\newtheorem{theorem}{Theorem}[subsection]
\newtheorem{proposition}[theorem]{Proposition}
\newtheorem{lemma}[theorem]{Lemma}
\newtheorem{corollary}[theorem]{Corollary}
\newtheorem{remark}[theorem]{Remark}
\theoremstyle{definition}
\newtheorem{example}[theorem]{Example}
\newtheorem{definition}[theorem]{Definition}
\newtheorem{notation}[theorem]{Notation}
\numberwithin{equation}{section}
\newcommand{\pushoutcorner}[1][dr]{\save*!/#1+1.2pc/#1:(1,-1)@^{|-}\restore}
\newcommand{\pullbackcorner}[1][dr]{\save*!/#1-1.2pc/#1:(-1,1)@^{|-}\restore}
\newcommand{\vrt}[1]{V(#1)}
\newcommand{\fhom}{\mathrm{\underline{Hom}}}
\newcommand{\cat}[1]{\mathbb{#1}}
\newcommand{\catc}{\cat{C}}
\newcommand{\set}{\mathbf{Set}}
\newcommand{\asm}{\mathbf{Asm}}
\newcommand{\btwo}{\mathbf{2}}
\newcommand{\bthree}{\mathbf{3}}
\newcommand{\vtcl}{\mathrm{vert}}
\newcommand{\smcatc}{\mathcal{C}}
\newcommand{\opcat}[1]{{#1}^\mathrm{op}}
\newcommand{\fmly}{\operatorname{Fam}}
\newcommand{\fmlyc}{\fmly(\catc)}
\newcommand{\cats}{\mathbf{Cat}}
\newcommand{\smcat}[1]{\mathcal{#1}}
\newcommand{\cod}{\operatorname{cod}}
\newcommand{\dom}{\operatorname{dom}}
\newcommand{\intv}{\mathbb{I}}
\newcommand{\xalg}[1]{{#1}\operatorname{-Alg}}
\newcommand{\bicart}{\operatorname{Bicart}}
\newcommand{\names}{\mathbb{A}}
\newcommand{\perma}{\operatorname{Perm}(\names)}
\newcommand{\ffcat}{\mathbf{FF}}
\newcommand{\awfscat}{\mathbf{AWFS}}
\newcommand{\lawfscat}{\mathbf{LAWFS}}
\newcommand{\rawfscat}{\mathbf{RAWFS}}
\newcommand{\powset}{\mathcal{P}}
\newcommand{\powfin}{\powset_\mathrm{fin}}
\newcommand{\nat}{\mathbb{N}}
\newcommand{\sub}{\mathsf{01Sub}}
\newcommand{\dmmod}{\mathsf{dM}}
\newcommand{\yoneda}{\mathbf{y}}
\title{Lifting Problems in Grothendieck Fibrations}
\author{Andrew W Swan}
\begin{document}

\maketitle

\begin{abstract}
  Many interesting classes of maps from homotopical algebra can be
  characterised as those maps with the right lifting property against
  certain sets of maps (such classes are sometimes referred to as
  \emph{cofibrantly generated}). In a more sophisticated notion due to
  Garner (referred to as \emph{algebraically cofibrantly generated})
  the set of maps is replaced with a diagram over a small category.

  We give a yet more general definition where the set or diagram of
  maps is replaced with a vertical map in a Grothendieck fibration. In
  addition to an interesting new view of the existing examples above,
  we get new notions, such as computable lifting problems in presheaf
  assemblies, and internal lifting problems in a topos.

  We show that under reasonable conditions one can define a notion of
  universal lifting problem and carry out step-one of Garner's small
  object argument. We give explicit descriptions of what the general
  construction looks like in some examples.
\end{abstract}

\tableofcontents

\section{Introduction}
\label{sec:introduction}

We first recall some standard notions in homotopical algebra. See
e.g. \cite[Chapters 11 and 12]{riehlcht} for more details.

Given two maps $m \colon U \rightarrow V$ and
$f \colon X \rightarrow Y$ in a category $\mathbb{C}$, we say that $m$
has the \emph{left lifting property} with respect to $f$ and $f$ has
the \emph{right lifting property} with respect to $m$ if for every
commutative square, as in the solid lines below (which we refer to as
a \emph{lifting problem}), there is a \emph{diagonal filler}, which is
the dotted line below, making two commutative triangles.
\begin{equation*}
  \begin{gathered}
    \xymatrix{ U \ar[d]_m \ar[r] & X \ar[d]^f \\
      V \ar@{.>}[ur] \ar[r] & Y}
  \end{gathered}
\end{equation*}

\begin{definition}
  A \emph{weak factorisation system} (wfs) is two classes of maps
  $\mathcal{L}$ and $\mathcal{R}$, which are closed under retracts
  such that every element of $\mathcal{L}$ has the left lifting
  property against every element of $\mathcal{R}$, and any map factors
  as an element of $\mathcal{L}$ followed by an element of
  $\mathcal{R}$.
\end{definition}

\begin{definition}
  We say a weak factorisation system $(\mathcal{L}, \mathcal{R})$ is
  \emph{cofibrantly generated} if there is some set $I$ of morphisms in
  $\catc$ such that $\mathcal{R} = I^\pitchfork$.
\end{definition}

A well known result due to Quillen, the \emph{small object argument}
shows that in categories satisfying certain conditions any set $I$
cofibrantly generates a wfs.

We note however, that the definition of cofibrantly generated still
makes sense in the absence of any weak factorisation
system. Specifically, we say that a class $\mathcal{R}$ is
\emph{cofibrantly generated} by a set $I$ if $R = I^\pitchfork$. This
can still be a useful thing to do as it can give us an easy way to
give concise definitions of classes of maps. In this paper we will
focus on classes of maps considered in the semantics of homotopy type
theory, although the techniques developed may be more widely
applicable.

A generalisation of cofibrantly generated was developed by
Garner in \cite{garnersmallobject}. The definition was again
originally stated for wfs's, or more precisely a more structured
notion called \emph{algebraic weak factorisation system} (awfs)
developed by Grandis and Tholen in \cite{grandistholennwfs}
(originally referred to as \emph{natural weak factorisation
  system}). We again note however, that the definition still makes
sense for arbitrary classes of maps.

\begin{definition}[Garner]
  \label{def:alglift}
  Let $\catc$ be a category. Let $\smcat{A}$ and $\smcat{B}$ be small
  categories and let $F \colon \smcat{A} \rightarrow \catc^\btwo$ and
  $G \colon \smcat{B} \rightarrow \catc^\btwo$ be functors. We say $F$
  has the \emph{left lifting property} against $G$ and $G$ has the
  \emph{right lifting property} against $F$ if the following
  holds. For all objects $a$ of $\smcat{A}$ and $b$ of $\smcat{B}$,
  and for all lifting problems of $F(a)$ against $G(b)$, we have a
  choice of filler. Furthermore these fillers satisfy a
  \emph{uniformity condition} which states that for all morphisms
  $\sigma \colon a \rightarrow a'$ in $\smcat{A}$, all $\tau \colon
  b \rightarrow b'$ in $\smcat{B}$ and all commutative cubes with
  the square $F(\sigma)$ on the left and $G(\tau)$ on the right, the
  resulting ``diagonal square'' formed by the fillers commutes.
\end{definition}

\begin{definition}[Garner]
  Let $\catc$ be a category. We say a class of maps $\mathcal{R}$ is
  \emph{algebraically cofibrantly generated} if there is a small
  category $\smcat{I}$ and a functor
  $J \colon \smcat{I} \rightarrow \catc^\btwo$ such that $f$ belongs
  to the class if and only if it has the right lifting property
  against $J$.
\end{definition}

It is natural to also consider the notion of cofibrantly generated
category as defined below.
\begin{definition}
  Let $\catc$ be a category. Let $\smcat{I}$ be a small category and
  $J \colon \smcat{I} \rightarrow \catc^\btwo$. Write $J^\pitchfork$
  for the category defined as follows. An object of $J^\pitchfork$ is
  a morphism $f$ of $\catc$ together with a uniform choice of
  diagonal fillers of $J$ against $f$. A morphism is a commutative
  square in $\catc$ which is compatible with the fillers.

  We say a category $\cat{D}$ and functor
  $U \colon \cat{D} \rightarrow \catc^\btwo$ is \emph{cofibrantly
    generated} if they are isomorphic to the forgetful functor
  $J^\pitchfork \rightarrow \catc^\btwo$ for some $J \colon \smcat{I}
  \rightarrow \catc^\btwo$.
\end{definition}

Garner developed an improved version of Quillen's small object
argument, referred to as the \emph{algebraic small object
  argument}. As a consequence of this result, under certain
assumptions, cofibrantly generated categories are isomorphic to
categories of algebras over a monad on $\catc^\btwo$. As part of the
proof, known as \emph{step-one} of the algebraic small object argument,
Garner considered a weaker notion of \emph{left half of an algebraic
  weak factorisation system}. This is already sufficient to show that
cofibrantly generated categories are isomorphic to categories of
algebras over a pointed endofunctor on $\catc^\btwo$.

We will develop a further generalisation of lifting problem, in which
the set $I \subseteq \catc^\btwo$, or diagram
$J \colon \smcat{I} \rightarrow \catc^\btwo$ is replaced with a
vertical map in a Grothendieck fibration.
We will also describe some
interesting examples that aren't included under existing definitions
of cofibrantly generated. Our main two examples will be category
indexed presheaf assemblies and codomain fibrations. The first of
these is a variant of Garner's definition of algebraically cofibrantly
generated applied to presheaf categories, but where the choice of
fillers for lifting problems must satisfy the additional requirement
of being uniformly computable. The second allows us to formalise a
notion of lifting problem internal to a topos (or more generally any
locally cartesian closed category). This is will allow us to better
understand certain ideas considered by Van den Berg and Frumin in
\cite{vdbergfrumin} and by Pitts and Orton in
\cite{pittsortoncubtopos}. In both of these cases we note that the
underlying category is not necessarily cocomplete, which is a necessary
condition required for Garner's small object argument, even for the
relatively simple step-one part.

We will then develop some constructions that can be carried out for
the general definition under reasonable assumptions (which do not
require the underlying category to be complete or cocomplete). The
first is universal lifting problem, in which the collection of all
lifting problems between two families of maps corresponds to one
single lifting problem in the total category of the fibration, which is uniquely
determined by a universal property. We then define an abstract version
of step-one of the small object argument that exhibits cofibrantly
generated categories as categories of algebras over pointed
endofunctors.

We will also show how to generalise the definition of cofibrantly
generated algebraic weak factorisation system to our setting. Although
we won't see any new examples of such awfs's in this paper, we will
see some smaller results in this direction. We will show how the
existing results by Garner relate to this definition. We will also
develop a sufficient criterion for the existence of cofibrantly
generated awfs's  in terms of the existence of choices of
initial objects for certain categories of algebras. This will then be
used in a future paper alongside a new generalisation of dependent
$W$-types to construct new examples of awfs's.

We will also show how to generalise the Leibniz construction to a
fibration, and a generalised notion of lifting problem due to
Sattler. Both may have useful application when applying these ideas to
the semantics of homotopy type theory (work in progress by the author
suggests that the latter can be used to better understand and
generalise the implementation of higher inductive types in cubical
sets).

We draw attention the following ideas that motivated various aspects
of this work that are good to bear in mind when reading.

\paragraph{The similarities and differences between two
  generalisations of cubical sets}
Cubical sets (from \cite{coquandcubicaltt}), are inspired by ideas in
homotopical algebra, and in particular the simplicial set model
\cite{voevodskykapulkinlumsdainess}. However, over time two distinct
approaches have arisen.

In \cite{gambinosattlerpi}, Gambino and
Sattler view cubical sets as a special case of general constructions
in homotopical algebra, including Garner's notion of lifting problem
and small object argument
and the Leibniz construction. We will refer to this as the ``algebraic
approach.''
On the other hand Orton and Pitts view
cubical sets as a special case of a construction in the internal logic
of a topos. We will refer to this as the ``internal logic approach.''

The two approaches are closely connected, and as Gambino and Sattler
show, there is a large amount of overlap. However, there are also a
number of curious differences. In using Garner's definition of
cofibrantly generated, the algebraic approach makes essential use of
the notion of ``small category'' and thereby to an external notion of
set that does not correspond to anything internal to the category we
are working with. We see this again with the requirement that the
category we work with is cocomplete. In this sense the internal logic
approach, which does not require cocompleteness would appear to be
more flexible. However, there are also several ways in which the
algebraic is more general. For example, much of the general theory in
the algebraic approach can be carried out without the category of
study being locally cartesian closed: if we need to talk about maps
from an object $X$ to an object $Y$, we just use the set of
morphisms. This would be impossible in the internal logic approach,
where we need local cartesian closedness in order to talk about maps
from $X$ to $Y$ internally.

We will see that what the two approaches have in common is that both
can be seen as studying part of a locally small bifibration. The
differences between the two approaches can be explained in terms of
differences between the bifibrations that we are working over. The
external notion of set and small category in the algebraic approach
arises from the base category of the fibration of ``category indexed
families.'' For the internal logic approach, we are (implicitly)
working over a codomain fibration, where the base category is the just
the category we are studying. The essential use of infinite colimits
in the algebraic approach arise from the opcartesian maps in the
fibration we use, which are left Kan extensions. On the other hand the
opcartesian maps over a codomain fibration are just given by
composition, so we don't see any kind of infinite colimit over a small
category (even internally).

\paragraph{Applications to presheaf assemblies}
One of the main applications that the author hopes to use in future work is to
understanding lifting problems within presheaf assemblies (categories
of presheaves constructed internally in categories of assemblies). In
particular working with presheaf assemblies should not involve minor
but tedious modifications of existing arguments. Instead the same
proofs and definitions should apply to both the existing definitions
of cubical sets and to new realizability variants. We will in fact see
two different approaches that could be used in the future in presheaf
assemblies. The first is simply to build on the existing approach of Pitts
and Orton\footnote{Technically their paper does not apply to presheaf
  assemblies because of the use of subobject classifiers, but the
  author expects this can be worked around without
  difficulty. Certainly the main results in this paper do not require
  subobject classifier.}. The
other is a more ``hands on'' approach based on Garner's definition of
algebraic lifting problem. In this approach a functor
$J \colon \mathcal{I} \rightarrow \catc^\btwo$ (like in definition
\ref{def:alglift}) is annotated with extra computational
information. A filler is then required to be both uniform in the
algebraic sense, while also uniformly computable. This has quite a
different character to the internal approach, and might also be
useful for some applications.

A key point to make about presheaf assemblies is that they are not
cocomplete, even for just countable colimits.

\paragraph{A generalisation of Garner's algebraic lifting problem that
  does not mention small categories}
On seeing the essential role that small categories play in Garner's
definition, one might expect that something analogous, such as
internal categories appear in any generalisation. Indeed in early
drafts of this paper, that was the approach taken. However, somewhat
surprisingly, our general definition will not involve any notion of
small category or internal category, and yet still includes Garner's
definition as a special case by choosing the Grothendieck fibration
appropriately (using so called \emph{category indexed family
  fibrations}).

\paragraph{The distinction between fibred and strongly fibred awfs's}
When working over a fibration it's natural to require that functors
preserve cartesian maps. We will see that in this case there are in
fact two different senses in which an awfs may preserve cartesian maps,
that we refer to as \emph{fibred} and \emph{strongly fibred}.

For a general fibration the difference is clear to see. The definition
of strongly fibred always involves pullbacks, whereas fibred only
mentions cartesian maps from the fibration we are working over.

Over a codomain fibration, it is possible to confuse the two notions
since both can roughly be described as ``stability under pullback.''
However, the distinction is still important to make. We will see that
over codomain fibrations, cofibrantly generated lawfs's are always
fibred (and so are cofibrantly generated awfs's when they exist). In
general, cofibrantly generated lawfs's need not be strongly fibred
(and if an awfs fails to be strongly fibred this is already the case
for the lawfs at step-one), but we will see a useful special case
where they are.

Throughout we pay careful attention to when we know that functors are
fibred and when we don't.

\subsection*{Acknowledgements}
\label{sec:acknowlegdements}

I'm grateful to Benno van den Berg for many helpful comments and
corrections.

\section{Lifting Problems in a Fibration}
\label{sec:definition-examples}

\subsection{Review of Grothendieck Fibrations}
\label{sec:revi-groth-fibr}

We recall the definition of Grothendieck fibration. See
e.g. \cite[Chapter 1]{jacobs} for a more detailed introduction.

\begin{definition}
  \label{def:vertical}
  Let $p \colon \cat{E} \rightarrow \cat{B}$ be a functor. A morphism
  $f \colon A \rightarrow B$ in $\cat{E}$ is \emph{vertical} if
  $p(A) = p(B)$ and $p(f)$ is the identity morphism. We will often
  refer to a vertical map over $I \in \cat{B}$ as a \emph{family of
    maps over $I$}.
\end{definition}

\begin{definition}
  Let $p \colon \cat{E} \rightarrow \cat{B}$ be a functor and let $I$
  be an object of $\cat{B}$. We define the \emph{fibre category over
    $I$}, $\cat{E}_I$ to consist of objects $X$ of $\cat{E}$ such that
  $p(X) = I$, and morphisms (vertical) maps $f$ such that $p(f) =
  1_I$.
\end{definition}

\begin{definition}
  \label{def:cartesian}
  Let $p \colon \cat{E} \rightarrow \cat{B}$ be a functor and
  $u \colon I \rightarrow J$ a morphism in $\cat{B}$. A morphism
  $f \colon X \rightarrow Y$ is \emph{cartesian over u} if $p(f) = u$
  and for every $g \colon Z \rightarrow Y$ in $\cat{E}$ with
  $p(g) = u \circ w$ for some $w \colon p(Z) \rightarrow I$ there is a
  unique $h \colon Z \rightarrow X$ in $\cat{E}$ above $w$ with
  $f \circ h = g$.
\end{definition}

We think of cartesian maps as ``substitutions.''

\begin{definition}
  \label{def:clovfib}
  A \emph{cloven Grothendieck fibration} is a functor
  $p \colon \cat{E} \rightarrow \cat{B}$ together with a choice of
  cartesian lifts. That is, for each $X \in \cat{E}$ and
  $\sigma \colon I \rightarrow p(Y)$ in $\cat{B}$ we are given an
  object $\sigma^\ast Y$ in $\cat{E}$ and a cartesian morphism
  $\overline{\sigma}(X) \colon u^\ast Y \rightarrow Y$ over
  $\sigma$. We call the choice of lifts a \emph{cleavage} for $p$. We
  will write $\overline{\sigma}(X)$ just as $\overline{\sigma}$ if $X$
  is clear from context. We will refer to $\sigma^\ast(Y)$ as the
  \emph{reindexing} of $Y$.

  We refer to $\cat{B}$ as the \emph{base} and to $\cat{E}$ as the
  \emph{total category} of the fibration.
\end{definition}

\begin{remark}
  The term \emph{cloven} refers to the fact that we require a choice
  of cartesian lifts, not just that there exists at least one
  cartesian lift in each case. In this paper we will only consider
  cloven fibrations. In fact the same applies for all categorical
  structure in this paper. For instance, when we say that a category
  has certain limits, we really mean that we are given a choice of
  limit for each diagram of that shape.
\end{remark}

\begin{proposition}
  For each $\sigma \colon I \rightarrow J$, reindexing along $\sigma$
  defines a functor $\cat{E}_J \rightarrow \cat{E}_I$.
\end{proposition}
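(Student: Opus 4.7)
The plan is to exploit the universal property of cartesian lifts to define the functor on morphisms, then use uniqueness to verify functoriality.

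First, the action on objects is given directly by the cleavage: $Y \mapsto \sigma^*Y$. For the action on morphisms, suppose $g \colon Y \to Y'$ is vertical in $\cat{E}_J$. I would consider the composite $g \circ \overline{\sigma}(Y) \colon \sigma^*Y \to Y'$, which lies over $1_J \circ \sigma = \sigma$. Since $\overline{\sigma}(Y') \colon \sigma^*Y' \to Y'$ is cartesian over $\sigma$, and $\sigma = \sigma \circ 1_I$, the definition of cartesian morphism (Definition \ref{def:cartesian}) provides a unique morphism $\sigma^*g \colon \sigma^*Y \to \sigma^*Y'$ lying over $1_I$ (hence vertical in $\cat{E}_I$) with
\begin{equation*}
  \overline{\sigma}(Y') \circ \sigma^*g = g \circ \overline{\sigma}(Y).
\end{equation*}

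For preservation of identities, I would observe that when $g = 1_Y$, the morphism $1_{\sigma^*Y}$ satisfies the defining equation, since $\overline{\sigma}(Y) \circ 1_{\sigma^*Y} = \overline{\sigma}(Y) = 1_Y \circ \overline{\sigma}(Y)$; uniqueness of the factorisation then forces $\sigma^*(1_Y) = 1_{\sigma^*Y}$. For preservation of composition, given $g \colon Y \to Y'$ and $g' \colon Y' \to Y''$ vertical, I would check that the composite $\sigma^*(g') \circ \sigma^*(g)$ satisfies the universal property defining $\sigma^*(g' \circ g)$. Indeed, using the two defining equations,
\begin{equation*}
  \overline{\sigma}(Y'') \circ (\sigma^*(g') \circ \sigma^*(g)) = g' \circ \overline{\sigma}(Y') \circ \sigma^*(g) = g' \circ g \circ \overline{\sigma}(Y),
\end{equation*}
and $\sigma^*(g') \circ \sigma^*(g)$ lies over $1_I \circ 1_I = 1_I$, so by uniqueness it coincides with $\sigma^*(g' \circ g)$.

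There is no real obstacle; the only thing to be careful about is applying the universal property with the correct factorisation of the base morphism (namely $\sigma = \sigma \circ 1_I$), so that the induced map is vertical and hence lands in $\cat{E}_I$ as required.
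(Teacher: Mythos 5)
Your proof is correct and is exactly the standard argument the paper leaves implicit (it states this proposition without proof): define $\sigma^*g$ by factoring $g \circ \overline{\sigma}(Y)$ through the cartesian lift $\overline{\sigma}(Y')$ over the decomposition $\sigma = \sigma \circ 1_I$, then derive functoriality from the uniqueness clause of Definition \ref{def:cartesian}. All steps check out.
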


An important concept that we will use throughout the paper is that of
fibrations of vertical maps, which we define below.
\begin{definition}
  Let $p \colon \cat{E} \rightarrow \cat{B}$ be a functor. We define
  the \emph{category of vertical maps}, $\vrt{\cat{E}}$, to be the
  full subcategory of $\cat{E}^\btwo$ consisting of vertical maps.
\end{definition}

\begin{proposition}
  The functors $p \circ \dom$ and $p \circ \cod$ from $\vrt{\cat{E}}$
  to $\cat{B}$ are equal.
\end{proposition}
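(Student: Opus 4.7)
The plan is to unpack the definitions and observe that both claims —equality on objects and equality on morphisms—follow from the definition of vertical maps together with the functoriality of $p$.

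First, on objects: an object of $\vrt{\cat{E}}$ is a vertical morphism $f \colon A \rightarrow B$ in $\cat{E}$. By Definition~\ref{def:vertical}, verticality of $f$ means precisely $p(A) = p(B)$, so $(p\circ\dom)(f) = p(A) = p(B) = (p\circ\cod)(f)$.

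Next, on morphisms: a morphism in $\vrt{\cat{E}}$ from $f \colon A \rightarrow B$ to $f' \colon A' \rightarrow B'$ is a commutative square in $\cat{E}$ consisting of arrows $u \colon A \rightarrow A'$ and $v \colon B \rightarrow B'$ with $f' \circ u = v \circ f$. Applying the functor $p$ yields $p(f') \circ p(u) = p(v) \circ p(f)$ in $\cat{B}$. Since $f$ and $f'$ are vertical, $p(f)$ and $p(f')$ are identity morphisms, so this reduces to $p(u) = p(v)$, i.e. $(p\circ\dom)$ and $(p\circ\cod)$ agree on this morphism as well.

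There is no real obstacle here: the statement is essentially an immediate unpacking of the definition of vertical map, combined with one application of functoriality to rewrite the commuting square down in $\cat{B}$. The only subtlety worth flagging is that the equality is an equality of functors on the nose (not merely up to natural isomorphism), which is exactly what the definition of vertical map, requiring $p(f) = 1$ rather than just $p(A) = p(B)$, is designed to deliver.
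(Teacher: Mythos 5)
Your proof is correct and is exactly the direct definition-unpacking argument the paper has in mind (the paper omits the proof entirely, treating the statement as immediate). Both the object-level check via $p(A)=p(B)$ and the morphism-level check via applying $p$ to the commuting square and cancelling the identities $p(f)$, $p(f')$ are right.
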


\begin{notation}
  We will sometimes write the functor $p \circ \cod = p \circ \dom$ as
  $p$, when it is clear to do so from context.
\end{notation}

\begin{proposition}
  Let $p \colon \cat{E} \rightarrow \cat{B}$ be a fibration. Then a
  morphism in $\vrt{\cat{E}}$ is cartesian over $\cat{B}$ if and only
  if it is levelwise cartesian over $p$. The functor $p \circ \cod
  \colon \vrt{\cat{E}} \rightarrow \cat{B}$ is also a fibration.
\end{proposition}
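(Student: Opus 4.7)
The plan is to first prove the ``if'' direction of the characterisation of cartesian morphisms, then use it to build the cleavage witnessing that $p \circ \cod$ is a fibration, and finally derive the ``only if'' direction from the uniqueness of cartesian lifts.

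For the ``if'' direction, I would consider a morphism of $\vrt{\cat{E}}$ presented as a commutative square with top $g_1 \colon X \to X'$ and bottom $g_2 \colon Y \to Y'$ both lying over some $\sigma \colon I \to J$, and with vertical sides $f$ and $f'$. Assuming $g_1$ and $g_2$ are cartesian over $p$, I take an arbitrary vertical $h \colon Z \to W$ over $K$ together with a morphism $(k_1, k_2) \colon h \to f'$ in $\vrt{\cat{E}}$ over some $\tau \colon K \to J$, and a factorisation $\tau = \sigma \circ w$. The universal properties of $g_1$ and $g_2$ separately supply unique $\ell_1 \colon Z \to X$ and $\ell_2 \colon W \to Y$ over $w$ with $g_1 \circ \ell_1 = k_1$ and $g_2 \circ \ell_2 = k_2$. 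To assemble these into a morphism of $\vrt{\cat{E}}$ I must verify $\ell_2 \circ h = f \circ \ell_1$: both sides lie over $w$, and postcomposing with $g_2$ yields $k_2 \circ h = f' \circ k_1$ in either case, so the equality follows from uniqueness in the cartesianness of $g_2$.

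For the fibration claim, given $f' \colon X' \to Y'$ vertical over $J$ and $\sigma \colon I \to J$, I would pick the chosen cartesian lifts $\overline\sigma \colon \sigma^\ast X' \to X'$ and $\overline\sigma \colon \sigma^\ast Y' \to Y'$ in $\cat{E}$. The composite $f' \circ \overline\sigma$ lies over $\sigma = \sigma \circ 1_I$, so the universal property of the cartesian lift $\sigma^\ast Y' \to Y'$ produces a unique vertical $\sigma^\ast f' \colon \sigma^\ast X' \to \sigma^\ast Y'$ making the evident square commute. This square is a morphism in $\vrt{\cat{E}}$ both of whose horizontal edges are cartesian in $\cat{E}$, so by the previous step it is cartesian for $p \circ \cod$ over $\sigma$, providing the required cleavage.

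For the ``only if'' direction, suppose $(g_1, g_2)$ is cartesian for $p \circ \cod$ over $\sigma$. The cleavage just built yields another cartesian lift of the same $f'$ over $\sigma$, so the two lifts must differ by a vertical isomorphism in $\vrt{\cat{E}}$, that is, a commutative square whose horizontal edges are vertical isomorphisms in $\cat{E}$. Therefore $g_1$ and $g_2$ equal the $\overline\sigma$'s precomposed with vertical isomorphisms, and since precomposition of a cartesian map by a vertical isomorphism is again cartesian, $g_1$ and $g_2$ are cartesian over $p$. The main subtlety I anticipate is the commutativity check $\ell_2 \circ h = f \circ \ell_1$ in the ``if'' direction, where the universal property of a cartesian map is used to prove an equality rather than to construct a morphism; the rest is routine.
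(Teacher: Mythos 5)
Your proof is correct and follows essentially the same route as the paper's: first check that levelwise cartesian implies cartesian (the key point being the commutativity verification via uniqueness through the cartesian bottom edge), then build the cleavage levelwise, and finally conclude the converse from the fact that any cartesian lift is isomorphic to a chosen one via a vertical isomorphism. You have simply filled in the details the paper leaves as "easy to check."
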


\begin{proof}
  It is easy to check that if a morphism is levelwise cartesian then
  it is cartesian in $\vrt{\cat{E}}$. We can hence give $p \circ \cod$
  the structure of a cloven fibration, by applying the cleavage of $p$
  levelwise. But then any cartesian map in $\vrt{\cat{E}}$ must be
  isomorphic to such a morphism, and so must be levelwise cartesian in
  $\cat{E}$.
\end{proof}

\begin{remark}
  While working with Grothendieck fibrations, we will sometimes refer
  to the reference by Johnstone \cite{theelephant}. In these cases the
  results are actually for indexed categories. However, indexed
  categories and Grothendieck fibrations are equivalent and one may
  readily switch between them. See \cite[Section B1.3]{theelephant}
  for details.
\end{remark}

The main examples of Grothendieck fibrations that we will consider are
the following.

\begin{example}
  \label{ex:setindexfmly}
  Let $\catc$ be any category.
  We define the fibration of \emph{set
    indexed families of $\catc$}, $p \colon \fmlyc \rightarrow \set$ as
  follows. Objects of $\fmlyc$ are set indexed families of objects of
  $\catc$, $(C_i)_{i \in I}$ (where $I$ is an indexing set and $C_i$ are
  objects of $\catc$).
  
  Vertical maps $F$ over a set $I$ consist of set indexed
  families of morphisms in $\catc$.
  
  Cartesian maps are maps in $\fmlyc$ that are levelwise isomorphisms,
  and reindexing is just reindexing in the usual sense.
\end{example}

\begin{example}
  \label{ex:catindexfmly}
  The set indexed family fibration $\fmlyc \rightarrow \set$ extends to
  a fibration $\fmlyc \rightarrow \cats$. In this case we define
  $\fmly$ to consist of pairs $\langle \smcat{A}, X \rangle$ where
  $\smcat{A}$ is a small category and $X$ is a functor $\smcat{A}
  \rightarrow \catc$.

  A vertical map from $X \colon \smcat{A} \rightarrow \catc$ to $Y
  \colon \smcat{A} \rightarrow \catc$ is a natural transformation
  between the functors, and reindexing is given by composition.
\end{example}

\begin{example}
  Again let $\catc$ be any category. A map is cartesian over the
  codomain functor $\cod \colon \catc^\btwo \rightarrow \catc$ if and
  only if it is a pullback in $\catc$. Hence $\cod$ is a fibration if
  and only if $\catc$ has pullbacks.
\end{example}

\subsection{Definition of Lifting Problem}
\label{sec:defin-lift-probl}

We now give our general definition of lifting problem (which we will
actually refer to as \emph{family of lifting problems}) and fillers, as
well as a few equivalent definitions.

In the below, let $p \colon \cat{E} \rightarrow \cat{B}$ be a
fibration.

\begin{definition}
  Let $m \colon U \rightarrow V$ and $f \colon X \rightarrow Y$ be
  vertical maps over objects $I$ and $J$ of $\cat{B}$ respectively. A
  \emph{family of lifting problems from $m$ to $f$} consists of an
  object $K$ of $\cat{B}$, morphisms $\sigma \colon K \rightarrow I$,
  $\tau \colon K \rightarrow J$, together with morphisms making the
  top and bottom of a commutative square in $\cat{E}_K$ as below.
  \begin{equation}
    \label{eq:3}
    \begin{gathered}
    \xymatrix{ \sigma^\ast(U) \ar[r] \ar[d]_{\sigma^\ast(m)} & \tau^\ast(X)
      \ar[d]^{\tau^\ast(f)} \\
        \sigma^\ast(V) \ar[r] & \tau^\ast(Y) }      
    \end{gathered}
  \end{equation}
  We will refer to such commutative squares as families of lifting
  problems from $m$ to $f$ over $K$.
\end{definition}

\begin{definition}
  A \emph{solution}, or \emph{choice of diagonal fillers} for a family
  of lifting problems is a map
  $j \colon \sigma^\ast(V) \rightarrow \tau^\ast(X)$ in $\cat{E}_K$, making
  \eqref{eq:3} into two commutative triangles:
  \begin{equation*}
    \begin{gathered}
    \xymatrix{ \sigma^\ast(U) \ar[r] \ar[d]_{\sigma^\ast(m)} & \tau^\ast(X)
      \ar[d]^{\tau^\ast(f)} \\
        \sigma^\ast(V) \ar[r] \ar[ur]|j & \tau^\ast(Y) }      
    \end{gathered}
  \end{equation*}
  (Note that by considering the image of the diagram in $\cat{B}$ we
  see $j$ is necessarily a vertical map.)
\end{definition}

\begin{example}
  Over a trivial fibration $\cat{C} \rightarrow 1$, a family of
  lifting problems is just a lifting problem in $\cat{C}$ and a
  solution is a diagonal filler.
\end{example}

We will often use one of the reformulations of lifting property
below. Each is easy to check using the cartesianness of
$\overline{\tau}(f)$ and the characterisation of cartesian maps in
$\vrt{\cat{E}}$ as levelwise cartesian maps in $\cat{E}$.

\begin{proposition}
  Families of lifting problems from $m$ to $f$ correspond precisely to
  an object $K$ of $\cat{B}$, morphisms
  $\sigma \colon K \rightarrow I$, $\tau \colon K \rightarrow J$,
  together with a vertical map
  $\sigma^\ast(m) \rightarrow \tau^\ast(f)$ in $\vrt{\cat{E}}_K$.
\end{proposition}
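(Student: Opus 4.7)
The plan is to unwind the definitions. A family of lifting problems from $m$ to $f$ over $K$ is, by definition, precisely the data of a commutative square in the fibre $\cat{E}_K$ whose left and right sides are $\sigma^\ast(m)$ and $\tau^\ast(f)$. So it suffices to show that such squares in $\cat{E}_K$ are in natural bijection with vertical morphisms $\sigma^\ast(m) \to \tau^\ast(f)$ of $\vrt{\cat{E}}$ over $K$.

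First I would unpack $\vrt{\cat{E}}_K$. By the definition of $\vrt{\cat{E}}$, an object lying over $K$ is a vertical map in $\cat{E}$ whose domain and codomain both live in $\cat{E}_K$, while a morphism in $\vrt{\cat{E}}$ is a commutative square in $\cat{E}$ between two vertical maps. Such a morphism is vertical over $\cat{B}$ exactly when its image under both $p \circ \cod$ and $p \circ \dom$ (which agree, by the earlier proposition) is the identity on $K$; equivalently, when its two horizontal sides are vertical morphisms in $\cat{E}_K$. Thus a morphism in $\vrt{\cat{E}}_K$ from $\sigma^\ast(m)$ to $\tau^\ast(f)$ is literally a commutative square in $\cat{E}_K$ with those two vertical maps as its left and right sides.

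Comparing this with the definition of a family of lifting problems, the two descriptions coincide on the nose. No genuine obstacle arises: the characterisation of cartesian maps in $\vrt{\cat{E}}$ as levelwise cartesian ensures that ``vertical in $\vrt{\cat{E}}$'' unpacks correctly as ``levelwise vertical,'' and from there the correspondence is immediate. The bijection is clearly natural in $K$, $\sigma$, $\tau$, and the data of the square, so no further coherence check is required.
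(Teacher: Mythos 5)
Your proof is correct and matches the paper's approach: the paper offers no explicit argument, merely noting that each reformulation "is easy to check," and your definitional unwinding is exactly that check. (One small quibble: the levelwise characterisation of \emph{cartesian} maps in $\vrt{\cat{E}}$ is not actually needed here — that a morphism of $\vrt{\cat{E}}_K$ is a square with both horizontal components in $\cat{E}_K$ follows directly from $p \circ \dom = p \circ \cod$ and the definition of a fibre category — but this does not affect the validity of the argument.)
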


\begin{proposition}
  Families of lifting problems from $m$ to $f$ correspond precisely to
  an object $K$ of $\cat{B}$, morphisms
  $\sigma \colon K \rightarrow I$, $\tau \colon K \rightarrow J$,
  together with morphisms over $\tau$ making the top and bottom of a
  commutative square in $\cat{E}$ as below.
  \begin{equation}
    \label{eq:21}
    \begin{gathered}
    \xymatrix{ \sigma^\ast(U) \ar[r] \ar[d]_{\sigma^\ast(m)} & X
      \ar[d]^{f} \\
        \sigma^\ast(V) \ar[r] & Y }
    \end{gathered}
  \end{equation}

  Moreover, solutions of \eqref{eq:3} correspond precisely to
  maps $j \colon \sigma^\ast(V) \rightarrow X$ making \eqref{eq:21}
  into two commutative triangles.
  
  (Note that by considering the image of the diagram in $\cat{B}$ we
  see $j$ necessarily lies over $\tau$.)
\end{proposition}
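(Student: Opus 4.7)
The plan is to reduce this to the previous proposition, which already reformulates families of lifting problems from $m$ to $f$ as vertical maps $\sigma^\ast(m) \to \tau^\ast(f)$ in $\vrt{\cat{E}}_K$. The key new ingredient is to eliminate the reindexing on $f$ by using the universal property of the cartesian lift of $\tau$ at $f$.

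First I would invoke the earlier proposition stating that a morphism in $\vrt{\cat{E}}$ is cartesian over $\cat{B}$ if and only if it is levelwise cartesian in $\cat{E}$. Since $\overline{\tau}(X) \colon \tau^\ast(X) \to X$ and $\overline{\tau}(Y) \colon \tau^\ast(Y) \to Y$ are cartesian over $\tau$, the induced morphism $\overline{\tau}(f) \colon \tau^\ast(f) \to f$ in $\vrt{\cat{E}}$ is cartesian over $\tau$. Therefore, vertical maps $\sigma^\ast(m) \to \tau^\ast(f)$ in $\vrt{\cat{E}}_K$ correspond bijectively, by composition with $\overline{\tau}(f)$, to maps $\sigma^\ast(m) \to f$ in $\vrt{\cat{E}}$ lying over $\tau$. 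Unpacking, such a map is precisely a commutative square of the shape \eqref{eq:21} in $\cat{E}$, whose two vertical sides lie over $\tau$.

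For the second half, about solutions, I would again use cartesianness, but now only of $\overline{\tau}(X)$ alone. A vertical diagonal filler $j \colon \sigma^\ast(V) \to \tau^\ast(X)$ in $\cat{E}_K$ corresponds, by composition with $\overline{\tau}(X)$, to a map $\sigma^\ast(V) \to X$ over $\tau$, and the cartesian property of $\overline{\tau}(X)$ shows this assignment is a bijection (since any map over $\tau$ factors uniquely through $\overline{\tau}(X)$ by a vertical map). Then I would check that the upper triangle $\sigma^\ast(U) \to \tau^\ast(X)$ factors as $\sigma^\ast(U) \to \sigma^\ast(V) \to \tau^\ast(X)$ if and only if, after composing with $\overline{\tau}(X)$, the corresponding triangle in $\cat{E}$ commutes; this is immediate. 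The lower triangle takes slightly more care: one uses that the right edge of \eqref{eq:21} is obtained from $\tau^\ast(f)$ by composing with $\overline{\tau}(Y)$, together with the naturality identity $f \circ \overline{\tau}(X) = \overline{\tau}(Y) \circ \tau^\ast(f)$, to translate between the two formulations.

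No step should present a real obstacle; the mild bookkeeping lies in (iii) of the previous paragraph, where one must carefully distinguish the two cartesian lifts $\overline{\tau}(X)$ and $\overline{\tau}(Y)$ and invoke their naturality. The parenthetical claim that $j$ automatically lies over $\tau$ is then immediate: applying $p$ to the triangle $\tau^\ast(f) \circ j = (\text{bottom map})$ yields $\mathrm{id}_K \circ p(j) = \tau$ in the second reformulation, and $p(j) = 1_K$ directly in the first.
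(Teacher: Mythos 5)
Your proposal is correct and follows exactly the route the paper indicates: the paper dispatches all of these reformulations with the single remark that they follow from the cartesianness of $\overline{\tau}(f)$ and the characterisation of cartesian maps in $\vrt{\cat{E}}$ as levelwise cartesian maps, and your argument is a faithful elaboration of precisely that, including the use of $\overline{\tau}(X)$ alone for the correspondence of solutions.
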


\begin{proposition}
  Families of lifting problems from $m$ to $f$ correspond precisely to
  an object $K$ of $\cat{B}$, morphisms
  $\sigma \colon K \rightarrow I$, $\tau \colon K \rightarrow J$,
  together with a morphism $\sigma^\ast(m) \rightarrow f$ in
  $\vrt{\cat{E}}$ lying over $\tau$.
\end{proposition}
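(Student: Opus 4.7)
The plan is to reduce this to the previous propositions via the universal property of cartesian morphisms in $\vrt{\cat{E}}$. By the proposition stating that cartesian maps in $\vrt{\cat{E}}$ are exactly levelwise cartesian maps, the canonical lift $\overline{\tau}(f) \colon \tau^\ast(f) \rightarrow f$ (defined by applying $\overline{\tau}$ to the domain and codomain of $f$) is a cartesian morphism in $\vrt{\cat{E}}$ over $\tau \colon K \rightarrow J$. This is the key piece of structure I would exploit.

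First I would invoke the first reformulation proposition to replace ``family of lifting problems from $m$ to $f$'' with the equivalent data of $K$, $\sigma$, $\tau$, and a vertical map $\varphi \colon \sigma^\ast(m) \rightarrow \tau^\ast(f)$ in $\vrt{\cat{E}}_K$. Then, given such a $\varphi$, I would form the composite $\overline{\tau}(f) \circ \varphi \colon \sigma^\ast(m) \rightarrow f$, which is a morphism in $\vrt{\cat{E}}$ lying over $\tau$. Conversely, given any morphism $\psi \colon \sigma^\ast(m) \rightarrow f$ in $\vrt{\cat{E}}$ over $\tau$, the cartesian property of $\overline{\tau}(f)$ produces a unique vertical $\varphi$ with $\overline{\tau}(f) \circ \varphi = \psi$.

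These two operations are mutually inverse by the uniqueness clause in the definition of cartesian morphism, which gives the bijective correspondence. The main (and only) thing to verify carefully is that the universal property of cartesian maps in $\cat{E}$ really does lift to $\vrt{\cat{E}}$ in the form needed, but this is exactly the content of the earlier proposition characterising cartesian maps in $\vrt{\cat{E}}$ as levelwise cartesian, so no further work is required. I would also note parenthetically, as in the earlier propositions, that any such $\psi$ automatically lies over $\tau$ once we fix that its images under $\dom$ and $\cod$ do, because cartesianness forces the factorisation to be vertical.

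Since the argument is a straightforward unwinding via the cartesian universal property, I do not anticipate any serious obstacle; the work is essentially bookkeeping. The only mild subtlety is keeping track of the fact that ``vertical over $K$'' and ``lying over $\tau$'' are interchanged precisely by pre- or post-composition with $\overline{\tau}(f)$, and that this interchange is realised inside $\vrt{\cat{E}}$ rather than just levelwise in $\cat{E}$.
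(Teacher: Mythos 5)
Your proof is correct and follows exactly the route the paper indicates: it establishes the bijection by pre-/post-composition with the cartesian lift $\overline{\tau}(f)$ in $\vrt{\cat{E}}$, whose cartesianness is justified by the levelwise characterisation of cartesian maps in $\vrt{\cat{E}}$. This is precisely the argument the paper sketches in the sentence preceding the list of reformulations, so no further comment is needed.
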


\begin{proposition}
  Families of lifting problems from $m$ to $f$ correspond precisely to
  isomorphism classes of spans $m \leftarrow \cdot \rightarrow f$ in
  $\vrt{\cat{E}}$ where the map $\cdot \rightarrow m$ is cartesian.
\end{proposition}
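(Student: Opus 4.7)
The plan is to reduce to the previous proposition (families of lifting problems from $m$ to $f$ correspond to data $K, \sigma, \tau$ together with a morphism $\sigma^\ast(m) \to f$ in $\vrt{\cat{E}}$ over $\tau$) by using the universal property of cartesian lifts to put every such span into a canonical form.

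First I would unpack the span data. Suppose we are given a span $m \xleftarrow{c} h \xrightarrow{g} f$ in $\vrt{\cat{E}}$ with $c$ cartesian over $\cat{B}$. Since the composite $p \circ \cod = p \circ \dom$ agrees on vertical maps, the object $h$ lies over some $K \in \cat{B}$, and $c$ and $g$ lie over morphisms $\sigma \colon K \to I$ and $\tau \colon K \to J$ respectively. By the proposition characterising cartesian maps in $\vrt{\cat{E}}$, the map $c$ is levelwise cartesian in $\cat{E}$.

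Next I would compare $c$ to the canonical cartesian lift from the cleavage. Because $\overline{\sigma}(m) \colon \sigma^\ast(m) \to m$ is cartesian over $\sigma$ and $c$ is also cartesian over $\sigma$, the universal property of cartesian maps produces a unique vertical isomorphism $\phi \colon h \to \sigma^\ast(m)$ in $\vrt{\cat{E}}_K$ with $\overline{\sigma}(m) \circ \phi = c$. Composing, $g \circ \phi^{-1} \colon \sigma^\ast(m) \to f$ is a morphism of $\vrt{\cat{E}}$ lying over $\tau$, so by the preceding proposition it determines a family of lifting problems from $m$ to $f$. Conversely, given the data $K, \sigma, \tau$ and a morphism $k \colon \sigma^\ast(m) \to f$ over $\tau$, the pair $(\overline{\sigma}(m), k)$ is such a span (with cartesian left leg by construction).

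Finally I would verify the isomorphism-class statement. A morphism of spans is a vertical isomorphism between the apices commuting with both legs; two spans $(c, g)$ and $(c', g')$ are isomorphic iff there is an isomorphism $\psi$ with $c' \circ \psi = c$ and $g' \circ \psi = g$. Under the construction above, the resulting isomorphism $\phi$ is uniquely determined by the cartesian property, so isomorphic spans produce the same morphism $\sigma^\ast(m) \to f$ over $\tau$, while non-isomorphic spans produce distinct ones. Thus the assignment descends to a bijection between isomorphism classes of such spans and families of lifting problems. The only slightly delicate step is the uniqueness half of this last paragraph, which is where the cartesianness hypothesis on $c$ is essential; otherwise the isomorphism $\phi$ would not be forced and distinct isomorphism classes could collapse.
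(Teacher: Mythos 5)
Your argument is correct and is exactly the verification the paper leaves to the reader (the paper only remarks that each reformulation ``is easy to check using the cartesianness \dots and the characterisation of cartesian maps in $\vrt{\cat{E}}$ as levelwise cartesian maps''): comparing the cartesian leg with the chosen lift $\overline{\sigma}(m)$ via the unique vertical isomorphism is the intended route. The one interpretive choice you make---reading ``isomorphism of spans'' as a \emph{vertical} isomorphism of apices---is the reading needed for the correspondence to be an honest bijection with families of lifting problems (which carry specific data $K$, $\sigma$, $\tau$), and your uniqueness argument for $\phi$ correctly handles both directions of that bijection.
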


\section{Universal Lifting Problems}
\label{sec:univ-lift-probl}

\subsection{Review of Hom Objects and Locally Small Fibrations}
\label{sec:review-hom-objects}

We recall the definitions of hom object and locally small
fibration. We essentially follow Jacobs' presentation in \cite[Section
9.5]{jacobs}, although we will change the terminology a little for
convenience. We will also make some basic observations that will be
used later.

\begin{definition}
  \label{def:homrealdef}
  For any objects $X$ and $Y$ in $\cat{E}$ with $p(X) = p(Y) = I$, we
  say \emph{the hom object from $X$ to $Y$ exists} if the functor from
  $\opcat{(\cat{B} / I)}$ to $\set$ sending $u \colon J \rightarrow I$
  to $\cat{E}(u^\ast(X), u^\ast(Y))$ is representable. We write the
  representing object as $h_0 \colon \fhom_I(X, Y) \rightarrow I$, and
  $h_1$ for the canonical map $h_0^\ast (X) \rightarrow h_0^\ast(Y)$
  corresponding to the identity on $h_0$. We will refer to
  $(\fhom_I(X, Y), h_0, h_1)$ as \emph{the hom object from $X$ to $Y$}.
\end{definition}

Note that for each $X$ and $Y$, the hom object from $X$ to $Y$ is
unique up to isomorphism when it exists. This justifies referring to
it as ``the'' hom object.

\begin{proposition}
  \label{prop:homiasunivspan}
  To specify a hom object from $X$ to $Y$ over $I$ is to specify a
  span from $X$ to $Y$, $X \leftarrow Z \rightarrow Y$ where the left
  map $Z \rightarrow X$ is cartesian, which is universal, in the sense
  that for every span $X \leftarrow Z' \rightarrow Y$ where
  $Z' \rightarrow X$ is cartesian, there is a unique map
  $t \colon Z' \rightarrow Z$ in the commutative diagram below.
  \begin{equation*}
    \xymatrix{ X & & Y \\
      & Z \ar[ul] \ar[ur] & \\
      & Z' \ar@/^/[uul] \ar@/_/[uur] \ar@{.>}[u]^t &}
  \end{equation*}
\end{proposition}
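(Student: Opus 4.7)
The plan is to unpack both sides of the claimed equivalence using the universal property of cartesian lifts and then reduce everything to the representability condition defining the hom object.

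First, I would record a purely fibrational bijection that does the bulk of the translation work. For any $u \colon J \to I$ in $\cat{B}$, the cartesian lift $\overline{u} \colon u^\ast Y \to Y$ induces a bijection between vertical maps $u^\ast X \to u^\ast Y$ and morphisms $u^\ast X \to Y$ over $u$; pairing with the cartesian left leg $\overline{u} \colon u^\ast X \to X$, an element of $\cat{E}(u^\ast X, u^\ast Y)$ is the same data as a span $X \leftarrow u^\ast X \to Y$ whose left leg is $\overline{u}$ and whose right leg sits over $u$. Up to unique vertical isomorphism of the apex, these are precisely the spans $X \leftarrow Z \to Y$ whose left leg is cartesian and whose right leg sits over the same morphism of $\cat{B}$.

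For the forward direction, starting from a hom object $(\fhom_I(X, Y), h_0, h_1)$ I would propose the candidate universal span
\[
X \xleftarrow{\overline{h_0}} h_0^\ast X \xrightarrow{\overline{h_0} \circ h_1} Y.
\]
Given a test span $X \leftarrow Z' \to Y$ with cartesian left leg, say over $u \colon J \to I$, the cartesianness of $\overline{h_0}$ forces any $t \colon Z' \to h_0^\ast X$ making the $X$-triangle commute to lie uniquely over some $s \colon J \to \fhom_I(X, Y)$ with $h_0 \circ s = u$. The $Y$-triangle then commutes exactly when $s^\ast(h_1)$ equals the vertical map $u^\ast X \to u^\ast Y$ classifying the right leg via the bijection of the previous paragraph; by the representability property of the hom object this pins down $s$, hence $t$, uniquely.

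The converse direction runs the same argument in reverse: given a universal span $X \leftarrow Z \to Y$, let $h_0$ be the base morphism of the cartesian left leg, identify $Z$ with $h_0^\ast X$, and let $h_1$ be the vertical factorisation of the right leg through $\overline{h_0} \colon h_0^\ast Y \to Y$. The assumed universal property of the span then reads verbatim as the statement that $(\fhom_I(X, Y), h_0)$ represents the functor $u \mapsto \cat{E}(u^\ast X, u^\ast Y)$. The only real obstacle is bookkeeping: one has to track carefully which morphisms sit over which arrows in $\cat{B}$ and match the two triangle-commutativity conditions of the span against the single naturality clause of the representability bijection, using along the way the standard fact that a map factoring a cartesian map over a cartesian map is itself determined by a uniquely chosen intermediate base morphism. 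Once the $\overline{u}$-translation is in place, every verification is forced.
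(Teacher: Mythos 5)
Your argument is correct and is exactly the verification the paper has in mind: the paper states this proposition without proof, treating it as an immediate unwinding of Definition \ref{def:homrealdef} via the universal property of cartesian lifts, and your translation (identifying elements of $\cat{E}(u^\ast X, u^\ast Y)$ with spans whose cartesian left leg lies over $u$ and whose right leg lies over the same $u$, then matching the two triangle conditions against representability) supplies that verification faithfully. Your explicit remark that the right leg must sit over the same base morphism as the left leg is a worthwhile precision that the paper's statement leaves implicit.
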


\begin{notation}
  We will sometimes write the object $Z$ appearing in proposition
  \ref{prop:homiasunivspan} as $\fhom_I(X, Y)$ when it is clear to do
  so from context.
\end{notation}

\begin{definition}
  A fibration $p \colon \cat{E} \rightarrow \cat{B}$ is \emph{locally
    small} if we have a choice of hom object
  $(\fhom_I(X, Y), h_0, h_1)$ for all $I \in \cat{B}$ and all
  $X, Y \in \cat{E}_I$.
\end{definition}

\begin{proposition}
  \label{prop:homasunivspan}
  Let $p \colon \cat{E} \rightarrow \cat{B}$ be a fibration, and
  suppose that $\cat{B}$ has all finite limits. The $p$
  is locally small if and only if the following holds. For all
  $I, J \in \cat{B}$, all $X \in \cat{E}_I$ and for all
  $Y \in \cat{E}_J$ there is a choice of object
  $\fhom(X, Y) \in \cat{B}$ together with maps
  $\sigma \colon \fhom(X, Y) \rightarrow I$,
  $\tau \colon \fhom(X, Y) \rightarrow J$ and
  $h \colon \sigma^\ast(X) \rightarrow Y$ over $\tau$, which are
  universal in the following sense. For any $K$, together with maps
  $\sigma' \colon K \rightarrow X$, $\tau' \colon K \rightarrow Y$ and
  $h' \colon \sigma'^\ast(X) \rightarrow Y$ over $\tau'$, there is a
  unique map $t \colon K \rightarrow \fhom(X, Y)$ such that $t$ and
  the canonical map $\sigma'^\ast(X) \rightarrow \sigma^\ast(X)$ over
  $t$ make the following diagrams commute.
  \begin{equation*}
    \begin{gathered}
      \xymatrix{ I & & J \\
        & \fhom(X, Y) \ar[ur]_\tau \ar[ul]^\sigma & \\
        & K \ar@{.>}[u] \ar@/^/[uul]^{\sigma'} \ar@/_/[uur]_{\tau'} &}
    \end{gathered}
    \qquad
    \begin{gathered}
      \xymatrix{ X & & Y \\
        & \sigma^\ast(X) \ar[ur]^h \ar[ul]_{\overline{\sigma}(X)} & \\
        & \sigma'^\ast(X) \ar@{.>}[u]
        \ar@/^/[uul]^{\overline{\sigma'}(X)}
        \ar@/_/[uur]_{h'} &}
    \end{gathered}
  \end{equation*}
\end{proposition}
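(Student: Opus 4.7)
The plan is to prove both directions by translating between the ``same-fibre'' hom object of Definition \ref{def:homrealdef} and the ``mixed-fibre'' universal span in the statement, using finite limits in $\cat{B}$ to mediate between them.

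For the forward direction I would start from $X\in\cat{E}_I$ and $Y\in\cat{E}_J$ and pull both into a common fibre by forming the product $I\times J$ with projections $\pi_1,\pi_2$, setting $X':=\pi_1^\ast(X)$ and $Y':=\pi_2^\ast(Y)$ in $\cat{E}_{I\times J}$. Local smallness then supplies a hom object $h_0\colon\fhom_{I\times J}(X',Y')\to I\times J$ with canonical vertical map $h_1\colon h_0^\ast(X')\to h_0^\ast(Y')$. Setting $\sigma:=\pi_1\circ h_0$ and $\tau:=\pi_2\circ h_0$, the canonical identifications $h_0^\ast(X')\cong\sigma^\ast(X)$ and $h_0^\ast(Y')\cong\tau^\ast(Y)$ allow me to produce $h\colon\sigma^\ast(X)\to Y$ by composing $h_1$ with the cartesian lift $\tau^\ast(Y)\to Y$. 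To verify universality I would take an arbitrary competitor $(K,\sigma',\tau',h')$, assemble $\langle\sigma',\tau'\rangle\colon K\to I\times J$, use cartesianness to rewrite $h'$ as a vertical map in $\cat{E}_K$ between $\langle\sigma',\tau'\rangle^\ast(X')$ and $\langle\sigma',\tau'\rangle^\ast(Y')$, and invoke Proposition \ref{prop:homiasunivspan} to obtain the unique mediating $t\colon K\to\fhom(X,Y)$.

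For the converse I would specialise the universal span hypothesis to $J=I$ and then cut down to the fibre over $I$ by forming the equalizer $e\colon E\hookrightarrow\fhom(X,Y)$ of $\sigma$ and $\tau$, which exists because $\cat{B}$ has finite limits. Over $E$ the two composites agree, so composing $h$ with the cartesian lift of $e$ and using cartesianness of the canonical map $(\sigma e)^\ast(Y)\to Y$ gives a vertical map $(\sigma e)^\ast(X)\to (\sigma e)^\ast(Y)$ in $\cat{E}_E$. Combining the hypothesis with the universal property of the equalizer then yields the universal property demanded by Proposition \ref{prop:homiasunivspan}: any same-fibre competitor $u\colon K\to I$ with a vertical map $u^\ast(X)\to u^\ast(Y)$ gives a span datum over $K$ with $\sigma'=\tau'=u$, whose mediator into $\fhom(X,Y)$ must factor uniquely through $E$.

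The main obstacle will be the bookkeeping of canonical isomorphisms between reindexings along composites of maps in $\cat{B}$, and checking that both triangles in the displayed diagram commute simultaneously. These checks are routine but need to be set up carefully so that uniqueness of $t$ really does follow from the universal property it is derived from, rather than requiring a separate argument.
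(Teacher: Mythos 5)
Your proposal follows essentially the same route as the paper: the forward direction takes $\fhom(X,Y)$ to be the fibrewise hom object $\fhom_{I\times J}(\pi_0^\ast X,\pi_1^\ast Y)$ with $\sigma,\tau$ obtained by composing with the projections, exactly as in the paper, and your converse via the equalizer of $\sigma,\tau\colon\fhom(X,Y)\to I$ is the same construction as the paper's pullback of $\langle\sigma,\tau\rangle$ along the diagonal $I\to I\times I$. The remaining bookkeeping you flag is precisely what the paper also leaves as an exercise, so the proposal is correct.
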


\begin{proof}
  Given $X$ over $I$ and $Y$ over $J$, we define $\fhom(X, Y)$ as
  follows. Write $\pi_0$ and $\pi_1$ for the projections from $I
  \times J$. We take $\fhom(X, Y)$ to be $\fhom_{I \times J}(X, Y)$.

  Conversely, given $X$ and $Y$ both in $\cat{E}_I$, note that we have
  a map $\fhom(X, Y) \rightarrow I \times I$ given by $\sigma$ and
  $\tau$ above. We define $\fhom_I(X, Y)$ to be the pullback of
  $\fhom(X, Y)$ along the diagonal map $I \rightarrow I \times I$.

  The remaining details are left as an exercise for the reader (see
  \cite[Exercise 9.5.2]{jacobs}, and also see \cite[Lemma
  A.2]{johnstonett}, although the formulation of hom object used there
  is a little different to one we use).
\end{proof}

\begin{notation}
  We will follow the convention that whenever we write in the
  subscript object from $\cat{B}$, as in $\fhom_I(X, Y)$, we are using
  definition \ref{def:homrealdef}. Whenever we drop the subscript as
  in $\fhom(X, Y)$ we are following the alternative definition from
  proposition \ref{prop:homasunivspan}
\end{notation}

\begin{notation}
  We will sometimes write the $\sigma^\ast(X)$ from proposition
  \ref{prop:homasunivspan} as $\fhom(X, Y)$ when it is clear to do so
  from context.
\end{notation}

\begin{remark}
  Just as for lifting problems, we can give an alternative formulation
  of $\fhom(X, Y)$, where instead of specifying a map
  $h \colon \sigma^\ast(X) \rightarrow Y$ over $\tau$, we can specify
  a vertical map $h' \colon \sigma^\ast(X) \rightarrow \tau^\ast(Y)$.
\end{remark}

We will use the following lemmas later.

\begin{lemma}
  \label{lem:vertarrhom}
  Suppose that $p \colon \cat{E} \rightarrow \cat{B}$ is locally small
  and $\cat{B}$ has pullbacks. Then
  $\vrt{\cat{E}} \rightarrow \cat{B}$ is also locally small.
\end{lemma}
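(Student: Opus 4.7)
The plan is to construct the hom object $\fhom_I(m,n) \rightarrow I$ directly from the hom objects provided by local smallness of $p$, together with pullbacks in $\cat{B}$ and equalizers in $\cat{B}/I$. First I observe that unpacking the definitions, a vertical map $u^\ast m \rightarrow u^\ast n$ in $\vrt{\cat{E}}_K$, where $u \colon K \rightarrow I$, is precisely a pair of vertical maps $h_U \colon u^\ast U \rightarrow u^\ast X$ and $h_V \colon u^\ast V \rightarrow u^\ast Y$ in $\cat{E}_K$ making the evident square with $u^\ast m$ and $u^\ast n$ commute. So I need an object over $I$ whose maps-over-$I$ from any $K$ classify exactly such pairs.

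Let $Z_1 \rightarrow I$ and $Z_2 \rightarrow I$ denote $\fhom_I(U,X)$ and $\fhom_I(V,Y)$ respectively, provided by the assumed local smallness of $p$. Form the pullback $P = Z_1 \times_I Z_2$ in $\cat{B}$, with structure map $q \colon P \rightarrow I$. Reindexing the two universal maps $h_1$ to $P$ yields vertical maps $\widetilde{h}_U \colon q^\ast U \rightarrow q^\ast X$ and $\widetilde{h}_V \colon q^\ast V \rightarrow q^\ast Y$ in $\cat{E}_P$. Combining these with $q^\ast m$ and $q^\ast n$ gives a parallel pair $q^\ast n \circ \widetilde{h}_U,\; \widetilde{h}_V \circ q^\ast m \colon q^\ast U \rightrightarrows q^\ast Y$ in $\cat{E}_P$. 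Using local smallness a third time, with $Z_3 = \fhom_I(U,Y)$, this parallel pair corresponds to a pair $\alpha, \beta \colon P \rightrightarrows Z_3$ of morphisms over $I$.

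Since $\cat{B}$ has pullbacks and $\mathrm{id}_I$ is terminal in $\cat{B}/I$, the slice $\cat{B}/I$ has all finite limits. Let $E \rightarrow I$ be the equalizer of $\alpha$ and $\beta$ in $\cat{B}/I$; concretely it can be built as the pullback of the diagonal $Z_3 \rightarrow Z_3 \times_I Z_3$ along $(\alpha,\beta)$. I claim that $E \rightarrow I$ together with the reindexed data is the hom object $\fhom_I(m,n)$. The verification is essentially a chain of universal properties: a morphism $t \colon K \rightarrow E$ over $I$ is a morphism $K \rightarrow P$ over $I$ equalizing $\alpha$ and $\beta$; by the pullback defining $P$, such a morphism is a pair of morphisms $K \rightarrow Z_1$ and $K \rightarrow Z_2$ over $I$, which by local smallness of $p$ correspond to the pair $(h_U, h_V)$ of vertical maps as described above; finally, the equalizing condition translates, through the universal property of $Z_3 = \fhom_I(U,Y)$, into exactly the commutativity of the square witnessing that $(h_U,h_V)$ is a morphism $u^\ast m \rightarrow u^\ast n$ in $\vrt{\cat{E}}_K$.

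The main obstacle is just careful book-keeping of the three applications of local smallness and their interaction with reindexing along $q$; no step is conceptually hard, but one has to check that the two parallel maps $\alpha, \beta$ to $Z_3$ really classify the two composites one wants, and that the natural bijection obtained at the end is functorial in $u \colon K \rightarrow I$, so as to exhibit $E$ as genuinely representing the functor $u \mapsto \vrt{\cat{E}}(u^\ast m, u^\ast n)$.
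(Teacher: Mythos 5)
Your construction is correct and is essentially the paper's: the paper takes $\fhom_I(m,n)$ to be the pullback of $\fhom_I(U,X)\rightarrow\fhom_I(U,Y)\leftarrow\fhom_I(V,Y)$ (the maps induced by postcomposition with $n$ and precomposition with $m$), which is exactly the limit you build, merely decomposed as a product in $\cat{B}/I$ followed by an equalizer rather than as a single pullback. The verification via the chain of universal properties is the same in both cases.
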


\begin{proof}
  Let $m, f$ be objects of $\vrt{\cat{E}}$. Then we can view
  them as vertical maps $m \colon U \rightarrow V$ and $f \colon X
  \rightarrow Y$. We take $\fhom_I(m, f)$ to be the pullback below.
  \begin{equation*}
    \begin{gathered}
      \xymatrix{ \fhom_I(m, f) \pullbackcorner \ar[r] \ar[d] &
        \fhom_I(U, X) \ar[d] \\
        \fhom_I(V, Y) \ar[r] & \fhom_I(U, Y)}
    \end{gathered}
  \end{equation*}
  It is straightforward to verify that this does give a hom object.
\end{proof}

\begin{lemma}
  \label{lem:homprojpb}
  Let $X$, $Y$ and $Y'$ be objects of $\cat{E}$ over $I$, $J$ and $J'$
  respectively. Let $f \colon Y' \rightarrow Y$ be cartesian. Then the
  following square is a pullback, where the map $\fhom(X, Y')
  \rightarrow \fhom(X, Y)$ is the canonical map corresponding to
  composition with $f$.
  \begin{equation*}
    \xymatrix{\fhom(X, Y') \ar[r]^{\tau'} \ar[d] & J' \ar[d]^{p(f)} \\
      \fhom(X, Y) \ar[r]_\tau & J }
  \end{equation*}
\end{lemma}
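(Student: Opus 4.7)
The plan is to verify the pullback property directly using the universal characterisation of hom objects given in Proposition \ref{prop:homasunivspan} together with the cartesianness of $f$.

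First I would unwind what the canonical map $\fhom(X,Y') \to \fhom(X,Y)$ does. By the universal property of $\fhom(X,Y)$ applied to the data $(\sigma', p(f) \circ \tau', f \circ h')$ (where $f \circ h'$ is a map $\sigma'^\ast(X) \to Y$ lying over $p(f) \circ \tau'$), we obtain the canonical map; in particular commutativity of the square in the statement is automatic.

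Next I would check the universal property. Suppose we are given $K$ together with $a \colon K \to J'$ and $b \colon K \to \fhom(X,Y)$ such that $p(f) \circ a = \tau \circ b$. The map $b$ corresponds, by the universal property of $\fhom(X,Y)$, to a triple $(\sigma_K, \tau_K, h_K)$ with $\sigma_K \colon K \to I$, $\tau_K \colon K \to J$ and $h_K \colon \sigma_K^\ast(X) \to Y$ lying over $\tau_K$. The hypothesis forces $\tau_K = p(f) \circ a$, so $h_K$ lies over $p(f) \circ a$. Since $f$ is cartesian over $p(f)$, by the defining universal property of cartesian maps (Definition \ref{def:cartesian}) there is a unique $\tilde h \colon \sigma_K^\ast(X) \to Y'$ over $a$ with $f \circ \tilde h = h_K$. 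The triple $(\sigma_K, a, \tilde h)$ then determines, by the universal property of $\fhom(X,Y')$, a unique map $t \colon K \to \fhom(X,Y')$.

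It remains to check $t$ is the mediating map and is unique. That $\tau' \circ t = a$ is immediate from the universal property of $\fhom(X,Y')$. To see that the composite of $t$ with the canonical map $\fhom(X,Y') \to \fhom(X,Y)$ equals $b$, I would apply uniqueness in the universal property of $\fhom(X,Y)$: both maps correspond to the triple $(\sigma_K, p(f) \circ a, f \circ \tilde h = h_K)$. Uniqueness of $t$ itself follows by reversing the argument, since any $t'$ making the two triangles commute yields the triple $(\sigma_K, a, \tilde h)$ by the cartesianness of $f$, and hence agrees with $t$. The only mild subtlety, and the place where one must be careful, is keeping track of which maps lie over which morphisms in $\cat{B}$ when transporting between the two formulations of the universal property; once that bookkeeping is done, all the verifications reduce to the uniqueness clauses of the universal properties already invoked.
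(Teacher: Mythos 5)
Your proposal is correct and follows essentially the same route as the paper: given a cone over the cospan, use the cartesianness of $f$ to lift the map $\sigma_K^\ast(X) \to Y$ uniquely to a map into $Y'$ over $a$, then invoke the universal property of $\fhom(X, Y')$ to obtain the mediating map, with uniqueness following by the same two universal properties. The only cosmetic difference is that the paper phrases the data via spans and the reindexed object $\beta^\ast\sigma^\ast(X)$ rather than via triples, but the argument is the same.
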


\begin{proof}
  Suppose we are given a diagram in $\cat{B}$ of the following form.
  \begin{equation}
    \label{eq:22}
    \begin{gathered}
    \xymatrix{ K \ar@/^/[drr]^\alpha \ar@/_/[ddr]_\beta & & \\
      & \fhom(X, Y') \ar[r]^{\tau'} \ar[d] & J' \ar[d]^{p(f)} \\
      & \fhom(X, Y) \ar[r]_\tau & J }      
    \end{gathered}
  \end{equation}
  This gives us the following diagram in $\cat{E}$.
  \begin{equation*}
    \xymatrix{ X & & Y \\
      & \sigma^\ast(X) \ar[ur] \ar[ul] & \\
      & \beta^\ast \sigma^\ast(X) \ar[u] \ar@/^/[uul] \ar@/_/[uur] &}
  \end{equation*}
  Since $f$ is cartesian, there is a unique map $\beta^\ast
  \sigma^\ast(X) \rightarrow Y'$ over $\beta$ allowing us to extend
  the diagram to the one below.
  \begin{equation*}
    \xymatrix{ X & & Y \\
      & \sigma^\ast(X) \ar[ur] \ar[ul] & Y' \ar[u]_f \\
      & \beta^\ast \sigma^\ast(X) \ar[u] \ar@/^/[uul] \ar[ur] &}
  \end{equation*}
  But now this gives us a map $K \rightarrow \fhom(X, Y')$ extending
  \eqref{eq:22}, which is unique by a similar argument, and so the
  square is a pullback, as required.
\end{proof}

\subsection{Definition and Existence of Universal Lifting Problems}
\label{sec:defin-exist-univ}

Universal lifting problem is an important concept that allows us to
view the class of all lifting problems as a single uniquely determined
ordinary lifting problem in the total category of the fibration.

\begin{definition}
  Let $m$ and $f$ be vertical maps over $I$ and $J$, as before.
  A \emph{coherent choice of solutions} consists of a choice of solution
  for each family of lifting problems, satisfying the following
  condition. Suppose that we are given the diagram below
  \begin{equation}
    \label{eq:4}
    \begin{gathered}
      \xymatrix{ & & I \\
        K' \ar[r]_k \ar@/^/[urr]^{f'} \ar@/_/[drr]_{g'} & K \ar[ur]_f
        \ar[dr]^g & \\
        & & J }
    \end{gathered}
  \end{equation}
  and suppose that we are given a family of lifting problems over
  $K$.

  Then we also have a family of lifting problems over $K'$ given by
  applying $k^\ast$ to the family of lifting problems over $K$. The
  choice of solutions must satisfy that the choice of solution to the
  lifting problems over $K'$ is given by applying $k^\ast$ to the
  choice of solution to the lifting problems over $K$.
\end{definition}

\begin{definition}
  Let $\cat{B}$ have finite limits, let $p$ be a locally small
  fibration and let $m$ and $f$ be vertical morphisms over $I$ and $J$
  respectively. We define a family of lifting problems denoted the
  \emph{universal family of lifting problems}, or just \emph{universal
    lifting problem}. Take $K$ to be
  $\fhom(m, f)$ (the hom object in $\vrt{\cat{E}}$, as defined in
  proposition \ref{prop:homasunivspan}) and take commutative square to
  be the morphism $h_1 \colon \sigma^\ast (m) \rightarrow \tau^\ast (f)$
  in $\vrt{\cat{E}}$ given together with $\fhom(m, f)$.
\end{definition}

\begin{lemma}
  \label{lem:univlplem}
  For any family of lifting problems over $K$, there is a unique map
  $t \colon K \rightarrow \fhom(m, f)$ such that the
  family of lifting problems is given by applying $t^\ast$ to the
  universal family.
\end{lemma}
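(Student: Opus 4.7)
The plan is to reduce the statement to a direct application of the universal property of the hom object in $\vrt{\cat{E}}$, as formulated in Proposition \ref{prop:homasunivspan}.

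First, I would observe that by Lemma \ref{lem:vertarrhom}, the fibration $\vrt{\cat{E}} \rightarrow \cat{B}$ is locally small (since $p$ is locally small and $\cat{B}$ has finite limits, hence pullbacks). Thus the hom object $\fhom(m, f)$ in $\vrt{\cat{E}}$ from the definition of the universal lifting problem is well-defined, and satisfies the span-like universal property of Proposition \ref{prop:homasunivspan} applied to the fibration $\vrt{\cat{E}} \to \cat{B}$: for any $K' \in \cat{B}$ equipped with maps $\sigma' \colon K' \to I$, $\tau' \colon K' \to J$ together with a morphism $h' \colon \sigma'^\ast(m) \to f$ over $\tau'$ in $\vrt{\cat{E}}$, there is a unique $t \colon K' \to \fhom(m, f)$ such that the relevant commuting diagrams hold.

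Next, I would translate the given data into exactly this form. By the earlier proposition characterising families of lifting problems as morphisms $\sigma^\ast(m) \rightarrow f$ in $\vrt{\cat{E}}$ lying over $\tau$, the hypothesis that we have a family of lifting problems over $K$ provides precisely $\sigma$, $\tau$ and a morphism over $\tau$ in $\vrt{\cat{E}}$ with domain a cartesian reindexing of $m$. Applying the universal property of $\fhom(m, f)$ from the previous paragraph to this data immediately yields the unique $t \colon K \rightarrow \fhom(m, f)$.

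It then remains to check that the given family of lifting problems coincides with the reindexing along $t$ of the universal family. This is essentially a restatement of the second commuting diagram of Proposition \ref{prop:homasunivspan}: that diagram says the canonical cartesian map $\sigma'^\ast(m) \rightarrow \sigma^\ast(m)$ over $t$, postcomposed with $h_1 \colon \sigma^\ast(m) \rightarrow \tau^\ast(f)$, recovers the original morphism representing the family of lifting problems, which is exactly what reindexing the universal family along $t$ produces. The main subtlety to keep track of is the back-and-forth between the ``morphism over $\tau$'' and ``vertical morphism into $\tau^\ast(f)$'' formulations of a family of lifting problems (and similarly for hom objects), but the earlier propositions in Sections 2.2 and 3.1 have already established that these translations are bijective, so no genuine obstacle arises; the proof is in essence a bookkeeping exercise that unpacks the definition of ``universal family'' and matches it against the universal property of $\fhom(m, f)$ in $\vrt{\cat{E}}$.
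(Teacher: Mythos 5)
Your proposal is correct and follows essentially the same route as the paper, which simply says the lemma "follows directly from the characterisation of locally small in Proposition \ref{prop:homasunivspan}"; you have just unpacked that one-line argument, correctly invoking Lemma \ref{lem:vertarrhom} for the existence of the hom object in $\vrt{\cat{E}}$ and matching the data through the equivalent reformulations of a family of lifting problems.
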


\begin{proof}
  This follows directly from the characterisation of locally small in
  proposition \ref{prop:homasunivspan}.
\end{proof}

\begin{proposition}
  Suppose that a fibration $p \colon \cat{E} \rightarrow \cat{B}$ is
  locally small and $\cat{B}$ has all finite limits. Then the
  universal family of lifting problems from $m$ to $f$ exists for any
  vertical maps $m$ and $f$.
\end{proposition}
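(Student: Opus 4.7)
The plan is to show that the universal family of lifting problems exists by reducing the question to the existence of a hom object in $\vrt{\cat{E}}$, which follows from the assumptions together with Lemma~\ref{lem:vertarrhom}.

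First I would observe that since $\cat{B}$ has all finite limits, it in particular has pullbacks. Together with the hypothesis that $p$ is locally small, this is exactly the input needed for Lemma~\ref{lem:vertarrhom}, which tells us that the fibration $\vrt{\cat{E}} \rightarrow \cat{B}$ is itself locally small. Since $\cat{B}$ has all finite limits, we may therefore invoke the alternative characterisation of locally small given in Proposition~\ref{prop:homasunivspan}, applied to the fibration $\vrt{\cat{E}} \rightarrow \cat{B}$, to obtain the hom object $\fhom(m,f)$ together with projections $\sigma \colon \fhom(m,f) \rightarrow I$ and $\tau \colon \fhom(m,f) \rightarrow J$ and a morphism $h \colon \sigma^\ast(m) \rightarrow f$ in $\vrt{\cat{E}}$ lying over $\tau$.

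Next I would unpack what this data amounts to concretely. By the remark following Proposition~\ref{prop:homasunivspan}, the morphism $h$ over $\tau$ corresponds to a vertical map $h_1 \colon \sigma^\ast(m) \rightarrow \tau^\ast(f)$ in $\vrt{\cat{E}}_{\fhom(m,f)}$. Using the characterisation of cartesian maps in $\vrt{\cat{E}}$ as levelwise cartesian maps in $\cat{E}$, this vertical map between vertical maps unfolds into precisely the data of a commutative square of the form~\eqref{eq:3} over the base object $K = \fhom(m,f)$, i.e.\ exactly a family of lifting problems from $m$ to $f$. This is the family of lifting problems named in the definition of the universal family.

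Finally, the universality is already handled by Lemma~\ref{lem:univlplem}, but it also follows directly from the universal property of $\fhom(m,f)$ together with the translation above: any family of lifting problems over some $K'$ is the same data as a span $m \leftarrow \cdot \rightarrow f$ in $\vrt{\cat{E}}$ whose left leg is cartesian and whose apex lies over $K'$, and by Proposition~\ref{prop:homiasunivspan} such spans factor uniquely through the universal one. I expect no serious obstacle: the work has essentially been done in Lemma~\ref{lem:vertarrhom} and Proposition~\ref{prop:homasunivspan}, and the only thing to be careful about is checking that the two reformulations of ``hom object'' and of ``family of lifting problems'' line up, which is routine bookkeeping with cartesian lifts.
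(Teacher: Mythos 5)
Your proposal is correct and follows exactly the paper's route: the paper's proof is simply an appeal to Lemma~\ref{lem:vertarrhom} (local smallness of $\vrt{\cat{E}} \rightarrow \cat{B}$), which together with Proposition~\ref{prop:homasunivspan} and the definition of the universal family is all that is needed. Your additional unpacking of the hom object into a family of lifting problems is the same routine bookkeeping the paper leaves implicit.
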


\begin{proof}
  By lemma \ref{lem:vertarrhom}.
\end{proof}

Finally, we give a simple, yet powerful result that allows us to
characterise the coherent existence of a filler for every lifting
problem as a single filler for the universal lifting problem.
\begin{proposition}
  \label{prop:univlp}
  Let $\cat{B}$ have finite limits, let $p$ be a locally small
  fibration and let $m$ and $f$ be vertical morphisms. Then the
  following are equivalent.
  \begin{enumerate}
  \item Every family of lifting problems from $m$ to $f$ has a
    solution.
  \item The universal family of lifting problems from $m$ to $f$ has a
    solution.
  \item There is a coherent choice of solutions to all families of
    lifting problems from $m$ to $f$.
  \end{enumerate}
\end{proposition}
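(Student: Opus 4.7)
The plan is to prove the chain $(3) \Rightarrow (1) \Rightarrow (2) \Rightarrow (3)$, since the first two implications are nearly formal and the real content lies in extracting a coherent family of solutions from a single solution to the universal lifting problem.

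For $(3) \Rightarrow (1)$, I would just observe that a coherent choice of solutions supplies, by definition, a solution to every family of lifting problems, including any individual one. For $(1) \Rightarrow (2)$, the universal family of lifting problems is itself a family of lifting problems (over the object $\fhom(m,f)$), so the assumption of $(1)$ directly yields a solution to it.

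The main work is in $(2) \Rightarrow (3)$. Suppose $j \colon \sigma^\ast(V) \rightarrow \tau^\ast(X)$ is a solution to the universal family over $\fhom(m,f)$. Given any family of lifting problems over some $K$, Lemma \ref{lem:univlplem} produces a unique map $t \colon K \rightarrow \fhom(m,f)$ such that the given family is obtained by applying $t^\ast$ to the universal family. I define the chosen solution over $K$ to be $t^\ast(j)$, which is a vertical map of the right type because reindexing preserves commutative triangles. To verify coherence, suppose we are given a diagram as in \eqref{eq:4} with $k \colon K' \rightarrow K$, and let $t \colon K \rightarrow \fhom(m,f)$ be the classifying map of the family over $K$. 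Then $t \circ k \colon K' \rightarrow \fhom(m,f)$ classifies the family obtained by applying $k^\ast$, again by uniqueness in Lemma \ref{lem:univlplem}. Since reindexing is (pseudo)functorial, $(t \circ k)^\ast(j) = k^\ast(t^\ast(j))$, which is exactly the coherence condition required: the chosen solution over $K'$ equals the $k^\ast$-reindexing of the chosen solution over $K$.

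I do not expect any serious obstacle; the only point requiring care is the bookkeeping around the cleavage, since strictly speaking $(t \circ k)^\ast$ and $k^\ast \circ t^\ast$ are only canonically isomorphic, not equal, so one should verify that the canonical comparison matches the comparison used implicitly when asserting that $k^\ast$ of the universal family over $K$ is the given family over $K'$. This is routine but is the one spot where care with the cleavage is needed.
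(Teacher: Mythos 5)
Your proof is correct and is exactly the standard unpacking of what the paper dismisses with ``follows easily from the definition of universal family of lifting problems'': the cycle $(3)\Rightarrow(1)\Rightarrow(2)\Rightarrow(3)$ with the content concentrated in using Lemma \ref{lem:univlplem} to classify each family and reindex the universal solution. Your remark about the cleavage only being pseudofunctorial is the right caveat and matches the level of rigour the paper itself adopts.
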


\begin{proof}
  Follows easily from the definition of universal family of lifting
  problems.
\end{proof}

\begin{definition}
  If $m$ and $f$ are families of maps over $I$ and $J$ respectively
  and one of the equivalent conditions in proposition
  \ref{prop:univlp} holds, then we say \emph{$m$ has the fibred left
    lifting property against $f$} and \emph{$f$ has the fibred right
    lifting property against $m$}.
\end{definition}

\section{Step-one of the Small Object Argument}

The overall aim of this section is theorem \ref{thm:step1} where we
will produce a more general version of step-one of Garner's small
object argument as appears in \cite{garnersmallobject} (see also
\cite[Chapter 12]{riehlcht}). This will produce for each vertical map
$m$ a left half of an awfs $(L_1, R_1)$, whose right maps are families
of maps with the fibred right lifting property against $m$ (and
moreover $R_1$-algebra structures correspond precisely to solutions to
the universal lifting problem from $m$ to $f$). For this
we will need the Grothendieck fibration to have the additional
structure of a bifibration. As in the previous section, we also assume
that the fibration is locally small.

A motivation for doing this is that the main role that awfs's play in
the cubical set model of type theory is not in constructing Kan
fibrations, but in giving an elegant definition of maps with the
structure of a fibration in terms of algebras over a monad. However,
this can already be done with an lawfs, except that fibrations are
algebras over a pointed endofunctor rather than a monad. Using this
idea it is possible to give a simple way to combine a split
comprehension category with an lawfs to obtain a new split
comprehension category, and the cubical set model of type theory can
be viewed as an instance of this construction\footnote{This will
  appear as part of a future paper by the author.}.

\subsection{A Review of Bifibrations}
\label{sec:bifibrations}

We review some basic definitions and observations on bifibrations. See
e.g. \cite[Section 9.1]{jacobs} for more details.

\begin{definition}
  Let $p \colon \cat{E} \rightarrow \cat{B}$ be a functor. A morphism
  $X \rightarrow Y$ in $\cat{E}$ is \emph{opcartesian} if the
  corresponding map $Y \rightarrow X$ is cartesian in $\opcat{p} \colon
  \opcat{\cat{E}} \rightarrow \opcat{\cat{B}}$. We say $p$ is an
  \emph{opfibration} if $\opcat{p} \colon
  \opcat{\cat{E}} \rightarrow \opcat{\cat{B}}$ is a fibration. We say
  $p$ is a \emph{bifibration} if it is both a fibration and an
  opfibration.  
\end{definition}

\begin{proposition}
  Let $p \colon \cat{E} \rightarrow \cat{B}$ be a cloven
  fibration. Then opcartesian maps over $u \colon I \rightarrow J$
  correspond precisely to left adjoints to $u^\ast \colon \cat{E}_J
  \rightarrow \cat{E}_I$ (which we will write as $\coprod_u \dashv
  u^\ast$).
\end{proposition}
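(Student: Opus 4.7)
The plan is to exhibit a bijective correspondence in both directions, using the universal property of opcartesian maps to construct the left adjoint data from a choice of opcartesian lifts, and using the unit of an adjunction (together with the chosen cartesian lifts of $p$) to recover opcartesian lifts.

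First I would fix the cleavage of $p$ and suppose we are given a choice of opcartesian maps $\eta_X \colon X \to X'$ over $u$ for each $X \in \cat{E}_I$. I would define $\coprod_u \colon \cat{E}_I \to \cat{E}_J$ on objects by $\coprod_u X := X'$, and on a vertical morphism $\varphi \colon X \to Y$ I would use the opcartesian universal property of $\eta_X$ to factor $\eta_Y \circ \varphi$ (which lies over $u = 1_J \circ u$) uniquely through $\eta_X$ by a vertical map $\coprod_u X \to \coprod_u Y$. Functoriality follows from the uniqueness clause in the universal property.

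Next I would establish the hom-set bijection $\cat{E}_J(\coprod_u X, Z) \cong \cat{E}_I(X, u^\ast Z)$ naturally in $X$ and $Z$. A vertical map $\psi \colon \coprod_u X \to Z$ precomposes with $\eta_X$ to give a map $X \to Z$ over $u$; by the universal property of the cartesian lift $\overline{u}(Z) \colon u^\ast Z \to Z$, this factors uniquely as $\overline{u}(Z) \circ \widehat{\psi}$ for a unique vertical $\widehat{\psi} \colon X \to u^\ast Z$. The inverse assignment sends $\chi \colon X \to u^\ast Z$ to the unique vertical map $\coprod_u X \to Z$ obtained from $\overline{u}(Z) \circ \chi$ by the opcartesian property of $\eta_X$. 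The two assignments are mutually inverse by the uniqueness halves of the respective universal properties, and naturality in $X$ and $Z$ is then routine. This yields $\coprod_u \dashv u^\ast$, with $\eta_X$ corresponding to the unit at $X$.

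For the converse direction I would start from a given adjunction $\coprod_u \dashv u^\ast$ with unit $\eta$, and define a candidate opcartesian lift of $u$ at $X$ by $\overline{u}(\coprod_u X) \circ \eta_X \colon X \to \coprod_u X$, where $\overline{u}(\coprod_u X)$ is the chosen cartesian lift. To verify opcartesianness, given any $g \colon X \to Z$ with $p(g) = v \circ u$, I would first factor $g$ through the cartesian lift $\overline{v}(Z)$ to obtain a map $X \to v^\ast Z$ over $u$; factoring this through $\overline{u}(v^\ast Z)$ gives a vertical $X \to u^\ast v^\ast Z$, and transposing across the adjunction produces the unique vertical map $\coprod_u X \to v^\ast Z$, which composed with $\overline{v}(Z)$ supplies the required filler. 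Uniqueness reduces to the uniqueness of the transpose together with the cartesian universal properties.

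Finally I would check that these two constructions are mutually inverse up to the equalities forced by the uniqueness clauses, so opcartesian cleavages over $u$ and left adjoints to $u^\ast$ are in bijection. The only mildly delicate point is keeping the bookkeeping of $p$-images straight when interleaving the cartesian and opcartesian universal properties; there is no genuine obstacle, just the need to apply the uniqueness halves of each universal property at the right moment.
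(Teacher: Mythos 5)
Your proof is correct and follows the standard argument that the paper itself defers to (Jacobs, Lemma 9.1.2): in particular your converse direction uses exactly the formula $\overline{u}(\coprod_u X) \circ \eta_X$ that the paper highlights as the opcartesian lift. The only reading point is that the correspondence is between left adjoints and \emph{choices} of opcartesian lifts at every object of $\cat{E}_I$ (not a single opcartesian map), which you handle correctly.
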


\begin{proof}
  See \cite[Lemma 9.1.2]{jacobs} for full details. Here we just note
  that given $\coprod_u \dashv u^\ast$ with unit $\eta$ and
  $X \in p^{-1}(I)$, the map
  $\bar{u}(\coprod_u X) \circ \eta_X \colon X \rightarrow \coprod_u X$ is
  opcartesian.
\end{proof}

\begin{proposition}
  \label{prop:opcartfunct}
  Let $p \colon \cat{E} \rightarrow \cat{B}$ be a
  bifibration. Suppose that we are given the following diagram in
  $\cat{E}$.
  \begin{equation}
    \label{eq:24}
    \begin{gathered}
      \xymatrix{ X \ar[d] \ar[r] & U \\
        Y \ar[r] & V }
    \end{gathered}
  \end{equation}
  If the top map is opcartesian and we are given a map $t
  \colon p(U) \rightarrow p(V)$ making a commutative square in
  $\cat{B}$ then there is a unique map $U
  \rightarrow V$ over $t$ making a commutative square.  
\end{proposition}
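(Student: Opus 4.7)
The plan is to apply the universal property of opcartesian maps directly to the composite $X \to Y \to V$. By definition (as the dual of cartesianness, see Definition \ref{def:cartesian}), the opcartesian map $X \to U$ has the following property: for any morphism $g \colon X \to W$ in $\cat{E}$ whose image $p(g)$ factors through $p(X \to U)$ as $s \circ p(X \to U)$ for some $s \colon p(U) \to p(W)$, there is a unique lift $U \to W$ over $s$ whose precomposition with $X \to U$ equals $g$.

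First, I would instantiate this with $W = V$ and $g$ equal to the composite $X \to Y \to V$. The hypothesis that $t$ completes a commutative square in $\cat{B}$ is exactly the statement that $p(g) = p(Y \to V) \circ p(X \to Y) = t \circ p(X \to U)$, so the factorisation required by the universal property is provided by $t$. The opcartesian property then yields a unique morphism $U \to V$ lying over $t$ such that the upper-left triangle of the square in \eqref{eq:24} commutes, namely $(U \to V) \circ (X \to U) = (Y \to V) \circ (X \to Y)$.

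It then remains to observe that this commutativity in $\cat{E}$ is precisely the commutativity of the square, so the produced map makes \eqref{eq:24} commute. Uniqueness is immediate from the uniqueness clause in the universal property of the opcartesian map, since any map $U \to V$ over $t$ completing the square would in particular satisfy the same precomposition identity with $X \to U$.

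There is no real obstacle here; the statement is essentially a repackaging of the universal property of opcartesian morphisms, with the commutative square in the base providing exactly the factorisation data the universal property requires. The only small point to keep straight is which triangle the opcartesian property fills and which direction the fibre map $t$ points, but these are fixed unambiguously by the shape of the given diagram.
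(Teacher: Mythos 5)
Your proof is correct and is exactly the argument the paper intends: the paper's proof simply says the result follows easily from the definition of opcartesian, and you have unpacked that definition in the standard way, applying the universal property of $X \to U$ to the composite $X \to Y \to V$ with the base factorisation supplied by $t$. Nothing to add.
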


\begin{proof}
  This is easy to show from the definition of opcartesian.
\end{proof}

In particular, let $p$ be cloven, and suppose we are given the
following square in $\cat{B}$.
\begin{equation}
  \label{eq:19}
  \begin{gathered}
    \xymatrix{ A \ar[r]^r \ar[d]_s & B \ar[d]^t \\
      C \ar[r]^u & D }
  \end{gathered}
\end{equation}
Then if we are given $f \colon X \rightarrow Y$ over $s$, this gives
us a canonical map $\coprod_r X \rightarrow \coprod_u Y$. It is easy
to check that in fact this is functorial using the uniqueness in
proposition \ref{prop:opcartfunct}.

\begin{definition}
  We say that $p$ satisfies the \emph{Beck-Chevalley condition} if the
  following holds.

  Suppose we are given a square in $\cat{E}$ of the following form.
  \begin{equation*}
    \xymatrix{ X \ar[d]_f \ar[r]^h & V \ar[d]^g \\
      Y \ar[r]_k & W }
  \end{equation*}
  If the image of the square in $\cat{B}$ is a pullback, $g$ is
  cartesian and $k$ is opcartesian then $h$ is opcartesian if and only
  if $f$ is cartesian.
\end{definition}

\begin{proposition}
  \label{prop:opfibcart}
  Suppose that $p$ satisfies the Beck-Chevalley condition, that
  \eqref{eq:19} is a pullback square and that $f$ is cartesian. Then
  the canonical map $\coprod_r X \rightarrow \coprod_u Y$ is
  cartesian.
\end{proposition}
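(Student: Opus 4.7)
The plan is to identify the canonical map $c \colon \coprod_r X \to \coprod_u Y$ with a cartesian lift of $t$ up to a vertical isomorphism, via a direct application of the Beck-Chevalley condition. Write $\eta_X \colon X \to \coprod_r X$ and $\eta_Y \colon Y \to \coprod_u Y$ for the chosen opcartesian lifts (over $r$ and $u$ respectively); by construction $c$ is the unique morphism over $t$ satisfying $c \circ \eta_X = \eta_Y \circ f$.

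First I would produce a second opcartesian lift of $r$ at $X$. Let $\overline{t}(\coprod_u Y) \colon t^\ast(\coprod_u Y) \to \coprod_u Y$ be the chosen cartesian lift of $t$. Since $\eta_Y \circ f$ lies over $u \circ s = t \circ r$, the universal property of $\overline{t}(\coprod_u Y)$ yields a unique $\tilde{\eta}_X \colon X \to t^\ast(\coprod_u Y)$ over $r$ with $\overline{t}(\coprod_u Y) \circ \tilde{\eta}_X = \eta_Y \circ f$. The resulting square in $\cat{E}$ with top $\tilde{\eta}_X$, bottom $\eta_Y$, left $f$ and right $\overline{t}(\coprod_u Y)$ projects to the given pullback \eqref{eq:19} in $\cat{B}$. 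In this square the right side is cartesian by definition, the bottom is opcartesian by construction, and the left is cartesian by hypothesis, so the Beck-Chevalley condition forces $\tilde{\eta}_X$ to be opcartesian.

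Having two opcartesian lifts of $r$ at $X$, there is a unique vertical isomorphism $\iota \colon \coprod_r X \to t^\ast(\coprod_u Y)$ with $\iota \circ \eta_X = \tilde{\eta}_X$. A short check comparing $\overline{t}(\coprod_u Y) \circ \iota \circ \eta_X$ with $c \circ \eta_X$ (both equal $\eta_Y \circ f$) and invoking the uniqueness clause of opcartesianness of $\eta_X$ gives $\overline{t}(\coprod_u Y) \circ \iota = c$. Thus $c$ factors as a vertical isomorphism followed by a cartesian map, and is therefore cartesian. The only nontrivial step is the invocation of Beck-Chevalley; the remainder is routine bookkeeping with the universal properties of cartesian and opcartesian lifts, so I would not expect a significant obstacle beyond correctly setting up the auxiliary square.
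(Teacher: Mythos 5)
Your argument is correct and is essentially the paper's own proof: both form the square with top $X \to t^\ast(\coprod_u Y)$, left $f$, bottom the opcartesian $Y \to \coprod_u Y$ and right the cartesian lift of $t$, apply Beck--Chevalley to conclude the top map is opcartesian, and then factor the canonical map as a vertical isomorphism followed by a cartesian map. The extra bookkeeping you supply (the construction of $\tilde{\eta}_X$ and the uniqueness check identifying $\overline{t}(\coprod_u Y) \circ \iota$ with $c$) is just the detail the paper leaves implicit.
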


\begin{proof}
  We consider the following commutative square in $\cat{E}$, where the
  bottom map is the canonical opcartesian map and the top is the
  unique map over $r$ making the square commute.
  \begin{equation*}
    \begin{gathered}
      \xymatrix{ X \ar[d]_f \ar[r] & t^\ast (\coprod_u Y)
        \ar[d]^{\bar{t}(\coprod_u Y)} \\
        Y \ar[r] & \coprod_u Y}
    \end{gathered}
  \end{equation*}
  Then by assumption $f$ is cartesian, so applying Beck-Chevalley, the
  top map is opcartesian. We deduce that there is a vertical
  isomorphism $\coprod_r X \cong t^\ast(\coprod_u Y)$. But this
  implies that the canonical map $\coprod_r X \rightarrow \coprod_u Y$
  is cartesian.  
\end{proof}

\subsection{Compositions of Fibrations and Bifibrations}
\label{sec:comp-fibr-bifibr}

In this section we will prove some useful lemmas about cartesian maps
over compositions of functors. We phrase the statements of the lemmas
so that they can be easily dualised to give corresponding results for
opcartesian maps. We will then apply these results to the composition
of fibrations
$\vrt{\cat{E}} \stackrel{\cod}{\rightarrow} \cat{E}
\stackrel{p}{\rightarrow} \cat{B}$ and to the composition of
opfibrations
$\vrt{\cat{E}} \stackrel{\dom}{\rightarrow} \cat{E}
\stackrel{p}{\rightarrow} \cat{B}$. We will in particular see that
opcartesian maps over $\dom$ are characterised as a square with
both horizontal maps opcartesian followed by a pushout. This will be
key to relating our general definition of step-one to Garner's
definition.

Throughout this section, we assume that we have functors $p$, $q$ and
$r$ as below.
\begin{equation*}
  \begin{gathered}
    \xymatrix{ \cat{F} \ar[dr]_r \ar[rr]^q & & \cat{E} \ar[dl]^p \\
      & \cat{B} &}
  \end{gathered}
\end{equation*}

\begin{lemma}
  \label{lem:cartvertabsolute}
  Let $r$ and $p$ be bifibrations, let $q$ preserve opcartesian
  maps, let $I \in \cat{B}$ and let $f \colon X \rightarrow Y$ in
  $\cat{F}_I$ be cartesian over
  $q_I \colon \cat{F}_I \rightarrow \cat{E}_I$. Then $f$ is also
  cartesian as a map in $\cat{F}$ over $q$.
\end{lemma}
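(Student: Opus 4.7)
The plan is to reduce an arbitrary test datum for cartesianness in $\cat{F}$ over $q$ to one in the fibre $\cat{F}_I$, where we can apply the hypothesis that $f$ is cartesian over $q_I$. The bridge between the two is the opfibration structure of $r$, together with the assumption that $q$ preserves opcartesian maps.

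More concretely, suppose we are given $g \colon Z \to Y$ in $\cat{F}$ and $w \colon q(Z) \to q(X)$ in $\cat{E}$ with $q(g) = q(f) \circ w$. Since $f$ is vertical for $r$, $q(f)$ is vertical for $p$, so $p(w) = p(q(g)) = r(g) =: u$. Using that $r$ is an opfibration, take the opcartesian lift $e \colon Z \to Z'$ over $u$. Because $r(g) = u$ and $Y \in \cat{F}_I$, proposition \ref{prop:opcartfunct} applied to $r$ gives a unique $g' \colon Z' \to Y$ in $\cat{F}_I$ with $g' \circ e = g$. Since $q$ preserves opcartesian maps, $q(e)$ is opcartesian over $u$ in $\cat{E}$, so the same proposition applied to $p$ gives a unique $w' \colon q(Z') \to q(X)$ in $\cat{E}_I$ with $w' \circ q(e) = w$. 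Comparing the two factorisations through $q(e)$ of the map $q(g) = q(g') \circ q(e) = q(f) \circ w' \circ q(e)$, opcartesian uniqueness yields $q(g') = q(f) \circ w'$ in $\cat{E}_I$.

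Now the cartesianness of $f$ over $q_I$ produces a unique $h' \colon Z' \to X$ in $\cat{F}_I$ with $f \circ h' = g'$ and $q_I(h') = w'$. Setting $h := h' \circ e \colon Z \to X$, one checks directly that $f \circ h = g' \circ e = g$ and $q(h) = w' \circ q(e) = w$, giving existence. For uniqueness, any $\tilde h \colon Z \to X$ with $f \circ \tilde h = g$ and $q(\tilde h) = w$ satisfies $r(\tilde h) = p(w) = u$, so factors uniquely as $\tilde h = \tilde h' \circ e$ with $\tilde h' \in \cat{F}_I$; the relations $f \circ \tilde h' = g'$ and $q(\tilde h') = w'$ follow by opcartesian uniqueness in $\cat{F}$ and in $\cat{E}$ respectively, whence $\tilde h' = h'$ by cartesianness of $f$ over $q_I$, and thus $\tilde h = h$.

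The proof is essentially a diagram chase, and the only real conceptual step is the observation that preservation of opcartesian maps by $q$ is exactly what is needed so that the fibrewise factorisations on the $\cat{F}$-side and the $\cat{E}$-side are compatible. I expect the main source of friction, rather than any deep obstacle, to be bookkeeping: making sure that each invocation of opcartesian uniqueness is applied with respect to the correct functor ($r$, $p$, or their composite) and that the maps $g'$, $w'$ and $h'$ all land in the intended fibre over $I$.
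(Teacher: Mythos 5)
Your proof is correct: the paper dismisses this lemma with ``a straightforward diagram chase,'' and your argument --- pushing the test datum into the fibre over $I$ via an $r$-opcartesian lift, using preservation of opcartesian maps by $q$ to get the matching factorisation on the $\cat{E}$ side, and then invoking cartesianness over $q_I$ --- is precisely the natural chase being alluded to. Both the existence and uniqueness halves are handled properly, with each appeal to opcartesian uniqueness made with respect to the correct functor.
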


\begin{proof}
  A straightforward diagram chase.
\end{proof}

\begin{lemma}
  \label{lem:cartabsolute}
  Suppose that $r$ and $p$ are bifibrations, and $q$ preserves
  cartesian maps. Then every map $f$ in $\cat{F}$ cartesian over $r$
  is also cartesian over $q$.
\end{lemma}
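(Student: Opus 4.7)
The plan is to verify the cartesian property of $f$ over $q$ directly, using the cartesianness of $f$ over $r = p \circ q$ together with the cartesianness of $q(f)$ over $p$ (which is exactly what the preservation hypothesis supplies).

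Fix a cartesian map $f \colon X \rightarrow Y$ over $r$ and write $v = q(f)$; by the hypothesis that $q$ preserves cartesian maps, $v$ is cartesian over $p$. To show $f$ is cartesian over $q$, I take an arbitrary $g \colon Z \rightarrow Y$ in $\cat{F}$ and $w \colon q(Z) \rightarrow q(X)$ in $\cat{E}$ with $q(g) = v \circ w$, and must produce a unique $h \colon Z \rightarrow X$ satisfying $q(h) = w$ and $f \circ h = g$.

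For existence, I would first apply $p$ to the equation $q(g) = v \circ w$ to obtain $r(g) = r(f) \circ p(w)$, and then invoke the cartesian property of $f$ over $r$ to obtain a unique $h \colon Z \rightarrow X$ in $\cat{F}$ with $r(h) = p(w)$ and $f \circ h = g$. The main step is to upgrade the $\cat{B}$-level equality $r(h) = p(w)$ to the $\cat{E}$-level equality $q(h) = w$. To do this, I apply $q$ to $f \circ h = g$ to get $v \circ q(h) = q(g) = v \circ w$; since $p(q(h)) = r(h) = p(w)$, both $q(h)$ and $w$ lie over $p(w)$, so the uniqueness clause in the cartesian property of $v$ over $p$ forces $q(h) = w$. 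Uniqueness of $h$ as a lift over $w$ in $\cat{E}$ then reduces immediately to uniqueness over $p(w)$ in $\cat{B}$: any two candidates $h_1, h_2$ satisfy $r(h_i) = p(w)$ and $f \circ h_i = g$, so the uniqueness part of $f$ being cartesian over $r$ gives $h_1 = h_2$.

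The expected main obstacle is precisely the promotion step from $r(h) = p(w)$ to $q(h) = w$; this is the only place where the preservation hypothesis on $q$ is genuinely used, and it is applied through the uniqueness clause of cartesianness rather than the existence clause. Note that only the fibration structure of $p$ (and of $r$ for the initial appeal) is actually needed in the argument; the opfibration part of the bifibration hypothesis is inherited from the standing assumptions of the subsection but plays no role here, which parallels the dual framing anticipated for the companion lemma about opcartesian maps.
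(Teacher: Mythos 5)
Your proof is correct and follows essentially the same route as the paper's: lift $g$ against $f$ using cartesianness over $r$, then use the uniqueness clause of cartesianness of $q(f)$ over $p$ to promote the $\cat{B}$-level agreement to the required $\cat{E}$-level equality $q(h)=w$. Your writeup is, if anything, slightly more explicit about where the preservation hypothesis enters.
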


\begin{proof}
  We need to show $f$ is cartesian over $q$. Suppose we have the
  following three diagrams in $\cat{F}$, $\cat{E}$ and $\cat{B}$
  respectively.
  \begin{equation*}
    \begin{gathered}
      \xymatrix{ Z \ar@/^/[drr]^g & & \\
        & X \ar[r]_f & Y}
    \end{gathered}
    \begin{gathered}
      \xymatrix{ q(Z) \ar@/^/[drr]^{q(g)} \ar[dr]_{h} & & \\
        & q(X) \ar[r]_{q(f)} & q(Y)}
    \end{gathered}
    \begin{gathered}
      \xymatrix{ r(Z) \ar@/^/[drr]^{r(g)} \ar[dr]_{p(h)} & & \\
        & r(X) \ar[r]_{r(f)} & r(Y)}
    \end{gathered}    
  \end{equation*}
  Then there is a unique map $t$ over $p(h)$ making the left hand
  diagram commute. However, using the fact that $q(f)$ is cartesian in
  $\cat{E}$ over $r(f)$, we have that $q(t) = h$. Hence $t$ lies over
  $h$, and is clearly the unique such map. Hence $f$ is cartesian over
  $q(f)$.
\end{proof}

\begin{lemma}
  \label{lem:cartincomp}
  Suppose that $r$ and $p$ are bifibrations, and $q$ preserves both
  cartesian and opcartesian maps, and is a (cloven)
  isofibration. Suppose further that for all $I \in \cat{B}$, $q_I$ is
  a (cloven) fibration (and the cleavage is uniform in $I$). Then $q$
  is a fibration, and moreover every cartesian map over $q$ can be
  factored as a map vertical over $r$ (and necessarily cartesian over
  $q$) followed by a map cartesian over both $r$ and $q$.
\end{lemma}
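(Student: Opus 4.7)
The plan is to build the cartesian lifts of $q$ by combining three ingredients in the expected way: the cleavage of $r$, the cartesian lifts inherited by $q$ from $p$ (via the cartesianness-preservation hypothesis), and the fibrewise cleavage of $q_I$. Given a morphism $h \colon Z \rightarrow q(Y)$ in $\cat{E}$ that we want to lift cartesianly along $q$, the overall strategy is to first handle the ``horizontal'' part (the image in $\cat{B}$) using $r$, then reduce to a vertical-in-$\cat{E}$ residual map that can be handled by $q_I$.

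More concretely, for the first claim, I would set $u \colon p(Z) \rightarrow r(Y)$ to be $p(h)$ and use the cleavage of $r$ to produce $\overline{u}(Y) \colon u^\ast Y \rightarrow Y$, which is cartesian over $r$ and hence, by Lemma \ref{lem:cartabsolute}, also cartesian over $q$. Since $q$ preserves cartesian maps, $q(\overline{u}(Y))$ is cartesian over $u$ in $\cat{E}$, so the map $h$ factors uniquely as $q(\overline{u}(Y)) \circ h'$ with $h' \colon Z \rightarrow q(u^\ast Y)$ vertical in $\cat{E}_{p(Z)}$. Now apply the cleavage of the fibre fibration $q_{p(Z)} \colon \cat{F}_{p(Z)} \rightarrow \cat{E}_{p(Z)}$ to lift $h'$ to $\widetilde{h} \colon W \rightarrow u^\ast Y$ cartesian in $\cat{F}_{p(Z)}$. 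By Lemma \ref{lem:cartvertabsolute}, $\widetilde{h}$ is cartesian over $q$, and so is the composite $\overline{u}(Y) \circ \widetilde{h}$, whose $q$-image is $h$ by construction. The uniformity of the fibre cleavage in $I$ ensures this assembles into a cleavage, and the isofibration hypothesis is used to rectify the domain object when comparing the canonical construction against an arbitrary choice (so the chosen cleavage matches what we want on the nose, not merely up to vertical iso).

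For the factorisation statement, I would apply this very construction to $q(f)$ for an arbitrary cartesian $f \colon X \rightarrow Y$ over $q$. Since cartesian lifts of $q(f)$ are unique up to a unique vertical iso over $q$, there is a vertical iso $\varphi \colon X \rightarrow W$ with $\overline{u}(Y) \circ \widetilde{h} \circ \varphi = f$; this iso is automatically vertical over $r$ as well (its $r$-image is $p(\mathrm{id})$), so $\widetilde{h} \circ \varphi$ is vertical over $r$ and cartesian over $q$, while $\overline{u}(Y)$ is cartesian over both $r$ and $q$. Alternatively and more conceptually: setting $u := r(f)$ and $\overline{u}(Y)$ as before, the cartesianness of $\overline{u}(Y)$ over $q$ uniquely induces a map $h \colon X \rightarrow u^\ast Y$ with $\overline{u}(Y) \circ h = f$, and since $q(\overline{u}(Y))$ is cartesian over $u$ in $\cat{E}$, one sees $q(h)$ is vertical in $\cat{E}_{p(q(X))}$, so $r(h)$ is an identity and $h$ is vertical over $r$; finally $h$ is cartesian over $q$ by the standard left-cancellation property of cartesian maps applied to $f = \overline{u}(Y) \circ h$.

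I expect the bookkeeping around the isofibration/uniformity hypothesis to be the main obstacle: the construction itself is a standard assembly from the three cleavages, but verifying that the resulting cleavage of $q$ behaves uniformly (rather than only up to canonical iso) relies on matching the fibrewise cleavages across different base objects and on using the isofibration property to promote vertical isos in $\cat{E}$ to specified lifts in $\cat{F}$. Everything else reduces to diagram chases of the sort carried out in the preceding two lemmas.
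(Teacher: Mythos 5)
Your proposal is correct and follows essentially the same route as the paper: produce the cartesian lift as a fibrewise cartesian map (cartesian over $q$ by Lemma \ref{lem:cartvertabsolute}) followed by an $r$-cartesian map (cartesian over $q$ by Lemma \ref{lem:cartabsolute}), then transfer the factorisation to an arbitrary cartesian map via the unique vertical isomorphism. The only cosmetic difference is that you factor $h$ through $q(\overline{u}(Y))$ directly, whereas the paper first factors $h$ in $\cat{E}$ as vertical-followed-by-cartesian and then invokes the isofibration hypothesis to rectify $q(g_2)$ to equal that cartesian part; both orderings yield the same lift.
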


\begin{proof}
  Let $f \colon X \rightarrow q(Y)$ in $\cat{E}$. Since $p$ is a
  fibration, we may factor $f$ as a map $f_1$ vertical over $p$,
  followed by $f_2$ cartesian over $p$. Then, since $r$ is a fibration
  we have a cartesian map $g_2$ over $p(f)$. Since $q(g_2)$ is
  cartesian and $q$ an isofibration, we may assume without loss of
  generality that $q(g_2) = f_2$. We also have that $g_2$ is cartesian
  over $q$ by lemma \ref{lem:cartabsolute}.

  Next, since $q_{p(X)}$ is a fibration, we have a vertical map $g_1$
  which is cartesian over $f_1$ in $q_{p(X)}$, and hence also
  cartesian as a map over $q$ by lemma \ref{lem:cartvertabsolute},
  with $\cod(f_1) = \dom(f_2)$.

  Now note that $f_2 \circ f_1$ is a composition of cartesian
  morphisms and so cartesian over $g$.

  This provides a cartesian lift witnessing that $q$ is a
  fibration. However, note also that every cartesian map is isomorphic
  to one of this form, and the property we are considering is preserved
  by isomorphism, so every cartesian map factors as described.
\end{proof}

\begin{lemma}
  \label{lem:opcartincomp}
  Suppose that $r$ and $p$ are bifibrations, and $q$ preserves both
  cartesian and opcartesian maps, and is a (cloven)
  isofibration. Suppose further that for all $I \in \cat{B}$, $q_I$
  is a (cloven) opfibration. Then $q$ is an opfibration, and moreover
  every opcartesian map over $q$ can be factored as a map opcartesian
  over both $r$ and $q$ followed by a map vertical over $r$ (and
  necessarily opcartesian over $q$).
\end{lemma}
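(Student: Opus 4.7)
The plan is to dualise the proof of Lemma \ref{lem:cartincomp}. Given an arrow $f \colon q(X) \rightarrow Y$ in $\cat{E}$, I would construct an opcartesian lift over $q$ starting at $X$ by decomposing $f$ as follows. Since $p$ is an opfibration, first factor $f$ as $f_2 \circ f_1$, where $f_1$ is opcartesian over $p$ and $f_2$ is vertical over $p$. Since $r$ is an opfibration, I can then form an opcartesian lift $g_1$ over $r$ of $p(f_1)$ starting from $X$. Because $q$ preserves opcartesian maps, $q(g_1)$ is opcartesian over $p(f_1)$ in $\cat{E}$, and since $q$ is a cloven isofibration, I may replace $g_1$ by an isomorphic opcartesian lift so that $q(g_1) = f_1$ on the nose. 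By the dual of Lemma \ref{lem:cartabsolute}, which is available because $r$ and $p$ are both bifibrations and $q$ preserves opcartesians, $g_1$ is also opcartesian as a map over $q$.

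Next, I would use the hypothesis that the fibre functor $q_{r(\cod g_1)}$ is an opfibration to produce an opcartesian lift $g_2$ of $f_2$ inside that fibre. Since $g_2$ is by construction vertical over $r$, the dual of Lemma \ref{lem:cartvertabsolute} tells us that $g_2$ is opcartesian over $q$ when regarded as a map in $\cat{F}$. Composing, $g_2 \circ g_1$ is a composition of opcartesian maps over $q$, hence opcartesian over $q$, and its image under $q$ is $f_2 \circ f_1 = f$. This provides the required opcartesian lift, so $q$ is an opfibration.

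For the factorisation claim, the lifts produced above are by construction of the desired shape, namely an opcartesian-over-both-$r$-and-$q$ map followed by a map vertical over $r$ (which is automatically opcartesian over $q$ as it is the second factor of an opcartesian map over $q$ whose first factor is also opcartesian over $q$). Since an arbitrary opcartesian map over $q$ is isomorphic to one of these canonical lifts, and this decomposition is stable under the isomorphism, every opcartesian map over $q$ admits the claimed factorisation. The only real subtlety is verifying that the dualisations of Lemmas \ref{lem:cartvertabsolute} and \ref{lem:cartabsolute} go through; but the statements are phrased so that replacing ``cartesian'' by ``opcartesian'' and swapping the roles of $p$ and its opposite yield valid arguments by symmetry of the bifibration hypotheses, so no genuinely new work is required.
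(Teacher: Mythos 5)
Your proposal is correct and is exactly the paper's approach: the paper's proof of this lemma is literally ``Dual to Lemma \ref{lem:cartincomp},'' and you have carried out that dualisation faithfully, with the right order of factors ($f_1$ opcartesian over $p$ followed by $f_2$ vertical) and the correct appeals to the duals of Lemmas \ref{lem:cartvertabsolute} and \ref{lem:cartabsolute}, whose hypotheses are covered because $q$ is assumed to preserve both cartesian and opcartesian maps.
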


\begin{proof}
  Dual to lemma \ref{lem:cartincomp}.
\end{proof}

\begin{lemma}
  \label{lem:fibredfunctorcomp}
  Suppose we are given the diagram of functors below.
  \begin{equation*}
    \begin{gathered}
      \xymatrix{ & \cat{F}_2 \ar[dr]^{q_2}  \ar'[d][dd]^{r_2} & \\
        \cat{F}_1 \ar[ur]^{\chi} \ar[rr]^{q_1} \ar[dr]_{r_1}
        & & \cat{E} \ar[dl]^p \\
        & \cat{B} &}
    \end{gathered}
  \end{equation*}
  Suppose further that $p$, $r_1$ and $r_2$ are (cloven) bifibrations,
  that $q_1$ and $q_2$ preserve cartesian and opcartesian maps over
  $\cat{B}$ and are (cloven) isofibrations. Suppose further that for
  all $I$, $q_{1, I}$ and $q_{2, I}$ are (cloven)
  fibrations. Then $\chi$ preserves all cartesian maps over $\cat{E}$
  if and only if it preserves cartesian maps over $\cat{B}$ and for
  each $I \in \cat{B}$, $\chi_I$ preserves cartesian maps over
  $\cat{E}_I$.
\end{lemma}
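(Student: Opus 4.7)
The plan is to reduce the question to three invocations of the supporting lemmas: the factorization of $q_i$-cartesian maps from lemma \ref{lem:cartincomp}, the promotion of fibrewise cartesian maps to absolute ones from lemma \ref{lem:cartvertabsolute}, and the passage from cartesian-over-$r$ to cartesian-over-$q$ provided by lemma \ref{lem:cartabsolute}. Throughout one juggles three levels of cartesianness, namely over $p$, over $r_i$, and over $q_i$. The two key glue facts are: since $q_i$ is fibred, any $r_i$-cartesian map has $p$-cartesian image under $q_i$; and conversely, a $q_2$-cartesian map whose $q_2$-image is $p$-cartesian is $r_2$-cartesian because cartesian lifts compose along composite fibrations.

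For the forward direction, I would assume $\chi$ preserves $q$-cartesian maps and verify the two conclusions in turn. For the fibrewise claim: if $f \in (\cat{F}_1)_I$ is cartesian over $q_{1,I}$, then lemma \ref{lem:cartvertabsolute} gives that $f$ is cartesian over $q_1$, so $\chi(f)$ is cartesian over $q_2$ by hypothesis; because $\chi(f)$ is vertical over $r_2$, the universal property restricts to the fibre, giving $\chi_I(f)$ cartesian over $q_{2,I}$. For the base claim: if $f$ is cartesian over $r_1$, lemma \ref{lem:cartabsolute} (applied to $q_1$) yields $f$ cartesian over $q_1$, so $\chi(f)$ is $q_2$-cartesian by hypothesis; and $q_2(\chi(f)) = q_1(f)$ is $p$-cartesian because $q_1$ is a fibred functor. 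A $q_2$-cartesian lift of a $p$-cartesian map is $r_2$-cartesian, so $\chi(f)$ is cartesian over $r_2$, as required.

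For the reverse direction, I would assume both preservation properties and show that $\chi$ preserves $q$-cartesian maps. Given $f$ cartesian over $q_1$, apply lemma \ref{lem:cartincomp} to factor $f = f_2 \circ f_1$ where $f_1$ is vertical over $r_1$ and cartesian over $q_1$ (hence cartesian over $q_{1,I}$ in the fibre by the easy converse of lemma \ref{lem:cartvertabsolute}), while $f_2$ is cartesian over both $r_1$ and $q_1$. By the fibrewise hypothesis $\chi(f_1)$ is cartesian over $q_{2,I}$, and then cartesian over $q_2$ by lemma \ref{lem:cartvertabsolute}. By the base hypothesis $\chi(f_2)$ is cartesian over $r_2$, and then cartesian over $q_2$ by lemma \ref{lem:cartabsolute} applied to $q_2$. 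Composing, $\chi(f) = \chi(f_2) \circ \chi(f_1)$ is cartesian over $q_2$.

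I do not foresee a substantial obstacle. The mildly delicate point is the converse of lemma \ref{lem:cartvertabsolute} used in the reverse direction, namely that a vertical map which is $q$-cartesian in the total category is already cartesian in the fibre; this is an immediate diagram chase, since any $q$-cartesian lift of an identity in $\cat{E}$ automatically lies over an identity in $\cat{B}$ and therefore in the fibre. Beyond that, the proof is routine bookkeeping with the three layers of cartesianness.
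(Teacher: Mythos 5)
Your proposal is correct and follows essentially the same route as the paper, whose proof is simply the remark that the lemma ``follows easily from the characterisation of cartesian maps in lemma \ref{lem:cartincomp}''; your write-up fills in exactly the details that remark leaves implicit (the factorisation for the reverse direction, and lemmas \ref{lem:cartvertabsolute} and \ref{lem:cartabsolute} plus the standard composite-fibration facts for the forward direction).
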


\begin{proof}
  Follows easily from the characterisation of cartesian maps in
  lemma \ref{lem:cartincomp}.
\end{proof}

\begin{lemma}
  \label{lem:codcartchar}
  Suppose that $p \colon \cat{E} \rightarrow \cat{B}$ is a bifibration
  and that for each $I \in \cat{B}$, $\cat{E}$ has pullbacks. Then
  $\cod \colon \vrt{\cat{E}} \rightarrow \cat{E}$ is a fibration
  and for any map $s$ in $\vrt{\cat{E}}$, the following are
  equivalent.
  \begin{enumerate}
  \item $s$ is a pullback, regarded as a square in $\cat{E}$.
  \item $s$ is cartesian over $\cod$.
  \item $s$ factors as a map vertical over $p \circ \cod$ and
    cartesian over $\cod$ followed by a map cartesian over both $\cod$
    and $p \circ \cod$.
  \item $s$ (regarded as a square in $\cat{E}$) factors as pullback
    with all maps vertical over $\cat{B}$, followed by a (pullback)
    square where both horizontal maps are cartesian in $\cat{E}$ over
    $p$.
  \end{enumerate}
\end{lemma}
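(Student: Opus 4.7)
The strategy is to derive all four equivalences from a single application of Lemma \ref{lem:cartincomp} to the composition $\vrt{\cat{E}} \xrightarrow{\cod} \cat{E} \xrightarrow{p} \cat{B}$, taking $q = \cod$ and $r = p \circ \cod$ in the notation of that lemma. First I would verify its hypotheses. The outer $p$ is a bifibration by assumption; the composite $p \circ \cod$ is a fibration by the earlier proposition on $\vrt{\cat{E}}$ and, dually, an opfibration via levelwise opcartesian lifts. The functor $\cod$ preserves cartesian maps, since that same proposition characterises cartesian maps in $\vrt{\cat{E}}$ over $\cat{B}$ as precisely the levelwise cartesian squares, and it preserves opcartesian maps dually. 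It is a cloven isofibration: given $f \colon X \to Y$ in $\vrt{\cat{E}}$ and an iso $g \colon Y_0 \to Y$ in $\cat{E}$, factor $g$ through the cartesian lift of $p(g)$ at $Y$, reindex $f$ accordingly, and take a pullback in the fibre along the remaining vertical iso. Finally, each fibre $\cod_I \colon \vrt{\cat{E}}_I \to \cat{E}_I$ is a fibration since its cartesian lifts are precisely pullbacks in $\cat{E}_I$, which exist by the hypothesis on fibre pullbacks.

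With the hypotheses in place, Lemma \ref{lem:cartincomp} immediately yields that $\cod$ is a fibration and that every cartesian map over $\cod$ factors as a map vertical over $p \circ \cod$ and cartesian over $\cod$, followed by a map cartesian over both $\cod$ and $p \circ \cod$. This is precisely the equivalence (2) $\Leftrightarrow$ (3). For (3) $\Leftrightarrow$ (4), I would unpack the definitions: a morphism $s$ in $\vrt{\cat{E}}$ is vertical over $p \circ \cod$ exactly when both of its horizontal components are vertical in $\cat{E}$, and by the earlier characterisation of cartesian maps in $\vrt{\cat{E}}$, it is cartesian over $p \circ \cod$ exactly when both horizontal components are cartesian over $p$. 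Substituting these into the factorisation of (3) yields (4).

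For (1) $\Leftrightarrow$ (2) I would exhibit the cartesian lifts of $\cod$ explicitly. Given $f \colon X \to Y$ in $\vrt{\cat{E}}$ and $g \colon Y_0 \to Y$ in $\cat{E}$, factor $g$ as a vertical map followed by a cartesian map using that $p$ is a fibration, reindex $f$ along the cartesian part, and take a fibre pullback along the vertical part. The resulting composite square is a pullback in $\cat{E}$ whose vertical sides are vertical, and its pullback universal property in $\cat{E}$ restricts (by testing against identity maps, which are always vertical) to the universal property required for cartesianness over $\cod$. Since cartesian lifts are unique up to vertical isomorphism, any pullback square whose vertical sides are vertical must be of this form, so it is cartesian over $\cod$; conversely every cartesian map over $\cod$ is such a pullback. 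The main obstacle is a careful verification of the isofibration and fibre-wise fibration hypotheses of Lemma \ref{lem:cartincomp}; once those are settled, the rest is a systematic bookkeeping exercise translating between the different perspectives on $s$.
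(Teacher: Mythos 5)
Your proposal is correct and follows essentially the same route as the paper: the paper likewise obtains $1 \Leftrightarrow 2$ from the usual characterisation of cartesian maps over $\cod$ as pullbacks, $3 \Leftrightarrow 4$ from the levelwise description of cartesian maps in $\vrt{\cat{E}}$, and $2 \Leftrightarrow 3$ by applying lemma \ref{lem:cartincomp}. The only difference is one of detail: where the paper says "it is easy to check that $\cod$ satisfies the necessary conditions," you actually carry out that verification (isofibration structure, preservation of (op)cartesian maps, fibrewise fibrations), which is a welcome expansion rather than a deviation.
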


\begin{proof}
  The usual proof that cartesian maps over $\cod$ are exactly pullback
  squares easily generalises to give us $1 \Leftrightarrow
  2$. Together with the observation that cartesian maps in
  $\vrt{\cat{E}}$ are exactly the levelwise cartesian maps, this
  also gives us $3 \Leftrightarrow 4$. (And it is easy to check that
  levelwise cartesian maps in $\vrt{\cat{E}}$ are pullbacks in
  $\cat{E}$.)

  Finally we get $2 \Leftrightarrow 3$ by lemma
  \ref{lem:cartincomp}. (It is easy to check that $\cod$ satisfies the
  necessary conditions to apply the lemma.)
\end{proof}

\begin{lemma}
  \label{lem:domopcartchar}
  Suppose that $p \colon \cat{E} \rightarrow \cat{B}$ is a bifibration
  and that for each $I \in \cat{B}$, $\cat{E}$ has pushouts. Then
  $\dom \colon \vrt{\cat{E}} \rightarrow \cat{E}$ is an
  opfibration and for any map $s$ in $\vrt{\cat{E}}$, the
  following are equivalent.
  \begin{enumerate}
  \item $s$ is a pushout, regarded as a square in $\cat{E}$.
  \item $s$ is opcartesian over $\dom$.
  \item $s$ factors as a map opcartesian over both $\dom$ and
    $p \circ \dom$ followed by a map vertical over $p \circ \dom$ and
    opcartesian over $\dom$.
  \item $s$ (regarded as a square in $\cat{E}$) factors as a (pushout)
    square where both horizontal maps are opcartesian in $\cat{E}$,
    followed by a pushout with all maps vertical over $\cat{B}$.
  \end{enumerate}
\end{lemma}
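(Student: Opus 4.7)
The plan is to derive this lemma from Lemma \ref{lem:codcartchar} by duality. Passing from $p \colon \cat{E} \to \cat{B}$ to $\opcat{p} \colon \opcat{\cat{E}} \to \opcat{\cat{B}}$ interchanges cartesian with opcartesian maps and pullbacks with pushouts. A vertical morphism $f \colon X \to Y$ in $\cat{E}$ corresponds to a vertical morphism $Y \to X$ in $\opcat{\cat{E}}$, so $\dom$ in $\cat{E}$ corresponds to $\cod$ in $\opcat{\cat{E}}$; consequently opcartesian maps over $\dom$ correspond to cartesian maps over $\cod$ in the opposite, and a square read as a pushout in $\cat{E}$ is read as a pullback in $\opcat{\cat{E}}$. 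Thus, if we can show the hypotheses of Lemma \ref{lem:codcartchar} hold for $\opcat{p}$, that lemma instantly gives the four-fold equivalence we want.

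First I would check the hypothesis transfers: $\opcat{p}$ is a bifibration because $p$ is, and the fibre $\opcat{\cat{E}}_I = \opcat{(\cat{E}_I)}$ has pullbacks precisely when $\cat{E}_I$ has pushouts, which is assumed. So Lemma \ref{lem:codcartchar} applies to $\opcat{p}$, and translating its four conditions back along the duality gives exactly the four conditions stated here. In particular, the existence part (that $\dom$ is an opfibration) also follows, as the dual statement is that $\cod \colon (\opcat{\cat{E}})^\btwo \to \opcat{\cat{E}}$ is a fibration.

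For readers who prefer a direct proof rather than a one-line invocation of duality, I would also outline the three ingredients separately: (i) $1 \Leftrightarrow 2$ is the standard fact that opcartesian maps into a fixed cospan over $\dom$ are exactly pushouts, which one checks by unfolding the universal property; (ii) $3 \Leftrightarrow 4$ uses that opcartesian maps in $\vrt{\cat{E}}$ are exactly the levelwise opcartesian maps, the dual of the proposition for cartesian maps in $\vrt{\cat{E}}$ already recorded in the paper; and (iii) $2 \Leftrightarrow 3$ is precisely Lemma \ref{lem:opcartincomp} applied to the composition $\vrt{\cat{E}} \stackrel{\dom}{\to} \cat{E} \stackrel{p}{\to} \cat{B}$, whose hypotheses (that $\dom$ is an isofibration preserving cartesian and opcartesian maps over $\cat{B}$, and that each $\dom_I$ is an opfibration because $\cat{E}_I$ has pushouts) are routine dual verifications.

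The only step that carries any real content is checking that $\dom \colon \vrt{\cat{E}} \to \cat{E}$ fits the hypotheses of Lemma \ref{lem:opcartincomp} — specifically, that pushouts in the fibres $\cat{E}_I$ really give opcartesian lifts at the fibre level and that these are preserved by the reindexing structure so that $\dom$ is an isofibration preserving both classes of distinguished maps. Once that is in hand, everything else is symmetric to the proof of Lemma \ref{lem:codcartchar} with no new ideas required.
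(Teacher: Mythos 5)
Your proposal is correct and matches the paper's proof, which simply reads ``Dual to Lemma \ref{lem:codcartchar}''; your additional unpacking of the duality and of the three sub-equivalences mirrors exactly the structure of the paper's proof of Lemma \ref{lem:codcartchar} (standard argument for $1 \Leftrightarrow 2$, levelwise characterisation for $3 \Leftrightarrow 4$, and Lemma \ref{lem:opcartincomp} for $2 \Leftrightarrow 3$). No gaps.
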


\begin{proof}
  Dual to lemma \ref{lem:codcartchar}.
\end{proof}

\subsection{Adjunctions, (Co)Pointed Endofunctors and (Co)monads over a
  Category}

We give definitions of adjunctions, (co)pointed endofunctors and
(co)monads over a fibration. These are fairly standard, although we
drop the requirement that they preserve cartesian maps, which is often
assumed in other places (see e.g. \cite[Chapter 1]{jacobs}).

\begin{definition}
  Let $p \colon \cat{E} \rightarrow \cat{B}$ and $q \colon \cat{F}
  \rightarrow \cat{B}$ be fibrations. An \emph{adjunction over
    $\cat{B}$} consists of an adjunction $F \dashv G$ from $\cat{E}$ to
  $\cat{F}$ such that $q \circ F = p$, $p \circ G = q$ and the unit
  and counit maps are vertical.
\end{definition}

\begin{remark}
  \label{rmk:adjnotfibred}
  We do not require that $F$ and $G$ in an adjunction over $\cat{B}$
  preserve cartesian maps. However, one can deduce from the definition
  that $G$ always does (see \cite[Exercise 1.8.5]{jacobs}). If
  $F$ also preserves cartesian maps, we say the adjunction is a
  \emph{fibred adjunction}.
\end{remark}

We will use later the following internal version of the usual
isomorphism between maps $F X \rightarrow Y$ and maps $X \rightarrow G
Y$.
\begin{lemma}
  \label{lem:fibredadjlemhom}
  Suppose that $p \colon \cat{E} \rightarrow \cat{B}$ and $q \colon
  \cat{F} \rightarrow \cat{B}$ are fibrations and $F \dashv G$ is a
  fibred adjunction from $\cat{E}$ to $\cat{F}$ over
  $\cat{B}$. Suppose that $X$ is an object of $\cat{E}$ over $I$ and
  $Y$ is an object of $\cat{F}$ over $J$. Suppose that the hom object
  from $F X$ to $Y$ exists, and is of the form below.
  \begin{equation*}
    \begin{gathered}
      \xymatrix{ I & & J \\
        & \fhom(F X, Y) \ar[ul]^\sigma \ar[ur]_\tau &}
    \end{gathered}
    \qquad
    \begin{gathered}
      \xymatrix{ F X & & & Y \\
        & \sigma^\ast(F X) \ar[r]_{\cong} \ar[ul]^{\overline
          \sigma(X)} &
         F(\sigma^\ast(X)) \ar[ur]_h &}
    \end{gathered}
  \end{equation*}

  Then the hom object from $X$ to $G Y$ (exists and) is of the form
  below.
  \begin{equation*}
    \begin{gathered}
      \xymatrix{ I & & J \\
        & \fhom(F X, Y) \ar[ul]^\sigma \ar[ur]_\tau &}
    \end{gathered}
    \qquad
    \begin{gathered}
      \xymatrix{ X & & G Y \\
        & \sigma^\ast(X) \ar[ul]^{\overline \sigma(X)} \ar[ur]_{\bar h} &}
    \end{gathered}
  \end{equation*}
  In particular $\fhom(F X, Y) \cong \fhom(X, G Y)$.
\end{lemma}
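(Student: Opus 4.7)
The plan is to derive the universal property of $\fhom(X, GY)$ from the universal property of $\fhom(FX, Y)$ by transposing across the adjunction. Since the adjunction $F \dashv G$ is over $\cat{B}$ with vertical unit $\eta$ and counit $\epsilon$, the natural bijection $\cat{F}(F A, B) \cong \cat{E}(A, G B)$ preserves the projection to $\cat{B}$ in the following sense: if $\phi \colon F A \to B$ lies over some $u \colon p(A) \to q(B)$, then its transpose $\bar{\phi} = G(\phi) \circ \eta_A$ lies over the same $u$, and vice versa. In particular, defining $\bar{h}$ as the transpose of $h$ gives a map $\sigma^\ast(X) \to GY$ over $\tau$, as required.

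Next, I would verify that the proposed span satisfies the universal property of Proposition \ref{prop:homasunivspan}. Given $K$ with $\sigma' \colon K \to I$, $\tau' \colon K \to J$ and a map $h'' \colon \sigma'^\ast(X) \to GY$ over $\tau'$, transpose to obtain $\widetilde{h''} \colon F(\sigma'^\ast(X)) \to Y$ over $\tau'$. Because $F$ preserves cartesian maps (this is where the \emph{fibred} hypothesis enters, as noted in Remark \ref{rmk:adjnotfibred}), the map $F(\bar{\sigma'}(X)) \colon F(\sigma'^\ast(X)) \to FX$ is cartesian, so there is a canonical vertical isomorphism $F(\sigma'^\ast(X)) \cong \sigma'^\ast(FX)$. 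Composing with $\widetilde{h''}$ gives a map $\sigma'^\ast(FX) \to Y$ over $\tau'$, so by the universal property of $\fhom(FX, Y)$ there is a unique $t \colon K \to \fhom(FX, Y)$ with $\sigma \circ t = \sigma'$, $\tau \circ t = \tau'$, and whose induced map on fibres composes with $h$ to give $\widetilde{h''}$.

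The remaining step is to check that this same $t$ witnesses the universal property for $\fhom(X, GY)$, and that it is unique with this property. Both follow by transposing the relevant commutative diagrams across the adjunction: the condition that the canonical vertical map $\sigma'^\ast(X) \to \sigma^\ast(X)$ composed with $\bar{h}$ equals $h''$ is equivalent, via the bijection $\cat{F}(F\sigma'^\ast X, Y) \cong \cat{E}(\sigma'^\ast X, GY)$ and the isomorphism $F(\sigma'^\ast X) \cong \sigma'^\ast(FX)$, to the corresponding condition phrased in terms of $h$ and $\widetilde{h''}$. Since the latter has a unique solution $t$, so does the former.

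The main obstacle is bookkeeping around the identification $F(\sigma^\ast X) \cong \sigma^\ast(FX)$: one must ensure that the canonical iso coming from fibredness of $F$ intertwines the transposition bijection with the substitution maps on both sides. Once this naturality is checked (which is routine given vertical unit/counit and fibredness of $F$), everything else is a direct transport of the universal property of $\fhom(FX, Y)$ across the adjunction.
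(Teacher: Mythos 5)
Your proof is correct and is exactly the argument the paper has in mind: the paper's own proof of this lemma is simply the word ``Straightforward,'' and what you have written---transposing $h$ across the adjunction using the vertical unit, invoking fibredness of $F$ to identify $F(\sigma'^\ast X)$ with $\sigma'^\ast(FX)$, and transporting the universal property of Proposition \ref{prop:homasunivspan} across the natural bijection---is precisely the expected filling-in of that claim.
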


\begin{proof}
  Straightforward.
\end{proof}

\begin{lemma}
  \label{lem:ulpadjunction}
  Suppose that $p \colon \cat{E} \rightarrow \cat{B}$ and
  $q \colon \cat{F} \rightarrow \cat{B}$ are locally small fibrations,
  $\cat{B}$ has finite limits and $F \dashv G$ is a fibred adjunction
  from $\cat{E}$ to $\cat{F}$ over $\cat{B}$.

  Suppose that $m$ is a vertical map in $\cat{E}$ over $I$ and $f$ is
  a vertical map in $\cat{F}$ over $J$. Then solutions to the
  universal lifting property from $F m$ to $f$ correspond precisely
  to solutions of the universal lifting property from $m$ to $G f$.
\end{lemma}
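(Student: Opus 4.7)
The plan is to translate the universal lifting problem across the fibred adjunction and then invoke Proposition \ref{prop:univlp}, which replaces ``solution of the universal lifting problem'' by ``coherent choice of solutions for every family.''

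First I would check that $F \dashv G$ restricts to a fibred adjunction between $\vrt{\cat{E}}$ and $\vrt{\cat{F}}$ over $\cat{B}$: since $F$ preserves cartesian maps (the adjunction is fibred) and $G$ always does (Remark \ref{rmk:adjnotfibred}), both functors preserve vertical maps, and the unit and counit, being pointwise vertical, remain vertical in the categories of vertical maps. In particular we have, for each $K \in \cat{B}$, a fibrewise adjunction $F_K \dashv G_K$ between $\vrt{\cat{E}}_K$ and $\vrt{\cat{F}}_K$.

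Next, for any $\sigma \colon K \rightarrow I$ and $\tau \colon K \rightarrow J$, the cartesian cleavages provide canonical vertical isomorphisms $\sigma^\ast(F m) \cong F(\sigma^\ast m)$ in $\vrt{\cat{F}}_K$ and $\tau^\ast(G f) \cong G(\tau^\ast f)$ in $\vrt{\cat{E}}_K$. Composing with the fibrewise adjunction gives a bijection between morphisms $\sigma^\ast(F m) \rightarrow \tau^\ast(f)$ in $\vrt{\cat{F}}_K$ and morphisms $\sigma^\ast(m) \rightarrow \tau^\ast(G f)$ in $\vrt{\cat{E}}_K$, i.e.\ a bijection between families of lifting problems from $F m$ to $f$ over $K$ and families from $m$ to $G f$ over $K$. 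By naturality of the adjunction transposition, this same bijection matches the two commutative triangles of \eqref{eq:3} on each side, so diagonal fillers correspond to diagonal fillers. The bijection is moreover natural in $k \colon K' \rightarrow K$, because both the cleavages and the adjunction commute with reindexing along $k$, so coherent choices of solutions correspond to coherent choices of solutions.

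Finally I would conclude by Proposition \ref{prop:univlp}: a solution of the universal lifting problem from $F m$ to $f$ is equivalent to a coherent choice of solutions to every family from $F m$ to $f$, which by the bijection above is equivalent to a coherent choice of solutions to every family from $m$ to $G f$, and hence to a solution of the universal lifting problem from $m$ to $G f$. The main obstacle is the bookkeeping around the canonical isomorphism $\sigma^\ast(F m) \cong F(\sigma^\ast m)$ and checking that the adjunction transpose of a diagonal filler really is a diagonal filler on the other side — a routine but fiddly diagram chase driven by naturality of the unit and counit and the universal property of cartesian lifts.
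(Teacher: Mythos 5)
Your argument is correct, and it rests on the same two pillars as the paper's proof: the adjunction lifts to a fibred adjunction between $\vrt{\cat{E}}$ and $\vrt{\cat{F}}$, and fibredness supplies the isomorphisms $\sigma^\ast(F m) \cong F(\sigma^\ast m)$ and $\tau^\ast(G f) \cong G(\tau^\ast f)$. Where you differ is in the packaging. The paper simply cites lemma \ref{lem:fibredadjlemhom}, which identifies the two hom objects $\fhom(F m, f) \cong \fhom(m, G f)$ in the vertical arrow fibrations outright --- i.e.\ it identifies the two universal lifting problems as reindexings over the same base object, with universal squares related by transposition --- and then transposes solutions. You instead re-derive the content of that lemma by hand as a family of bijections over each $K$, natural in $K$ (which, by Yoneda, is exactly an isomorphism of the representing objects), and then route through proposition \ref{prop:univlp} and coherent choices of solutions rather than comparing the universal problems directly. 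Your route is fine, but note that it quietly uses a bijective strengthening of proposition \ref{prop:univlp}: the proposition is stated as an equivalence of three existence conditions, whereas you need that solutions of the universal problem correspond \emph{bijectively} to coherent choices. That strengthening is immediate from lemma \ref{lem:univlplem} (every family is $t^\ast$ of the universal one for a unique $t$, and coherence forces the choice to be determined by its value at the universal family), but it is worth saying explicitly; alternatively, once you have the natural bijection of families you can skip coherent choices entirely and conclude as the paper does.
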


\begin{proof}
  By applying lemma \ref{lem:fibredadjlemhom} to the lift of the
  adjunction to a fibred adjunction from $V(\cat{E})$ to $V(\cat{F})$
  and the usual properties of an adjunction.
\end{proof}

\begin{definition}
  Let $p \colon \cat{E} \rightarrow \cat{B}$ be a fibration. An
  \emph{endofunctor over $\cat{B}$} is a functor
  $T \colon \cat{E} \rightarrow \cat{E}$ such that $p \circ T = p$. A
  \emph{pointed endofunctor over $\cat{B}$} is a an endofunctor over
  $\cat{B}$ together with a natural transformation
  $\eta \colon 1 \Rightarrow T$ which is pointwise vertical. We define
  \emph{monad over $\cat{B}$} to be a pointed endofunctor with
  pointwise vertical multiplication $\mu \colon T^2 \Rightarrow T$
  satisfying the usual axioms for a monad. We dually define
  \emph{copointed endofunctor over $\cat{B}$} and \emph{comonad over
    $\cat{B}$}.
\end{definition}

\begin{remark}
  We again don't require an endofunctor necessarily to preserve
  cartesian maps. If it does we say it is \emph{fibred}.
\end{remark}

\begin{definition}
  Let $T$ be an endofunctor over $\cat{B}$. An \emph{algebra} over $T$
  is an object $X$ of $\cat{E}$ together with a vertical map
  $\alpha \colon T X \rightarrow X$.

  If $T, \eta$ is a pointed endofunctor, we define an \emph{algebra}
  over $T, \eta$ to be an algebra over the endofunctor which
  additionally satisfies the \emph{unit law}:
  \begin{equation*}
    \xymatrix{ X \ar@{=}[dr] \ar[r]^{\eta_X} & T X \ar[d]^\alpha \\
      & X}
  \end{equation*}

  If $T, \eta, \mu$ is a monad over $\cat{B}$, we define an
  \emph{algebra} over $T, \eta, \mu$ to be an algebra over the pointed
  endofunctor which additionally satisfies the \emph{multiplication
    law}:
  \begin{equation*}
    \xymatrix{ T^2 X \ar[r]^{T \alpha} \ar[d]_{\mu_X} & T X
      \ar[d]^\alpha \\
      T X \ar[r]_\alpha & X}
  \end{equation*}
\end{definition}

\begin{remark}
  \label{rmk:novertinalg}
  For algebras over a pointed endofunctor and over a monad we can drop
  the requirement that $\alpha$ is vertical, since it follows
  automatically from the unit law together with the axiom that the
  unit is vertical.
\end{remark}

\subsection{Functorial Factorisations and Lawfs's over a Fibration}
\label{sec:fibr-funct-fact}

In this section, we give generalisations of the usual definitions of
functorial factorisation, lawfs and awfs, as appear for instance in
\cite{garnersmallobject}.

We write $\cat{E}^\bthree_\vtcl$ for the full subcategory of
$\cat{E}^\bthree$ where both maps in an object are vertical.

\begin{definition}
  Let $p \colon \cat{E} \rightarrow \cat{B}$. A \emph{functorial
    factorisation over $\cat{B}$} is a functor
  $\vrt{\cat{E}} \rightarrow \cat{E}^\bthree_\vtcl$ over
  $\cat{B}$ that is a section of the composition functor
  $\cat{E}^\bthree_\vtcl \rightarrow \vrt{\cat{E}}$. If
  $f \colon X \rightarrow Y$ is a vertical map in $\cat{E}$ we will
  usually write the factorisation of $f$ as $X \stackrel{L
    f}{\longrightarrow} K f \stackrel{R f}{\longrightarrow} Y$.
\end{definition}

\begin{lemma}
  Let $p \colon \cat{E} \rightarrow \cat{B}$ be a fibration. Assume
  further that $p$ has all finite limits, and so $\cod \colon
  \vrt{\cat{E}} \rightarrow \cat{E}$.
  is also a fibration. The following categories are isomorphic.
  \begin{enumerate}
  \item Functorial factorisations $(L, R)$ over $p$.
  \item Pointed endofunctors $(R, \lambda)$ over
    $\cod \colon \vrt{\cat{E}} \rightarrow \cat{E}$.
  \item Copointed endofunctors $(L, \rho)$ over
    $\dom \colon \vrt{\cat{E}} \rightarrow \cat{E}$.
  \end{enumerate}
\end{lemma}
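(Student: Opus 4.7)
This is the standard correspondence between functorial factorisations and (co)pointed endofunctors on the arrow category (as in e.g.\ Garner's paper), lifted to the fibred setting. Since the data on both sides involves only verticality conditions and the axioms match pointwise, the plan is to exhibit the explicit bijections and check that the fibration bookkeeping works out.

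The plan is to describe the correspondence $1 \Leftrightarrow 2$; the equivalence $1 \Leftrightarrow 3$ is dual. Given a functorial factorisation $(L, R)$ over $p$, I would define the pointed endofunctor on $\vrt{\cat{E}}$ by sending a vertical $f \colon X \to Y$ to $R f \colon K f \to Y$, with point $\lambda_f$ given by the commutative square whose top edge is $L f \colon X \to K f$ and whose bottom edge is the identity on $Y$. The equation $R f \circ L f = f$ is exactly commutativity of this square as a morphism $f \to R f$ in $\vrt{\cat{E}}$, and the identity on $Y$ ensures $\cod(\lambda_f) = 1_Y$, i.e.\ $\lambda_f$ is vertical over $\cod$. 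Functoriality of $L$ and $R$ gives naturality of $\lambda$, and the fact that the original functor lies over $\cat{B}$ (via $p \circ \cod = p \circ \dom$) gives that $R$ lies over $\cod$.

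For the reverse direction, given $(R, \lambda)$ a pointed endofunctor over $\cod$, I would read off a factorisation from the data. For vertical $f \colon X \to Y$, the condition $\cod \circ R = \cod$ lets us identify $\cod(R f) = Y$, so $R f$ has the form $R f \colon K f \to Y$ for some $K f$ vertical over $p(Y)$. The point $\lambda_f \colon f \to R f$, being vertical over $\cod$, is a square whose bottom component is $1_Y$; define $L f$ to be its top component $X \to K f$. Commutativity of $\lambda_f$ as a square in $\vrt{\cat{E}}$ gives $R f \circ L f = f$, i.e.\ the required factorisation. Naturality of $\lambda$ combined with functoriality of $R$ packages $L, R$ into a functor $\vrt{\cat{E}} \to \cat{E}^\bthree_\vtcl$; because $\lambda$ has $\cod$-vertical components and $R$ lies over $\cod$, the resulting functor lies over $\cat{B}$ as required.

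The remaining work is to check the two passages are mutually inverse (immediate from unpacking), and that the construction is functorial in morphisms on both sides: a morphism of functorial factorisations is a natural transformation commuting with $L$ and $R$ and compatible with the identities on $\dom$ and $\cod$ of each $f$, which is the same data as a natural transformation of pointed endofunctors over $\cod$. I do not expect a significant obstacle: the only delicate point is confirming that the ``over $\cat{B}$'' condition on the factorisation is equivalent to the ``over $\cod$'' condition on the endofunctor, but both reduce to the identity $p \circ \cod = p \circ \dom = p$ on $\vrt{\cat{E}}$ together with the fact that the squares $\lambda_f$ have identity codomain component. The dual argument with $\rho_f$ having identity domain component establishes $1 \Leftrightarrow 3$.
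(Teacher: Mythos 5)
Your proposal is correct and follows the same route as the paper, which simply notes that one can swap $\lambda_f$ with $Lf$ (and dually $\rho_f$ with $Rf$); you have just written out the details of that standard correspondence, including the verticality bookkeeping. No issues.
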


\begin{proof}
  We can take $\lambda_f$ to be $L f$ and vice versa to show 1 and 2
  are isomorphic.

  Similarly $\rho_f$ and $R f$ can be swapped to show 1 and 3 are
  isomorphic.
\end{proof}

\begin{definition}
  A \emph{left half of an algebraic weak factorisation system over
    $\cat{B}$} (lawfs) is a comonad over
  $\dom \colon \vrt{\cat{E}} \rightarrow \cat{E}$.

  We dually define \emph{right half of an algebraic weak factorisation
    system over $\cat{B}$} (rawfs) as a monad over
  $\cod \colon \vrt{\cat{E}} \rightarrow \cat{E}$
\end{definition}

\begin{proposition}
  Lawfs's correspond precisely to a functorial factorisations over
  $\cat{B}$ with a vertical natural transformation
  $\Sigma \colon L \Rightarrow L^2$ making $L$ into a comonad over
  $\dom \colon \vrt{\cat{E}} \rightarrow \cat{E}$.

  Rawfs's correspond precisely to a functorial factorisations over
  $\cat{B}$ with a vertical natural transformation
  $\Pi \colon R^2 \Rightarrow R$ making $R$ into a monad over
  $\cod \colon \vrt{\cat{E}} \rightarrow \cat{E}$.
\end{proposition}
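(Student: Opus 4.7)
The plan is to reduce the proposition to the immediately preceding lemma by unfolding the definition of (co)monad over a fibration. First I would invoke that lemma to identify any copointed endofunctor $(L, \rho)$ over $\dom \colon \vrt{\cat{E}} \rightarrow \cat{E}$ with a functorial factorisation over $\cat{B}$, under the assignment sending the counit $\rho_f$ to the second factor $R f$; dually, pointed endofunctors $(R, \lambda)$ over $\cod$ correspond to functorial factorisations under the assignment sending $\lambda_f$ to $L f$. After this identification, the only remaining content is to translate the additional coherence data (comultiplication or multiplication) across it.

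For the lawfs half, I would unfold the definition: an lawfs is by definition a comonad over $\dom$, which by the definition of (co)monad over a fibration is a copointed endofunctor $(L, \rho)$ together with a natural transformation $\Sigma \colon L \Rightarrow L^2$ whose components are pointwise vertical over $\dom$ and which satisfies the counit and coassociativity laws. Substituting the preceding lemma's identification on the copointed-endofunctor half produces exactly the description in the proposition: a functorial factorisation over $\cat{B}$ together with a vertical $\Sigma \colon L \Rightarrow L^2$ making $L$ a comonad. Naturality and the axioms on $\Sigma$ transport unchanged, since the lemma is already given as an isomorphism of categories.

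The rawfs statement is handled by the evident dual argument: a monad over $\cod$ is a pointed endofunctor $(R, \lambda)$ over $\cod$ together with a pointwise vertical multiplication $\Pi \colon R^2 \Rightarrow R$ satisfying unit and associativity, and the second clause of the preceding lemma translates the $(R, \lambda)$ part into a functorial factorisation. I do not expect any genuine obstacle; the only point requiring a bit of attention is keeping straight which component of a square is required to be the identity when $\Sigma$ or $\Pi$ is called \emph{vertical}, namely the domain component for $\Sigma$ (since we work over $\dom$) and the codomain component for $\Pi$ (since we work over $\cod$). Both are forced by the ambient (co)monad structure, so the argument is effectively a line by line unfolding of definitions.
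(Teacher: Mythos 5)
Your proposal is correct and matches the paper's intent exactly: the paper states this proposition without proof, treating it as an immediate unfolding of the preceding lemma (identifying functorial factorisations with (co)pointed endofunctors over $\dom$ and $\cod$) together with the definition of lawfs/rawfs as a comonad over $\dom$ (resp.\ monad over $\cod$). Your remark about which component of the square is forced to be an identity by verticality over $\dom$ versus $\cod$ is the right point to be careful about, and you resolve it correctly.
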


\begin{proposition}
  \label{prop:fibredchar}
  Let $(L, R)$ be a functorial factorisation over $\cat{B}$. The
  following are equivalent.
  \begin{enumerate}
  \item The underlying functor between fibrations
    $\vrt{\cat{E}} \rightarrow \cat{E}^\bthree_\vtcl$ is fibred over
    $\cat{B}$.
  \item The endofunctor $L$ is fibred over
    $p \circ \dom$.
  \item The endofunctor $R$ is fibred over
    $p \circ \cod$.
  \end{enumerate}
\end{proposition}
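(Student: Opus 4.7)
The plan is to observe that all three conditions, when unwound using the characterisation of cartesian maps in vertical arrow categories, reduce to the same single requirement on the middle map produced by the factorisation. Recall the earlier characterisation that a morphism in $\vrt{\cat{E}}$ is cartesian over $\cat{B}$ precisely when its two horizontal maps are cartesian in $\cat{E}$ over $p$; analogously, a morphism in $\cat{E}^\bthree_\vtcl$ is cartesian over $\cat{B}$ precisely when all three of its horizontal maps are cartesian. Since $L$ is an endofunctor over $\dom$ and $R$ over $\cod$, both are automatically functors over $p \circ \dom = p \circ \cod$, so ``fibred'' in conditions 2 and 3 just means preservation of cartesian maps.

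Next I would fix a cartesian morphism $s : f \to g$ in $\vrt{\cat{E}}$ over $\cat{B}$, with $f : X \to Y$ and $g : X' \to Y'$, so that the top map $u : X \to X'$ and bottom map $v : Y \to Y'$ are cartesian in $\cat{E}$. Writing the factorisations as $X \to Kf \to Y$ and $X' \to Kg \to Y'$, functoriality produces a canonical middle map $Ks : Kf \to Kg$. Condition 1 requires the image of $s$ in $\cat{E}^\bthree_\vtcl$, with the three horizontal maps $u$, $Ks$, $v$, to be cartesian, which (since $u$ and $v$ already are) reduces to $Ks$ being cartesian. Condition 2 requires $L(s) : Lf \to Lg$ to be cartesian in $\vrt{\cat{E}}$; as a square this has top $u$ and bottom $Ks$, so reduces to the same requirement. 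Condition 3 is symmetric: $R(s)$ has top $Ks$ and bottom $v$, so is cartesian exactly when $Ks$ is.

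Each of the three statements is therefore equivalent to the single assertion that $Ks$ is cartesian in $\cat{E}$ for every cartesian morphism $s$ in $\vrt{\cat{E}}$, and hence to one another. I do not anticipate any real obstacle here: the argument is a direct unwinding of the definitions, relying only on the earlier characterisation of cartesian maps in $\vrt{\cat{E}}$ and its evident extension to $\cat{E}^\bthree_\vtcl$, together with the fact that $L$ and $R$ act on the top and bottom edges of a morphism in $\vrt{\cat{E}}$ in a way that keeps one of the original cartesian edges intact in each case.
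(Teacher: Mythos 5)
Your argument is correct: the paper states this proposition without proof, and your direct unwinding — reducing all three conditions to the single requirement that the middle map $Ks$ be cartesian in $\cat{E}$, via the levelwise characterisation of cartesian maps in $\vrt{\cat{E}}$ and $\cat{E}^\bthree_\vtcl$ — is exactly the intended reasoning.
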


\begin{definition}
  We say a functorial factorisation is \emph{fibred} if one of the
  equivalent conditions in proposition \ref{prop:fibredchar} holds.

  We say it is \emph{strongly fibred} if it is fibred
  as a pointed endofunctor over $\cod$.
\end{definition}

\begin{lemma}
  \label{lem:strongfibrelem}
  A functorial factorisation $(L, R)$ is strongly fibred if and only
  if it is both fibred and for each $I \in \cat{B}$, the restriction
  $R_I \colon V(\cat{E})_I \rightarrow V(\cat{E})_I$ preserves
  cartesian maps over $\cat{E}_I$. (Note that $V(\cat{E})_I$ is just
  $\cat{E}_I^\btwo$, and the fibration $V(\cat{E})_I \rightarrow
  \cat{E}_I$ is just codomain, so this says $R_I$ preserves pullbacks.)
\end{lemma}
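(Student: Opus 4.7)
The plan is to deduce the lemma as a direct application of Lemma \ref{lem:fibredfunctorcomp}, applied to the degenerate case where $\cat{F}_1 = \cat{F}_2 = \vrt{\cat{E}}$, $q_1 = q_2 = \cod \colon \vrt{\cat{E}} \rightarrow \cat{E}$, $r_1 = r_2 = p \circ \cod$, and $\chi = R$. Under this dictionary, the condition ``$\chi$ preserves cartesian maps over $\cat{E}$'' is the definition of $(L,R)$ being strongly fibred; the condition ``$\chi$ preserves cartesian maps over $\cat{B}$'' is, by Proposition \ref{prop:fibredchar}, equivalent to $(L,R)$ being fibred; and the remaining fibrewise condition is exactly that each $R_I$ preserves cartesian maps over $\cat{E}_I$. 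Thus the equivalence in Lemma \ref{lem:fibredfunctorcomp} gives the equivalence in the statement.

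What remains is to verify the hypotheses needed to invoke Lemma \ref{lem:fibredfunctorcomp}. The functor $p$ is a bifibration by standing assumption, and the implicit hypothesis in the lemma (needed to speak of pullbacks) is that each fibre $\cat{E}_I$ has pullbacks and pushouts; the former makes each $\cod_I \colon \vrt{\cat{E}}_I \rightarrow \cat{E}_I$ a fibration, and together with $p$ being a bifibration this makes $p \circ \cod$ a bifibration as well. That $\cod$ preserves both cartesian and opcartesian maps over $\cat{B}$ is the content of Lemmas \ref{lem:codcartchar} and \ref{lem:domopcartchar}: cartesian (resp.\ opcartesian) maps in $\vrt{\cat{E}}$ over $\cat{B}$ are levelwise cartesian (resp.\ opcartesian) in $\cat{E}$, so in particular their codomain components are. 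The cloven isofibration property of $\cod$ is routine: given a square $f$ in $\vrt{\cat{E}}$ and an isomorphism $\cod(f) \cong Y$ in $\cat{E}$, one lifts by composing with identities on domains.

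Of the two implications produced, the forward one (strongly fibred implies fibred plus fibrewise cartesian-preservation) is immediate from the characterisation of cartesian maps in $\vrt{\cat{E}}$ over $p \circ \cod$ as levelwise cartesian and of cartesian maps over $\cod_I$ as pullbacks. The content is in the converse: any cartesian map over $\cod$ factors, by Lemma \ref{lem:cartincomp}, as one vertical over $p \circ \cod$ (hence fibrewise cartesian) followed by one cartesian over both $\cod$ and $p \circ \cod$, so preservation in both cases suffices. I do not expect any genuine obstacle here; the main difficulty has already been absorbed into Lemma \ref{lem:fibredfunctorcomp}, and the proof is essentially a translation of definitions.
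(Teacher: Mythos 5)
Your proposal is correct and takes exactly the route the paper does: the paper's entire proof is ``By lemma \ref{lem:fibredfunctorcomp},'' with precisely the instantiation you describe ($\cat{F}_1 = \cat{F}_2 = \vrt{\cat{E}}$, $q_1 = q_2 = \cod$, $\chi = R$). Your verification of the hypotheses of that lemma, and your identification of the three conditions with strongly fibred, fibred, and the fibrewise condition, fills in details the paper leaves implicit but adds nothing beyond the paper's intended argument.
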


\begin{proof}
  By lemma \ref{lem:fibredfunctorcomp}.
\end{proof}

\begin{definition}
  We say an lawfs or rawfs is \emph{(strongly) fibred} if the
  underlying functorial factorisation is (strongly) fibred.
\end{definition}

We will see that it is often easy to show functorial factorisations
are fibred under mild assumptions. Strongly fibred functorial
factorisations are much rarer. We will however see later some useful
lemmas for producing a few interesting examples of strongly fibred
awfs's.

\subsection{An Abstract Description of Step-one}
\label{sec:an-abstr-descr}

Ultimately we want to construct an lawfs over a fibration, which is a
comonad over the opfibration
$\dom \colon V(\cat{E}) \rightarrow \cat{E}$. We will first give a
more general construction of a comonad where we replace
$\dom \colon V(\cat{E}) \rightarrow \cat{E}$ with an arbitrary
opfibration $q$ in the diagram below.
\begin{equation}
  \label{eq:10}
  \begin{gathered}
    \xymatrix{ \cat{F} \ar[dr]_r \ar[rr]^q & & \cat{E} \ar[dl]^p \\
      & \cat{B} &}
  \end{gathered}
\end{equation}

This will allow us to give a clean proof that our claimed comonad
really is a comonad, including the construction of comultiplication
and the proof that it satisfies the ``square'' comonad law, which is
somewhat cumbersome to do directly. This description will also be
useful when we show that the generating family of left maps has a
coalgebra structure and (under suitable conditions) that the resulting
lawfs is fibred. However, we will see that the abstract description
can easily be reduced to give an explicit description of the action of
the comonad on objects in the special case we are interested in. The
reader may prefer to skip ahead to section \ref{sec:apply-abstr-descr}
to see how the abstract description is used before looking at this
section in detail.

Assume that we are given functors as in diagram \eqref{eq:10}.
Suppose further that $r$ is a locally small fibration and $q$ is an
opfibration. We will use this to construct a comonad $L_1$ over $q$. The
overall idea is to first construct an adjunction $F \dashv G$, and
then take $L_1$ to be $FG$.

\begin{definition}
  Let $X \in \cat{F}$. We define the category, $\bicart(X)$ of
  \emph{bicartesian spans from $X$} as follows. An object consists of
  $Y, Z \in \cat{F}$ together with $h \colon Z \rightarrow X$ cartesian
  over $r$ and $k \colon Z \rightarrow Y$ opcartesian over $q$. A
  morphism from
  $X \stackrel{h}{\leftarrow} Z \stackrel{k}{\rightarrow} Y$ to
  $X \stackrel{h'}{\leftarrow} Z' \stackrel{k'}{\rightarrow} Y'$
  consists of diagrams of the following form.
  \begin{equation*}
    \begin{gathered}
      \xymatrix{ X & & Y \\
        & Z \ar[ul]_h \ar[ur]^k & Y' \ar[u] \\
        & Z' \ar@/^/[uul]^{h'} \ar[ur]_{k'} \ar[u] & }
    \end{gathered}
  \end{equation*}  
\end{definition}

We define $F \colon \bicart(X) \rightarrow \cat{F}$ to be
the functor sending
$X \stackrel{h}{\leftarrow} Z \stackrel{k}{\rightarrow} Y$ to $Y$. We
will define a right adjoint $G$ to $F$, giving us a comonad $FG$ on
$\cat{F}$.

Given $Y \in \cat{F}$, we construct $G(Y)$ as follows. Using the local
smallness of $r$ we have a span
as below, where $l$ is cartesian.
\begin{equation*}
  \begin{gathered}
    \xymatrix{ X & & Y \\
      & \fhom(X, Y) \ar[ul]^{l} \ar[ur]_{m} &}
  \end{gathered}
\end{equation*}
We then get an opcartesian map
$\fhom(X, Y) \rightarrow \coprod_{q(m)} \fhom(X, Y)$ over $q(m)$ given
by the opfibration structure on $q$. We then take $G(Y)$ to be
$X \stackrel{h}{\leftarrow} \fhom(X, Y) \stackrel{k}{\rightarrow}
\coprod_{q(m)} \fhom(X, Y)$. Given $n \colon Y \rightarrow Y'$, we
define $G(n)$ by first applying the universal property of $\fhom(X,
Y')$, and then applying the universal property of the opcartesian map
$\fhom(X, Y') \rightarrow \coprod_{q(m)} \fhom(X, Y')$.

We now check that $F \dashv G$. Suppose that we are given an object of
$\bicart(X)$ of the form $X \stackrel{h}{\leftarrow} Z
\stackrel{k}{\rightarrow} Y$ an object $Y'$ of $\cat{F}$, and a map $n
\colon Y \rightarrow Y'$. This gives us the following diagram.
\begin{equation*}
  \begin{gathered}
    \xymatrix{ & & & Y' \\
      X & & \coprod_{q(m)} \fhom(X, Y') \ar[ur] & \\
      & \fhom(X, Y') \ar[ul]_l \ar[ur] & Y \ar@/_/[uur]_n \\
      & Z \ar@/^/[uul]^{h} \ar@/_/[ur]_k & & }
  \end{gathered}
\end{equation*}
We then apply the universal property of $\fhom(X, Y')$ followed by the
opcartesianess of $k$, to extend the diagram as below (and the right
hand triangle commutes, again using the opcartesianess of $k$).
\begin{equation*}
  \begin{gathered}
    \xymatrix{ & & & Y' \\
      X & & \coprod_{q(m)} \fhom(X, Y') \ar[ur] & \\
      & \fhom(X, Y') \ar[ul]_l \ar[ur] & Y \ar@/_/[uur]_n \ar[u] \\
      & Z \ar@/^/[uul]^{h} \ar@/_/[ur]_k \ar[u] & & }
  \end{gathered}
\end{equation*}

But we now have a morphism from $X \stackrel{h}{\leftarrow} Z
\stackrel{k}{\rightarrow} Y$ to $G(Y')$. This operation is invertible
by composition with the canonical vertical map $\coprod_{q(m)}
\fhom(X, Y')
\rightarrow Y'$, and one can check that it is natural. Hence this does
give us an adjunction $F \dashv G$.

\begin{definition}
  \label{def:absstep1}
  We assume that we are given functors in the diagram below.
  \begin{equation*}
    \begin{gathered}
      \xymatrix{ \cat{F} \ar[dr]_r \ar[rr]^q & & \cat{E} \ar[dl]^p \\
        & \cat{B} &}
    \end{gathered}
  \end{equation*}
  Suppose further that $r$ is a locally small fibration and $q$ is an
  opfibration, and we are given $X \in \cat{F}$. We will use this to
  construct a comonad over $q$.

  Let $F \dashv G$ be the adjunction above. We write $L_1$ for the
  resulting comonad $FG$. Note that in fact $F$ and $G$ can be viewed as
  functors over $\cat{E}$ and the unit and counit are both vertical,
  giving us an adjunction over $\cat{E}$. Hence the counit and
  comultiplication are vertical over $\cat{E}$ and so $L_1$ is a
  comonad over $\cat{E}$.
\end{definition}

We also get an explicit description of the counit of the
comonad. Namely, it is the map corresponding to the identity on $G Y$
under the adjunction, which unfolding the description above, is the
vertical map in the factorisation of the map
$\fhom(X, Y) \rightarrow Y$ as opcartesian map followed by vertical
map.

We now show that $X$ admits an $FG$-coalgebra structure.
\begin{lemma}
  \label{lem:genhascoalgstr}
  Let $X$ be the object appearing in the definition of $L_1$. Then $X$
  admits an $L_1$-coalgebra structure.
\end{lemma}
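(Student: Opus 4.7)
The plan is to present $X$ as lying in the image of the left adjoint $F$, and then invoke the standard fact that for any adjunction $F \dashv G$, every object of the form $F(A)$ carries a canonical coalgebra structure for the comonad $FG$, given by $F(\eta_A)$ where $\eta$ is the unit.

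First I would observe that the identity map $1_X \colon X \to X$ is both cartesian over $r$ and opcartesian over $q$, since identity morphisms are always (op)cartesian over any functor. Hence the span
\begin{equation*}
  A_X \;:=\; \bigl(X \stackrel{1_X}{\longleftarrow} X \stackrel{1_X}{\longrightarrow} X\bigr)
\end{equation*}
is a legitimate object of $\bicart(X)$, and by the definition of $F$ we have $F(A_X) = X$.

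The proposed coalgebra structure is $\alpha := F(\eta_{A_X}) \colon X \to FG(X) = L_1(X)$. Unfolding the construction of $G$, the morphism $\eta_{A_X} \colon A_X \to G(X)$ in $\bicart(X)$ is determined by its two components: the left component $X \to \fhom(X,X)$ is the unique map furnished by the universal property of the hom object applied to the identity span $X \stackrel{1}{\leftarrow} X \stackrel{1}{\rightarrow} X$ (which has cartesian left leg, so the universal property applies), and the right component $X \to \coprod_{q(m)}\fhom(X,X)$ is obtained from the former by postcomposition with the opcartesian map. The latter is $\alpha$ itself.

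That $\alpha$ satisfies the coalgebra axioms is then purely formal. The counit law $\epsilon_X \circ \alpha = 1_X$ follows from the triangle identity $\epsilon_{F(A_X)} \circ F(\eta_{A_X}) = 1_{F(A_X)}$. Coassociativity $L_1(\alpha) \circ \alpha = \Delta_X \circ \alpha$, with comultiplication $\Delta = F \eta G$, follows by applying $F$ to the naturality square $\eta_{GF(A_X)} \circ \eta_{A_X} = GF(\eta_{A_X}) \circ \eta_{A_X}$. Finally, $\alpha$ is vertical over $\cat{E}$ because the adjunction $F \dashv G$ is over $\cat{E}$ with vertical unit, so $F$ preserves verticality. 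There is no serious obstacle; the only substantive step is the observation that $1_X$ qualifies simultaneously as a cartesian leg and an opcartesian leg, and everything else is a direct invocation of the general theory of adjoint-induced comonads.
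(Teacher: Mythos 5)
Your proof is correct and follows essentially the same route as the paper: exhibit $X$ as $F$ applied to the identity span (which lies in $\bicart(X)$ because identities are both cartesian and opcartesian), then invoke the standard fact that $F(A)$ carries the $FG$-coalgebra structure $F(\eta_A)$. The extra unfolding of $\eta_{A_X}$ and the verification of the coalgebra axioms are fine but not needed beyond what the paper states.
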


\begin{proof}  
  Define $\bar{X}$ to be the span
  $X \stackrel{1_X}{\longleftarrow} X \stackrel{1_X}{\longrightarrow}
  X$. The identity map is both cartesian and opcartesian, so this
  gives an object of $\bicart(X)$. We clearly have $X =
  F(\bar{X})$.

  Recall that for any adjunction
  $F \dashv G \colon \cat{C} \rightarrow \cat{D}$ and any object $Z$
  of $\cat{C}$, $F(Z)$ admits an $F G$-coalgebra structure, given by
  $F \eta_Z$, where $\eta$ is the unit of the adjunction.

  Applying this to $\bar{X}$ gives us an $FG$-coalgebra structure on
  $F(\bar{X}) = X$.
\end{proof}

Finally, we show a general result about when the comonad is
fibred.
\begin{definition}
  We say \emph{$q$-opcartesian maps are $r$-fibred} if the following
  holds. Suppose we are given a diagram of the following form in
  $\cat{F}$.
  \begin{equation*}
    \xymatrix{ X \ar[r]^f \ar[d]_h & Y \ar[d]^k \\
      W \ar[r]_g & Z}
  \end{equation*}
  Suppose further that $k$ and $h$ are cartesian over $r$, that $g$ is
  both vertical over $r$ and opcartesian over $p$, and that $f$ is
  vertical over $r$. Then the condition states that for every such
  square $f$ is also opcartesian over $q$.
\end{definition}

\begin{lemma}
  \label{lem:abststep1fibrd}
  Suppose that $r$ is a bifibration satisfying the Beck-Chevalley
  condition, that $q$-opcartesian maps are $r$-fibred and that
  $f \colon Y' \rightarrow Y$ is cartesian over $r$. Then
  $L_1 f \colon L_1 Y' \rightarrow L_1 Y$ is also cartesian over $r$.
\end{lemma}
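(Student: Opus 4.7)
The plan is to show that $L_1 f$ can be identified (up to vertical isomorphism) with the $r$-cartesian lift of $r(f) = r(L_1 f)$ at $L_1 Y$, by exhibiting that lift as a $q$-opcoproduct satisfying the universal property that defines $L_1 Y'$.

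First, I would apply lemma \ref{lem:homprojpb} to the locally small fibration $r$ and the $r$-cartesian map $f \colon Y' \to Y$. This yields a pullback square in $\cat{B}$ relating the bases of the two hom objects with $r(Y')$ and $r(Y)$, and it simultaneously exhibits the induced comparison map $\hat f \colon \fhom(X, Y') \to \fhom(X, Y)$ in $\cat{F}$ as $r$-cartesian (over the pullback projection in $\cat{B}$).

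Second, using the fibration structure of $r$, I would take the $r$-cartesian lift $\phi \colon W \to L_1 Y$ of $r(f)$ at $L_1 Y$. The universal property of $\phi$ produces a unique $r$-vertical comparison map $u \colon L_1 Y' \to W$ with $\phi \circ u = L_1 f$, and it also produces a unique map $\psi \colon \fhom(X, Y') \to W$ over $\tau'$ satisfying $\phi \circ \psi = k \circ \hat f$, where $k \colon \fhom(X, Y) \to L_1 Y$ is the $q$-opcartesian map defining $L_1 Y$. (The existence of $\psi$ uses the pullback from step one to match the $\cat{B}$-bases.) If I can show that $\psi$ is $q$-opcartesian over $q(m')$, then $W$ satisfies the universal property of $\coprod_{q(m')} \fhom(X, Y') = L_1 Y'$, giving an isomorphism that must coincide with $u$; this forces $L_1 f \cong \phi$ and finishes the proof.

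The heart of the argument is thus to show that $\psi$ is $q$-opcartesian. This is where the hypothesis that $q$-opcartesian maps are $r$-fibred enters. Using the $r$-bifibration structure, I would decompose both $k$ and $\psi$ as an $r$-opcartesian part followed by an $r$-vertical part. Applying the Beck-Chevalley condition (proposition \ref{prop:opfibcart}) to the pullback square from step one shows that the induced map between the two $r$-opcoproduct targets is $r$-cartesian. This produces a square of $r$-verticals with $r$-cartesian sides in which the $r$-vertical residual of $k$ is still $q$-opcartesian (inherited from $k$), so the hypothesis transfers $q$-opcartesianness to the $r$-vertical residual of $\psi$; composing with the $r$-opcartesian piece then yields $q$-opcartesianness of $\psi$ itself. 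The main obstacle will be the careful bookkeeping in this final step, in particular verifying that the $r$-vertical residual of the $q$-opcartesian map $k$ is in the form to which the hypothesis can be applied.
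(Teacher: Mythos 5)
Your proposal is correct and follows essentially the same route as the paper's proof: it factors the $q$-opcartesian structure maps into $r$-opcartesian and $r$-vertical parts (lemma \ref{lem:opcartincomp}), uses lemma \ref{lem:homprojpb} plus Beck-Chevalley (proposition \ref{prop:opfibcart}) to make the middle comparison map $r$-cartesian, and then invokes the hypothesis that $q$-opcartesian maps are $r$-fibred to transfer opcartesianness and identify $L_1 f$ with a cartesian lift. The only difference is presentational — you introduce the cartesian lift $\phi$ at the outset and argue toward the universal property of $L_1 Y'$, whereas the paper decomposes the comparison square first and reruns the proposition \ref{prop:opfibcart} argument at the end — but the content is the same.
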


\begin{proof}
  We consider the following square.
  \begin{equation}
    \label{eq:2}
    \begin{gathered}
      \xymatrix{ \sigma'^\ast(X) \ar[r] \ar[d] & \coprod_{q(m')}
        \fhom(X, Y') \ar[d] \\
        \sigma^\ast(X) \ar[r] & \coprod_{q(m)} \fhom(X, Y)}
    \end{gathered}
  \end{equation}
  The horizontal maps are opcartesian over $q$ so by lemma
  \ref{lem:opcartincomp} we can factorise each of them as maps
  opcartesian over $r$ followed by maps opcartesian over $q$ and
  vertical over $r$. Using the opcartesianess of the maps on the
  left, we also get a canonical map $t$ splitting the square into two,
  as below.
  \begin{equation*}
    \begin{gathered}
      \xymatrix{ \sigma'^\ast(X) \ar[r] \ar[d] & \cdot \ar[r] \ar[d]^t &
        \coprod_{q(m')}
        \fhom(X, Y') \ar[d] \\
        \sigma^\ast(X) \ar[r] & \cdot \ar[r] & \coprod_{q(m)} \fhom(X, Y)}
    \end{gathered}
  \end{equation*}
  
  Since the horizontal maps on the right are both vertical over $r$,
  the left hand square lies over the same square in $\cat{B}$ as
  \eqref{eq:2} did, which is the one below.
  \begin{equation*}
    \xymatrix{ \fhom(X, Y') \ar[r] \ar[d] & r(Y') \ar[d] \\
      \fhom(X, Y) \ar[r] & r(Y) }
  \end{equation*}
  However, by lemma \ref{lem:homprojpb} and the cartesianness of $f$
  this is a pullback square. Using this together with the
  Beck-Chevalley condition and proposition \ref{prop:opfibcart} we see
  that the $t$ must be cartesian.

  Since we now know $t$ is cartesian, we can use the assumption that
  $q$-opcartesian maps are $r$-fibred together with the same argument
  that we used in the proof of proposition \ref{prop:opfibcart} to
  show that the right hand map in \eqref{eq:2} is cartesian over $r$,
  as required.
\end{proof}

\subsection{Applying the Abstract Description}
\label{sec:apply-abstr-descr}

Suppose we are given a vertical map $m \colon U \rightarrow V$ over $I
\in \cat{B}$.

We will use this to define a lawfs over $p$ denoted \emph{step-one of
  the small object argument}, $(L_1, R_1)$.

Suppose we are given a vertical map $f \colon X \rightarrow Y$ over
$J$.

First we view $m$ and $f$ both as elements of $\vrt{\cat{E}}$
over $I$ and $J$ respectively. We then apply the abstract version of
step-one to the following diagram.
\begin{equation}
  \label{eq:9}
  \begin{gathered}
    \xymatrix{ \vrt{\cat{E}} \ar[dr]_{p \circ \cod = p \circ \dom}
      \ar[rr]^{\dom} & & \cat{E} \ar[dl]^p \\
      & \cat{B} & } 
  \end{gathered}
\end{equation}

Unfolding the abstract definition of $L_1$ in this case, we see that
for a fixed $m$ over $I$, we define the comonad $L_1$ over $\dom$ as
follows. Given a vertical $f$ we construct $L_1 f$ by first taking the
hom object from $m$ to $f$, which is just the universal lifting
problem from $m$ to $f$ and consists of the object $\fhom(m, f)$, maps
$\sigma \colon \fhom(m, f) \rightarrow I$ and
$\tau \colon \fhom(m, f) \rightarrow J$, and a map
$h_1 \colon \sigma^\ast(m) \rightarrow f$ in $\vrt{\cat{E}}$ over
$\tau$ (which recall we can also view as a square in $\cat{E}$ where
the horizontal maps lie over $\tau$). We then factor $h_1$ as an
opcartesian map followed by a vertical map, and the vertical map is
the value of $L_1 f$ together with the counit at $f$. Since $L_1$ is a
comonad over $\dom$ it is an lawfs.

\begin{lemma}
  \label{lem:step1algrmap}
  Let $(L_1, R_1)$ be the lawfs obtained by applying the abstract
  version of step-one to \eqref{eq:9}.
  Then $R_1$-algebra structures on $f$ correspond precisely to solutions of
  the universal lifting problem of $m$ against $f$.
\end{lemma}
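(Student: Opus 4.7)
The plan is to unfold both sides of the claimed correspondence and match them via the pushout characterisation of opcartesian maps over $\dom$ from lemma \ref{lem:domopcartchar}.

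First I would unfold what an $R_1$-algebra structure on $f$ is. Since $(L_1, R_1)$ is an lawfs, the associated pointed endofunctor $R_1$ on $\vrt{\cat{E}}$ over $\cod$ has point $\lambda_f \colon f \to R_1 f$ given by the square with top edge $L_1 f \colon X \to K_1 f$ (writing $K_1 f$ for the codomain of $L_1 f$) and bottom edge $1_Y$. An $R_1$-algebra is a morphism $\alpha \colon R_1 f \to f$ in $\vrt{\cat{E}}$ satisfying the unit law $\alpha \circ \lambda_f = 1_f$. By remark \ref{rmk:novertinalg}, the unit law forces $\cod(\alpha) = 1_Y$, so specifying $\alpha$ is the same as specifying a morphism $j \colon K_1 f \to X$ in $\cat{E}$ with $j \circ L_1 f = 1_X$ and $f \circ j = R_1 f$.

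Next I would unfold the explicit form of $L_1 f$ from the abstract step-one construction applied to \eqref{eq:9}. Writing $a \colon \sigma^\ast(U) \to X$ and $b \colon \sigma^\ast(V) \to Y$ for the top and bottom edges (both over $\tau$) of the universal lifting square $h_1 \colon \sigma^\ast(m) \to f$, the construction factors $h_1$ as an opcartesian-over-$\dom$ map followed by a vertical-over-$\dom$ map, with the latter providing the counit component $L_1 f \to f$. By lemma \ref{lem:domopcartchar} the opcartesian piece is a pushout square in $\cat{E}$, so $K_1 f$ is the pushout of $X \xleftarrow{a} \sigma^\ast(U) \xrightarrow{\sigma^\ast(m)} \sigma^\ast(V)$; the map $L_1 f$ is one coprojection, and writing $\iota \colon \sigma^\ast(V) \to K_1 f$ for the other, $R_1 f$ is the unique map with $R_1 f \circ L_1 f = f$ and $R_1 f \circ \iota = b$.

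Finally I would apply the universal property of the pushout. Maps $j \colon K_1 f \to X$ with $j \circ L_1 f = 1_X$ correspond bijectively, via $j \mapsto j \circ \iota$, to maps $j' \colon \sigma^\ast(V) \to X$ satisfying $j' \circ \sigma^\ast(m) = a$, i.e.\ the upper-triangle condition of a solution. Under this correspondence, the remaining condition $f \circ j = R_1 f$ is automatic after precomposing with $L_1 f$ (both sides reduce to $f$), and after precomposing with $\iota$ it becomes $f \circ j' = b$, which is the lower-triangle condition. Hence $R_1$-algebra structures on $f$ correspond exactly to solutions of the universal lifting problem from $m$ to $f$. The only mild obstacle is bookkeeping with fibres, since the pushout is taken in $\cat{E}$ yet $L_1 f$ must be vertical; this is handled by the factorisation in the fourth clause of lemma \ref{lem:domopcartchar}, which identifies the pushout as a fibrewise one after first absorbing the cartesian twist along $\tau$.
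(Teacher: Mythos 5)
Your proof is correct and follows essentially the same route as the paper's: unfold step-one via the pushout characterisation of $\dom$-opcartesian maps in lemma \ref{lem:domopcartchar}, then use the universal property of the pushout to match diagonal fillers of the outer rectangle (solutions of the universal lifting problem) with diagonal fillers of the right-hand square ($R_1$-algebra structures). Your version just spells out the identification of algebra structures with fillers and the pushout bookkeeping in more detail than the paper does.
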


\begin{proof}
  As stated above, in the first part of the abstract version of step-one
  we take the universal lifting problem and in the second part we
  factor the map $\fhom(m, f) \rightarrow f$ as a map which is
  opcartesian over $\dom$, followed by a map which is vertical over
  $\dom$. By lemma \ref{lem:domopcartchar} this gives us the following
  diagram in $\cat{E}$ where the rectangle is the universal lifting
  problem, the right hand square is the factorisation of $f$ given by
  step-one, and the left hand square is a pushout in the category
  $\cat{E}$.
  \begin{equation*}
    \xymatrix{ \sigma^\ast(U) \ar[r] \ar[d]_{\sigma^\ast(m)} & X \ar[d]^{L_1 f} \ar@{=}[r]
      & X \ar[d]^f \\
      \sigma^\ast(V) \ar[r] & K_1 f \pushoutcorner \ar[r]_{R_1 f} & Y
    }
  \end{equation*}
  It is now easy to see from the universal property of the pushout
  that diagonal fillers of the whole rectangle correspond to diagonal
  fillers of the right hand square. However, diagonal fillers of the
  right hand square are necessarily vertical over $p$ and are
  precisely $R_1$-algebra structures on $f$.
\end{proof}

\begin{lemma}
  \label{lem:miscoalg}
  Let $m$ be the family of generating left maps for $(L_1, R_1)$. $m$
  has the structure of an $L_1$-coalgebra.
\end{lemma}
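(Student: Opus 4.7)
The plan is to reduce the claim to Lemma \ref{lem:genhascoalgstr}, which is exactly the abstract analogue of this statement. Recall that the lawfs $(L_1, R_1)$ was defined by applying the abstract construction of definition \ref{def:absstep1} to the diagram \eqref{eq:9}, taking the distinguished object $X$ of the abstract setup to be $m$ viewed as an object of $\vrt{\cat{E}}$ over $I$. The comonad $L_1$ arising in that way is therefore literally the comonad $FG$ of Lemma \ref{lem:genhascoalgstr}, and by that lemma the generating object $m$ admits an $FG$-coalgebra structure.

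The construction in the proof of Lemma \ref{lem:genhascoalgstr} gave this coalgebra structure by applying $F$ to the unit of $F \dashv G$ at the object $\bar m = (m \stackrel{1_m}{\leftarrow} m \stackrel{1_m}{\rightarrow} m)$ of $\bicart(m)$; the identity $1_m$ is both cartesian over $p \circ \cod$ and opcartesian over $\dom$, so $\bar m$ genuinely lies in $\bicart(m)$. Concretely, the resulting coalgebra structure corresponds, via the adjunction $F \dashv G$, to the morphism $\bar m \to G(m)$ in $\bicart(m)$ obtained from the universal property of the hom object $\fhom(m,m)$ applied to the span $m \stackrel{1_m}{\leftarrow} m \stackrel{1_m}{\rightarrow} m$ (producing the ``identity'' lifting problem), followed by the canonical opcartesian map into $\coprod_{q(\cdot)}\fhom(m,m)$. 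Unfolding through the explicit description of $L_1$ in Section \ref{sec:apply-abstr-descr}, this is exactly the square in $\vrt{\cat{E}}$ given by the pushout corner of the factorisation of the identity lifting problem of $m$ against itself.

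I do not expect any real obstacle here: the abstract framework was set up precisely so that this statement becomes a one-line corollary of Lemma \ref{lem:genhascoalgstr}. The only thing to verify, which is routine, is the trivial but necessary observation that the identity $1_m$ is indeed cartesian over $p \circ \cod$ and opcartesian over $\dom$, so that the span $\bar m$ belongs to the category $\bicart(m)$ to which the abstract construction applies.
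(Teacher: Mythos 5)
Your proposal is correct and follows exactly the paper's route: the paper's proof of this lemma is simply an appeal to Lemma \ref{lem:genhascoalgstr}, applied to the instance of the abstract construction in diagram \eqref{eq:9} with distinguished object $m$. Your additional unfolding of the coalgebra structure is accurate but not needed beyond that citation.
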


\begin{proof}
  By lemma \ref{lem:genhascoalgstr}.
\end{proof}

We summarise the above as the following theorem.
\begin{theorem}[Step-one of the small object argument]
  \label{thm:step1}
  Let $p \colon \cat{E} \rightarrow \cat{B}$ be a locally small
  bifibration. Let $m$ be a vertical map over $p$. Then there is an
  lawfs $(L_1, R_1)$ such that $R_1$-algebra structures on a vertical
  map $f$ correspond precisely to solutions of the universal lifting
  problem from $m$ to $f$, and $m$ can be given the structure of an
  $L_1$-coalgebra.
\end{theorem}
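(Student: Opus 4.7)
The plan is to assemble the theorem directly from the preceding two lemmas by invoking the abstract construction of Definition~\ref{def:absstep1}. First I would instantiate that definition at the diagram~\eqref{eq:9}: take $\cat{F}=\vrt{\cat{E}}$, $q=\dom$ and $r = p\circ\cod = p\circ\dom$, with the distinguished object of $\cat{F}$ chosen to be the given generating vertical map $m$. The hypotheses demanded by Definition~\ref{def:absstep1} are that $r$ is a locally small fibration and that $q$ is an opfibration. The first is Lemma~\ref{lem:vertarrhom} (using that $p$ is locally small and that $\cat{B}$ has the required pullbacks); the second is Lemma~\ref{lem:domopcartchar} (using that the fibres of $p$ have pushouts, which is part of the ambient bifibration and cocompleteness assumptions in force in this section).

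The output of Definition~\ref{def:absstep1} is a comonad $L_1$ on $\vrt{\cat{E}}$ over $\dom$, and by definition a comonad over $\dom$ is exactly an lawfs over $\cat{B}$; call the associated pointed endofunctor on the codomain side $R_1$. The two advertised properties then follow from the subsequent lemmas. Lemma~\ref{lem:step1algrmap} identifies $R_1$-algebra structures on a vertical map $f$ with solutions of the universal lifting problem from $m$ to $f$: its argument uses Lemma~\ref{lem:domopcartchar} to unfold the abstract opcartesian/vertical factorisation as a genuine pushout square in $\cat{E}$ followed by a vertical square over the identity in $\cat{B}$, after which the universal property of the pushout converts fillers of the universal lifting problem into fillers of the right-hand square, which is exactly what an $R_1$-algebra structure provides. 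The $L_1$-coalgebra structure on $m$ is then the instance of Lemma~\ref{lem:genhascoalgstr} recorded as Lemma~\ref{lem:miscoalg}, supplied as $F\eta_{\bar m}$ where $\bar m$ is the trivial span from $m$ to itself.

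The main subtlety I anticipate is purely bookkeeping: keeping track of which data is vertical over which of the three relevant fibrations ($p$, $\cod$ and the composite $p\circ\cod$), so that the comonad produced by the abstract construction really delivers an lawfs over $p$ and not merely a comonad on $\vrt{\cat{E}}$. This is however built into Definition~\ref{def:absstep1}, where $F$, $G$ and the unit and counit of the adjunction $F\dashv G$ are all verified to be vertical over $\cat{E}$, and $\cat{E}\to\cat{B}$ is just the further projection; so the whole construction factors through $\cat{B}$ and no new argument beyond invoking Lemmas~\ref{lem:step1algrmap} and~\ref{lem:miscoalg} is required.
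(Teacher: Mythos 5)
Your proposal is correct and follows exactly the paper's own proof: the paper likewise constructs $(L_1,R_1)$ by applying Definition~\ref{def:absstep1} to the fibrations in diagram~\eqref{eq:9} and then cites Lemmas~\ref{lem:step1algrmap} and~\ref{lem:miscoalg} for the two stated properties. Your extra remarks on verifying the hypotheses of the abstract construction (local smallness of $p\circ\dom$ via Lemma~\ref{lem:vertarrhom} and the opfibration structure on $\dom$ via Lemma~\ref{lem:domopcartchar}, which does need fibrewise pushouts) are details the paper leaves implicit.
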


\begin{proof}
  An lawfs is a comonad over $p \circ \dom$. We construct this as the
  comonad obtained by applying the abstract version of step-one
  (definition \ref{def:absstep1}) to the fibrations in diagram
  \eqref{eq:9}. The rest of the theorem is then lemmas
  \ref{lem:step1algrmap} and \ref{lem:miscoalg}.
\end{proof}

\begin{theorem}
  \label{thm:step1fibred}
  Suppose that $p\colon \cat{E} \rightarrow \cat{B}$ is a locally
  small bifibration that satisfies the Beck-Chevalley condition and
  has fibred pushouts. Then step-one of the small object argument,
  $(L_1, R_1)$ is a fibred lawfs.
\end{theorem}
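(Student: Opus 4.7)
The strategy is to invoke Lemma~\ref{lem:abststep1fibrd} for the abstract step-one construction applied to diagram~\eqref{eq:9}, specialised to $r = p \circ \cod = p \circ \dom$ and $q = \dom$. If the hypotheses of that lemma can be verified in this setting, it yields that $L_1$ preserves cartesian maps over $r$. By proposition~\ref{prop:fibredchar} this is precisely the assertion that the functorial factorisation, and hence the lawfs $(L_1, R_1)$, is fibred.

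To apply the lemma, the first requirement is that $r$ is a bifibration satisfying Beck--Chevalley. We already know that $r$ is a fibration. For the opfibration structure, I would use the bifibration structure of $p$: given $u \colon I \to J$ in $\cat{B}$ and a vertical map $m \colon U \to V$ over $I$, take the opcartesian lift to be the canonical map $m \to \coprod_u m$, where $\coprod_u m$ is produced fibrewise via the functoriality of $\coprod_u$ established around proposition~\ref{prop:opcartfunct}. The Beck--Chevalley condition for $r$ then reduces, by examining domains and codomains of squares in $\vrt{\cat{E}}$ separately, to the Beck--Chevalley condition for $p$.

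The second requirement is that $\dom$-opcartesian maps are $r$-fibred. Concretely, this asserts that in any square in $\vrt{\cat{E}}$ whose vertical sides are cartesian over $r$ (equivalently, levelwise cartesian over $p$), whose bottom horizontal is both vertical over $r$ and opcartesian over $\dom$, and whose top horizontal is vertical over $r$, the top is also opcartesian over $\dom$. By lemma~\ref{lem:domopcartchar}, a morphism in $\vrt{\cat{E}}$ that is opcartesian over $\dom$ and vertical over $r$ is precisely a pushout square living in a single fibre $\cat{E}_I$ of $p$. So the condition rephrases as the statement that such fibred pushouts are preserved under reindexing along cartesian morphisms of $p$, which is exactly the content of the fibred pushouts hypothesis.

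I expect the main obstacle to be carefully unpacking the abstract condition ``$\dom$-opcartesian maps are $r$-fibred'' in terms of pushouts in fibres of $p$, and paying attention to the levelwise nature of cartesian and opcartesian maps in $\vrt{\cat{E}}$. Once this translation is in hand, the fibred pushouts assumption delivers precisely what is needed, and Lemma~\ref{lem:abststep1fibrd} concludes the proof.
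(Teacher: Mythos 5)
Your proposal is correct and follows essentially the same route as the paper: apply Lemma~\ref{lem:abststep1fibrd} to diagram~\eqref{eq:9}, deduce Beck--Chevalley for $p \circ \dom$ from the levelwise characterisation of (op)cartesian maps in $\vrt{\cat{E}}$, and identify the condition that $\dom$-opcartesian maps are $r$-fibred with the fibred pushouts hypothesis (your unpacking via Lemma~\ref{lem:domopcartchar} just makes explicit what the paper asserts in one line). No gaps.
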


\begin{proof}
  We aim to apply lemma \ref{lem:abststep1fibrd}.
  
  Since cartesian maps and opcartesian maps in $\vrt{\cat{E}}$ are
  just maps that are levelwise cartesian and opcartesian respectively,
  the Beck-Chevalley condition for $p$ implies the same for $\dom
  \circ p$. Next, note that the condition that $\dom$-opcartesian maps
  are $p$-fibred is exactly the assumption that pushouts are fibred.

  Hence by lemma \ref{lem:abststep1fibrd} we can deduce that $L_1$ is
  fibred over $\cat{B}$ and so is a fibred lawfs.
\end{proof}

\section{Criteria for the Existence of Algebraically Free Rawfs's and
  Awfs's}
\label{sec:crit-exist-algebr}

In this section we show that the existence of algebraically free
awfs's is equivalent to the existence of initial algebras for certain
pointed endofunctors. The motivation for doing this is that initial
algebras allow us to formalise in category theory the notion of
inductively generated object. We can then focus on arguments that
produce objects that are intuitively the ``least'' objects satisfying
some definition, then formalise this idea by showing they are initial
algebras, then deduce that we get awfs's by applying the results of
this section. This will be used by the author in a future paper alongside
a generalisation of dependent $W$-types, in which dependent polynomial
endofunctors are generalised to a certain class of pointed
endofunctors that will include as a special case the pointed
endofunctors appearing in this section in codomain fibrations.

\subsection{The Construction of Adjunctions and Monads}

The following lemma is based on an observation by Van den Berg and
Garner in \cite[Proposition 3.3.6]{vdberggarnerpathobj}.

\begin{lemma}
  \label{lem:algfibred}
  Suppose that $T \colon \cat{E} \rightarrow \cat{E}$ is a (not
  necessarily fibred) pointed endofunctor over the fibration
  $p \colon \cat{E} \rightarrow \cat{B}$. Write
  $U \colon \xalg{T} \rightarrow \cat{E}$ for the forgetful
  functor. Then for every cartesian map $f \colon X \rightarrow U(Y)$
  in $\cat{E}$ there is a unique cartesian map $g$ in $\xalg{T}$ such
  that $U(g) = f$ and $\cod{g} = Z$.

  Consequently, $p \circ U$ is a fibration, and $U$ is a fibred
  functor, regardless of whether or not $T$ is fibred.
\end{lemma}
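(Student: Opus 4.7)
The plan is to construct the cartesian lift explicitly using the universal property of $f$ combined with the algebra structure of $Y$, then verify the required compatibility properties.

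First I would set up the construction. Given a cartesian $f\colon X \to U(Y)$ over some $u\colon p(X) \to p(U(Y))$ in $\cat{B}$, and an algebra structure $\alpha_Y\colon T(U(Y)) \to U(Y)$ on $Y$, I want to equip $X$ with an algebra structure $\alpha_X\colon T(X) \to X$ making $f$ into an algebra morphism. Since $T$ is an endofunctor over $\cat{B}$, the map $T(f)$ lies over $u$, and since $\alpha_Y$ is vertical (by Remark \ref{rmk:novertinalg}), the composite $\alpha_Y \circ T(f)\colon T(X) \to U(Y)$ also lies over $u$. By cartesianness of $f$, there is a unique vertical map $\alpha_X\colon T(X) \to X$ with $f \circ \alpha_X = \alpha_Y \circ T(f)$. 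This makes $f$ tautologically an algebra morphism, and by construction $f$ is then the $U$-image of a map $g$ in $\xalg{T}$.

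Next I would verify the pointed-endofunctor unit law for $\alpha_X$. Postcomposing $\alpha_X \circ \eta_X$ with $f$ gives $f \circ \alpha_X \circ \eta_X = \alpha_Y \circ T(f) \circ \eta_X = \alpha_Y \circ \eta_{U(Y)} \circ f = f$ by naturality of $\eta$ and the unit law for $Y$; the map $\mathrm{id}_X$ also postcomposes with $f$ to give $f$, and both are vertical, so cartesianness of $f$ forces $\alpha_X \circ \eta_X = \mathrm{id}_X$.

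The main thing to check is that $g$ is cartesian in $\xalg{T}$. Given any algebra $(Z,\beta)$ with a morphism $k\colon Z \to Y$ over $u \circ w$ for some $w$, cartesianness of $f$ in $\cat{E}$ already produces a unique $h\colon Z \to X$ over $w$ with $f \circ h = U(k)$; I need to see that $h$ automatically respects the algebra structures. Both $h \circ \beta$ and $\alpha_X \circ T(h)$ are maps $TZ \to X$ lying over $w$ (since $\beta$ and $\alpha_X$ are vertical), and postcomposing either with $f$ yields $\alpha_Y \circ T(U(k))$ by an easy diagram chase, so cartesianness of $f$ forces them to agree. This is the core of the argument; uniqueness of the lift $g$ given $\cod(g)=Y$ follows because any such lift must equip $X$ with an algebra structure satisfying $f\circ \alpha_X' = \alpha_Y \circ T(f)$, and the cartesianness of $f$ makes $\alpha_X'$ unique.

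For the consequence, given any $Y \in \xalg{T}$ and $u\colon I \to p(U(Y))$ in $\cat{B}$, I would use the cleavage of $p$ to produce a cartesian $\bar{u}(U(Y))\colon u^*U(Y) \to U(Y)$ in $\cat{E}$ and then apply the main statement to lift this uniquely to a cartesian map in $\xalg{T}$; this gives a cleavage for $p \circ U$, and $U$ preserves the chosen cartesian maps by construction, so $U$ is fibred. I don't expect any step to pose a significant obstacle — the only slight subtlety is keeping track of which diagrams lie over which base maps so as to apply cartesianness in the right form.
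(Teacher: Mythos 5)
Your proposal is correct and follows essentially the same route as the paper's proof: use cartesianness of $f$ to induce the unique vertical algebra structure $\alpha_X$ on $X$ from $\alpha_Y \circ T(f)$, then check the unit law and cartesianness in $\xalg{T}$ (details the paper leaves as "straightforward"). Your verifications of those details are accurate, including the correct reading of the statement's typo $\cod g = Z$ as $\cod g = Y$.
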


\begin{proof}
  Let $f \colon X \rightarrow Y$ be cartesian in in $\cat{E}$ and let
  $\alpha \colon T Y \rightarrow Y$ be a $T$-algebra on $Y$. Then we
  have the following diagram.
  \begin{equation*}
    \begin{gathered}
      \xymatrix{T(X) \ar[rr]^{T(f)} & & T Y
        \ar@/^/[dr]^\alpha & \\
        & X \ar[rr]_{f} & & Y}
    \end{gathered}
  \end{equation*}
  Using the fact that $f$ is cartesian, there is a
  unique vertical map $T(X) \rightarrow X$
  making a commutative square. It is straightforward to check that
  this satisfies the unit law, and so is a $T$-algebra structure on
  $X$ and that the resulting morphism of $T$-algebras is
  cartesian in $\xalg{T}$.
\end{proof}

\begin{lemma}
  \label{lem:inittoadj}
  Suppose we are given a fibred functor
  $G \colon \cat{F} \rightarrow \cat{E}$ over $\cat{B}$. Suppose
  further that we are given a (not necessarily fibred) choice
  of initial object of $(X \downarrow G_I)$ for each $I \in \cat{B}$
  and each $X \in \cat{E}_I$ (where $G_I$ is the restriction
  $\cat{F}_I \rightarrow \cat{E}_I$). Then we have a (not necessarily
  fibred) adjunction $F \dashv G$.

  Furthermore, if the initial object at $X$ is
  $(FX, X \stackrel{\eta_X}{\longrightarrow} G F X)$, then these form
  the action on objects of the left adjoint and the unit of the
  adjunction respectively.
\end{lemma}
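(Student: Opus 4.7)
The plan is to define $F$ on objects by taking the initial object as stipulated, then extend to morphisms using the fibred structure of $G$, and finally read off the adjunction from the universal property.

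On objects, I take $FX$ to be the codomain of the given initial object of $(X \downarrow G_I)$ and $\eta_X \colon X \to GFX$ to be the corresponding arrow. For a morphism $f \colon X \to X'$ in $\cat{E}$ lying over $u \colon I \to I'$ in $\cat{B}$, I first form the composite $\eta_{X'} \circ f \colon X \to GFX'$, which again lies over $u$. Because $G$ is fibred, applying $G$ to the chosen cartesian lift $\bar{u}(FX') \colon u^\ast(FX') \to FX'$ yields a cartesian morphism $G\bar{u}(FX') \colon G u^\ast(FX') \to GFX'$ over $u$. Cartesianness therefore produces a unique vertical $\tilde{f} \colon X \to G u^\ast(FX')$ in $\cat{E}_I$ with $G\bar{u}(FX') \circ \tilde{f} = \eta_{X'} \circ f$. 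Now the fibrewise initiality of $(FX, \eta_X)$ in $(X \downarrow G_I)$ yields a unique vertical map $g \colon FX \to u^\ast(FX')$ with $Gg \circ \eta_X = \tilde{f}$, and I define $Ff := \bar{u}(FX') \circ g$. Functoriality (preservation of identities and of composition) follows from the uniqueness clauses of these two universal properties: both sides of each equation satisfy the same defining property.

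For the adjunction, I propose the hom-set bijection $\Phi_{X,Y} \colon \cat{F}(FX, Y) \to \cat{E}(X, GY)$ sending $g \mapsto Gg \circ \eta_X$. The inverse is built by exactly the same two-step recipe as above: for $h \colon X \to GY$ over $u \colon I \to p(Y)$, use the cartesianness of $G\bar{u}(Y)$ to obtain a unique vertical $\tilde{h} \colon X \to G u^\ast(Y)$, then use fibrewise initiality of $(FX, \eta_X)$ to obtain a unique vertical $FX \to u^\ast(Y)$, and post-compose with $\bar{u}(Y)$. The two constructions are mutually inverse by the uniqueness clauses they each invoke, and naturality in $X$ and $Y$ is another direct uniqueness argument.

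The main obstacle, such as it is, is bookkeeping: the given universal property is only fibrewise while the desired adjunction is global, so every non-vertical morphism must first be routed through a cartesian lift before the fibrewise initiality can be applied. Once this pattern is isolated, everything else is forced by uniqueness. In particular, because the universal property of $(FX, \eta_X)$ is only ever invoked against arrows living in $\cat{E}_I$, there is no need for the chosen initial objects to be stable under reindexing; this is why $F$ need not be fibred and the adjunction need not be a fibred adjunction.
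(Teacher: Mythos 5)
Your proof is correct and follows exactly the route the paper intends: the paper's own proof is just the remark that this is ``a straightforward variation on the standard result for adjunctions,'' i.e.\ the universal-arrow construction of a left adjoint, with the fibrewise initiality made global by first factoring every non-vertical arrow through a cartesian lift of $G$ -- which is precisely your two-step recipe. The only point worth making explicit is that combining the two uniqueness clauses shows $Ff$ is the \emph{unique} map over $u$ with $G(Ff)\circ\eta_X=\eta_{X'}\circ f$, which is the cleanest way to justify your functoriality and naturality claims.
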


\begin{proof}
  This is a straightforward variation on the standard result for
  adjunctions.
\end{proof}

\begin{remark}
  It is necessary in lemma \ref{lem:inittoadj} to assume that $G$ is
  fibred, since this is the case for any adjunction by remark
  \ref{rmk:adjnotfibred}. If we do have a fibred choice of initial
  object then the same construction clearly gives us a fibred left
  adjoint (see \cite[Proposition 2.2.2]{pareschumacheradjfun}).
\end{remark}

\begin{definition}
  Let $T$ be a pointed endofunctor over a fibration
  $p \colon \cat{E} \rightarrow \cat{B}$. A monad $M$ over $p$ is
  \emph{algebraically free on $T$} if there is a morphism of pointed
  endofunctors $\xi \colon T \rightarrow M$ such that $\xi$ is
  levelwise vertical and the canonical map
  $\bar{\xi} \colon \xalg{M} \rightarrow \xalg{T}$ is an isomorphism.
\end{definition}

\begin{proposition}
  \label{prop:monadictoalgfree}
  Let $T$ be a pointed endofunctor over a fibration $p \colon \cat{E}
  \rightarrow \cat{B}$. If the forgetful functor $U \colon \xalg{T}
  \rightarrow \cat{E}$ has a left adjoint $F$ over $p$, then the monad
  $U F$ over $p$ is algebraically free on $T$.
\end{proposition}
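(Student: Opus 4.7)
The plan is to use the standard Eilenberg--Moore comparison idea, adapted to the pointed-endofunctor setting. Write $\eta^T$ for the unit of $T$, and write $\eta \colon 1 \Rightarrow UF$ and $\epsilon \colon FU \Rightarrow 1$ for the unit and counit of the adjunction $F \dashv U$ (both of which are levelwise vertical by the assumption that the adjunction is over $p$). For each $X \in \cat{E}$, the object $FX \in \xalg{T}$ comes equipped with a $T$-algebra structure $\alpha_X \colon T(UFX) \to UFX$, and we define
\begin{equation*}
  \xi_X \;:=\; \alpha_X \circ T(\eta_X) \colon T X \longrightarrow U F X.
\end{equation*}
Naturality of $\eta^T$ combined with the unit law for the $T$-algebra $\alpha_X$ yields $\xi_X \circ \eta^T_X = \eta_X$, so $\xi$ is a morphism of pointed endofunctors. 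Since $\alpha_X$, $T(\eta_X)$ and the unit $\eta_X$ are all vertical, $\xi_X$ is vertical.

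Next I would construct an inverse to $\bar{\xi} \colon \xalg{M} \to \xalg{T}$. Given a $T$-algebra $(X, \gamma)$, transpose the identity $1_X \colon X \to U(X,\gamma)$ across the adjunction to obtain a $T$-algebra morphism $\phi_\gamma \colon FX \to (X,\gamma)$, and set $\beta := U(\phi_\gamma) \colon MX \to X$. The triangle identity for the adjunction gives $\beta \circ \eta_X = 1_X$, so the unit law for $M$-algebras holds. Since $\phi_\gamma$ is a $T$-algebra morphism, $U(\phi_\gamma) \circ \alpha_X = \gamma \circ T(U\phi_\gamma)$; composing with $T\eta_X$ on the right then gives
\begin{equation*}
  \beta \circ \xi_X \;=\; \gamma \circ T\beta \circ T\eta_X \;=\; \gamma \circ T(\beta \circ \eta_X) \;=\; \gamma,
\end{equation*}
as required. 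The monad multiplication law $\beta \circ U\epsilon_{FX} = \beta \circ M\beta$ is the standard calculation exhibiting the Eilenberg--Moore comparison as well-defined, and follows from the fact that $\phi_\gamma$ is a $T$-algebra morphism together with the triangle identities.

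For uniqueness, any $M$-algebra structure $\beta'$ on $X$ with $\beta' \circ \xi_X = \gamma$ is automatically an $M$-algebra morphism $(MX, \mu_X) \to (X, \beta')$ in the appropriate sense; transposing it back across the adjunction produces a $T$-algebra morphism $FX \to (X,\gamma)$ that must equal $\phi_\gamma$ by the defining universal property, so $\beta' = U(\phi_\gamma) = \beta$. The correspondence between $T$-algebra morphisms and $M$-algebra morphisms at the level of underlying maps is a direct naturality check: any $f \colon X \to X'$ commuting with $\gamma, \gamma'$ also commutes with the associated $\beta, \beta'$, since both conditions are determined by their composite with $\xi$ and $\eta$ respectively. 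Thus $\bar{\xi}$ has a two-sided inverse and is an isomorphism, proving $UF$ is algebraically free on $T$.

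I expect the main technical obstacle to be bookkeeping around the verification of the multiplication law and the checking that the constructed $\beta$ is vertical (so that $(X,\beta)$ really is an $M$-algebra in the fibred sense of remark \ref{rmk:novertinalg}); both reduce to the fact that the adjunction lives over $\cat{B}$ and that $\xi$ is vertical, but the diagrams are the fiddliest part.
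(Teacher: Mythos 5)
Your overall strategy is sound and, unlike the paper's own proof, self-contained: the paper simply invokes Kelly's Theorem 22.3 and observes that the comparison map is levelwise vertical, whereas you reconstruct the Eilenberg--Moore comparison directly. Your definition $\xi_X := \alpha_X \circ T(\eta_X)$, the check that it is a vertical morphism of pointed endofunctors, the construction of $\beta = U(\phi_\gamma)$ from a $T$-algebra $(X,\gamma)$, and the computation $\beta \circ \xi_X = \gamma$ are all correct; so is the remark that verticality of the algebra structure maps comes for free from the unit law.

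There is, however, a genuine gap in the uniqueness step. You say that the $M$-algebra morphism $(MX,\mu_X) \to (X,\beta')$ can be ``transposed back across the adjunction'' to a $T$-algebra morphism $FX \to (X,\gamma)$. But the adjunction $F \dashv U$ only relates $\cat{E}$ and $\xalg{T}$; there is no transposition turning $M$-algebra morphisms into $T$-algebra morphisms, and establishing that correspondence is exactly the content of the surjectivity of $\bar{\xi}$ that you are trying to prove. What is needed is to show directly that $\beta' \colon UFX \to X$ underlies a $T$-algebra morphism $FX \to (X,\gamma)$, i.e.\ that $\beta' \circ \alpha_X = \gamma \circ T\beta'$. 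This follows from the identity $\alpha_X = \mu_X \circ \xi_{MX}$ (itself a consequence of $\epsilon_{FX}$ being a $T$-algebra morphism together with the triangle identity $U\epsilon_{FX} \circ \eta_{UFX} = 1$), combined with naturality of $\xi$ at $\beta'$ and the associativity law for $(X,\beta')$:
\begin{equation*}
  \gamma \circ T\beta' \;=\; \beta' \circ \xi_X \circ T\beta' \;=\; \beta' \circ M\beta' \circ \xi_{MX} \;=\; \beta' \circ \mu_X \circ \xi_{MX} \;=\; \beta' \circ \alpha_X.
\end{equation*}
With this in hand, $\beta'$ is a $T$-algebra morphism $FX \to (X,\gamma)$ whose transpose is $\beta' \circ \eta_X = 1_X$, so it equals $\phi_\gamma$ and your conclusion $\beta' = \beta$ follows. (Alternatively, surjectivity of the comparison can be obtained from Beck's monadicity theorem, since forgetful functors from pointed-endofunctor algebras create $U$-split coequalizers; this is the route taken in the proof of lemma \ref{lem:buildalgfreemonad}.) The rest of your argument, including the full-faithfulness remark, then goes through.
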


\begin{proof}
  Except for the requirement that we are working over $\cat{B}$, the
  result is exactly the one proved by Kelly in \cite[Theorem
  22.3]{kellytransfinite}.

  However, it is easy to check that if the adjunction $F \dashv U$ is
  an adjunction over $p$ then $U F$ is a monad over $p$, and that the
  map constructed by Kelly is levelwise vertical.
\end{proof}

\begin{lemma}
  \label{lem:buildalgfreemonad}
  Suppose that $T, \eta$ is a (not necessarily fibred) pointed
  endofunctor over $p \colon \cat{E} \rightarrow \cat{B}$. Suppose
  further that we are given a choice of initial object of
  $(X \downarrow U_I)$ for each $I \in \cat{B}$ and $X \in
  \cat{E}_I$ (where $U_I$ is the forgetful functor, restricted to the
  fibre of $I$). Then we can construct a monad algebraically free on
  $T$.
\end{lemma}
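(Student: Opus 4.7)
The plan is to assemble this from the three preceding results, since each is essentially tailored to provide one step of the construction. The key observation is that although $T$ itself need not be fibred, lemma \ref{lem:algfibred} guarantees that $U \colon \xalg{T} \rightarrow \cat{E}$ is nevertheless a fibred functor over $\cat{B}$. This is crucial because the next ingredient, lemma \ref{lem:inittoadj}, requires a fibred functor as input in order to produce a (not necessarily fibred) left adjoint from fibrewise initial objects.

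First I would invoke lemma \ref{lem:algfibred} to conclude that $p \circ U$ is a fibration and $U$ is fibred. Then I would apply lemma \ref{lem:inittoadj} to $U$, using the given choices of initial objects of $(X \downarrow U_I)$ for each $I \in \cat{B}$ and $X \in \cat{E}_I$. This produces a left adjoint $F \dashv U$ over $\cat{B}$, with $F$ acting on objects by $X \mapsto FX$ where $(FX, \eta_X)$ is the chosen initial object. Finally, since we now have an adjunction $F \dashv U$ over $p$, proposition \ref{prop:monadictoalgfree} applies and gives us that the monad $UF$ is algebraically free on $T$.

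There is essentially no new content to establish beyond verifying that the hypotheses of each cited result are satisfied, and the main point worth emphasising is that fibredness of $U$ — which underpins the application of lemma \ref{lem:inittoadj} — holds for free via lemma \ref{lem:algfibred}, even though we make no fibredness assumption on $T$ or on the initial object choice. The only subtle point I would double-check in writing this up is that the unit $\eta_X \colon X \rightarrow UFX$ of the adjunction produced by lemma \ref{lem:inittoadj} genuinely agrees with the pointing structure once we pass through proposition \ref{prop:monadictoalgfree}, so that ``algebraically free on $T$'' really holds in the sense of the preceding definition (i.e., with the comparison $T \Rightarrow UF$ being levelwise vertical and inducing an isomorphism $\xalg{UF} \cong \xalg{T}$). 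This is already handled inside proposition \ref{prop:monadictoalgfree} and Kelly's argument cited there, so no further work is needed.
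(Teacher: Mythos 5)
Your proposal is correct, and the first two steps coincide exactly with the paper's proof: apply lemma \ref{lem:algfibred} to get that $U$ is fibred (despite $T$ not being assumed fibred), then apply lemma \ref{lem:inittoadj} to the fibrewise initial objects to obtain the adjunction $F \dashv U$ over $\cat{B}$. The only divergence is in the final step. You close the argument by citing proposition \ref{prop:monadictoalgfree} (i.e.\ Kelly's theorem transported over $p$), which applies verbatim since you have produced a left adjoint to $U$ over $p$; this is a perfectly clean route, and arguably the one the earlier proposition was set up for. The paper instead argues directly: it observes that forgetful functors from categories of algebras over a pointed endofunctor create coequalizers of $U$-split pairs, and then invokes (the ordinary version of) Beck's monadicity theorem to conclude that the canonical comparison $\xalg{T} \rightarrow \xalg{U F}$ is an isomorphism, hence that $U F$ is algebraically free on $T$. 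Both justifications are valid and essentially interchangeable here; yours delegates the verification that the comparison morphism of pointed endofunctors is levelwise vertical to proposition \ref{prop:monadictoalgfree} (where the paper has already checked it), whereas the paper's Beck-monadicity route makes the isomorphism of algebra categories explicit without passing through Kelly's construction a second time.
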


\begin{proof}
  The basic idea is the follow \cite[Section 6.1]{gambinohylanddepw}
  with a few modifications.

  First, note that by lemma \ref{lem:algfibred}, $U$ is a fibred
  functor. We can therefore apply lemma \ref{lem:inittoadj} to
  construct a left adjoint $F$ to $U$ over $\cat{B}$. Note that this
  is in particular an ordinary adjunction $F \dashv U$. Furthermore,
  note that forgetful functors $U$ from algebras over a pointed
  endofunctor always create coequalizers for $U$-split pairs. Hence we
  can apply (the ordinary version of) Beck's monadicity theorem to
  show that the canonical functor $\xalg{T} \rightarrow \xalg{U F}$ is
  an isomorphism, and so the monad $U F$ is algebraically free on
  $T$.
\end{proof}

\begin{lemma}
  Suppose that $T, \eta$ is a (not necessarily fibred) pointed
  endofunctor over $p \colon \cat{E} \rightarrow \cat{B}$. Then
  algebraically free monads on $T$ are unique up to isomorphism.

  (In particular every algebraically free monad on $T$ is isomorphic
  to the one constructed in lemma \ref{lem:buildalgfreemonad}.)
\end{lemma}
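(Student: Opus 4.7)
The plan is to invoke the standard correspondence between monad morphisms and functors between their Eilenberg--Moore categories over the base. Specifically, given monads $M$ and $M'$ on $\cat{E}$, with forgetful functors $U$ and $U'$ respectively, isomorphisms of categories $\Phi \colon \xalg{M} \to \xalg{M'}$ satisfying $U' \circ \Phi = U$ correspond bijectively to isomorphisms of monads $M' \cong M$. This is the usual ``monad is determined by its algebras'' fact, and I would cite it or spell it out very briefly.

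Given two algebraically free monads $(M, \xi)$ and $(M', \xi')$ on $T$, both $\bar\xi \colon \xalg{M} \to \xalg{T}$ and $\bar{\xi'} \colon \xalg{M'} \to \xalg{T}$ are isomorphisms of categories commuting with the forgetful functors to $\cat{E}$ (since $\xi$ and $\xi'$ induce the $T$-algebra structure from an $M$- or $M'$-algebra structure by composition, leaving the underlying object and vertical structure map unchanged). Hence $\bar{\xi'}^{-1} \circ \bar\xi$ is an isomorphism $\xalg{M} \cong \xalg{M'}$ over $\cat{E}$, and by the above principle it arises from a unique isomorphism of monads $\varphi \colon M \to M'$.

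Two compatibilities then need checking. First, $\varphi$ should make the triangle with $\xi$ and $\xi'$ commute as a map of pointed endofunctors; this follows by unwinding the correspondence, since by construction $\bar{\xi'}^{-1} \circ \bar\xi$ sends the free $M$-algebra on $X$ to an $M'$-algebra whose underlying $T$-algebra structure is pulled back along $\xi$, which under the correspondence forces $\varphi \circ \xi = \xi'$. Second, $\varphi$ should be levelwise vertical over $\cat{B}$; this is immediate since $\xi$ and $\xi'$ are levelwise vertical by hypothesis, so the induced natural transformation between monads must be as well.

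The argument is essentially a packaging of standard facts, so there is no real obstacle; the only point requiring mild care is checking that the correspondence restricts correctly to the fibrewise setting to conclude $\varphi$ is levelwise vertical. The parenthetical ``in particular'' claim is then simply the instance of the uniqueness statement where $M'$ is the algebraically free monad explicitly produced in lemma \ref{lem:buildalgfreemonad}.
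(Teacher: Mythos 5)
Your argument is correct, and it takes a route that is closely parallel to but not identical with the paper's. The paper's proof observes that an algebraically free monad $M$ on $T$ equips the forgetful functor $\xalg{T}_I \rightarrow \cat{E}_I$ with a left adjoint in each fibre --- namely the free $M$-algebra functor $X \mapsto (MX, \mu_X)$ transported across the isomorphism $\xalg{M} \cong \xalg{T}$ --- and then concludes by uniqueness of adjoints, since the monad is recovered as the composite of that adjunction. You instead invoke the full faithfulness of the semantics functor, i.e.\ the correspondence between monad morphisms and functors between Eilenberg--Moore categories over the base, applied to the isomorphism $\bar{\xi'}^{-1} \circ \bar{\xi}$. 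Both arguments rest on the same underlying fact that a monad is determined by its category of algebras together with the forgetful functor; the paper's version is marginally more elementary (uniqueness of adjoints is usually the lemma from which the Eilenberg--Moore correspondence is itself derived) and leaves the compatibility with $\xi$, $\xi'$ implicit, whereas yours makes the compatibility $\varphi \circ \xi = \xi'$ and the verticality of $\varphi$ explicit, which is a genuine plus. One small point of care: verticality of $\varphi$ does not follow quite as directly from the verticality of $\xi$ and $\xi'$ as you suggest; rather, the components of the induced monad morphism are built from algebra structure maps and units of the monads, and these are vertical (the former automatically so, by remark \ref{rmk:novertinalg}), which is what gives the conclusion. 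This is a routine fix, not a gap.
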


\begin{proof}
  Let $(M, \lambda, \mu)$ be a monad and $\xi \colon T \rightarrow M$
  a morphism of pointed endofunctors witnessing that $M$ is
  algebraically free on $T$. For each $I$, the forgetful functor
  $\xalg{M}_I \rightarrow \cat{E}_I$ has a left adjoint,
  which sends $X$ to $M X$ together with the $M$-algebra structure
  $\mu_X \colon M^2 X \rightarrow M X$. Composing with the isomorphism
  $\xalg{T} \cong \xalg{M}$ gives a left adjoint to the forgetful
  functor $\xalg{T}_I \rightarrow \cat{E}_I$. The result now follows
  from uniqueness of adjoints.
\end{proof}

\begin{lemma}
  \label{lem:freemonadfibred}
  Suppose that $T, \eta$ is a fibred pointed endofunctor and $p \colon
  \cat{E} \rightarrow \cat{B}$ has all dependent products (which do
  not need to satisfy Beck-Chevalley). Then the
  algebraically free monad on $T$ is fibred.
\end{lemma}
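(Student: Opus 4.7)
The plan is to show that the fibrewise left adjoints $F_I$ produced in the proof of lemma \ref{lem:buildalgfreemonad} assemble into a fibred left adjoint $F \dashv U$; since $U$ is automatically fibred by lemma \ref{lem:algfibred}, this will give the algebraically free monad $U F$ the structure of a fibred monad. By uniqueness of adjoints, $F$ will be fibred once I produce, for each $\sigma \colon J \to I$ in $\cat{B}$, a right adjoint $\Pi_\sigma^T$ to the reindexing functor $\sigma^{\ast T} \colon \xalg{T}_I \to \xalg{T}_J$ (which exists since $U$ is fibred) satisfying $U \Pi_\sigma^T = \Pi_\sigma U$: then $F_J \sigma^\ast \dashv \Pi_\sigma U$ and $\sigma^{\ast T} F_I \dashv U \Pi_\sigma^T$ have naturally isomorphic right adjoints, hence naturally isomorphic left adjoints, which is the statement that $F$ is fibred.

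To construct $\Pi_\sigma^T$, given $Z \in \xalg{T}_J$ with structure map $\alpha \colon T U Z \to U Z$, I would take the underlying object of $\Pi_\sigma^T Z$ to be $\Pi_\sigma U Z$, equipped with the algebra structure
\begin{equation*}
T \Pi_\sigma U Z \longrightarrow \Pi_\sigma T U Z \xrightarrow{\Pi_\sigma \alpha} \Pi_\sigma U Z,
\end{equation*}
where the first arrow is the mate of the isomorphism $\sigma^\ast T \cong T \sigma^\ast$ (provided by $T$ being fibred) under $\sigma^\ast \dashv \Pi_\sigma$. The unit law follows from naturality of $\eta$ together with the mate compatibility for the vertical natural transformation $\eta \colon 1 \Rightarrow T$. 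By construction $U \Pi_\sigma^T = \Pi_\sigma U$ on the nose.

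The main technical step, and the one I expect to take most work, is verifying that $\sigma^{\ast T} \dashv \Pi_\sigma^T$: given a $T$-algebra map $f \colon \sigma^{\ast T} W \to Z$ for $W \in \xalg{T}_I$, the transpose $\hat f \colon U W \to \Pi_\sigma U Z$ of $U f$ under $\sigma^\ast \dashv \Pi_\sigma$ must be shown to be a $T$-algebra map $W \to \Pi_\sigma^T Z$. This is a diagram chase that reduces to the compatibility between the mate $T \Pi_\sigma \Rightarrow \Pi_\sigma T$ and the reindexing isomorphism $\sigma^\ast T \cong T \sigma^\ast$, together with the fact that the reindexed algebra structure on $\sigma^{\ast T} W$ is itself the image of $W$'s structure map under $\sigma^\ast$ modulo the same isomorphism. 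Once that coherence is in hand, the invertibility and naturality of the transposition are formal, and the conclusion that $UF$ is a fibred monad follows via the mate-uniqueness argument above.
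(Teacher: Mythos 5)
Your proof is correct and takes essentially the same route as the paper: the key step in both is lifting the dependent product $\Pi_\sigma$ to a right adjoint of reindexing on $\xalg{T}$, which the paper obtains by citing Johnstone (Lemma B1.4.15(i), stated for monads but noted to work for pointed endofunctors) where you construct it explicitly via the mate of $\sigma^\ast T \cong T \sigma^\ast$. The paper then concludes by lifting the adjunction to the comma categories $(Y \downarrow U_J)$ and $(\sigma^\ast(Y) \downarrow U_I)$ and observing that the left adjoint preserves initial objects, which is just the pointwise form of your uniqueness-of-adjoints argument.
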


\begin{proof}
  Let $\sigma \colon I \rightarrow J$ in $\cat{B}$. Then we have seen
  that the reindexing functor
  $\sigma^\ast \colon \cat{E}_J \rightarrow \cat{E}_I$ lifts to a
  functor $\sigma^\ast \colon (\xalg{T})_J \rightarrow
  (\xalg{T})_I$. However, when $T$ is fibred the dependent product
  $\Pi_\sigma \colon \cat{E}_I \rightarrow \cat{E}_J$ also lifts to a
  functor $(\xalg{T})_I \rightarrow (\xalg{T})_J$, and this is right
  adjoint to $\sigma^\ast$ (see \cite[Lemma B1.4.15(i)]{theelephant}
  for the result when $T$ has the structure of a monad and note the
  same proof holds for pointed endofunctors).

  Then we can lift this to get an adjunction between
  $(Y \downarrow U_J)$ and $(\sigma^\ast(Y) \downarrow U_I)$ for
  $Y \in \cat{E}_J$. Since
  $\sigma^\ast \colon (Y \downarrow U_J) \rightarrow (\sigma^\ast(Y)
  \downarrow U_I)$ is a left adjoint it preserves colimits and in
  particular initial objects. Hence any choice of initial objects is
  fibred, and so the $F$ constructed using such initial objects is
  also fibred.
\end{proof}

\subsection{Algebraic Weak Factorisation Systems}
\label{sec:defin-algebr-free}

We now the definition of algebraic weak factorisation over a
fibration. This is a generalisation of the well known definition due
to Grandis and Tholen \cite{grandistholennwfs} (originally referred to
as natural weak factorisation system).

\begin{definition}
  An \emph{algebraic weak factorisation system over $p$} (awfs)
  is a functorial factorisation over $p$ together with a both a vertical
  natural transformation $\Sigma \colon L \Rightarrow L^2$ making $L$
  into a comonad over
  $\dom \colon \vrt{\cat{E}} \rightarrow \cat{E}$ and a vertical
  transformation $\Pi \colon R^2 \rightarrow R$ making $R$ into a
  monad over $\cat{B}$, and the canonical natural transformation $LR
  \Rightarrow RL$ is a distributive law (see e.g. \cite[Definition
  4.12]{riehlams} for an elaboration of this part of the definition).
\end{definition}

\begin{definition}
  An awfs is \emph{(strongly) fibred} if the underlying functorial
  factorisation is (strongly) fibred.
\end{definition}

\subsection{Algebraically Free (R)awfs's}

\begin{definition}
  A \emph{morphism of functorial factorisations over $\cat{B}$} is a
  vertical natural transformation $\xi$ between the functors
  $\vrt{\cat{E}} \rightarrow \cat{E}^\bthree_\vtcl$. A
  \emph{morphism of lawfs's} is a morphism of functorial
  factorisations that also respects the comultiplication. A
  \emph{morphism of awfs's} is a morphism of lawfs's that also
  respects the multiplications.

  We define morphisms of rawfs's dually to those of lawfs's. We
  write the corresponding categories as $\ffcat(p)$, $\lawfscat(p)$,
  $\awfscat(p)$ and $\rawfscat(p)$ respectively.
\end{definition}

\begin{proposition}
  Any morphism of lawfs's $\xi \colon (L, R) \Rightarrow (L', R')$
  induces a functor $\xalg{R'} \rightarrow \xalg{R}$ over $\cat{B}$.
\end{proposition}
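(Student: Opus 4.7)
The plan is to build the functor directly: an $R'$-algebra $(f, \alpha')$ is sent to $(f, \alpha' \circ k_f)$, where $k_f$ is the middle component of $\xi_f$.

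First I would unpack $\xi$. For each vertical $f \colon X \rightarrow Y$, $\xi_f$ is a morphism in $\cat{E}^\bthree_\vtcl$ from $(Lf, Rf)$ to $(L'f, R'f)$. Since both sides factorise the same $f$ and $\xi$ is levelwise vertical, this morphism has identities on the outer components and a single vertical map $k_f \colon Kf \rightarrow K'f$ in the middle, satisfying $k_f \circ Lf = L'f$ and $R'f \circ k_f = Rf$. Recalling that an $R$-algebra structure on $f$ is a vertical $\alpha \colon Kf \rightarrow X$ with $f \circ \alpha = Rf$ and $\alpha \circ Lf = 1_X$, setting $\alpha := \alpha' \circ k_f$ yields an $R$-algebra structure via the two immediate computations $f \circ \alpha = R'f \circ k_f = Rf$ and $\alpha \circ Lf = \alpha' \circ L'f = 1_X$.

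For morphisms, send a morphism $\phi$ of $R'$-algebras to the same underlying morphism in $\vrt{\cat{E}}$. Writing $K\phi$ and $K'\phi$ for the middle components of the two factorisations applied to $\phi$, and $\phi_0$ for its domain component, compatibility of $\phi$ with the induced $R$-algebra structures $\alpha = \alpha' \circ k_f$ and $\beta = \beta' \circ k_g$ unfolds as
\begin{equation*}
  \beta \circ K\phi = \beta' \circ k_g \circ K\phi = \beta' \circ K'\phi \circ k_f = \phi_0 \circ \alpha' \circ k_f = \phi_0 \circ \alpha,
\end{equation*}
where the second equality is naturality of $\xi$ and the third is compatibility of $\phi$ with the $R'$-algebra structures. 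Functoriality is clear, and the resulting functor is over $\vrt{\cat{E}}$ and hence over $\cat{B}$, since neither the underlying vertical map of an algebra nor the underlying square of a morphism is altered.

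There is no real obstacle; the one remark I would make is that the comultiplication aspect of a morphism of lawfs's plays no role here. For $R$ viewed merely as a pointed endofunctor, the only inputs used are naturality of $\xi$ and the automatic fact that each $\xi_f$ respects the points of the two pointed endofunctors (a consequence of being a morphism in $\cat{E}^\bthree_\vtcl$ that preserves the outer components).
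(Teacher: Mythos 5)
Your construction is correct and is exactly the standard argument the paper leaves implicit (the proposition is stated without proof): precompose an $R'$-algebra structure with the middle component $k_f$ of $\xi_f$, check the unit law via $k_f\circ Lf = L'f$, use naturality of $\xi$ for morphisms, and observe that nothing changes at the level of $\vrt{\cat{E}}$, so the functor lives over $\cat{B}$; your closing remark that the comultiplication is never used is also right, which is why the paper can reuse this for mere morphisms of functorial factorisations. One small caveat: ``levelwise vertical'' does not by itself force the outer components of $\xi_f$ to be identities --- that is part of the intended (standard) definition of a morphism of functorial factorisations, namely that $\xi$ whiskered with the projection $\cat{E}^\bthree_\vtcl \rightarrow \vrt{\cat{E}}$ is the identity, and your proof does genuinely need it for the unit-law computation.
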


\begin{definition}
  Let $(L_0, R_0)$ be a functorial factorisation over a fibration
  $p$. An rawfs $(L, R)$ over $p$ is \emph{algebraically free} on
  $(L_0, R_0)$ if the corresponding monad over $p$ is algebraically
  free on the corresponding pointed endofunctor.
\end{definition}

\begin{definition}
  Let $(L, R, \Sigma)$ be an lawfs over $\cat{B}$. An
  \emph{algebraically free awfs on $(L, R, \Sigma)$} is an awfs $(L',
  R', \Sigma', \Pi')$ over $\cat{B}$ together with a morphism of
  lawfs's $\xi \colon (L, R, \Sigma) \Rightarrow (L', R', \Sigma')$
  such that the composition of the functor $\xalg{R'} \rightarrow
  \xalg{R}$ with the forgetful functor $\xalg{(R', \lambda, \Pi)}
  \rightarrow \xalg{(R', \lambda)}$ is an isomorphism.
\end{definition}

\begin{definition}
  Let $m$ be a vertical map over $p$. We say that an awfs $(L, R)$ is
  \emph{cofibrantly generated by $m$} if it is algebraically free on
  step-one.
\end{definition}

\begin{theorem}
  \label{thm:initialalgthmrawfs}
  Suppose that we are given a functorial factorisation $(L_0,
  R_0)$. Write $U \colon \xalg{R_0} \rightarrow \vrt{\cat{E}}$ for the
  forgetful functor. Suppose we are given an initial object of
  $(f \downarrow U_Y)$ for every object $Y$ of $\cat{E}$, and every
  vertical map $f \colon X \rightarrow Y$. Then there is an rawfs
  $(L, R)$ algebraically free on $(L_0, R_0)$.
\end{theorem}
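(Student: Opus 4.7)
The plan is to reduce this to Lemma \ref{lem:buildalgfreemonad} applied to the fibration $\cod \colon \vrt{\cat{E}} \rightarrow \cat{E}$ in place of $p \colon \cat{E} \rightarrow \cat{B}$. A functorial factorisation over $p$ is the same data as a pointed endofunctor $(R_0,\lambda)$ over $\cod$, and an rawfs over $p$ is the same data as a monad over $\cod$. So an rawfs $(L,R)$ algebraically free on $(L_0,R_0)$ is precisely a monad over $\cod$ that is algebraically free on the pointed endofunctor $(R_0,\lambda)$.

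The hypothesis of Lemma \ref{lem:buildalgfreemonad}, transported along this reindexing, asks for a choice of initial object of $(X \downarrow U_I)$ for each object $I$ of the base and each $X$ in the fibre over $I$. Here the base is $\cat{E}$, the fibres of $\cod$ over $Y \in \cat{E}$ consist of vertical maps with codomain $Y$, and $U_Y$ is the restriction of the forgetful functor $U \colon \xalg{R_0} \rightarrow \vrt{\cat{E}}$ to the fibre over $Y$. So the required initial objects are exactly initial objects of $(f \downarrow U_Y)$ for each vertical $f \colon X \rightarrow Y$, which is precisely what the theorem supplies.

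Thus I would first invoke Lemma \ref{lem:algfibred} (again with $\cod$ in place of $p$) to conclude that the forgetful functor $U$ is fibred over $\cat{E}$, so the hypotheses of Lemma \ref{lem:inittoadj} are satisfied and we obtain a left adjoint $F \dashv U$ over $\cat{E}$. The composite $UF$ is then a monad over $\cod$, and by Proposition \ref{prop:monadictoalgfree} (again reindexed) it is algebraically free on $(R_0,\lambda)$. Translating back through the correspondence between monads over $\cod$ and rawfs's gives the desired rawfs $(L,R)$.

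The only step that needs care is that nothing in the argument uses any structure on $R_0$ beyond being a pointed endofunctor over $\cod$ in the specific sense of Section \ref{sec:fibr-funct-fact}; in particular the $\lambda$-component is levelwise vertical over $\cod$, which is exactly the condition built into a functorial factorisation. The main conceptual obstacle is simply keeping track of the change of base: the ``verticality'' conditions needed by Lemmas \ref{lem:algfibred} and \ref{lem:buildalgfreemonad} are now verticality over $\cod$ rather than over $p$, and one should check that the pointed endofunctor arising from $(L_0,R_0)$ meets them. Once that bookkeeping is done the result is immediate from the cited lemmas.
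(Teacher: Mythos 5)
Your proposal is correct and follows exactly the paper's route: the paper's proof is the one-line observation that the theorem is a special case of Lemma \ref{lem:buildalgfreemonad} applied to the fibration $\cod \colon \vrt{\cat{E}} \rightarrow \cat{E}$, which is precisely the reduction you carry out (your version just makes the change-of-base bookkeeping explicit).
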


\begin{proof}
  This is a special case of lemma \ref{lem:buildalgfreemonad}.
\end{proof}

\begin{theorem}
  \label{thm:algfreerawfsfib}
  Suppose that $p \colon \cat{E} \rightarrow \cat{B}$ is a fibration
  with dependent products (right adjoints to reindexing functors) and
  $(L_0, R_0)$ is a fibred functorial factorisation. If the
  algebraically free rawfs on $(L_0, R_0)$ exists (say $(L, R)$), then
  it is also fibred.
\end{theorem}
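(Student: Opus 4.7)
The plan is to apply Lemma \ref{lem:freemonadfibred} with the ``base fibration'' of that lemma taken to be $p \circ \cod \colon \vrt{\cat{E}} \rightarrow \cat{B}$ and the pointed endofunctor $T$ taken to be the pointed endofunctor underlying $R_0$. Since $R_0$ is the right half of a functorial factorisation we have $\cod \circ R_0 = \cod$, so $(p \circ \cod) \circ R_0 = p \circ \cod$ and $R_0$ is an endofunctor over $p \circ \cod$. By Proposition \ref{prop:fibredchar}, the hypothesis that $(L_0, R_0)$ is a fibred functorial factorisation is exactly that $R_0$ is fibred over $p \circ \cod$, so the first hypothesis of the lemma is immediate.

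For the remaining hypothesis, I would check that $p \circ \cod$ itself has dependent products. Reindexing along $\sigma \colon I \rightarrow J$ on $\vrt{\cat{E}}_J = \cat{E}_J^\btwo$ is just componentwise $\sigma^\ast$ from $p$ (this is already the cleavage used in the proof that $p \circ \cod$ is a fibration). Applying the given adjoint $\Pi_\sigma \dashv \sigma^\ast$ of $p$ componentwise on arrow categories produces a right adjoint to reindexing on $\vrt{\cat{E}}_I \rightarrow \vrt{\cat{E}}_J$: a morphism of arrows in $\cat{E}_I^\btwo$ is just a commutative square, and transposing each of the two legs under the componentwise adjunction preserves commutativity, so the bijection lifts.

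Finally, the algebraically free rawfs $(L, R)$ exists by hypothesis; the underlying monad over $\cod$ is in particular a monad over $p \circ \cod$, and algebraic freeness is a statement about the categories of algebras on the underlying functors, which are the same regardless of which of these two fibrations we view things over. Hence $R$ is the algebraically free monad in the sense of Lemma \ref{lem:freemonadfibred} applied to $(p \circ \cod, R_0)$, and by that lemma $R$ is fibred over $p \circ \cod$. Invoking Proposition \ref{prop:fibredchar} once more, this says the underlying functorial factorisation of $(L, R)$ is fibred, so $(L, R)$ is a fibred rawfs.

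The argument is really just a matter of locating the right instance of Lemma \ref{lem:freemonadfibred}; the only mildly non-trivial point is the existence of dependent products on $p \circ \cod$, but that is settled entirely by the componentwise construction above.
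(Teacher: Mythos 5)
Your proposal is correct and follows essentially the same route as the paper's proof: construct dependent products for $p \circ \cod$ levelwise, observe that algebraic freeness over $\cod$ transfers to algebraic freeness over $p \circ \cod$ (the categories of algebras coincide, as the paper notes via remark \ref{rmk:novertinalg}), and then invoke lemma \ref{lem:freemonadfibred}. The only difference is that you spell out the componentwise adjunction and the appeal to proposition \ref{prop:fibredchar} in slightly more detail than the paper does.
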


\begin{proof}
  First note that the fibration
  $p \circ \cod \colon \vrt{\cat{E}} \rightarrow \cat{B}$ also has
  dependent products, which are just defined levelwise. Recall that an
  rawfs $(L, R)$ is algebraically free on a functorial factorisation
  $(L_0, R_0)$ if $R$ is algebraically free on $R_0$ over
  $\cat{E}$. However, this implies that $R$ is also algebraically free
  on $R_0$ over $\cat{B}$. (In fact the algebras over $\cat{B}$ are
  the same as the corresponding algebras over $\cat{E}$ by remark
  \ref{rmk:novertinalg}.)

  Therefore we can apply lemma \ref{lem:freemonadfibred} to show that
  $R$ is fibred.
\end{proof}

\begin{theorem}
  \label{thm:algfreerawfsstrfib}
  Suppose that $p \colon \cat{E} \rightarrow \cat{B}$ is a fibration
  with dependent products (right adjoints to reindexing functors),
  that each fibre category $\cat{E}_I$ is locally cartesian closed and
  $(L_0, R_0)$ is a strongly fibred functorial factorisation. If the
  algebraically free rawfs on $(L_0, R_0)$ exists (say $(L, R)$), then
  it is also strongly fibred.
\end{theorem}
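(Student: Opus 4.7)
The plan is to reduce strong fibredness to a combination of ordinary fibredness together with fibrewise preservation of pullbacks, and then apply the previously established results to each piece. More precisely, by Lemma~\ref{lem:strongfibrelem}, I need to show (i) that $(L, R)$ is fibred, and (ii) that for each $I \in \cat{B}$ the restriction $R_I \colon V(\cat{E})_I \rightarrow V(\cat{E})_I$ preserves pullbacks, i.e.\ preserves cartesian maps with respect to the codomain fibration $\cod \colon V(\cat{E})_I \rightarrow \cat{E}_I$. Part (i) is immediate from Theorem~\ref{thm:algfreerawfsfib}, since strongly fibred implies fibred and the hypotheses on $p$ are in force. So the real content is in (ii).

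For (ii), I would fix $I$ and work entirely inside the fibre category $\cat{E}_I$. The key observation is that ``algebraically free'' localises to each fibre: since the canonical functor $\xalg{R} \rightarrow \xalg{R_0}$ is an isomorphism of categories over $V(\cat{E})$, restricting to the fibre over $I$ produces an isomorphism $\xalg{R_I} \cong \xalg{R_{0,I}}$ over $V(\cat{E})_I = \cat{E}_I^\btwo$, exhibiting $R_I$ as algebraically free on $R_{0,I}$ over the category $\cat{E}_I$. This step I expect to require only a straightforward unwinding, but it is important to get right because everything downstream depends on it.

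Now I would apply Lemma~\ref{lem:freemonadfibred}, but this time to the codomain fibration $\cod \colon \cat{E}_I^\btwo \rightarrow \cat{E}_I$ rather than to $p$ itself. Two hypotheses must be checked. First, this fibration has dependent products: this is exactly the assumption that $\cat{E}_I$ is locally cartesian closed, since the dependent products along the codomain fibration are the local exponentials. Second, the pointed endofunctor $R_{0,I}$ must be fibred over $\cod$ on $\cat{E}_I^\btwo$; but this is precisely the content of ``strongly fibred'' for $(L_0, R_0)$ evaluated at the fibre $I$, again by Lemma~\ref{lem:strongfibrelem}. Lemma~\ref{lem:freemonadfibred} then yields that the algebraically free monad $R_I$ on $R_{0,I}$ is fibred over $\cod$, which is exactly the statement that $R_I$ preserves pullbacks.

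Combining (i) and (ii) via Lemma~\ref{lem:strongfibrelem} concludes that $(L, R)$ is strongly fibred. The only genuine obstacle I foresee is the fibrewise localisation of algebraic freeness alluded to above; the rest amounts to assembling the previously established machinery and checking that the local cartesian closure of each fibre provides exactly the dependent products needed to invoke Lemma~\ref{lem:freemonadfibred} internally.
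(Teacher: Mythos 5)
Your proof is correct, but it carves the statement up differently from the paper. The paper does not go through Lemma \ref{lem:strongfibrelem} at all: it works with the single fibration $\cod \colon \vrt{\cat{E}} \rightarrow \cat{E}$, shows that this fibration has dependent products by factoring each reindexing functor $u^\ast$ (via the characterisation of $\cod$-cartesian maps in Lemma \ref{lem:codcartchar}) as reindexing along $p(u)$ followed by pullback along the vertical component of $u$ --- the first right adjoint coming from the dependent products of $p$ applied levelwise, the second from local cartesian closedness of the fibres --- and then applies Lemma \ref{lem:freemonadfibred} once, globally, to $R_0$ over $\cod$. You instead split ``strongly fibred'' into ``fibred over $\cat{B}$'' plus ``fibrewise pullback-preserving'' and apply Lemma \ref{lem:freemonadfibred} twice: once over $\cat{B}$ (packaged as Theorem \ref{thm:algfreerawfsfib}) and once in each fibre over the codomain fibration $\cod \colon \cat{E}_I^\btwo \rightarrow \cat{E}_I$. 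The price you pay is the localisation step you correctly flag: you must check that the isomorphism $\bar{\xi} \colon \xalg{R} \rightarrow \xalg{R_0}$, being over $\cat{B}$ and with $\xi$ levelwise vertical, restricts on each fibre to exhibit $R_I$ as algebraically free on $R_{0,I}$; this does go through, since algebras and their morphisms in the fibre over $I$ are exactly the $R_I$-algebras. The price the paper pays is the verification that the two partial right adjoints compose to give dependent products for $\cod$. The two routes are of comparable length and rest on the same two ingredients (Lemma \ref{lem:freemonadfibred} and the decomposition of $\cod$-cartesian maps into a base direction and a fibre direction); yours makes the two directions explicit at the level of the conclusion, the paper's at the level of the hypotheses.
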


\begin{proof}
  We can construct dependent products for
  $\cod \colon \vrt{\cat{E}} \rightarrow \cat{E}$ using the dependent
  products for $p$ and the dependent products in the fibre categories
  as follows. We need to show that for every map
  $u \colon X \rightarrow Y$ in $\cat{E}$, the reindexing map $u^\ast$
  has a right adjoint. However, we know by the characterisation of
  cartesian maps in lemma \ref{lem:codcartchar} that this is the same
  as reindexing along $p(u)$ over $p \circ \cod$, followed by pulling
  back along the vertical component of $u$. The former has a right
  adjoint by the same argument as in theorem \ref{thm:algfreerawfsfib}
  and the latter has a right adjoint by the existence of dependent
  products. Hence each $u^\ast$ has a right adjoint by the composition
  of adjoints.
  
  Therefore we can apply
  lemma \ref{lem:freemonadfibred} to show $R$ is fibred over $\cod$,
  and so $(L, R)$ is strongly fibred.
\end{proof}

\subsection{Criteria for the Existence of Algebraically Free Awfs's}
\label{sec:crit-exist-algebr-1}

We now use the observation by Garner that awfs's can be characterised
in terms of double categories. We will follow the description by Bourke and
Garner in \cite[Section 2 and 3]{bourkegarnerawfs1} and by Riehl
in \cite[Section 2.5 and Section 6.2]{riehlams}.

The result is essentially that for a given rawfs $R$, the additional
structure making it an awfs corresponds precisely to a natural
composition operation, assigning an $R$-algebra structure to
$g \circ f$ given $R$-algebra structures on $f$ and $g$. We express
this composition operation as an extension of the category $\xalg{R}$
to a double category.

In this paper we adapt the results as follows. First of all, we are of
course working over a fibration rather than a category. Secondly, the
descriptions in \cite{riehlams} and \cite{bourkegarnerawfs1} consider
awfs's over different base categories, which requires a more
sophisticated notion of morphism (so called \emph{lax morphisms of
  awfs's}). On the other hand, we fix a single fibration throughout,
since that is all we need here. Finally, we make a minor observation
that does not appear in those papers. If instead of an rawfs we are
given a functorial factorisation, then one direction of the
correspondence still holds. Namely, given an lawfs we can produce a
composition operation on the $R$-algebra structures, where $R$ is now
just a pointed endofunctor rather than a monad.

Note that $\vrt{\cat{E}}$ has the structure of a double category
by taking vertical maps to be vertical maps in $\cat{E}$, taking
squares to be squares in $\cat{E}$ (with left and right maps
vertical), and taking composition to be composition in $\cat{E}$ in
each direction. (Note that we chose to orientate the double category
so that vertical maps in the double category sense are also vertical
maps in the Grothendieck fibration sense.)

\begin{lemma}
  \label{lem:awfstocomp}
  If $L$ is an awfs, and $R$ is the corresponding monad, then
  $R$-algebras can be composed naturally. Formally, there is a double
  category $\cat{T}$ whose vertical maps are $R$-algebras, whose
  squares are morphisms of $R$-algebras such that the forgetful
  functor $\xalg{R} \rightarrow \vrt{\cat{E}}$ is a double
  functor (i.e. it preserves the vertical composition).
\end{lemma}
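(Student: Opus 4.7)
The plan is to adapt the classical Grandis--Tholen construction of composition of $R$-algebras (see e.g.\ \cite{grandistholennwfs} or \cite[Section 2.5]{riehlams}) to the fibred setting. The key observation is that composable vertical maps in $\cat{E}$ necessarily lie in a common fibre $\cat{E}_I$, since if $f \colon X \to Y$ is vertical over $I$ and $g \colon Y \to Z$ is vertical over $J$ then $p(Y) = I = J$. Hence the whole awfs data $(L, R, \Sigma, \Pi)$ restricts to ordinary awfs data on each fibre category $\vrt{\cat{E}}_I = \cat{E}_I^\btwo$, and the essential combinatorial content of the theorem is the classical one applied fibrewise; the only additional work is to check that the construction is compatible with the cartesian structure of the fibration, which is automatic because all structure maps are vertical by assumption.

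Concretely, given $R$-algebras $\alpha \colon R f \to f$ and $\beta \colon R g \to g$ with $\cod f = \dom g$, I would produce an $R$-algebra structure $\beta \ast \alpha$ on $g \circ f$ as follows. Using functoriality of the factorisation applied to the canonical morphisms $(1_X, g) \colon f \to g f$ and $(L f, 1_Z) \colon g f \to g \circ R f$ in $\vrt{\cat{E}}$, one obtains comparison maps relating $K f$, $K g$ and $K(g f)$; the distributive law $\delta \colon L R \Rightarrow R L$ then ensures that combining these with $\alpha$ and $\beta$ yields a well-defined vertical map $K(g f) \to X$. The unit law for this composite is immediate from the fact that the relevant comparison restricted along $L(g f)$ recovers $L f$, while the multiplication law is precisely what the distributive law axiom of the awfs encodes. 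The double category $\cat{T}$ is then assembled with objects those of $\cat{E}$, horizontal arrows morphisms of $\cat{E}$, vertical arrows $R$-algebras (with composition just defined and identity algebras on identity arrows as units), and squares morphisms of $R$-algebras. Associativity, unit laws, and the interchange law for squares follow by diagram chases from the naturality of all ingredients together with the monad axioms for $R$ and the distributive law.

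The main obstacle is verifying the multiplication law for the composed algebra $\beta \ast \alpha$, which is the core of the Grandis--Tholen theorem and is where the distributive law does its real work; every other verification reduces to routine naturality arguments. Since in our setting all the natural transformations in play ($\eta^R$, $\Pi$, $\Sigma$, $\delta$) are vertical by definition, the required diagram chase takes place inside a single fibre and reduces to the classical one. Finally, the forgetful functor $\xalg{R} \to \vrt{\cat{E}}$ preserves vertical composition by construction -- the underlying vertical map of $\beta \ast \alpha$ is by definition $g \circ f$ -- and preserves horizontal composition trivially, so it is a double functor as claimed.
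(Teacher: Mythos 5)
Your proposal is correct and matches the paper's approach: the paper's proof simply states that the usual construction (citing Riehl's explicit description of the composition of $R$-algebras) generalises to fibrations without modification, which is exactly your observation that composable vertical maps live in a single fibre and all the structure maps of the awfs are vertical, so the classical diagram chases go through unchanged. You supply more detail on the construction of $\beta \ast \alpha$ than the paper does, but the underlying argument is the same.
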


\begin{proof}
  The usual proof generalises to fibrations without modification. See
  e.g. \cite[Definition 2.21 and lemma 2.22]{riehlams}.
\end{proof}

\begin{lemma}
  \label{lem:lawfstocomp}
  If $(L, R)$ is an lawfs, then $R$-algebras can be composed
  naturally. Formally, there is a double category $\cat{T}$ whose
  vertical maps are $R$-algebras (where these are now just pointed
  endofunctor algebras), whose squares are morphisms of $R$-algebras
  such that the forgetful functor
  $\xalg{R} \rightarrow \vrt{\cat{E}}$ is a double functor
  (i.e. it preserves the vertical composition).
\end{lemma}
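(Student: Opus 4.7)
The plan is to mirror the proof of Lemma \ref{lem:awfstocomp}, but observe that the standard construction of the composite $R$-algebra structure uses only the comultiplication $\Sigma$ of the comonad $L$, and never invokes a monad multiplication on $R$. Since the algebra laws for a pointed endofunctor algebra consist only of the unit law (and not the associativity law), everything that needs to be checked for the awfs version still holds in our more limited setting.

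Concretely, given $R$-algebras $(f,\alpha)$ and $(g,\beta)$ on vertical maps $f\colon X\to Y$ and $g\colon Y\to Z$, I would define the composite algebra structure $\beta \bullet \alpha \colon K(gf)\to X$ in two stages. First, the square $(f,1_Z)\colon gf\to g$ in $\vrt{\cat{E}}$ produces via the functorial factorisation a vertical map $K(gf)\to Kg$ over $Z$, whose postcomposition with $\beta$ yields a map $\gamma_1\colon K(gf)\to Y$ satisfying $\gamma_1\circ L(gf)=f$ by the unit law for $\beta$. This exhibits $(1_X,\gamma_1)$ as a morphism $L(gf)\to f$ in $\vrt{\cat{E}}$, and applying $R$ to it and postcomposing with $\alpha$, after first precomposing with the comultiplication component $\Sigma_{gf}$, produces $\beta \bullet \alpha$. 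The unit law for this composite reduces to the unit laws for $\alpha$ and $\beta$ together with a coalgebra counit identity for $L$.

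To obtain the double category, I would take vertical maps to be $R$-algebras and squares to be morphisms of $R$-algebras in the sense already used in the awfs case (squares in $\cat{E}$ compatible with the algebra structure), with horizontal composition inherited from $\vrt{\cat{E}}$ and vertical composition given by $\bullet$. Identity vertical $R$-algebras are provided by the identities $1_X$ together with their canonical algebra structure $R(1_X)\to 1_X$ supplied by the copointed endofunctor structure of $L$. Associativity and the interchange law with horizontal composition both reduce, after diagram chasing, to the coassociativity of $\Sigma$ and the functoriality of $(L,R)$; these are exactly the ingredients used in the awfs proof, none of which mention $\Pi$.

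The main obstacle is the bookkeeping for the interchange and associativity axioms of the double category, but this is essentially identical to the diagram chasing in the usual proof for awfs's (see for instance \cite[Definition 2.21, Lemma 2.22]{riehlams}); the verification transfers verbatim to our fibred setting because $\vrt{\cat{E}}$ already has the double category structure inherited from $\cat{E}$, the forgetful functor from $R$-algebras is defined levelwise, and none of the relevant identities involve the monad multiplication that is absent from an lawfs.
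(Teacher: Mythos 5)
Your proposal is correct and follows essentially the same route as the paper: the paper's proof likewise observes that the explicit composition of $R$-algebras in \cite[Definition 2.21]{riehlams} never uses the multiplication on $R$, so the construction and the (lengthy but routine) diagram chases for associativity and interchange go through using only the lawfs axioms, and transfer to the fibred setting without change. Your extra detail spelling out the two-stage construction of $\beta\bullet\alpha$ via $\Sigma_{gf}$ is exactly the construction the paper is citing.
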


\begin{proof}
  Observe that the explicit description in \cite[Definition
  2.21]{riehlams} makes no use of the multiplication on $R$. A
  lengthy but straightforward diagram chase verifies that this gives a
  functorial composition operation for $R$-algebras using only the
  lawfs axioms. The generalisation to fibrations is again
  straightforward.
\end{proof}

\begin{lemma}
  \label{lem:comptoawfs}
  If $R$ is an rawfs, and $R$-algebras can be composed naturally,
  then the corresponding copointed endofunctor $L$ can be given the
  structure of a comonad (and so $(L, R)$ the structure of an awfs).
\end{lemma}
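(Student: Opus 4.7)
The plan is to dualise the construction in Lemma \ref{lem:awfstocomp}, extracting a comultiplication on $L$ from the given natural composition operation on $R$-algebras. The strategy is essentially the classical one used in the non-fibred setting by Garner, Bourke--Garner \cite{bourkegarnerawfs1}, and Riehl \cite{riehlams}, with the only modification being that all constructions happen fibrewise over $\cat{B}$.

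First I would construct the comultiplication $\Sigma \colon L \Rightarrow L^2$ as follows. For each vertical $f \colon X \to Y$, both $R f \colon K f \to Y$ and $R L f \colon K L f \to K f$ carry their canonical free $R$-algebra structures $\mu_f$ and $\mu_{L f}$, and are composable as vertical arrows. Applying the given natural composition yields an $R$-algebra structure $\gamma_f$ on the composite $R f \circ R L f \colon K L f \to Y$. Using this $R$-algebra together with the universal property of $(R f, \mu_f)$ as the free $R$-algebra on $f$, I extract a canonical map $\delta_f \colon K f \to K L f$ satisfying $\delta_f \circ L f = L^2 f$ and $R L f \circ \delta_f = 1_{K f}$. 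The comultiplication component is then $\Sigma_f := (1_X, \delta_f)$, viewed as a vertical morphism $L f \Rightarrow L^2 f$ in $\vrt{\cat{E}}$.

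Next I would verify the comonad axioms for $(L, \epsilon, \Sigma)$ pointwise at each vertical $f$. The two counit identities reduce to $R L f \circ \delta_f = 1_{K f}$ and to the unit laws for the double-categorical composition of $R$-algebras (composing an $R$-algebra with an identity $R$-algebra returns the same $R$-algebra). Coassociativity reduces to the strict associativity of composition in $\cat{T}$: the two morphisms $\Sigma_{L f} \circ \Sigma_f$ and $L \Sigma_f \circ \Sigma_f$ in $\vrt{\cat{E}}$ correspond to the two parenthesisations of the composed $R$-algebra structure on $R f \circ R L f \circ R L^2 f$, and agree by hypothesis. Finally, the distributive law $L R \Rightarrow R L$ required to promote the lawfs/rawfs pair to a genuine awfs is a naturality check, again following from the double-functoriality of composition.

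The main obstacle will be bookkeeping in the coassociativity step, where one must track three composable $R$-algebras simultaneously and confirm that the two parenthesisations yield the same composed structure. However, each constituent diagram lives entirely within a single fibre $\vrt{\cat{E}}_{p(f)}$, and the composition is given as a double functor by hypothesis, so the fibred setting introduces no new difficulty beyond the classical case: the corresponding proofs of \cite{bourkegarnerawfs1} and \cite{riehlams} transport essentially unchanged.
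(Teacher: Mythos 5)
Your proposal is correct and matches the paper's approach: the paper simply observes that the classical argument (Riehl's Theorem 2.24, Bourke–Garner's Proposition 4) generalises to fibrations without modification, and your unfolding of that argument — extracting $\delta_f$ from the composed algebra structure on $Rf \circ RLf$ via the free-algebra property, then checking the comonad laws and distributive law from double-functoriality fibrewise — is exactly that classical proof transported to the fibred setting.
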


\begin{proof}
  Again the usual proof generalises easily to fibrations. See
  e.g. \cite[Theorem 2.24]{riehlams} or \cite[Proposition
  4]{bourkegarnerawfs1}.
\end{proof}

\begin{lemma}
  \label{lem:doublefunctolawfsmorph}
  Suppose that we are given an lawfs $(L_1, R_1)$ and an awfs $(L, R)$
  together with a morphism of functorial factorisations (i.e. morphism
  of pointed endofunctors) $\xi \colon (L_1, R_1) \rightarrow (L,
  R)$. Suppose that the corresponding map $\bar{\xi} \colon \xalg{R}
  \rightarrow \xalg{R_1}$ preserves vertical composition (i.e. it is a
  double functor). Then $\xi$ respects comultiplication, and so is a
  morphism of lawfs's.
\end{lemma}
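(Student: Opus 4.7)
The plan is to exploit the correspondence between comultiplication data and vertical composition of $R$-algebras, which underlies Lemmas \ref{lem:awfstocomp} and \ref{lem:lawfstocomp}. In both the awfs and lawfs settings, the composite $R$-algebra structure on $g \circ f$, given structures $s$ on $f$ and $t$ on $g$, is defined by the same explicit formula (compare \cite[Def.~2.21]{riehlams}): the composite structure map arises by using the $L$-coalgebra structure on $L(gf)$ (given by $\Sigma_{gf}$ or $\Sigma^1_{gf}$) to produce a canonical lift against $(g, t)$, and then similarly lifting against $(f, s)$. Crucially, the comultiplication enters only at these two lifting steps; all other ingredients of the formula use only the functorial factorisation.

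Given this, I would write out explicitly the equation expressing that $\bar\xi$ preserves vertical composition, applied to an arbitrary composable pair $(f,s), (g,t)$ in $\xalg{R}$. The left-hand side is $\bar\xi$ applied to the composite in $\xalg{R}$, whose structure map is built using $\Sigma$ and then post-composed with $\xi_{gf}$; the right-hand side is the composite in $\xalg{R_1}$ of the images $\bar\xi(f,s) = (f, s \circ \xi_f)$ and $\bar\xi(g,t) = (g, t \circ \xi_g)$, whose structure map is built using $\Sigma^1$. Since $\xi$ is a morphism of functorial factorisations, all factorisation-dependent ingredients on the two sides agree by naturality of $\xi$. What remains is exactly the equation
\[
\Sigma \circ \xi \;=\; L(\xi) \circ \xi_{L_1} \circ \Sigma^1
\]
(of natural transformations $L_1 \Rightarrow L^2$), pasted with $s$ and $t$ and evaluated at $gf$. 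Specialising to, say, $(g, t) = (R h, \Pi_h)$ free on a suitable vertical map $h$ while letting $(f, s)$ range over $\xalg{R}$ should isolate exactly the required compatibility, showing that $\xi$ respects comultiplication.

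The hard part will be verifying that the $\Sigma$-dependent contribution can genuinely be separated from the rest of the composition formula, and that a sufficiently rich collection of choices of $(f,s),(g,t)$ is available to isolate the desired equation. Both of these amount to adapting the classical computation in \cite[Thm.~2.24]{riehlams} or \cite[Prop.~4]{bourkegarnerawfs1} to the fibred setting; in particular, we rely on Lemma \ref{lem:lawfstocomp} to ensure that the lawfs $(L_1, R_1)$ supports an algebra composition obeying the same formula used on the awfs side.
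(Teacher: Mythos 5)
Your proposal takes essentially the same route as the paper: the paper's proof is simply the observation that the second half of Riehl's Theorem 6.9 establishes this when $(L_1,R_1)$ is itself an awfs, that the multiplication on $(L_1,R_1)$ is never used in that argument (so Lemma \ref{lem:lawfstocomp} supplies all that is needed on the lawfs side, as you note), and that the computation generalises to the fibred setting without change. The one refinement worth making to your sketch is that the standard specialisation isolating the comultiplication is the composable pair of \emph{free} algebras $(RLf,\Pi_{Lf})$ and $(Rf,\Pi_f)$ — the composite of their structure maps precomposed with $K(LLf,1)$ recovers $\Sigma_f$ directly — rather than one free algebra paired with an arbitrary one.
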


\begin{proof}
  In the second half of \cite[Theorem 6.9]{riehlams}, Riehl gives an explicit
  proof of this when $(L_1, R_1)$ is also an awfs. However, the
  multiplication on $(L_1, R_1)$ is never needed and in fact the same
  proof applies when $(L_1, R_1)$ is just an lawfs (and once again the
  proof generalises to a fibration without problems).
\end{proof}

We say a functor $U \colon \cat{T} \rightarrow \vrt{\cat{E}}$ is
\emph{monadic over $\cat{E}$} if it is isomorphic to $\xalg{T}$ where
$T$ over
$\cod \colon \vrt{\cat{E}} \rightarrow \cat{E}$. We then get the
following.

\begin{theorem}
  \label{thm:buildalgfreeawfs}
  Let $(L_1, R_1)$ be an lawfs. Then there is an awfs $(L, R)$
  algebraically free on $(L_1, R_1)$ if and only if the forgetful
  functor $U \colon \xalg{R_1} \rightarrow \vrt{\cat{E}}$ is
  monadic over $\cod \colon \vrt{\cat{E}} \rightarrow \cat{E}$.
\end{theorem}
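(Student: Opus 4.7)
The theorem is an if-and-only-if, both implications relying on the double categorical description of awfs structures developed in the preceding lemmas. For the forward direction, suppose $(L, R)$ is algebraically free on $(L_1, R_1)$. Unpacking the definition, the composite $\xalg{(R, \lambda, \Pi)} \to \xalg{(R, \lambda)} \to \xalg{R_1}$ is an isomorphism of categories over $\vrt{\cat{E}}$. The leftmost category is by construction the category of algebras for the monad $R$ over $\cod$, so its forgetful functor to $\vrt{\cat{E}}$ is monadic over $\cod$. Composing with the isomorphism exhibits $U \colon \xalg{R_1} \to \vrt{\cat{E}}$ as monadic over $\cod$.

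For the reverse direction, suppose $U$ is monadic over $\cod$. Then $U$ admits a left adjoint $F$ over $\cod$, and the induced monad $M := UF$ on $\vrt{\cat{E}}$ is a monad over $\cod$. By Proposition \ref{prop:monadictoalgfree}, $M$ is algebraically free on the pointed endofunctor $R_1$, with witnessing morphism of pointed endofunctors $\xi \colon R_1 \to M$. Since $M$ is a monad over $\cod$, it furnishes an rawfs $(L', M)$, and the remaining task is to promote $(L', M)$ to an awfs and to check that $\xi$ assembles into a morphism of lawfs's witnessing algebraic freeness.

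To promote, I would transport structure along the algebraic-freeness isomorphism $\bar{\xi}$, which goes from the monad-algebras of $M$ to the pointed-endofunctor-algebras of $R_1$. By Lemma \ref{lem:lawfstocomp}, the lawfs $(L_1, R_1)$ equips the pointed-algebras of $R_1$ with a natural vertical composition compatible with the forgetful functor to $\vrt{\cat{E}}$. Pulling this composition back along $\bar{\xi}$ yields a natural vertical composition on $M$-monad-algebras, and Lemma \ref{lem:comptoawfs} then promotes $(L', M)$ to an awfs $(L, M)$. By construction $\bar{\xi}$ preserves vertical composition, so Lemma \ref{lem:doublefunctolawfsmorph} forces $\xi$ to respect comultiplication and hence to be a morphism of lawfs's. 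Algebraic freeness of the resulting awfs $(L, M)$ on $(L_1, R_1)$ then unfolds to the requirement that $\bar{\xi}$ is an isomorphism, which is precisely what monadicity of $U$ gave us.

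The main obstacle is simply the bookkeeping between the four relevant categories of algebras (monad- and pointed-algebras of $M$ and of $R_1$), together with checking that the double categorical composition transported along $\bar{\xi}$ from the lawfs side is compatible with the already-established monad structure on $M$, so that the cited lemmas apply uniformly. No calculation beyond those already performed in the preceding lemmas should be required.
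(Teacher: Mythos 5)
Your proof is correct and follows essentially the same route as the paper: apply Proposition \ref{prop:monadictoalgfree} to obtain the algebraically free monad, transport the double-categorical composition of Lemma \ref{lem:lawfstocomp} across $\bar{\xi}$, invoke Lemma \ref{lem:comptoawfs} to get the comultiplication, and use Lemma \ref{lem:doublefunctolawfsmorph} to see that $\xi$ is a morphism of lawfs's. The only difference is that you spell out the (easy) forward implication, which the paper leaves implicit.
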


\begin{proof}
  By proposition \ref{prop:monadictoalgfree} there is a monad $R$ over
  $\cod \circ p$ and a morphism of pointed endofunctors
  $\xi \colon R_1 \rightarrow R$ such that the corresponding map
  $\bar{\xi} \colon \xalg{R} \rightarrow \xalg{R_1}$ is an
  isomorphism. However, we can also view $R$ as an rawfs $(L, R)$ and
  $\xi$ as a morphism of functorial factorisations $(L_1, R_1)
  \rightarrow (L, R)$.

  We saw in lemma \ref{lem:lawfstocomp} that we can extend $\xalg{T}$
  to make a double category such that the forgetful functor to
  $\vrt{\cat{E}}$ is a double functor. However, $\bar{\xi}$ is an
  isomorphism between $\xalg{R_1}$ and $\xalg{R}$ over
  $\cat{E}$. Hence we can also give $\xalg{R}$ the structure of a
  double category by passing back and forth across the isomorphism.
  We then use lemma \ref{lem:comptoawfs} to assign a comultiplication
  to $R$, making it an awfs. By definition vertical composition is
  preserved by $\bar{\xi}$, and so by lemma
  \ref{lem:doublefunctolawfsmorph}, $\xi$ must preserve
  comultiplication, making it an lawfs morphism
  $(L_1, R_1) \rightarrow (L, R)$. But we can now deduce that $(L, R)$
  is algebraically free on $(L_1, R_1)$.
\end{proof}

\begin{theorem}
  \label{thm:initialalgthmawfs}
  Let $(L_1, R_1)$ be an lawfs.
  Write $U \colon \xalg{R_1} \rightarrow \vrt{\cat{E}}$ for the
  forgetful functor. Suppose we are given an initial object of
  $(f \downarrow U_Y)$ for every object $Y$ of $\cat{E}$, and every
  vertical map $f \colon X \rightarrow Y$. Then there exists an
  algebraically free awfs on $(L_1, R_1)$.
\end{theorem}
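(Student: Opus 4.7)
The plan is to combine Lemma \ref{lem:buildalgfreemonad} with Theorem \ref{thm:buildalgfreeawfs}. By Theorem \ref{thm:buildalgfreeawfs} it suffices to show that the forgetful functor $U \colon \xalg{R_1} \rightarrow \vrt{\cat{E}}$ is monadic over $\cod \colon \vrt{\cat{E}} \rightarrow \cat{E}$, in the sense defined just before that theorem, namely that $U$ is isomorphic to the forgetful functor from algebras over some monad on $\cod$.

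To do this, I would view the functorial factorisation $(L_1, R_1)$ as exhibiting $R_1$ as a pointed endofunctor over $\cod \colon \vrt{\cat{E}} \rightarrow \cat{E}$ (using the correspondence recorded at the start of Section \ref{sec:fibr-funct-fact}), and then apply Lemma \ref{lem:buildalgfreemonad} to $R_1$ over the fibration $\cod$. The hypothesis of that lemma asks for an initial object of $(f \downarrow U_Y)$ for every $Y \in \cat{E}$ and every vertical $f \colon X \rightarrow Y$, which is exactly the hypothesis of the present theorem once we unpack what it means for a map in $\vrt{\cat{E}}$ to be vertical over $\cod$ (i.e., a vertical-over-$p$ map $f$ in the fibre category $\vrt{\cat{E}}_Y$). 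Lemma \ref{lem:buildalgfreemonad} then produces a monad over $\cod$, algebraically free on $R_1$, whose category of algebras is canonically isomorphic to $\xalg{R_1}$ over $\vrt{\cat{E}}$.

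This isomorphism witnesses that $U$ is monadic over $\cat{E}$ in the sense required, so Theorem \ref{thm:buildalgfreeawfs} applies and produces an awfs $(L, R)$ algebraically free on $(L_1, R_1)$, as desired.

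No step looks genuinely difficult: the only thing to be careful about is the bookkeeping between viewing $R_1$ as part of a functorial factorisation over $p$ and viewing it as a pointed endofunctor on the fibration $\cod$, and likewise for the resulting monad. Since all the structural work (constructing the left adjoint, invoking Beck's monadicity theorem, and passing from monadicity to the algebraically free awfs) has been packaged in the earlier lemmas, the proof itself reduces to a one-line appeal to Lemma \ref{lem:buildalgfreemonad} followed by Theorem \ref{thm:buildalgfreeawfs}.
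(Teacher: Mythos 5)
Your proposal is correct and follows exactly the paper's own route: the paper's proof is literally ``by Lemma \ref{lem:buildalgfreemonad} and Theorem \ref{thm:buildalgfreeawfs}'', with Lemma \ref{lem:buildalgfreemonad} (applied to $R_1$ viewed as a pointed endofunctor over $\cod \colon \vrt{\cat{E}} \rightarrow \cat{E}$) supplying the monadicity hypothesis of Theorem \ref{thm:buildalgfreeawfs}. Your extra remarks on the bookkeeping between the two viewpoints on $R_1$ are accurate and match what the paper leaves implicit.
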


\begin{proof}
  By lemma \ref{lem:buildalgfreemonad} and theorem
  \ref{thm:buildalgfreeawfs}.
\end{proof}

\begin{corollary}
  \label{cor:initialalg}
  Let $p \colon \cat{E} \rightarrow \cat{B}$ be a locally small
  bifibration.

  Fix a vertical map $m$. Suppose that for every $Y \in \cat{E}$ and
  every vertical map $f \colon X \rightarrow Y$, the following pointed
  endofunctor on $\cat{E}/Y$ has an initial algebra. Send
  $g \colon Z \rightarrow Y$ to
  $\langle f, \rho_g \rangle \colon X \amalg R_1 g \rightarrow Y$,
  where $R_1$ is given by step-one. The point of the pointed
  endofunctor, at $g$ is the composition
  $Z \stackrel{\lambda_g}{\rightarrow} R g \hookrightarrow X \amalg R
  g$. Then the awfs cofibrantly generated by $m$ exists.
\end{corollary}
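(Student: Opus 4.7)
The plan is to apply Theorem \ref{thm:initialalgthmawfs} to the lawfs $(L_1, R_1)$ given by step-one of the small object argument. That theorem requires, for each $Y \in \cat{E}$ and each vertical $f \colon X \to Y$, an initial object of the comma category $(f \downarrow U_Y)$, where $U_Y$ is the restriction of the forgetful functor $U \colon \xalg{R_1} \to \vrt{\cat{E}}$ to the fibre of $\cod$ over $Y$. My strategy is to identify $(f \downarrow U_Y)$ with the category of algebras for the pointed endofunctor $T_f$ on $\cat{E}/Y$ described in the statement, and then transport the assumed initial $T_f$-algebra across this isomorphism.

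The key step is to unpack a $T_f$-algebra on $g \colon Z \to Y$. Since $\cat{E}/Y$ has the binary coproduct used in the definition, an algebra structure $\alpha \colon X \amalg K_1 g \to Z$ over $Y$ is determined by a pair of maps $\alpha_0 \colon X \to Z$ and $\alpha_1 \colon K_1 g \to Z$, each over $Y$. The unit law at the chosen point $\mathrm{inr} \circ L_1 g$ is equivalent to $\alpha_1 \circ L_1 g = 1_Z$, which is precisely an $R_1$-algebra structure on $g$ in the pointed-endofunctor sense, while $\alpha_0$ is precisely a morphism $f \to g$ in $\vrt{\cat{E}}_Y$, i.e.\ a morphism in $(f \downarrow U_Y)$. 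An analogous decomposition shows that a morphism of $T_f$-algebras amounts to an $R_1$-algebra morphism on the $\alpha_1$-side together with compatibility with the $\alpha_0$-components, which is exactly a morphism in the comma category. Thus the two categories are canonically isomorphic.

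Transporting the assumed initial $T_f$-algebra across this isomorphism supplies the initial object of $(f \downarrow U_Y)$ required by Theorem \ref{thm:initialalgthmawfs}, yielding an algebraically free awfs on $(L_1, R_1)$, which is by definition the awfs cofibrantly generated by $m$. I expect no genuine obstacle here beyond the bookkeeping of the coproduct decomposition; the main point that must be stated carefully is that the unit law for $T_f$ is the same equation as the unit law for an $R_1$-algebra, so that the identification of $T_f$-algebras with objects of $(f \downarrow U_Y)$ is strict and not merely up to equivalence.
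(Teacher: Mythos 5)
Your proposal is correct and follows exactly the paper's route: the paper's proof is precisely ``check that the category of algebras for this pointed endofunctor is isomorphic to $(f \downarrow U_Y)$, then apply Theorem \ref{thm:initialalgthmawfs}'', and your coproduct decomposition of the algebra structure map (with the unit law at $\mathrm{inr}\circ L_1 g$ reducing to the $R_1$-algebra unit law) is the content of the ``one can check'' that the paper leaves implicit.
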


\begin{proof}
  One can check that for each $Y \in \cat{E}$ and each vertical map
  $f$, the category of algebras for the pointed endofunctor defined is
  isomorphic to $(f \downarrow U_Y)$. The result now follows from
  theorem \ref{thm:initialalgthmawfs}.
\end{proof}

\subsection{Fibred and Strongly Fibred Algebraically Free Awfs's}
\label{sec:fibr-strongly-fibr}

\begin{theorem}
  \label{thm:algfreeawfsfib}
  Suppose that $p \colon \cat{E} \rightarrow \cat{B}$ is a fibration
  with dependent products (right adjoints to reindexing functors) and
  $(L_1, R_1)$ is a fibred lawfs. If the algebraically free awfs on
  $(L_1, R_1)$ exists (say $(L, R)$), then it is also fibred.
\end{theorem}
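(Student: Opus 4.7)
The plan is to reduce this statement directly to Theorem \ref{thm:algfreerawfsfib}. The key observation is that by Proposition \ref{prop:fibredchar}, fibredness of a functorial factorisation is a property of either $L$ or $R$ as a mere functor over $\cat{B}$, independent of any (co)monad structure. Since fibredness of an lawfs, rawfs, or awfs is by definition fibredness of the underlying functorial factorisation, all three notions coincide whenever they are comparable.

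First I would unpack the hypothesis. If $(L, R)$ is the algebraically free awfs on $(L_1, R_1)$ with structural morphism $\xi \colon (L_1, R_1) \Rightarrow (L, R)$, then by the definition of algebraically free awfs, the induced functor $\xalg{R} \rightarrow \xalg{R_1}$ from the monad algebras of $R$ to the pointed endofunctor algebras of $R_1$ is an isomorphism. This is exactly the condition saying that, viewing $(L, R)$ merely as an rawfs and $(L_1, R_1)$ merely as a functorial factorisation, $(L, R)$ is the algebraically free rawfs on $(L_1, R_1)$.

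Next I would invoke Theorem \ref{thm:algfreerawfsfib}. The assumption that $(L_1, R_1)$ is a fibred lawfs says the underlying functorial factorisation is fibred; and by assumption $p$ has dependent products. Theorem \ref{thm:algfreerawfsfib} therefore applies and yields that $(L, R)$ is a fibred rawfs, i.e.\ that $R$ (as a functor over $p \circ \cod$) preserves cartesian maps. By Proposition \ref{prop:fibredchar}, this is equivalent to the underlying functorial factorisation of $(L, R)$ being fibred, which is the definition of $(L, R)$ being a fibred awfs.

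I do not expect a serious obstacle: the entire argument is a matter of noting that ``fibred'' is agnostic to the extra monad/comonad structure, so an algebraically free awfs is in particular an algebraically free rawfs and Theorem \ref{thm:algfreerawfsfib} applies verbatim. The only point requiring some care is to confirm that being algebraically free as an awfs really does entail being algebraically free as an rawfs on the underlying functorial factorisation; this follows immediately from the definition in terms of the isomorphism $\xalg{R} \cong \xalg{R_1}$.
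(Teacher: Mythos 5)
Your proof is correct and follows essentially the same route as the paper, whose entire proof of this theorem is ``Same as theorem \ref{thm:algfreerawfsfib}.'' You in fact make that terse proof precise in the cleanest possible way: the definition of algebraically free awfs is exactly the statement that $R$ is the algebraically free monad on the pointed endofunctor $R_1$, so the underlying rawfs is algebraically free on the underlying functorial factorisation and Theorem \ref{thm:algfreerawfsfib} applies verbatim, with Proposition \ref{prop:fibredchar} confirming that fibredness is insensitive to the extra (co)monad structure.
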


\begin{proof}
  Same as theorem \ref{thm:algfreerawfsfib}
\end{proof}

\begin{theorem}
  \label{thm:algfreeawfsstrfib}
  Suppose that $p \colon \cat{E} \rightarrow \cat{B}$ is a fibration
  with dependent products (right adjoints to reindexing functors),
  that each fibre category $\cat{E}_I$ is locally cartesian closed and
  $(L_1, R_1)$ is a strongly fibred lawfs. If the algebraically free
  awfs on $(L_1, R_1)$ exists (say $(L, R)$), then it is also strongly
  fibred.
\end{theorem}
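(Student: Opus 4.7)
The plan is to reduce this statement to Theorem \ref{thm:algfreerawfsstrfib}, in exactly the way Theorem \ref{thm:algfreeawfsfib} was reduced to Theorem \ref{thm:algfreerawfsfib}. By definition, an awfs is strongly fibred iff its underlying functorial factorisation is, which is a condition only on the underlying pointed endofunctor $R$ viewed as an endofunctor over $\cod \colon \vrt{\cat{E}} \to \cat{E}$. The extra comonad structure $\Sigma'$ and the distributive law are irrelevant to strong fibredness.

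First I would observe that the underlying monad of the algebraically free awfs on the lawfs $(L_1, R_1)$ coincides with the underlying monad of the algebraically free rawfs on the underlying functorial factorisation of $(L_1, R_1)$. This is because algebraic freeness in both settings is characterised by the induced isomorphism on the categories of (pointed endofunctor) $R_1$-algebras, which involves only the pointed endofunctor structure of $R_1$, not the copointed endofunctor structure of $L_1$ nor $\Sigma$. Since $(L_1, R_1)$ is strongly fibred as an lawfs, its underlying functorial factorisation is strongly fibred, so Theorem \ref{thm:algfreerawfsstrfib} applies and yields strong fibredness of $R$.

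Alternatively, one may just rerun the proof of Theorem \ref{thm:algfreerawfsstrfib} verbatim. The key step is the construction of dependent products for $\cod \colon \vrt{\cat{E}} \to \cat{E}$ from those of $p$ and the local cartesian closedness of the fibre categories $\cat{E}_I$: by Lemma \ref{lem:codcartchar}, any reindexing $u^\ast$ along $u$ in $\cat{E}$ factors as reindexing over $p(u)$ (in $p \circ \cod$) followed by pullback along the vertical component of $u$, both of which admit right adjoints under the stated hypotheses, so their composite does as well. Having dependent products for $\cod$, Lemma \ref{lem:freemonadfibred} applies to the algebraically free monad on the strongly fibred pointed endofunctor $R_1$ over $\cod$, giving fibredness of $R$ over $\cod$, i.e.\ strong fibredness of $(L, R)$.

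There is no real obstacle to overcome: the whole proof is a routine transfer of the rawfs argument, using the fact that the defining property of strong fibredness, and the ingredients appearing in Lemma \ref{lem:freemonadfibred}, are insensitive to the additional coalgebraic data present in an awfs. The only minor bookkeeping point is to confirm the identification of the two notions of algebraic freeness on the underlying monad level, which follows straight from Remark \ref{rmk:novertinalg} and the definitions.
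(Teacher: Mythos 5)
Your proposal is correct and matches the paper's argument, which is literally ``Same as theorem \ref{thm:algfreerawfsstrfib}'': one reruns the rawfs proof, constructing dependent products for $\cod \colon \vrt{\cat{E}} \rightarrow \cat{E}$ via Lemma \ref{lem:codcartchar} and then applying Lemma \ref{lem:freemonadfibred}. Your extra remark identifying the underlying algebraically free monads in the awfs and rawfs settings is a reasonable piece of bookkeeping that the paper leaves implicit.
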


\begin{proof}
  Same as theorem \ref{thm:algfreerawfsstrfib}.
\end{proof}

\section{Fibred Leibniz Construction}

\subsection{Review of Monoidal Fibrations}
\label{sec:revi-fibr-mono}

We recall the following from \cite[Section
12 and 13]{shulmanfbmf}.

\begin{definition}[Shulman]
  A \emph{monoidal fibration} is a Grothendieck fibration $p \colon
  \cat{E} \rightarrow \cat{B}$ together with monoidal products on
  $\cat{E}$ and $\cat{B}$ such that $p$ strictly preserves monoidal
  products and the monoidal product $\otimes$ on $\cat{E}$ preserves
  cartesian maps.

  A monoidal fibration is \emph{cartesian} if ($\cat{B}$ has products
  and) the monoidal product on $\cat{B}$ is cartesian product
  $\times$.
\end{definition}

\begin{proposition}[Shulman]
  Let $p \colon \cat{E} \rightarrow \cat{B}$ be a cartesian monoidal
  fibration. Then for each $I \in \cat{B}$ we can define a monoidal
  product $\otimes_I$ in each fibre category, and these are preserved
  (up to natural isomorphism) by reindexing. (Moreover this forms part
  of an equivalence of $2$-categories between cartesian fibrations on
  $\cat{B}$ and pseudofunctors from $\opcat{\cat{B}}$ to monoidal
  categories - see \cite[Theorem 12.7]{shulmanfbmf} for details.)
\end{proposition}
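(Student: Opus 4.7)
The plan is to define the fibrewise monoidal product by pulling back the global monoidal product along the diagonal, and then to verify that all the structure is induced from the ambient monoidal structure on $\cat{E}$ together with the fact that $\otimes$ preserves cartesian maps.

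First I would set, for each $I \in \cat{B}$ and $X, Y \in \cat{E}_I$,
\[ X \otimes_I Y := \Delta_I^\ast(X \otimes Y), \]
where $\Delta_I \colon I \to I \times I$ is the diagonal. Since $p$ strictly preserves monoidal products, $X \otimes Y$ lives over $I \times I$, so this makes sense. For the unit, note that the unit $\mathbf{1}$ of $\otimes$ lies over the terminal object $1$ of $\cat{B}$, and I would define the fibrewise unit as $\mathbf{1}_I := {!_I}^\ast(\mathbf{1})$, where $!_I \colon I \to 1$ is the unique map. Functoriality of $\otimes_I$ in each variable then follows by combining the functoriality of $\otimes$ with that of $\Delta_I^\ast$.

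Next I would construct the associator, unitors (and braiding, in the symmetric case) by transporting the corresponding natural isomorphisms from $\cat{E}$ along cartesian lifts. For instance, for associativity, one uses the global associator $\alpha \colon (X \otimes Y) \otimes Z \to X \otimes (Y \otimes Z)$ in $\cat{E}$, which lies over the associator in $\cat{B}$; since the associator in $\cat{B}$ composed with appropriate diagonals is the ternary diagonal $I \to I \times I \times I$ either way, the universal property of cartesian lifts produces a vertical isomorphism $(X \otimes_I Y) \otimes_I Z \cong X \otimes_I (Y \otimes_I Z)$. The coherence axioms (pentagon, triangle, and so on) then follow by passing through the cartesian cleavage: because cartesian lifts are unique up to unique vertical isomorphism and the coherence diagrams hold in $\cat{E}$, they also hold fibrewise.

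For preservation under reindexing, given $\sigma \colon J \to I$, I would exhibit a natural isomorphism
\[ \sigma^\ast(X \otimes_I Y) \;\cong\; \sigma^\ast(X) \otimes_J \sigma^\ast(Y). \]
Unfolding and using pseudofunctoriality of reindexing, $\sigma^\ast \Delta_I^\ast \cong (\Delta_I \circ \sigma)^\ast = ((\sigma \times \sigma) \circ \Delta_J)^\ast \cong \Delta_J^\ast (\sigma \times \sigma)^\ast$. The key step is then a natural isomorphism $(\sigma \times \sigma)^\ast(X \otimes Y) \cong \sigma^\ast(X) \otimes \sigma^\ast(Y)$, which follows directly from the hypothesis that $\otimes$ preserves cartesian maps: the two cartesian cleavages $\overline{\sigma}(X) \otimes \overline{\sigma}(Y)$ and $\overline{\sigma \times \sigma}(X \otimes Y)$ both give cartesian lifts over $\sigma \times \sigma = p(\overline{\sigma}(X) \otimes \overline{\sigma}(Y))$ to $X \otimes Y$, so they are uniquely vertically isomorphic. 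Preservation of the unit is similar and uses that $!_J = !_I \circ \sigma$.

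The main obstacle is purely bookkeeping: there is no conceptual difficulty, but one must verify that the isomorphisms produced by the universal property of cartesian maps are coherent with one another (for the associator and unitors of $\otimes_I$, and for compatibility with $\sigma^\ast$ and $(\sigma \circ \tau)^\ast$). All of this follows by the uniqueness clause in the definition of cartesian maps, which forces any two competing vertical isomorphisms between reindexed objects to agree. The claim in parentheses about the $2$-categorical equivalence is not needed for the core statement and would be handled exactly as in \cite[Theorem 12.7]{shulmanfbmf}.
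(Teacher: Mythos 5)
Your proposal is correct and is essentially the construction the paper is citing: Shulman's fibrewise product is exactly the pullback of the global product along the diagonal, with the unit pulled back along $!_I$, coherence transported through cartesian lifts, and preservation under reindexing coming from the identity $\Delta_I \circ \sigma = (\sigma\times\sigma)\circ\Delta_J$ together with the hypothesis that $\otimes$ preserves cartesian maps. The paper itself gives no proof beyond the reference, so no further comparison is needed.
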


\begin{example}
  \label{ex:codismonoidal}
  If $\catc$ is a category with pullbacks, then we can define a
  cartesian fibred monoidal product on the codomain fibration. We
  define the monoidal product on $\catc^\btwo$ to just be the
  cartesian product on $\catc^\btwo$, or in other words pointwise the
  cartesian product on $\catc$. The product in each fibre category
  $\catc/I$ is then just the cartesian product in $\catc/I$, which is
  just pullback in $\catc$.
\end{example}

\begin{example}
  Let $\catc$ be any category and $\otimes$ a monoidal product on
  $\catc$. We lift $\otimes$ to a fibred monoidal product on the
  category indexed families fibration $\fmlyc \rightarrow
  \cats$. Given $X \colon \smcat{A} \rightarrow \catc$ and
  $Y \colon \smcat{B} \rightarrow \catc$ we define $X \otimes Y$
  pointwise. That is, we define
  $X \otimes Y \colon \smcat{A} \times \smcat{B} \rightarrow \catc$ by
  $(X \otimes Y)(A, B) := X(A) \otimes Y(B)$. The resulting monoidal
  product $\otimes_\smcat{A}$ in each fibre category
  $[\smcat{A}, \catc]$ is of course also just defined pointwise.
\end{example}

\begin{definition}[Shulman]
  Let $p \colon \cat{E} \rightarrow \cat{B}$ be a cartesian monoidal
  fibration. We say that $p$ is \emph{internally closed} if each fibre
  $\cat{E}_I$ is closed monoidal (i.e. $- \otimes_I X$ and $X
  \otimes_I -$ have right adjoints for all $X$), and reindexing is
  closed monoidal (i.e. preserves these right adjoints).
\end{definition}

\begin{example}
  \label{ex:codismonoidalclosed}
  If $\catc$ is locally cartesian closed then the codomain fibration
  with monoidal structure from example \ref{ex:codismonoidalclosed} is
  internally monoidal closed.
\end{example}

We will also use the following proposition.
\begin{proposition}
  \label{prop:monoidalclosedtoadj}
  Let $p \colon \cat{E} \rightarrow \cat{B}$ be an internally closed
  cartesian monoidal fibration. Suppose further that $\cat{B}$ has a
  terminal object, and that $X$ is an object of $\cat{E}_1$. For $I
  \in \cat{B}$, write $\{ Y, - \}_I$ for the right adjoint to the
  functor $Y \otimes_I -$.
  
  Then there is an adjunction from $\cat{E}$ to $\cat{E}$ over
  $\cat{B}$ defined as follows. The left adjoint $F$ is defined by
  $F(Y) := I^\ast(X) \otimes_I Y$ for $Y \in \cat{E}_I$. The right
  adjoint $G$ is defined by $G(Z) := \{J^\ast(X) , Z\}$ for $Z \in
  \cat{E}_J$.
\end{proposition}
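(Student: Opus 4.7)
The plan is to glue together the fibrewise adjunctions $I^\ast(X) \otimes_I - \dashv \{I^\ast(X), -\}_I$, for varying $I$, into a single adjunction on $\cat{E}$ using the coherence provided by the cartesian monoidal and internally closed structures. The key preliminary observation is that for any $u \colon I \rightarrow J$ in $\cat{B}$, there is a canonical isomorphism $u^\ast(J^\ast(X)) \cong I^\ast(X)$, since both are obtained by reindexing $X$ along the unique map to $1$.

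First I would check that $F$ and $G$ are well-defined functors over $\cat{B}$. Given a morphism $f \colon Y \rightarrow Y'$ over $u \colon I \rightarrow J$, factor it as a vertical map $Y \rightarrow u^\ast Y'$ followed by the cartesian lift. The chain of isomorphisms
\begin{equation*}
u^\ast(J^\ast(X) \otimes_J Y') \cong u^\ast(J^\ast(X)) \otimes_I u^\ast(Y') \cong I^\ast(X) \otimes_I u^\ast(Y'),
\end{equation*}
coming from the cartesian monoidal structure and the observation above, lets us define $F(f)$ as the evident vertical map $F(Y) \rightarrow I^\ast(X) \otimes_I u^\ast(Y') \cong u^\ast F(Y')$ composed with the cartesian lift $u^\ast F(Y') \rightarrow F(Y')$. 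Functoriality and the fact that $p \circ F = p$ are immediate, and $G$ is defined dually using that reindexing preserves the internal hom.

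The adjunction bijection is then constructed as follows. For $Y \in \cat{E}_I$ and $Z \in \cat{E}_J$, any morphism $F(Y) \rightarrow Z$ in $\cat{E}$ lies over some $u \colon I \rightarrow J$ and so corresponds, by cartesianness of $\overline{u}(Z)$, to a vertical map $I^\ast(X) \otimes_I Y \rightarrow u^\ast(Z)$ in $\cat{E}_I$. The fibrewise adjunction sends this bijectively to a vertical map $Y \rightarrow \{I^\ast(X), u^\ast(Z)\}_I$, and the internal closure isomorphism $\{I^\ast(X), u^\ast(Z)\}_I \cong u^\ast(\{J^\ast(X), Z\}_J) = u^\ast G(Z)$ makes this exactly the data of a morphism $Y \rightarrow G(Z)$ over $u$. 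The unit and counit, being images of identities under this bijection, are automatically vertical, so the adjunction is over $\cat{B}$.

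The main obstacle will be verifying naturality of this bijection in both variables; triangle identities then follow from the fibrewise triangle identities and the uniqueness clause of cartesianness. Naturality amounts to checking that the isomorphisms $u^\ast(J^\ast X \otimes_J -) \cong I^\ast X \otimes_I u^\ast(-)$ and $u^\ast(\{J^\ast X, -\}_J) \cong \{I^\ast X, u^\ast(-)\}_I$ are compatible with composition of base morphisms and with the fibrewise adjunction units and counits. This is essentially bookkeeping, reducing to the coherence axioms packaged into ``cartesian monoidal fibration'' (reindexing preserves $\otimes$ up to coherent natural isomorphism) and ``internally closed'' (reindexing preserves right adjoints, again coherently).
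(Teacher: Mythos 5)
Your proposal is correct and is exactly the argument the paper has in mind; the paper simply declares the verification ``straightforward'' and omits it, and your writeup supplies the expected details (gluing the fibrewise adjunctions via the canonical isomorphisms $u^\ast(J^\ast X)\cong I^\ast X$, the cartesian-monoidal coherence for $\otimes_I$, and the internal-closure compatibility of reindexing with $\{-,-\}_I$). The reduction of naturality to the vertical and cartesian cases via the factorisation of an arbitrary morphism is the right way to finish.
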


\begin{proof}
  It is straightforward to check that the definition given is an
  adjunction over $\cat{B}$.
\end{proof}

\subsection{Definition and Existence of the Fibred Leibniz Construction}
\label{sec:defin-exist-fibr}

The Leibniz construction is a standard construction in homotopical
algebra (see e.g. \cite[Construction 11.1.7]{riehlcht}), which takes a
monoidal closed structure on a category $\catc$, and produces a
monoidal closed structure on the arrow category $\catc^\btwo$. It was
applied to the semantics of homotopy type theory by Gambino and
Sattler in \cite{gambinosattlerpi} who showed how to construct
dependent products for certain awfs's cofibrantly generated by maps
defined using pushout product, including as a special case the Kan
fibrations in the CCHM cubical set model (from
\cite{coquandcubicaltt}).

In this section we show how to extend the Leibniz construction to monoidal
fibrations, which will be used later in some of our examples.

\begin{definition}
  Let $(\catc, \otimes)$ be a monoidal category with pushouts. The
  \emph{pushout product} is the monoidal product $\hat{\otimes}$
  defined on $\catc^\btwo$ as follows. Given $f \colon U \rightarrow
  V$ and $g \colon X \rightarrow Y$, we define $f \hat{\otimes} g$ as
  the map given by the universal property of the pushout below.
  \begin{equation*}
    \xymatrix{ U \otimes X \ar[r]^{f \otimes X} \ar[d]_{U \otimes g} &
      V \otimes X \ar[d] \ar@/^/[ddr]^{V \otimes g} & \\
      U \otimes Y \ar[r] \ar@/_/[drr]_{f \otimes Y}
      & \cdot \pushoutcorner \ar[dr]|{f \hat \otimes g} & \\
      & & V \otimes Y}
  \end{equation*}
\end{definition}

\begin{proposition}
  \label{prop:pushoutprodfib}
  Suppose we are given a monoidal fibration
  $p \colon \cat{E} \rightarrow \cat{B}$ with fibred pushouts. Then
  pushout product restricts to a monoidal product on $V(\cat{E})$ and
  this makes $\vrt{\cat{E}} \rightarrow \cat{B}$ into a monoidal
  fibration with the same monoidal product on $\cat{B}$.
\end{proposition}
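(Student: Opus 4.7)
The plan is to verify Shulman's three conditions: that $\hat\otimes$ restricts to a monoidal product on $\vrt{\cat{E}}$, that $p \circ \cod$ strictly preserves it, and that $\hat\otimes$ preserves cartesian morphisms. Suppose $m \colon U \to V$ is vertical over $I$ and $f \colon X \to Y$ is vertical over $J$. Since $p$ strictly preserves $\otimes$, all four objects $U \otimes X, V \otimes X, U \otimes Y, V \otimes Y$ lie over $I \otimes J$, and the three arrows $m \otimes X$, $U \otimes f$, $V \otimes X \leftarrow U \otimes X \to U \otimes Y$ of the span are all vertical. The fibred-pushouts hypothesis means these pushouts can be computed inside the fibre $\cat{E}_{I \otimes J}$, so the induced map $m \hat\otimes f$ is itself vertical over $I \otimes J$. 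The same argument applied to morphisms of squares shows that $\hat\otimes$ restricts to a bifunctor on $\vrt{\cat{E}}$ and that $p \circ \cod$ strictly preserves the tensor on objects and morphisms.

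The monoidal coherence (associator, unitor, pentagon and triangle) is inherited from $(\cat{E}, \otimes)$ via the standard pushout-product argument, carried out in each fibre $\cat{E}_{I \otimes J \otimes \cdots}$; this works verbatim because fibred pushouts are preserved by reindexing, so composites and iterated pushouts agree with their counterparts in $\cat{E}$. (If the monoidal unit for $\hat\otimes$ is needed explicitly, it is the initial-to-unit map $\emptyset \to 1$ in the fibre over the monoidal unit of $\cat{B}$, assumed to exist along with the other pushouts.)

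The main obstacle is preservation of cartesian maps. Recall that a morphism $s \colon m \to m'$ in $\vrt{\cat{E}}$ is cartesian over $\cat{B}$ iff both components $s_0 \colon U \to U'$ and $s_1 \colon V \to V'$ are cartesian in $\cat{E}$, necessarily over a common $u \colon I \to I'$; let $t \colon f \to f'$ be similarly cartesian over $v \colon J \to J'$. Since $\otimes$ preserves cartesian maps, each of the four tensors $s_i \otimes t_j$ is cartesian over $u \otimes v$. The codomain component of $s \hat\otimes t$ is $s_1 \otimes t_1$, already cartesian, so it remains to show that the induced map of pushouts $P \to P'$ is cartesian over $u \otimes v$. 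By the fibred-pushouts hypothesis, $(u \otimes v)^\ast$ sends the pushout diagram defining $P'$ to a pushout in $\cat{E}_{I \otimes J}$; and the three cartesian liftings $s_0 \otimes t_0, s_1 \otimes t_0, s_0 \otimes t_1$ exhibit this reindexed span as canonically isomorphic to the span defining $P$. Hence $P \cong (u \otimes v)^\ast(P')$ canonically, so the universal map $P \to P'$ factors as an isomorphism followed by the cartesian lift $(u \otimes v)^\ast(P') \to P'$ and is itself cartesian. Thus $s \hat\otimes t$ is levelwise cartesian, hence cartesian in $\vrt{\cat{E}}$, completing the verification.
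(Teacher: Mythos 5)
Your proof is correct and follows essentially the same route as the paper's: observe that the whole pushout-product span lives in the fibre over $p(m) \otimes p(f)$ so the result is vertical, then combine the facts that $\otimes$ preserves cartesian maps and that fibred pushouts preserve cartesian maps (your reindexing argument for $P \cong (u \otimes v)^\ast(P')$ is exactly the unpacking of the latter) with the levelwise characterisation of cartesian maps in $\vrt{\cat{E}}$. The paper's proof is just a terser version of the same argument, and your parenthetical caveat about the unit $\emptyset \to 1$ is a reasonable point the paper leaves implicit.
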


\begin{proof}
  First note that all the objects and maps in the pushout in the
  definition of pushout product lie in the fibre of
  $p(f) \otimes p(g)$, and so $f \hat \otimes g$ does too. Then using
  the fact that $\otimes$ and pushouts preserve cartesian maps,
  pushout product must too. But this is enough to show we have a
  monoidal fibration.
\end{proof}

\begin{proposition}
  \label{prop:fibpushoutproddescr}
  Let $p \colon \cat{E} \rightarrow \cat{B}$ be a cartesian monoidal
  fibration where $\cat{B}$ has a terminal object. Suppose that $m_0$
  is a vertical map $\cat{E}$ in the fibre of the terminal object of
  $\cat{B}$ and $m$ is a vertical map over $I \in \cat{B}$. Let
  $f \colon X \rightarrow Y$ be a vertical map over $J \in \cat{B}$.
  Every family of lifting problems of $m_0 \hat{\otimes} m$ against
  $f$ over $K$ is isomorphic to a lifting problem of the following
  form in $\cat{E}$, where $\sigma'$ is the composition of $\sigma$
  with the canonical isomorphism $1 \times I \cong I$, and the
  horizontal maps lie over $\tau \colon K \rightarrow J$.
  \begin{equation*}
    \xymatrix{ \cdot \ar[d]_{m_0 \hat{\otimes} \sigma'^\ast(m)} \ar[r] &
      X \ar[d]^f \\
      \cdot \ar[r] & Y}
  \end{equation*}
\end{proposition}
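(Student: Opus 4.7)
The plan is to reduce the claim to verifying the single isomorphism
$\sigma^\ast(m_0 \hat\otimes m) \cong m_0 \hat\otimes \sigma'^\ast(m)$ in $\vrt{\cat{E}}$, and then to apply one of the equivalent reformulations of family of lifting problem from Section \ref{sec:defin-lift-probl} (the one where the horizontal maps are taken to lie over $\tau$ in $\cat{E}$).

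First I would unpack the definition. By the reformulation mentioned above, a family of lifting problems of $m_0 \hat\otimes m$ against $f$ over $K$ consists of $\sigma \colon K \to p(m_0 \hat\otimes m)$ and $\tau \colon K \to J$ together with a commutative square in $\cat{E}$ from $\sigma^\ast(m_0 \hat\otimes m)$ to $f$ whose horizontal maps lie over $\tau$. Since $p$ is a cartesian monoidal fibration, $p(m_0 \hat\otimes m) = p(m_0) \times p(m) = 1 \times I$, so $\sigma$ factors uniquely as $(!_K, \sigma')$ for a map $\sigma' \colon K \to I$ corresponding to $\sigma$ under the iso $1 \times I \cong I$.

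Next I would establish the displayed isomorphism. By Proposition \ref{prop:pushoutprodfib}, $\vrt{\cat{E}} \to \cat{B}$ is itself a cartesian monoidal fibration with monoidal product $\hat\otimes$, so in particular $\hat\otimes$ preserves cartesian maps in $\vrt{\cat{E}}$. Writing $\sigma = (!_K \times \sigma') \circ \Delta_K$ and tensoring cartesian lifts of $!_K$ at $m_0$ and of $\sigma'$ at $m$, I obtain
$\sigma^\ast(m_0 \hat\otimes m) \cong \Delta_K^\ast\bigl(!_K^\ast(m_0) \hat\otimes \sigma'^\ast(m)\bigr)$,
which by the fibrewise description of a cartesian monoidal fibration is the fibrewise pushout product $!_K^\ast(m_0) \hat\otimes_K \sigma'^\ast(m)$. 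A parallel computation, using the unitor iso $K \cong 1 \times K$ in place of the iso $1 \times I \cong I$, identifies $m_0 \hat\otimes \sigma'^\ast(m)$ (which a priori lives over $1 \times K$) with this same fibrewise pushout product. Combining these two identifications yields the desired iso, and since the iso comes from cartesian lifts, the commutative square transports canonically through it.

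The main obstacle is simply bookkeeping: carefully tracking the unitor isomorphisms $1 \times I \cong I$ and $1 \times K \cong K$ supplied by the cartesian structure on $\cat{B}$, and matching the "ambient" pushout product in $\vrt{\cat{E}}$ (which is what appears in the statement) with the fibrewise pushout product $\hat\otimes_K$ via reindexing along the diagonal. Once this is done, the claimed correspondence between families of lifting problems and squares of the displayed form is immediate, with the inverse passage going through the same isomorphisms in the opposite direction.
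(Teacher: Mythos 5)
Your proposal is correct and follows the same route as the paper, whose entire proof is the one-line remark that the claim follows ``from the definition of lifting problem together with the fact that pushout product is fibred'' (Proposition \ref{prop:pushoutprodfib}); you have simply spelled out the reindexing computation $\sigma^\ast(m_0 \hat\otimes m) \cong m_0 \hat\otimes \sigma'^\ast(m)$ and the unitor bookkeeping that the paper leaves implicit.
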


\begin{proof}
  From the definition of lifting problem together with the fact that
  pushout product is fibred.
\end{proof}

\begin{definition}
  Let $(\catc, \otimes)$ be a monoidal category with pushouts and
  pullbacks. Suppose that for each $X$, $X \otimes -$ has a right
  adjoint $\{X, - \}$ (referred to as \emph{cotensor}). Then for each
  map $f$, $f \hat \otimes -$ has a right adjoint,
  $\widehat{\{f, -\}}$ referred to as \emph{pullback cotensor}, which
  is defined explicitly as the map given by the universal property of
  the pullback below. Let $f \colon U \rightarrow V$ and
  $g \colon X \rightarrow Y$.
  \begin{equation*}
    \xymatrix{ \{V, X\} \ar[dr]|{\widehat{\{f, g\}}}
      \ar@/^/[drr]^{\{V, g\}} \ar@/_/[ddr]_{\{f, X\}} & & \\
      & \cdot \pullbackcorner \ar[r] \ar[d] & \{V, Y\} \ar[d]^{\{f, Y\}} \\
      & \{U,X\} \ar[r]_{\{U, g\}} & \{U,Y\}
    }
  \end{equation*}
\end{definition}

We similarly can define a right adjoint to $- \hat \otimes f$ referred
to as \emph{pullback hom}.

\begin{proposition}
  Suppose we are given an internally closed monoidal fibration
  $p \colon \cat{E} \rightarrow \cat{B}$ with fibred pushouts.
  Then $\vrt{\cat{E}} \rightarrow \cat{B}$ is
  a internally closed monoidal fibration.
\end{proposition}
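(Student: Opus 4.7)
The plan is to reduce the claim, via Proposition \ref{prop:pushoutprodfib}, to two things: (i) each fibre $\vrt{\cat{E}}_I = \cat{E}_I^\btwo$ is closed monoidal under the pushout product derived from $\otimes_I$, and (ii) the reindexing functors $\sigma^\ast \colon \cat{E}_J^\btwo \to \cat{E}_I^\btwo$ preserve the resulting right adjoints. Note that for (ii) we already know from the proof of Proposition \ref{prop:pushoutprodfib} that reindexing preserves $\hat{\otimes}$, so the content is preservation of the closed structure.

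For (i), I would apply the classical Leibniz construction inside each fibre. Since $p$ is internally closed, $\cat{E}_I$ is a closed monoidal category, so each $X \otimes_I -$ has a right adjoint $\{X, -\}_I$. Because we have fibred pushouts, each $\cat{E}_I$ has pushouts, and because $p$ is a fibration with cartesian monoidal structure, each $\cat{E}_I$ has pullbacks as well (in fact, the hypothesis that $\cat{B}$ is cartesian monoidal and $p$ preserves this forces the fibres to carry enough structure via reindexing from diagonals and projections). With pushouts and pullbacks in place, the standard construction produces the pullback cotensor $\widehat{\{f,-\}}_I$ as right adjoint to $f \hat{\otimes}_I -$, and symmetrically the pullback hom as right adjoint to $- \hat{\otimes}_I g$. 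This makes $\cat{E}_I^\btwo$ closed monoidal, which is exactly $\vrt{\cat{E}}_I$.

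For (ii), I would trace through the explicit pullback defining $\widehat{\{f,g\}}_I$ and observe that each ingredient is preserved by reindexing: reindexing preserves $\otimes$ (monoidal fibration), it preserves pushouts (fibred pushouts hypothesis), it preserves the closed structure $\{-,-\}_I$ (internally closed hypothesis), and since $\sigma^\ast$ is a right adjoint to $\coprod_\sigma$ whenever opcartesian lifts exist — or more simply, since cartesian fibres of a fibration always have pullbacks computed from the ambient ones and reindexing preserves them in the fibred sense — it preserves the pullback defining $\widehat{\{f,g\}}_I$. Composing these preservation facts gives $\sigma^\ast \widehat{\{f,g\}}_J \cong \widehat{\{\sigma^\ast f, \sigma^\ast g\}}_I$, which is exactly the assertion that reindexing is closed monoidal at the level of arrow categories.

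The main obstacle I expect is (ii), and specifically making sure that pullbacks in the fibres really are preserved by reindexing in the way required — this does not quite follow from $p$ being a fibration alone, but in our setting it does follow because pullbacks in $\cat{E}_I$ are constructed from the cartesian monoidal structure on $\cat{B}$ together with reindexing along diagonals, which reindexing functors are coherent with up to the canonical isomorphisms of a monoidal fibration. Once this coherence is checked, the rest of the argument is formal, and the proposition follows from the uniqueness of adjoints.
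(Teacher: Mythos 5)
Your overall strategy is the same as the paper's, which disposes of the proposition in one line: construct the pullback cotensor $\widehat{\{f,-\}}_I$ in each fibre $\cat{E}_I$ as the right adjoint to $f \mathbin{\hat\otimes_I} -$, and observe that it is preserved by reindexing because every ingredient in its defining diagram (the cotensor $\{-,-\}_I$ and the pullback) is preserved by reindexing. So the decomposition into (i) fibrewise Leibniz construction and (ii) preservation under $\sigma^\ast$ is exactly right, and part (ii) of your argument is sound.

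The one step that does not hold up is your justification in (i) that the fibres have pullbacks ``because the cartesian monoidal structure on $\cat{B}$ forces the fibres to carry enough structure via reindexing from diagonals and projections.'' Reindexing along the diagonal $I \to I \times I$ is what produces the fibrewise monoidal product $\otimes_I$ from $\otimes$; it does not produce pullbacks in $\cat{E}_I$, since $\otimes$ on the total category need not be cartesian. The existence of pullbacks in each fibre, preserved by reindexing, is genuinely an extra hypothesis: the paper's proof simply asserts that ``pullback is fibred'' without deriving it from the stated assumptions, so this is an implicit standing assumption rather than a consequence of the cartesian monoidal structure. If you drop the faulty derivation and instead take fibred pullbacks as given (as the paper tacitly does, and as holds in all the examples considered), the rest of your argument goes through.
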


\begin{proof}
  Construct the pullback cotensor in each fibre category $\cat{E}_I$
  to get a right adjoint to $f \hat\otimes_I -$, which we'll write as
  $\{f, -\}_I$. This is fibred since cotensor and pullback are both
  fibred.
\end{proof}

\begin{proposition}
  \label{prop:fibleibnizulp}
  Let $p \colon \cat{E} \rightarrow \cat{B}$ be a monoidal closed
  fibration where $\cat{B}$ has a terminal object. Suppose that $m_0$
  is a vertical map $\cat{E}$ in the fibre of the terminal object of
  $\cat{B}$. Let $m$ be vertical over $I$ and $f$ vertical over $J$.

  Solutions of the universal lifting problem from
  $I^\ast(m_0) \hat \otimes_I m$ to $f$ correspond precisely to
  solutions of the universal lifting problem from $m$ to
  $\widehat{\{m_0, f\}}$.
\end{proposition}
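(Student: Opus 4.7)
The plan is to reduce this statement to Lemma \ref{lem:ulpadjunction} by constructing an appropriate fibred adjunction and then reading off what it says about universal lifting problems.

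First, I would package up the monoidal structure on $\vrt{\cat{E}}$. By Proposition \ref{prop:pushoutprodfib}, pushout product makes $\vrt{\cat{E}} \to \cat{B}$ into a monoidal fibration. The closedness hypothesis on $p$, combined with fibred pushouts, upgrades this to an internally closed cartesian monoidal fibration whose fibrewise right adjoint to $- \mathbin{\hat{\otimes}}_I -$ is the pullback cotensor $\widehat{\{-,-\}}_I$ constructed fibrewise from the cotensor of $p$. (This is essentially the content of the preceding proposition; the only thing to check is that reindexing preserves the pullback cotensor, which follows because it preserves both pullbacks and the ordinary cotensor.)

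Next, with $m_0$ vertical over the terminal object $1 \in \cat{B}$, I would apply Proposition \ref{prop:monoidalclosedtoadj} to the monoidal fibration $\vrt{\cat{E}} \to \cat{B}$ and the object $m_0 \in \vrt{\cat{E}}_1$. This yields an adjunction $F \dashv G$ over $\cat{B}$, where on the fibre above $I$ the left adjoint sends $m \in \vrt{\cat{E}}_I$ to $I^\ast(m_0) \mathbin{\hat{\otimes}}_I m$ and on the fibre above $J$ the right adjoint sends $f \in \vrt{\cat{E}}_J$ to $\widehat{\{J^\ast(m_0), f\}}_J$, which under the notational convention of the statement is $\widehat{\{m_0, f\}}$. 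Because the monoidal product preserves cartesian maps by definition of monoidal fibration and the cotensors are reindexing-stable, both $F$ and $G$ are fibred, i.e. the adjunction is a fibred adjunction in the sense of Remark \ref{rmk:adjnotfibred}.

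Finally, I would invoke Lemma \ref{lem:ulpadjunction} applied to this fibred adjunction with the vertical map $m \in \vrt{\cat{E}}_I$ on the left and $f \in \vrt{\cat{E}}_J$ on the right: it gives a bijection between solutions of the universal lifting problem from $F(m) = I^\ast(m_0) \mathbin{\hat{\otimes}}_I m$ to $f$ and solutions of the universal lifting problem from $m$ to $G(f) = \widehat{\{m_0, f\}}$, which is exactly the claim. The main technical obstacle is making sure that Proposition \ref{prop:monoidalclosedtoadj} really applies to $\vrt{\cat{E}} \to \cat{B}$ and delivers an honestly fibred adjunction; once that is in place, everything else is bookkeeping about the pushout product/pullback cotensor notation.
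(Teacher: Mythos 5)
Your first two steps match the paper exactly: the paper's proof also begins by applying Proposition \ref{prop:monoidalclosedtoadj} to the internally closed monoidal fibration $\vrt{\cat{E}} \rightarrow \cat{B}$ obtained from the pushout product and pullback cotensor, yielding the fibred adjunction $I^\ast(m_0) \hat\otimes_I (-) \dashv \widehat{\{m_0, -\}}$ over $\cat{B}$.

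The gap is in the final step. Lemma \ref{lem:ulpadjunction} does not apply to the adjunction you have built: that lemma takes a fibred adjunction between the fibrations $\cat{E}$ and $\cat{F}$ themselves, together with \emph{vertical maps} $m$ of $\cat{E}$ and $f$ of $\cat{F}$, and its proof works by lifting the adjunction to $\vrt{\cat{E}}$ and $\vrt{\cat{F}}$. Your adjunction already lives on $\vrt{\cat{E}}$, and $m$, $f$ are \emph{objects} of $\vrt{\cat{E}}$, not vertical maps of it; reading the lemma with $\cat{E} := \vrt{\cat{E}}$ would produce a statement about squares, which is not the claim. More substantively, even granting the hom-object isomorphism $\fhom(Fm, f) \cong \fhom(m, Gf)$ (which does follow from Lemma \ref{lem:fibredadjlemhom} applied directly to your adjunction, and which identifies the \emph{lifting problems} on the two sides), the correspondence of \emph{solutions} --- diagonal fillers --- is not a formal consequence of having a fibred adjunction between arrow categories, because a general such adjunction need not interact with $\dom$ and $\cod$ in the way needed to transpose fillers. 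It holds here because of the specific Leibniz structure: one must unwind the pushout product and pullback cotensor to see that a filler $\cod(I^\ast(m_0)\hat\otimes_I m) \rightarrow \dom(f)$ transposes, under the underlying one-variable adjunction, to a filler $\cod(m) \rightarrow \dom(\widehat{\{m_0,f\}})$ with the correct commutativities. This is exactly the role of Proposition \ref{prop:fibpushoutproddescr} in the paper's proof, which your argument never invokes. The fix is routine (replace the appeal to Lemma \ref{lem:ulpadjunction} by Lemma \ref{lem:fibredadjlemhom} for the hom objects plus the explicit Joyal--Tierney-style transposition of fillers via Proposition \ref{prop:fibpushoutproddescr}), but as written the decisive step is missing.
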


\begin{proof}
  We apply the adjunction constructed in proposition
  \ref{prop:monoidalclosedtoadj} to pushout product and pullback
  cotensor. The result then follows from proposition
  \ref{prop:fibpushoutproddescr} and the characterisation of adjoints
  of hom objects in lemma \ref{lem:fibredadjlemhom}.
\end{proof}

\section{Examples}
\label{sec:examples}

We now give several examples of Grothendieck fibrations together with
explanations of what the general constructions look like in each
instance.

A theme throughout these examples is that our general construction was
defined in terms of hom objects and opcartesian maps, which are both
unique up to isomorphism. Therefore we can characterise what step-one of
the small argument looks like in each fibration by asking what are the
hom objects and what are the opcartesian maps.

For example, we will see that we recover a definition due to Garner by
applying our construction to fibrations of \emph{category indexed
  families} on a category $\catc$. In these fibrations, opcartesian
maps are described explicitly as left Kan extensions, which require
cocompleteness of $\catc$ to construct. This gives an explanation for
why cocompleteness of $\catc$ plays an important role in Garner's
small object argument, even for the relatively simple step-one part.

In contrast we will also look at codomain fibrations on a category
$\catc$. In this case the opcartesian maps are simply given by
composition, and so infinite colimits are not required for step-one. On
the other hand hom objects are now given by local exponentials in
slice categories, and so local cartesian closedness is a necessary
condition for step-one.

\subsection{Trivial Fibrations}
\label{sec:trivial-fibrations}

Let $\catc$ be a category. Then the unique functor
$\catc \rightarrow 1$ is a fibration. In this case a family of lifting
problems is just a single lifting problem and a choice of diagonal
fillers is just a diagonal filler. These fibrations are not locally
small and in fact universal lifting problems do not exist.

\subsection{Set Indexed Families}
\label{sec:set-indexed-families}

Set indexed families are the simplest nontrivial examples of
Grothendieck fibrations that we will consider. Although they are
simple, we can use them to illustrate the ideas that will turn up
again in other definitions. We will give a fairly brief
descriptions. See e.g. \cite{jacobs} for a more in depth reference on
set indexed families fibrations.

Recall the definition of set indexed families fibrations from example
\ref{ex:setindexfmly}.

A family of lifting problems from
$(F_i \colon U_i \rightarrow V_i)_{i \in I}$ to $(G_j \colon X_j
\rightarrow Y_j)_{j \in J}$
consists of a set $K$, together with maps $f \colon K \rightarrow I$
and $g \colon K \rightarrow J$ and for each $k \in K$, a commutative
square in $\catc$ of the following form:
\begin{equation}
  \label{eq:7}
  \begin{gathered}
    \xymatrix{ U_{f(k)} \ar[r] \ar[d]_{F_{f(k)}} & X_{g(k)}
      \ar[d]^{G_{g(k)}} \\
      V_{f(k)} \ar[r] & Y_{g(k)} }
  \end{gathered}
\end{equation}
A solution consists of a choice of diagonal filler for each such
square.

If $\catc$ is locally small, then so is $p$, with $\fhom_I(X, Y)$
given by the disjoint union of sets $\coprod_{i \in I} \hom(X_i,
Y_i)$.

Hence the universal family of lifting problems is defined as
follows. The indexing set (up to isomorphism) consists of triples
$\langle i, j, S \rangle$ where $i \in I$, $j \in J$ and $S$ is a
commutative square with left side equal to $F_i$ and right side equal
to $G_j$. The square in $\catc$ indexed at $\langle i, j, S \rangle$
is just $S$ itself.

\subsection{Category Indexed Families}
\label{sec:categ-index-famil}

Recall from example \ref{ex:catindexfmly} that $p$ from the previous
section can be extended to a fibration $\fmlyc \rightarrow \cats$. An
object of the larger $\fmlyc$ consists of a small category
$\smcat{C} \in \cats$ together with a functor
$X \colon \smcat{C} \rightarrow \catc$.

A vertical morphism over $\smcat{C}$ is just an object of
$[\btwo, [\smcat{C}, \catc]]$, but since $[\btwo, [\smcat{C}, \catc]] \cong
[\smcat{C} \times \btwo ,\catc] \cong
[\smcat{C}, [\btwo, \catc]]$, we can instead think of it as a functor from
$\smcat{C}$ to $\catc^\btwo$.

A family of lifting problems from
$F \colon \smcat{C} \rightarrow \catc^\btwo$ to
$G \colon \smcat{C} \rightarrow \catc^\btwo$ consists of a commutative
square for each object of the indexing category together with
commutative cubes for each morphism. A choice of fillers consists of a
choice of filler for each commutative square such that the resulting
``diagonal squares'' across each cube commute. We recover in this way
Garner's notion of lifting problem for $\catc$ from
\cite{garnersmallobject}.

$p \colon \fmlyc \rightarrow \cats$ is locally small if $\catc$ is
locally small (in the usual category theoretic sense). In fact, given
category indexed families $X \colon \smcat{A} \rightarrow \catc$ and
$Y \colon \smcat{B} \rightarrow \catc$, $\fhom(X, Y)$ is simply the
comma category $(X \downarrow Y)$. It is a bifibration if and only if
$\catc$ is cocomplete, with the opcartesian maps given by left Kan
extensions. We will see that although the Beck-Chevalley condition
does not hold in general for category indexed families we can
still show that step-one is fibred.

We now work towards a proof that Garner's definition of step-one of the
small object argument is the same as the result of applying the
general framework here to $\fmlyc$. As a corollary of this we get an
interesting new insight into Garner's definition. The original
definition of step-one is split into two pieces, first taking a certain
colimit, and then a pushout. On the other hand in the new definition
here, step-one is defined as a single opcartesian lift, or even just a
single pushout in $\fmlyc$. The key to seeing the link between the two
definitions is lemma \ref{lem:domopcartchar}, where we proved that the
single opcartesian lift over
$\dom \colon \vrt{\fmlyc} \rightarrow \fmlyc$ is equivalent to
an opcartesian lift over $\fmlyc \rightarrow \cats$, followed by a
pushout. Then for the special case where $f$ is a vertical map over
$1$, we are taking the opcartesian lift along a map
$\smcat{A} \rightarrow 1$, which is a colimit over a diagram of shape
$\smcat{A}$. This is now indeed of the same form as Garner's
definition, although we still need to check that it is the same
colimit, which will appear in lemma \ref{lem:steponematches}.

\begin{lemma}
  \label{lem:beckchevcif}
  Let $\cat{C}$ be a locally small category. Let $\smcat{A}$,
  $\smcat{B}$ and $\smcat{B}'$ be small categories. Suppose we are
  given $X$, $Y$ and $Y'$ over $\smcat{A}$, $\smcat{B}'$ and
  $\smcat{B}$ respectively. Suppose further that we have a map
  $\chi \colon \smcat{B}' \rightarrow \smcat{B}$. Write $\pi_0$ for
  the projection $(X \downarrow Y) \rightarrow \smcat{A}$ and $\pi_1$
  for the projection $(X \downarrow Y) \rightarrow \smcat{B}$. Write
  $(X \downarrow \chi)$ for the canonical map
  $(X \downarrow \chi^\ast(Y)) \rightarrow (X \downarrow Y)$ and
  $\pi_1'$ for the projection
  $(X \downarrow \chi^\ast(Y)) \rightarrow \smcat{B}'$. Then the
  canonical morphism $\coprod_{\pi_1'} (X \downarrow \chi)^\ast
  \rightarrow \chi^\ast \coprod_{\pi_1}$ is an isomorphism.
\end{lemma}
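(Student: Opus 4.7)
The plan is to interpret the statement as a Beck--Chevalley condition and to prove it by a direct cofinality argument. By lemma \ref{lem:homprojpb} applied to the cartesian map $\chi^\ast Y \to Y$ over $\chi$, the square
\begin{equation*}
  \xymatrix{ (X \downarrow \chi^\ast Y) \ar[r]^-{(X \downarrow \chi)} \ar[d]_{\pi_1'} & (X \downarrow Y) \ar[d]^{\pi_1} \\
    \smcat{B}' \ar[r]_\chi & \smcat{B}}
\end{equation*}
is a pullback in $\cats$. Since, for category indexed families, opcartesian maps are given by left Kan extensions and reindexing by precomposition, the canonical morphism in question is precisely the Beck--Chevalley comparison, and it suffices to check it is a pointwise isomorphism for every $Z \in \fmlyc_{(X \downarrow Y)}$.

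The essential ingredient is the observation that $\pi_1 \colon (X \downarrow Y) \to \smcat{B}$ is a Grothendieck opfibration: given $\beta \colon b \to b''$ in $\smcat{B}$ and an object $(a, b, f)$ of $(X \downarrow Y)$, the morphism $(1_a, \beta) \colon (a, b, f) \to (a, b'', Y\beta \circ f)$ is readily checked to be opcartesian over $\beta$. Using the pointwise formula for left Kan extensions, at each $b' \in \smcat{B}'$ the two sides of the comparison become colimits of $Z \circ (X \downarrow \chi)$ and $Z$ over the comma categories $\pi_1'/b'$ and $\pi_1/\chi(b')$ respectively, and the Beck--Chevalley map is induced by the evident functor $H \colon \pi_1'/b' \to \pi_1/\chi(b')$.

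It therefore suffices to show that $H$ is cofinal, i.e., that for every object $d = (w, k \colon \pi_1(w) \to \chi(b'))$ of $\pi_1/\chi(b')$ the comma category $d/H$ is non-empty and connected. For non-emptiness I would use the opcartesian lift $\tilde k \colon w \to w_k$ of $k$ at $w$, noting that since $\pi_1(w_k) = \chi(b')$ the pair $(w_k, b')$ determines an object of $(X \downarrow \chi^\ast Y)$ via the pullback description; then $((w_k, b'), 1_{b'})$ together with $\tilde k$ is a ``canonical'' object of $d/H$. For connectedness, given any $(x, \psi) \in d/H$ with $x = (y, g)$ and $\psi = (\alpha_\psi, \gamma_\psi)$, I would factor $\psi$ through the opcartesian lift $(1, \gamma_\psi)$ of $\gamma_\psi = \pi_1(\psi)$ at $w$, producing a ``factorization object'' together with a morphism in $d/H$ down to $(x, \psi)$; then the universal property of the opcartesian lift $\tilde k$, applied to the factorisation $k = \chi(g) \circ \gamma_\psi$, supplies a morphism from this factorization object up to the canonical object. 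Any two objects of $d/H$ are thereby connected by a zigzag of length four passing through factorization objects and the canonical object.

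The main obstacle is the bookkeeping required for the connectedness argument; identifying $\pi_1$ as an opfibration and reducing to a cofinality question are routine. In effect the whole argument specialises the familiar principle that pullback squares one of whose legs is a Grothendieck opfibration are Guitart-exact for left Kan extensions.
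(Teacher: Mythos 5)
Your proposal is correct and follows essentially the same route as the paper: recognise the square as a pullback, observe that $\pi_1 \colon (X \downarrow Y) \to \smcat{B}$ is a Grothendieck opfibration, and invoke the fact that pullbacks along opfibrations satisfy Beck--Chevalley for left Kan extensions. The only difference is that the paper cites this last fact as folklore (with a reference), whereas you prove it in situ by the pointwise-colimit/cofinality argument; your finality check (nonemptiness via the opcartesian lift of $k$, connectedness via factoring through opcartesian lifts of $\gamma_\psi$) is sound, modulo the small slip that the comparison map $w_{\gamma_\psi} \to w_k$ comes from the universal property of the opcartesian lift of $\gamma_\psi$, not of $\tilde k$ itself.
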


\begin{proof}
  In general if we are given a pullback square where the lower map is
  an opfibration then Beck-Chevalley holds for that square. (This
  appears to be a folklore result, see
  e.g. \cite{nlabbeckchevalley}
  for a proof.) However it is easy to check that the projection $(X
  \downarrow Y) \rightarrow \smcat{B}$ is an opfibration.
\end{proof}

\begin{lemma}
  \label{lem:steponefibrd}
  Step-one of the small object argument is fibred for category indexed
  families fibrations.
\end{lemma}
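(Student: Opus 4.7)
The plan is to reduce to (an adaptation of) Theorem \ref{thm:step1fibred}, whose proof goes via Lemma \ref{lem:abststep1fibrd}. First I would verify the structural prerequisites: assuming $\catc$ is locally small and cocomplete, $p \colon \fmlyc \rightarrow \cats$ is a locally small bifibration (hom objects being comma categories, opcartesian lifts being left Kan extensions), and pushouts in $\vrt{\fmlyc}$ are fibred because they may be computed pointwise in $\catc$ and pointwise colimits are preserved by reindexing along any $\chi \colon \smcat{B}' \to \smcat{B}$.

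The apparent obstacle is that, as noted in the surrounding text, the full Beck--Chevalley condition fails for $p$, so Theorem \ref{thm:step1fibred} cannot be invoked as a black box. My key observation is that in the proof of Lemma \ref{lem:abststep1fibrd}, Beck--Chevalley is used in exactly one place: to apply Proposition \ref{prop:opfibcart} to the pullback square produced by Lemma \ref{lem:homprojpb}, namely
\begin{equation*}
  \xymatrix{ \fhom(X, Y') \ar[r] \ar[d] & p(Y') \ar[d]^{p(f)} \\
    \fhom(X, Y) \ar[r] & p(Y). }
\end{equation*}
In the setting of category indexed families the horizontal arrows are the projections $\pi_1'$ and $\pi_1$ appearing in Lemma \ref{lem:beckchevcif}, and the right-hand vertical arrow is the functor $\chi \colon \smcat{B}' \to \smcat{B}$ witnessing that $Y' \cong \chi^\ast Y$. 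This is precisely the square handled by Lemma \ref{lem:beckchevcif}, which tells us that the canonical comparison $\coprod_{\pi_1'} (X \downarrow \chi)^\ast \Rightarrow \chi^\ast \coprod_{\pi_1}$ is an isomorphism, i.e.\ the required Beck--Chevalley instance holds.

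The main step is then to rerun the proof of Lemma \ref{lem:abststep1fibrd} with this restricted Beck--Chevalley in place of the global one: given $f \colon Y' \to Y$ cartesian over $p$, factor each horizontal opcartesian map in the square defining $L_1 f$ via Lemma \ref{lem:opcartincomp} as $r$-opcartesian followed by $r$-vertical and $q$-opcartesian; the resulting middle comparison map $t$ sits over exactly the square above, so by Lemma \ref{lem:beckchevcif} together with the argument of Proposition \ref{prop:opfibcart}, $t$ is cartesian. Using that $q$-opcartesian maps are $r$-fibred (this is the fibredness of pushouts established at the start), the same argument as in Lemma \ref{lem:abststep1fibrd} then propagates cartesianness to the right-hand map, giving $L_1 f$ cartesian over $p$. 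By Proposition \ref{prop:fibredchar} this exhibits $(L_1, R_1)$ as a fibred lawfs. The main subtlety in carrying this out is the matching of notation between the abstract setup and the category indexed family situation; once Lemma \ref{lem:beckchevcif} is recognised as supplying exactly the instance required, the remainder is a direct transcription.
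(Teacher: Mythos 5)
Your proposal is correct and takes essentially the same route as the paper: both observe that the only Beck--Chevalley instance needed in the proof of Theorem \ref{thm:step1fibred} (via Lemma \ref{lem:abststep1fibrd}) is the one over the pullback square from Lemma \ref{lem:homprojpb}, which is exactly the instance supplied by Lemma \ref{lem:beckchevcif}. The only detail the paper makes explicit that you leave implicit is that, since the relevant hom objects live in $\vrt{\fmlyc}$, one should invoke Lemma \ref{lem:beckchevcif} for $\catc^\btwo$ rather than $\catc$, using the isomorphism $\vrt{\fmlyc} \cong \fmly(\catc^\btwo)$ over $\cats$.
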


\begin{proof}
  Note that $\vrt{\fmlyc}$ is isomorphic to $\fmly(\catc^\btwo)$ over
  $\cats$, so is itself a category indexed family fibration. We then
  apply lemma \ref{lem:beckchevcif} not to $\catc$ itself, but to
  $\catc^\btwo$. We then note that for the proof of theorem
  \ref{thm:step1fibred} to hold, we don't need the entire
  Beck-Chevalley condition, but only certain instances that are
  precisely covered by this case.
\end{proof}

\begin{lemma}
  \label{lem:funtocatindfun}
  Let $\catc$ be a category. Suppose that we are given an
  endofunctor $T_1 \colon \catc \rightarrow \catc$. Then there is a
  fibred endofunctor $T$ over $\fmlyc \rightarrow \cats$ with the
  property that $T(X) = T_1(X)$ whenever
  $X \in \cat{C} \cong [1, \catc]$ and $T$ is unique up to
  isomorphism. Furthermore, we can ensure that $T$ strictly preserves
  reindexing (i.e. that for all $\sigma$ and $X$ we have
  $T(\sigma^\ast(X)) = \sigma^\ast(T(X))$ and
  $T(\bar{\sigma}(X)) = \bar{\sigma}(T(X))$), and is unique with this
  property.
\end{lemma}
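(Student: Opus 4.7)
The plan is to define $T$ on objects by $T(\langle \smcat{A}, X \rangle) := \langle \smcat{A}, T_1 \circ X \rangle$, keeping the indexing category fixed and post-composing the diagram with $T_1$. On morphisms, a map over $\sigma \colon \smcat{A} \to \smcat{B}$ from $\langle \smcat{A}, X \rangle$ to $\langle \smcat{B}, Y \rangle$ is essentially a natural transformation $\alpha \colon X \Rightarrow Y \circ \sigma$; send it to the morphism over $\sigma$ given by whiskering, $T_1 \alpha \colon T_1 \circ X \Rightarrow T_1 \circ Y \circ \sigma = (T_1 \circ Y) \circ \sigma$. Functoriality is immediate from functoriality of $T_1$.

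The three required properties are then straightforward. For the fibre-over-$1$ condition, under the identification $\catc \cong \fmlyc_1$ sending $X$ to $\langle 1, X \rangle$, we have $T(\langle 1, X \rangle) = \langle 1, T_1 X \rangle$, so $T$ restricts to $T_1$. For strict preservation of reindexing, recall reindexing in $\fmlyc$ is composition with $\sigma$, so
\[
  T(\sigma^\ast(\langle \smcat{B}, Y \rangle)) = T(\langle \smcat{A}, Y \circ \sigma \rangle) = \langle \smcat{A}, T_1 \circ Y \circ \sigma \rangle = \sigma^\ast(\langle \smcat{B}, T_1 \circ Y \rangle) = \sigma^\ast(T(\langle \smcat{B}, Y \rangle)),
\]
and the same identity applied to the canonical cartesian lift $\bar{\sigma}(Y)$ (which in this fibration is the identity natural transformation viewed as a morphism over $\sigma$) shows $T(\bar\sigma(Y)) = \bar\sigma(T(Y))$. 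Fibredness of $T$ (preservation of cartesian maps) then follows from this strict preservation.

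For uniqueness I would argue as follows. Suppose $T'$ is any fibred endofunctor restricting to $T_1$ on the fibre over $1$. Given $\langle \smcat{A}, X \rangle$ and $a \in \smcat{A}$, name the functor $a \colon 1 \to \smcat{A}$; then $a^\ast(\langle \smcat{A}, X \rangle) = X(a) \in \catc$, and applying $T'$ to the cartesian lift $\bar a(X)$ yields a cartesian map into $T'(\langle \smcat{A}, X \rangle)$ whose domain equals $T'(X(a)) = T_1(X(a))$. This gives a canonical isomorphism $a^\ast(T'(X)) \cong T_1(X(a))$, and the analogous argument for a morphism $f \colon a \to a'$, using the functor $\btwo \to \smcat{A}$ classifying $f$, yields naturality. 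These canonical isomorphisms assemble into a natural isomorphism $T'(\langle \smcat{A}, X \rangle) \cong \langle \smcat{A}, T_1 \circ X \rangle$ which is itself natural in $\langle \smcat{A}, X \rangle$, establishing uniqueness up to isomorphism. If moreover $T'$ preserves reindexing strictly, then the canonical isomorphism $a^\ast(T'(X)) \cong T_1(X(a))$ is an equality, and the analogous equality on morphisms pins the functor $T'(\langle \smcat{A}, X \rangle) \colon \smcat{A} \to \catc$ to be exactly $T_1 \circ X$; strict preservation of cartesian lifts forces the action of $T'$ on morphisms to agree with our $T$ as well.

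The construction itself is essentially forced, so no step is a real obstacle; the only point requiring care is pinning down the action of the strictly-reindexing-preserving $T'$ on morphisms of the indexing category, which is handled by applying the strict preservation axiom to the classifying functors $1 \to \smcat{A}$ and $\btwo \to \smcat{A}$ rather than just to objects.
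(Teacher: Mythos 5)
Your construction of $T$ (post-composition with $T_1$), your verification of strict preservation of reindexing, and your overall uniqueness strategy all coincide with the paper's: the paper likewise produces, for a fibred $T'$ agreeing with $T_1$ over $1$, isomorphisms $\alpha^X_A \colon T_1(X(A)) \to (T'X)(A)$ by applying $T'$ to the cartesian lift over the classifying functor $\ulcorner A \urcorner \colon 1 \to \smcat{A}$ and using cartesianness. Naturality in $X$ is indeed routine, as you say.

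The one place your proposal is too quick is naturality of $\alpha^X_A$ in $A$, which is where essentially all of the work in the paper's proof lives. The difficulty is that for $\sigma \colon A \to B$ in $\smcat{A}$, the triangle formed by $\overline{X(\sigma)}$ and the two canonical cartesian maps $\overline{X(A)} \to X$, $\overline{X(B)} \to X$ does \emph{not} commute in $\fmlyc$ (its image in $\cats$ fails to commute), so one cannot obtain the naturality square by applying $T'$ to a commuting diagram in the evident way. Your instinct to use the classifying functor $\ulcorner \sigma \urcorner \colon \btwo \to \smcat{A}$ is exactly the paper's fix, but carrying it out is not just ``analogous'': the paper introduces the object $\widetilde{X(\sigma)}$ of $\fmlyc$ over $\btwo$ and the cartesian map $T'(\widetilde{X(\sigma)}) \to T'X$ over $\ulcorner \sigma \urcorner$, factors $\alpha^X_A$ and $\alpha^X_B$ through its two components, and then relates these factorisations by applying $T'$ both to the vertical map $\widetilde{(1_{XA}, X(\sigma))}$ over $\btwo$ and to the degenerate instance $\sigma = 1_A$ (which supplies the identity coherence needed to start the chain). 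Only after a several-step calculation does the naturality square follow. So your outline identifies the right tool but conceals the genuinely delicate step; as written, ``the analogous argument yields naturality'' is a gap that needs to be filled along these lines.
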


\begin{proof}
  We simply define $T$ pointwise. That is, given
  $X \in [ \smcat{A}, \catc ]$ we define $T(X) = T_1 \circ X$. It is
  easy to see that this extends to an endofunctor over
  $\fmlyc \rightarrow \catc$. We recall that we defined reindexing as
  composition, and it is clear that this is strictly preserved by
  $T$.

  It remains to show uniqueness up to isomorphism. Strict uniqueness
  when $T$ strictly preserves reindexing is similar but easier.

  For $Z \in \catc$, write $\overline{Z}$ for the corresponding
  element of $\fmlyc$ over the fibre of $1$ and for a morphism $f$ in
  $\catc$ write $\overline{f}$ for the corresponding vertical map over
  $1$. Let $T$ be an endofunctor over $\fmlyc$ such that for all $Z$
  in $\catc$ we have $T(\overline{Z}) = \overline{T_1(Z)}$ and for all
  $f$ in $\catc$ we have $T(\overline{f}) = \overline{T_1(f)}$. Then
  to show that $T$ is isomorphic to the endofunctor described above,
  it suffices to find for each $X \colon \smcat{A} \rightarrow \catc$
  in $\fmlyc$ and each $A \in \smcat{A}$, an isomorphism
  $\alpha^X_A \colon T(\overline{X(A)}) \rightarrow (T X)(A)$, such
  that $\alpha^X_A$ is natural in both $A$ and $X$.

  Note that for any $X \colon \smcat{A} \rightarrow \catc$ in $\fmlyc$
  and for any object $A$ of $\smcat{A}$, we have a canonical cartesian
  map $\overline{X(A)} \rightarrow X$ over the map
  $\ulcorner A \urcorner \colon 1 \rightarrow \smcat{A}$. Applying $T$
  gives us a cartesian map $T(\overline{X(A)}) \rightarrow T X$ over
  $\ulcorner A \urcorner$. However, such a map is exactly a map from
  the underlying object in $\catc$ of $T(\overline{X(A)})$ to $(T
  X)(A)$, which is an isomorphism, by cartesianness. We take this to
  be $\alpha^X_A$.
  
  Naturality in $X$ is straightforward, but naturality in $A$ is more
  difficult, so we now give a proof. Let $\sigma \colon A
  \rightarrow B$ in $\smcat{A}$. We need to verify that the following
  diagram commutes.
  \begin{equation}
    \label{eq:6}
    \begin{gathered}
      \xymatrix@=3pc{
        T(\overline{X(A)}) \ar[r]_{T(\overline{X(f)})}
        \ar[d]_{\alpha^X_A} &
        T(\overline{X(B)}) \ar[d]^{\alpha^X_B} \\
        (T X)(A) \ar[r]^{(T X)(f)}
        &
        (T X)(B)
      }
    \end{gathered}
  \end{equation}

  The issue that we need to deal with is that, for $\sigma \colon A
  \rightarrow B$ in $\catc$, the following diagram in $\fmlyc$
  does \emph{not} commute, since its image in $\cats$ does not
  commute.
  \begin{equation*}
    \xymatrix{ \overline{X(A)} \ar[r] \ar[d]_{\overline{X(\sigma)}} & X \\
      \overline{X(B)} \ar[ur] &}
  \end{equation*}
  
  For any map $g$ in $\catc$, write $\tilde{g}$ for the corresponding
  object in $\fmlyc$ in the fibre of $\btwo$. Then similarly to
  before, we have for each $\sigma \colon A \rightarrow B$ in
  $\smcat{A}$ a canonical cartesian map
  $\widetilde{X(\sigma)} \rightarrow X$ over the map
  $\ulcorner \sigma \urcorner \colon \btwo \rightarrow \smcat{A}$. As
  before, we have a canonical cartesian maps
  $T(\widetilde{X(\sigma)}) \rightarrow T X$, which we'll write as
  $\beta_{\sigma}$. We can view $\beta_\sigma$ as a commutative square in
  $\catc$ of the following form.
  \begin{equation}
    \label{eq:18}
    \begin{gathered}
    \xymatrix@=3pc{
      \cdot \ar[d]_{T(\widetilde{X(\sigma)})}  \ar[r]^{\beta_{\sigma,0}} &
      (T X)(A) \ar[d]^{(T X)(\sigma)} \\
      \cdot
      \ar[r]^{\beta_{\sigma, 1}} &
      (T X)(B)
    }      
    \end{gathered}
  \end{equation}

  Note that $\alpha^X_A$ factors through $\beta_{\sigma, 0}$ and
  $\alpha^X_B$ factors through $\beta_{\sigma,1}$, by applying $T$ to
  the appropriate commutative diagram, which gives us the following,
  where each $\gamma_{\sigma, i}$ lies over the corresponding map $1
  \rightarrow \btwo$ in $\cats$.
  \begin{equation}
    \label{eq:1}
    \begin{gathered}
      \xymatrix{ T (\overline{X(A)}) \ar@/^/[drr]^{\alpha^X_A}
        \ar[dr]_{\gamma_{\sigma,
            0}} & & \\
        & \cdot \ar[r]_{\beta_{\sigma, 0}} & (T X)(A)}
    \end{gathered}
    \qquad
    \begin{gathered}
      \xymatrix{ T (\overline{X(B)}) \ar@/^/[drr]^{\alpha^X_B}
        \ar[dr]_{\gamma_{\sigma,
            1}} & & \\
        & \cdot \ar[r]_{\beta_{\sigma, 1}} & (T X)(B)}
    \end{gathered}
  \end{equation}
  
  If we consider the special case where $A = B$ and $\sigma = 1_A$,
  then we have the following.
  \begin{align*}
    \beta_{1_A, 1} \circ T(\widetilde{1_{X A}}) \circ
    \gamma_{1_A, 0} &= \beta_{1_A, 0} \circ \gamma_{1_A, 0}
    \\
                       &= \alpha^X_A \\
                       &= \beta_{1_A, 1} \circ \gamma_{1_A, 1}
  \end{align*}
  Since $\beta_{1_A, 1}$ is an isomorphism, we deduce that the
  following diagram commutes.
  \begin{equation}
    \label{eq:23}
    \begin{gathered}
      \xymatrix@=2.5pc{ T(\overline{X(A)}) \ar[r]^{\gamma_{1_A, 0}}
        \ar[dr]_{\gamma_{1_A, 1}} & \cdot
        \ar[d]^{T(\widetilde{1_{X A}})} \\
        & \cdot}  
    \end{gathered}
  \end{equation}

  Next, suppose that we have a commutative square of the following
  form in $\catc$.
  \begin{equation*}
    \xymatrix{ \cdot \ar[r]^f \ar[d]_h & \cdot \ar[d]^k \\
      \cdot \ar[r]_g & \cdot}
  \end{equation*}
  Then we can view this as a vertical map in $\fmlyc$ in the fibre of
  $\btwo$ from $\tilde{f}$ to $\tilde{g}$, which we write as
  $\widetilde{(h, k)}$. Applying $T$ to $\widetilde{(1_{X A},
    X(\sigma))}$ gives us a commutative square of the following form.
  \begin{equation}
    \label{eq:20}
    \begin{gathered}
      \xymatrix{ \cdot \ar[r]^{T(\widetilde{1_{X A}})}
        \ar[d]_{T \widetilde{(1_{X A}, X(\sigma)})_0} & \cdot
        \ar[d]^{T \widetilde{(1_{X A}, X(\sigma)})_1} \\
        \cdot \ar[r]_{T(\widetilde{X(\sigma)})} & \cdot}
    \end{gathered}
  \end{equation}

  Furthermore such squares are compatible with $\overline{h}$
  and $\overline{k}$, in the sense that we have the following
  commutative diagrams in $\fmlyc$ over each of the corresponding maps
  $1 \rightarrow \btwo$ in $\cats$.
  \begin{equation*}
    \begin{gathered}
      \xymatrix{ \cdot \ar[r] \ar[d]_{\overline{h}} & \cdot
        \ar[d]^{\widetilde{(h, k)}} \\
        \cdot \ar[r] & \cdot}
    \end{gathered}
    \qquad
    \begin{gathered}
      \xymatrix{ \cdot \ar[r] \ar[d]_{\overline{k}} & \cdot
        \ar[d]^{\widetilde{(h, k)}} \\
        \cdot \ar[r] & \cdot}
    \end{gathered}
  \end{equation*}

  As before we apply $T$ to a special case of the diagrams
  above, to get the following commutative diagrams.
  \begin{equation}
    \label{eq:17}
    \begin{gathered}
      \xymatrix{ T(\overline{X(A)})
        \ar[r]^{\gamma_{1_{A}, 0}} \ar[dr]_{\gamma_{\sigma, 0}} &
        T(\widetilde{1_{X A}}) \ar[d]^{T \widetilde{(1_{X A},
            X(\sigma))}_0} \\
        &
        T(\widetilde{X \sigma})
      }
    \end{gathered}
    \quad
    \begin{gathered}
      \xymatrix{ T(\overline{X(A)}) \ar[d]_{T(\overline{X(\sigma)})}
        \ar[r]^{\gamma_{1_{A}, 1}} &
        T(\widetilde{1_{X A}}) \ar[d]^{T \widetilde{(1_{X A},
            X(\sigma))}_1} \\
        T(\overline{X(B)}) \ar[r]_{\gamma_{\sigma, 1}} & T(\widetilde{X \sigma})}
    \end{gathered}
  \end{equation}

  Finally, we can now calculate,
  \begin{align*}
    (T X)(\sigma) \circ \alpha^X_A &= (T X)(\sigma) \circ \beta_{\sigma, 0}
                                \circ \gamma_{\sigma, 0} &
                                                           \text{by
                                                           \eqref{eq:1}}
    \\
                                   &= \beta_{\sigma, 1} \circ T(\widetilde{X(\sigma)}) \circ
                                     \gamma_{\sigma, 0} &
                                                          \text{by
                                                          \eqref{eq:18}}
    \\
    &= \beta_{\sigma, 1} \circ T(\widetilde{X(\sigma)}) \circ T \widetilde{(1_{X A},
            X(\sigma))}_0 \circ \gamma_{1_A, 0} & \text{by
                                                  \eqref{eq:17}} \\
    &= \beta_{\sigma, 1} \circ T \widetilde{(1_{X A},
            X(\sigma))}_1 \circ T(\widetilde{1_{X A}}) \circ
      \gamma_{1_A, 0} & \text{by \eqref{eq:20}} \\
    &= \beta_{\sigma, 1} \circ T \widetilde{(1_{X A},
            X(\sigma))}_1 \circ \gamma_{1_A, 1} & \text{by
                                                  \eqref{eq:23}} \\
    &= \beta_{\sigma, 1} \circ \gamma_{\sigma, 1} \circ
      T(\overline{X(\sigma)}) & \text{by \eqref{eq:17}} \\
    &= \alpha^X_B \circ T(\overline{X(\sigma)}) & \text{by \eqref{eq:1}}
  \end{align*}

  But we can now deduce that the
  naturality square \eqref{eq:6} commutes, as required.
\end{proof}

\begin{lemma}
  \label{lem:steponematches}
  The restriction of step-one to the fibre over $1$ is isomorphic to
  Garner's definition of step-one (the composition of the left adjoints
  to $\mathcal{G}_2$ and $\mathcal{G}_3$ in
  \cite[Section 4]{garnersmallobject}).
\end{lemma}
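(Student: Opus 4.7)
The plan is to unfold our step-one for the category indexed families fibration restricted to the fibre over $1 \in \cats$, and identify the two stages produced by Lemma \ref{lem:domopcartchar} with Garner's two left adjoints in turn.

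Fix generating data $m \colon \smcat{I} \to \catc^\btwo$, viewed as a vertical map over $\smcat{I}$, and a vertical map $f$ over $1 \in \cats$ (i.e., a single morphism in $\catc$). Since hom objects in $\fmlyc$ are comma categories, Lemma \ref{lem:vertarrhom} tells us that the indexing object $\fhom(m, f)$ of the universal lifting problem may be taken to be the comma category of squares from some $m_i$ to $f$; its projections $\sigma$ and $\tau$ to $\smcat{I}$ and $1$ are the obvious ones, and the tautological vertical map $h_1 \colon \sigma^\ast(m) \to \tau^\ast(f)$ picks out at each object $(i, S)$ the square $S$ itself.

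To compute $L_1 f$, I would factor $h_1$ as opcartesian followed by vertical over $\dom$. By Lemma \ref{lem:domopcartchar}, regarded as a square in $\fmlyc$, this opcartesian piece decomposes as (i) a square whose two horizontal maps are opcartesian in $\fmlyc \to \cats$, followed by (ii) a pushout with all four vertices in the fibre of $1$, i.e.\ a pushout in $\catc$. Since opcartesian lifts in $\fmlyc \to \cats$ are left Kan extensions and here we are extending along $\tau \colon \fhom(m, f) \to 1$, stage (i) produces the colimits of $\sigma^\ast(\dom m)$ and $\sigma^\ast(\cod m)$ over $\fhom(m, f)$, connected by the canonical morphism; stage (ii) then pushes this colimit square out against the induced map into $\dom f$, producing $L_1 f \colon \dom f \to K_1 f$ and $R_1 f \colon K_1 f \to \cod f$.

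Finally I would match this against Garner. The left adjoint of $\mathcal{G}_3$ in \cite[Section 4]{garnersmallobject} is precisely the pushout appearing in stage (ii), while the left adjoint of $\mathcal{G}_2$ is the coend $\int^{i \in \smcat{I}} \catc^\btwo(m_i, f) \cdot m_i$ in $\catc^\btwo$. The main obstacle is to identify this coend with the colimit in stage (i). This follows from the standard identification of a coend of a copower $X(i) \cdot Y(i)$ with the colimit of $Y$ over the category of elements of $X$, together with the observation that the category of elements of $i \mapsto \catc^\btwo(m_i, f)$ is exactly the comma category $\fhom(m, f)$, under which the functor being colimited is $\sigma^\ast(m)$. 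Functoriality and naturality in $f$ of both constructions is then a routine verification, yielding the required isomorphism of functorial factorisations.
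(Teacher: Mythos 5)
Your proposal is correct and follows essentially the same route as the paper: unfold step-one over the fibre of $1$, use Lemma \ref{lem:domopcartchar} to split the single opcartesian lift over $\dom$ into a left Kan extension along $(M \downarrow f) \rightarrow 1$ (the colimit of the reindexed family over the comma category) followed by an ordinary pushout in $\catc$, and identify these two stages with the left adjoints to $\mathcal{G}_2$ and $\mathcal{G}_3$ respectively. Your explicit coend-to-colimit-over-the-category-of-elements identification is the same observation the paper records in the remark immediately after the lemma, namely that the colimit over the comma category is exactly the pointwise formula for the left Kan extension of $M$ along itself at $f$.
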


\begin{proof}
  Suppose we are given a vertical map $m$ over a category
  $\smcat{A}$. Write $M$ for the corresponding functor
  $\smcat{A} \rightarrow \catc^\btwo$. Since we are just evaluating
  step-one on the fibre of $1$, we suppose we are given a vertical map
  $f$ over $1$, which is just a map in $\catc$, say $f \colon X
  \rightarrow Y$.
  
  Expanding the definition of step-one from section
  \ref{sec:apply-abstr-descr} using
  the explicit description of hom objects in $\fmlyc$, we see that
  $L_1 f$ is constructed as follows. $\fhom(m, f)$ is the comma
  category $(M \downarrow f)$ and reindexing (i.e. composing) $M$
  along the first projection gives us a functor
  $M' \colon (M \downarrow f) \rightarrow \catc^\btwo$. We have an
  opfibration $\dom \colon \vrt{\fmlyc} \rightarrow
  \fmlyc$. $L_1 f$ is then the opcartesian lift of $M'$ along the
  image of the canonical map $M' \rightarrow f$ under $\dom$. However,
  we have seen in lemma \ref{lem:domopcartchar} that this is
  equivalent to first taking the opcartesian lift of $M'$ along the
  map $(M \downarrow f) \rightarrow 1$ followed by a vertical pushout
  over $1$. The former is just the left Kan extension of $M'$ along
  $(M \downarrow f) \rightarrow 1$, which is the colimit of $M'$
  regarded as a diagram in $\catc^\btwo$, or alternatively as the
  canonical map between the levelwise colimits in $\catc$. The pushout
  over $1$ is an ordinary pushout in $\catc$. But this description now
  matches Garner's.
\end{proof}

\begin{remark}
  Step 0 of the small object argument over $1$ can be viewed as a left
  Kan extension in two different ways. Firstly, following our general
  construction we have already seen in the proof above that we take
  the left Kan extension of $M'$ along
  $(M \downarrow f) \rightarrow 1$. This is just the colimit of the
  diagram $M'$. Note, however that this diagram is, as observed by
  Garner, precisely the pointwise formula for the left Kan extension
  of $M$ along itself at $f$.
\end{remark}

\begin{theorem}
  \label{thm:steponematches2}
  Our definition of step-one of the small object argument is simply the
  pointwise lift of Garner's definition from \cite{garnersmallobject} to
  $\fmlyc$.
\end{theorem}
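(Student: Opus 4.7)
The plan is to reduce the theorem to the fibre-over-$1$ case already established in Lemma~\ref{lem:steponematches}, and then upgrade from that single fibre to the entire fibration via the uniqueness-up-to-isomorphism of fibred extensions given by Lemma~\ref{lem:funtocatindfun} (applied with $\catc^\btwo$ in the role of $\catc$).

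First I would use the identification of $\vrt{\fmlyc}$ with $\fmly(\catc^\btwo)$ over $\cats$ already noted in the proof of Lemma~\ref{lem:steponefibrd}, so that the fibration of vertical maps is itself a category indexed family fibration. Under this identification, the ``pointwise lift of Garner's definition'' is the following explicit endofunctor. Letting $m$ correspond to $M \colon \smcat{A} \rightarrow \catc^\btwo$, Garner's step-one produces an endofunctor $F_M \colon \catc^\btwo \rightarrow \catc^\btwo$ (together with its comonad data); the pointwise lift $\widehat{F_M}$ sends $X \colon \smcat{B} \rightarrow \catc^\btwo$ to $F_M \circ X$, with comonad data obtained by postcomposing $F_M$'s counit and comultiplication. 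This $\widehat{F_M}$ is manifestly a fibred endofunctor on $\fmly(\catc^\btwo) \rightarrow \cats$ that strictly preserves reindexing (which is precomposition), and its restriction to the fibre over $1$ is $F_M$ itself.

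Next, by Lemma~\ref{lem:steponefibrd} our $L_1$ is also a fibred endofunctor on $\fmly(\catc^\btwo)$, and by Lemma~\ref{lem:steponematches} its restriction to the fibre over $1$ is isomorphic to $F_M$. I would then apply Lemma~\ref{lem:funtocatindfun} in the form ``a fibred endofunctor is determined up to isomorphism by its restriction to the fibre of $1$'': both $L_1$ and $\widehat{F_M}$ are fibred extensions of $F_M$, hence naturally isomorphic over $\cats$.

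The step I expect to require the most care is promoting this from an isomorphism of underlying endofunctors to an isomorphism of lawfs's (i.e.\ compatibility with counit and comultiplication). The counit in both constructions is the vertical component of the opcartesian factorisation and the comultiplication is constructed by the abstract step-one in Definition~\ref{def:absstep1}, so both comonad structures are canonically determined by universal properties in each fibre; therefore the natural isomorphism furnished by Lemma~\ref{lem:funtocatindfun} should automatically intertwine these structure maps, but checking this cleanly will likely require revisiting the construction in the fibre over $1$ (where it is given by Lemma~\ref{lem:steponematches}) and then invoking fibredness together with the universal property of $\fhom(m,-)$ to transport the compatibility to every fibre. This bookkeeping, rather than any genuinely new idea, is the main obstacle.
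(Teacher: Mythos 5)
Your proposal follows exactly the paper's own argument: the proof in the text is precisely the combination of Lemma~\ref{lem:steponematches} (agreement on the fibre over $1$), Lemma~\ref{lem:steponefibrd} (fibredness of step-one), and Lemma~\ref{lem:funtocatindfun} applied to $\catc^\btwo$ (uniqueness of fibred extensions). Your closing concern about upgrading the isomorphism of endofunctors to one of lawfs's is a fair point of care that the paper leaves implicit, but it does not change the route.
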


\begin{proof}
  By lemmas \ref{lem:steponematches}, \ref{lem:steponefibrd} and
  \ref{lem:funtocatindfun}
\end{proof}

\begin{theorem}[Garner]
  \label{thm:garnerssmallobj}
  Let $\catc$ be a category satisfying either one of the following two
  conditions.
  \begin{enumerate}
  \item For every $X \in \catc$ there is a regular cardinal $\alpha$
    such that $\catc(X, -)$ preserves $\alpha$-filtered colimits.
  \item $\catc$ possesses a proper, well-copowered strong
    factorisation system $(\mathcal{E}, \mathcal{M})$, and for every
    object $X$ of $\catc$ there is $\alpha$ such that $\catc(X, -)$
    preserves $\alpha$-filtered unions of $\mathcal{M}$-subobjects.
  \end{enumerate}
  Then for every vertical map $m$ in $\fmlyc$, the awfs cofibrantly
  generated by $m$ exists.
\end{theorem}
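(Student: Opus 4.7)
The plan is to reduce the theorem to Corollary \ref{cor:initialalg}, which says it suffices to construct, for each $Y \in \fmlyc$ and each vertical $f \colon X \rightarrow Y$, an initial algebra for the pointed endofunctor $T_{Y,f}$ sending vertical $g \colon Z \rightarrow Y$ to $X \amalg R_1 g \rightarrow Y$ in the slice of vertical maps into $Y$. If we write $Y$ as a functor $\smcat{B} \rightarrow \catc$ in $\fmlyc$, this slice is exactly $[\smcat{B}, \catc] / Y$.

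By Theorem \ref{thm:steponematches2}, step-one $R_1$ in $\fmlyc$ is pointwise Garner's step-one $R_1^G$ in $\catc$, and coproducts in $[\smcat{B}, \catc] / Y$ are also computed pointwise in $\catc / Y(b)$. Hence $T_{Y,f}$ is pointwise the analogous pointed endofunctor in $\catc / Y(b)$, formed from the restrictions of $m$ and $f$ at each $b \in \smcat{B}$. An initial algebra for $T_{Y,f}$ in $[\smcat{B}, \catc] / Y$ therefore amounts to a family of initial algebras in $\catc / Y(b)$ that assembles functorially in $b$.

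I would then invoke Garner's original small object argument in \cite{garnersmallobject}, which, under either hypothesis (1) or (2), produces for every $V \in \catc$ and every map $f_V \colon X_V \rightarrow V$ the free $R_1^G$-algebra on $f_V$, as the value of a left adjoint to the forgetful functor from $R_1^G$-algebras to $\catc / V$. Being a left adjoint, this free-algebra construction is functorial in the seed map. Applying it pointwise at each $b \in \smcat{B}$ to the map $f(b) \colon X(b) \rightarrow Y(b)$ therefore yields a functor $Z_\infty \colon \smcat{B} \rightarrow \catc$ equipped with the required structure maps, which is precisely an initial $T_{Y,f}$-algebra in $[\smcat{B}, \catc] / Y$. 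Corollary \ref{cor:initialalg} then delivers the desired awfs cofibrantly generated by $m$.

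The main obstacle is bookkeeping: carefully identifying the slice of vertical maps into $Y$ with $[\smcat{B}, \catc] / Y$, matching $T_{Y,f}$ with its pointwise Garner counterpart via Theorem \ref{thm:steponematches2}, and verifying that Garner's pointwise free-algebra assignment genuinely coheres into an object of the functor category. This last point is not substantial once one observes that the free-algebra construction, being a left adjoint, is strictly functorial in its input, so the transition maps $Z_\infty(\sigma) \colon Z_\infty(b) \rightarrow Z_\infty(b')$ for $\sigma \colon b \rightarrow b'$ in $\smcat{B}$ are forced by functoriality and automatically respect the algebra structure.
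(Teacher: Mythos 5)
Your proposal is correct in substance but takes a genuinely different route from the paper. The paper's proof is very short: it invokes Theorem \ref{thm:steponematches2} to identify step-one with the pointwise lift of Garner's step-one, and then asserts that Garner's \emph{entire} construction of the algebraically free awfs lifts pointwise (via the pattern of Lemma \ref{lem:funtocatindfun}) to a fibred awfs over $\fmlyc$ that is algebraically free on step-one. You instead route through the initial-algebra criterion of Section 5: you reduce to Corollary \ref{cor:initialalg}, identify the relevant pointed endofunctor on $[\smcat{B},\catc]/Y$ as the pointwise version of Garner's (again via Theorem \ref{thm:steponematches2}), and import from Garner only the existence of the free-algebra left adjoint, letting Theorem \ref{thm:initialalgthmawfs} rebuild the awfs. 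What your route buys is that it actually exercises the paper's general machinery and needs strictly less from Garner (free algebras rather than the full awfs plus the claim that its pointwise lift is algebraically free); what the paper's route buys is brevity and the fact that the resulting awfs is manifestly fibred. Two points you should tighten. First, the phrase ``restrictions of $m$ \ldots at each $b$'' is off: the generating family $m$ lives over a different index category $\smcat{A}$ and is used wholesale at every $b$; only the seed $f(b)$ varies with $b$. Second, the assembly step is slightly more delicate than ``forced by functoriality'': to see that the pointwise-unique comparison maps into an arbitrary $T_{Y,f}$-algebra $W$ are natural in $b$, you need the universal property of $Z_\infty(b)$ against algebra maps lying over \emph{non-identity} maps $Y(b)\to Y(b')$, not just against the fibre $(f(b)\downarrow U_{Y(b)})$. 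This does hold, precisely because Garner's left adjoint is defined on all of $\catc^\btwo$ and the unit is vertical over $\cod$ (the same upgrade that underlies Lemma \ref{lem:algfibred} and Lemma \ref{lem:inittoadj}), but it is the one step that deserves to be spelled out rather than absorbed into ``automatically''.
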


\begin{proof}
  The well known result due to Garner \cite[Theorem
  4.4]{garnersmallobject} is for ordinary awfs's rather than for
  awfs's fibred over $\fmlyc$, so we need to show how to extend the
  result to this case.

  We saw in theorem \ref{thm:steponematches2} that our definition of
  step-one is isomorphic to the pointwise lift of Garner's definition to
  $\fmlyc$. Garner's construction of an algebraically free awfs
  clearly lifts pointwise to a fibred awfs over $\fmlyc$ which is
  clearly algebraically free on the pointwise lift of step-one.
\end{proof}

\begin{remark}
  Garner's proof is not constructively valid as stated. We draw
  attention to two issues. Firstly for the small object argument to
  apply in many situations, such as categories of presheaves, we
  require the existence of uncountable regular ordinals. However Gitik
  has proved in \cite{gitik1980} that this is independent of
  $\mathbf{ZF}$. So this requires some form of the axiom of choice. On
  the other hand a very weak, and constructively acceptable form of
  choice such as AMC (as appears in \cite{moerdijkpalmgrenast1})
  should suffice.

  Secondly the axiom of excluded middle is used in places. The main
  example is the assumption that every ordinal is either a limit or
  successor. The treatment of ordinals in \cite[Section
  9.4]{aczelrathjenbookdraft} suggests that with care the argument can
  be rephrased to work in constructive set theory, although it would be
  necessary to assume the existence of
  inaccessibles (as in \cite[Chapter 18]{aczelrathjenbookdraft}), in
  addition to AMC.
\end{remark}

Note that cofibrantly generated awfs's are automatically fibred, since
they are unique up to isomorphism, and we can always find a fibred one
by lifting pointwise the cofibrantly generated awfs over $1$. We
can show something similar for strongly fibred.

\begin{theorem}
  \label{thm:catfmlystrfibrd}
  Suppose that $\catc$ is cocomplete and pullbacks exist and preserve
  all colimits. If $(L_1, R_1)$ is a strongly fibred lawfs and the
  algebraically free awfs on $(L_1, R_1)$ exists, say $(L, R)$. Then
  it is also strongly fibred.
\end{theorem}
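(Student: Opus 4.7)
The plan is to exploit the pointwise character of awfs's over the category indexed families fibration, reducing the claim to a pullback-preservation statement about $\catc^\btwo$, and then to use Garner's transfinite construction of the algebraically free monad together with the hypothesis that pullbacks preserve all colimits.

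Algebraically free awfs's on $(L_1, R_1)$ are unique up to isomorphism, so following the remark immediately preceding this theorem I may assume $(L, R)$ is the pointwise lift over $\fmlyc$ of Garner's awfs $(L_\catc, R_\catc)$ on $\catc^\btwo$. By lemma \ref{lem:strongfibrelem}, strongly fibred for $(L, R)$ amounts to fibred (immediate from the pointwise construction) together with $R_I$ preserving pullbacks in $[\smcat{I}, \catc]^\btwo$ for each $I \in \cats$. Since both $R_I$ and pullbacks in a functor category are computed pointwise, this reduces to showing that $R_\catc$ preserves pullbacks in $\catc^\btwo$.

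Restricting the strongly fibred hypothesis for $(L_1, R_1)$ to the fibre of $1$ gives, again via lemma \ref{lem:strongfibrelem}, that the step-one functor $R_{1, \catc}$ on $\catc^\btwo$ preserves pullbacks. Garner's construction of the algebraically free monad (cf.\ corollary \ref{cor:initialalg}) builds $R_\catc f$ as a transfinite colimit: iterating the pointed endofunctor $g \mapsto X \amalg R_{1,\catc}(g)$ on $\catc/Y$, whose action combines a pushout with an application of $R_{1,\catc}$, produces a chain whose colimit is the initial algebra corresponding to $R_\catc f$.

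I would then argue by transfinite induction that each stage in this construction preserves pullbacks. The base stage is trivial; at successor stages, we apply a pushout followed by $R_{1,\catc}$, where pushouts are preserved by pullback by the hypothesis that pullbacks commute with all colimits, and $R_{1,\catc}$ preserves pullbacks by assumption. At limit stages we take filtered colimits, again preserved by pullback. The overall colimit then preserves pullbacks as well, so $R_\catc$ preserves pullbacks. The main obstacle is the bookkeeping: one must carefully track the coherence of the transfinite construction and verify that pullback-preservation is maintained through every type of step, including any coequalizers required to enforce the monad multiplication. The hypothesis that pullbacks preserve \emph{all} colimits is precisely what makes this routine.
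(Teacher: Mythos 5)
Your proposal is correct and follows essentially the same route as the paper's proof: identify $(L,R)$ with the pointwise lift of Garner's construction on $\catc^\btwo$, observe that this construction is a transfinite colimit of iterations of the (pullback-preserving) step-one functor, and conclude using the hypothesis that pullbacks preserve all colimits. Your version merely spells out in more detail the reduction via lemma \ref{lem:strongfibrelem} and the transfinite induction that the paper compresses into a single sentence.
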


\begin{proof}
  Since $(L, R)$ exists, it has to be the pointwise lift of the
  explicit description given by Garner. Hence it suffices to show that
  Garner's construction is preserved by pullbacks. However, this
  description is a colimit of iterations of $L_1$, which is preserved
  by pullbacks by the assumptions.
\end{proof}

Finally, we remark that in this case the fibred pushout product
construction from section \ref{sec:defin-exist-fibr} reduces down to
the usual pushout product for monoidal categories.

Unfortunately,
however when $\catc$ is monoidal closed, $\fmlyc \rightarrow \cats$ is
not necessarily an internally closed monoidal fibration. Consider, for
example, the case $\catc = \set$ with cartesian product. Then the
cotensor over a small category $\smcat{A}$ is the exponential in
$\set^\smcat{A}$, which is not preserved by reindexing.

\subsection{Internal Category Indexed Families of Diagrams}
\label{sec:categ-index-famil-1}

Recall that we can define a notion of \emph{internal category}, which
generalises small categories by replacing the set of objects and the
set of morphisms with objects in some (usually large)
category. Furthermore, given a fibration $p \colon \cat{E} \rightarrow
\cat{B}$ and an internal category in $\cat{B}$, we can define a notion
of \emph{diagram}, which generalises functors $\smcat{A} \rightarrow
\set$, where $\smcat{A}$ is a small category. See e.g. \cite[Chapter
7]{jacobs} for formal definitions of both of these.

Let $p \colon \cat{E} \rightarrow \cat{B}$ be a fibration and let
$\smcat{C}$ be an internal category in $\cat{B}$. Then we
define a new fibration as follows. We define $\cats_\cat{B}$ to be the
category of internal categories and internal functors in $\cat{B}$. We
define $\cat{E}_{\smcat{C}}$ to consist of pairs $(\smcat{D}, X)$ where
$\smcat{D}$ is an internal category in $\cat{B}$ and $X$ is a diagram
of type $\smcat{C} \times \smcat{D}$ in $\cat{E}$.

Note that if we apply this with
$p \colon \fmly(\set) \rightarrow \set$ the fibration of set indexed
families of sets, we get the following. A internal category
$\smcat{C}$ in $\set$ is just a small category. Given another small
category $\smcat{D}$, a diagram of type $\smcat{C} \times \smcat{D}$
is just a functor $\smcat{C} \times \smcat{D} \rightarrow \set$. Then
using the isomorphism
$[\smcat{C} \times \smcat{D}, \set] \cong [\smcat{D}, [\smcat{C},
\set]]$,
we see that the resulting fibration is a special case of category
indexed families from section \ref{sec:categ-index-famil} where
$\cat{C}$ is the category $[\smcatc, \set]$ of presheaves over
$\opcat{\smcatc}$.

\begin{theorem}
  Suppose that $p \colon \cat{E} \rightarrow \cat{B}$ is a locally
  small fibration, $\cat{B}$ is locally cartesian closed, and
  $\smcat{C}$ is an internal category in $\cat{B}$. Then
  $\cat{E}_{\smcat{C}} \rightarrow \cats_\cat{B}$ is also locally
  small.
\end{theorem}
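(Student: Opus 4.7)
The plan is to construct the hom object as an internal functor $h_0 \colon \smcat{H} \to \smcat{D}$ in $\cat{B}$, together with a universal morphism $h_1 \colon h_0^\ast X \to h_0^\ast Y$ of diagrams of type $\smcat{C} \times \smcat{H}$. The object of objects and the object of morphisms of $\smcat{H}$ will each be built in two stages: first by invoking local smallness of $p$ to produce a ``hom object of families of vertical maps'', and then by cutting out a subobject of that in $\cat{B}$, using local cartesian closedness, to enforce the required naturality condition with respect to the $\smcat{C}$-action.

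In more detail, write $C_0, C_1$ and $D_0, D_1$ for the objects of objects and morphisms of $\smcat{C}$ and $\smcat{D}$, and $X_0, Y_0 \in \cat{E}_{C_0 \times D_0}$ for the underlying objects of the diagrams $X$ and $Y$. Local smallness of $p$ yields a hom object $\fhom_{C_0 \times D_0}(X_0, Y_0) \to C_0 \times D_0$. I then impose naturality of such families with respect to $\smcat{C}$ by forming, in $\cat{B}/D_0$, the equaliser of a pair of maps into an auxiliary hom object over $C_1 \times D_0$ (obtained again by local smallness); local cartesian closedness of $\cat{B}$ supplies the dependent products needed to express this ``for all $\smcat{C}$-morphisms'' quantification as a single diagram in $\cat{B}$. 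This gives the object of objects $H_0 \to D_0$ of $\smcat{H}$. An entirely analogous construction, with $D_1$ and the reindexings of $X_0, Y_0$ along the source and target maps $D_1 \to D_0$ in place of $D_0$, yields the object of morphisms $H_1 \to D_1$.

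One then equips $(H_0, H_1)$ with the structure of an internal category in $\cat{B}$ together with an internal functor $h_0 \colon \smcat{H} \to \smcat{D}$: source, target, identity and composition maps are induced from those of $\smcat{D}$ by the universal properties of the hom objects that went into $H_0$ and $H_1$, and the imposed naturality conditions guarantee compatibility. A straightforward diagram chase confirms the internal category axioms and produces, by construction, the canonical universal map $h_1 \colon h_0^\ast X \to h_0^\ast Y$ of $\smcat{C} \times \smcat{H}$-diagrams.

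Universality is then verified by unwinding the universal properties that entered the construction: any pair $(u \colon \smcat{D}' \to \smcat{D}, \varphi \colon u^\ast X \to u^\ast Y)$ induces, at the level of objects, a unique $D_0' \to H_0$ over $D_0$, and at the level of morphisms, a unique $D_1' \to H_1$ over $D_1$, which assemble into a unique internal functor $\smcat{D}' \to \smcat{H}$ over $\smcat{D}$ classifying $\varphi$. The main obstacle will be the book-keeping of the two layers of naturality (in $\smcat{C}$ and in $\smcat{D}$) coherently with the internal category structure on $\smcat{H}$; here local cartesian closedness of $\cat{B}$ plays a double role, both in forming the equalisers that express the naturality condition and in providing the dependent products that let us internally quantify uniformly over morphisms of $\smcat{C}$ and $\smcat{D}$.
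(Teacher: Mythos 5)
Your construction is correct in outline and shares the paper's overall architecture: the hom object is an internal category over the indexing category, with object of objects a hom object over $D_0$ parametrising $\smcat{C}$-natural families and object of morphisms a hom object over $D_1$ parametrising compatible pairs of such families, and universality is checked levelwise and assembled into an internal functor. Where you diverge is in how the two levels are produced. The paper black-boxes the discrete case: it cites Johnstone's result that the base change of $p$ along $\cat{B} \rightarrow \cats_\cat{B}$ is locally small, which hands it the object of objects $\fhom_{A_0}(X,Y) \rightarrow A_0$ directly, with the $\smcat{C}$-naturality already built in; and for the object of morphisms it applies lemma \ref{lem:vertarrhom} to the action morphisms $\mu, \nu$ viewed as objects of $\vrt{\cat{E}_{\smcat{C}}}$, so that $\fhom_{A_1}(\mu,\nu)$ (a pullback of three hom objects) automatically records a commuting square, i.e.\ a pair of transformations compatible with the $\smcat{D}$-action. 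You instead rebuild both levels from the raw local smallness of $p$ over $C_0 \times D_0$ and $C_1 \times D_1$, using dependent products and equalisers in $\cat{B}$ to impose $\smcat{C}$-naturality by hand --- in effect re-proving Johnstone's discrete-case lemma inside your argument. That is legitimate (it is essentially how the cited lemma is proved) but it doubles the book-keeping you flag at the end, and your description of $H_1$ is the one place you should be more careful: it must be the hom object of the two \emph{squares} $\mu$ and $\nu$ over $D_1$ (equivalently a pullback of hom objects as in lemma \ref{lem:vertarrhom}), not merely a hom object between single reindexings of $X_0$ and $Y_0$, since otherwise the compatibility of the source and target transformations with the $\smcat{D}$-action is not enforced. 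The paper's factoring through the vertical-arrow fibration buys exactly this for free.
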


\begin{proof}
  In \cite[Lemma B2.3.15 (i)]{theelephant} Johnstone shows how to do
  this for the base change of the fibration along the discrete
  category functor $\cat{B} \rightarrow \cats_\cat{B}$. We will show
  how to extend this to the fibration over $\cats_\cat{B}$. The basic
  idea is to mimic internally the construction for category indexed families
  applied to a presheaf category. If we are given functors
  $X, Y \colon \smcat{A} \rightarrow [\smcat{C}, \set]$, then the
  result cited above already gives us the internal version of the hom
  object in set indexed families over the objects of $\smcat{A}$. We
  recall that the hom object for category indexed families is the
  small category where the set of objects is the hom object for set
  indexed families, which we recall consists of morphisms in
  $[\smcat{C}, \set]$ (i.e. natural
  transformations), with morphisms being those morphisms in $\smcat{A}$
  that are compatible with the natural transformations.

  Suppose we are given an internal category
  $\smcat{A} = A_1 \rightrightarrows A_0$ in $\cat{B}$ together with
  diagrams $X$ and $Y$ over $\smcat{A}$. We first need to define the
  object in $\cats_\cat{B}$ indexing the hom. That is, we need to
  define an internal category $\fhom_\smcat{A}(X, Y)$. Note that we
  can also view $X$ and $Y$ as objects in $\cat{E}_{\smcat{C}}$ over
  $A_0$. By the result in loc. cit. we have a hom object
  $h \colon \fhom_{A_0}(X, Y) \rightarrow A_0$ in $\smcat{B}$. We will
  take this to be the object of objects in the internal category, so
  we just need to define the object of morphisms.

  Let $\mu \colon \partial_0^\ast(X) \rightarrow \partial_1^\ast(X)$
  be the action of $X$, and
  $\nu \colon \partial_0^\ast(Y) \rightarrow \partial_1^\ast(Y)$ the
  action of $Y$, which we view as vertical maps over
  $\cat{E}_\smcat{C} \rightarrow \cat{B}$ (strictly speaking these are
  morphisms over $C_1 \times A_1$, but we can view them as morphisms
  over $C_0 \times A_1$ by reindexing along the identity map for
  $\smcat{C}$). By lemma \ref{lem:vertarrhom} (together with
  the result from loc. cit. and local cartesian closedness) we know
  that $\vrt{\cat{E}_{\smcat{C}}} \rightarrow \cat{B}$ is
  locally small.  We take the object of morphisms to be
  $\fhom_{A_1}(\mu, \nu)$, which comes equipped with a map
  $k \colon \fhom_{A_1}(\mu, \nu) \rightarrow A_1$. By considering the
  domain and codomain of the universal square, we get canonical maps
  $\partial_0', \partial_1' \colon \fhom_{A_1}(\mu, \nu) \rightarrow
  \fhom_{A_0}(X, Y)$ such that
  $\partial_i \circ k = h \circ \partial_i'$ for each $i$. We can then
  define the identity and multiplication by lifting those for
  $\smcat{A}$. Note that the universal square from $\mu$ to $\nu$ is a
  vertical morphism between the diagrams $X$ and $Y$.

  We now check that this is a hom object. Suppose that we are given an
  internal category $\smcat{D}$ and an internal functor
  $F \colon \smcat{D} \rightarrow \smcat{A}$ together with a vertical
  map $F^\ast X \rightarrow F^\ast Y$. This is a vertical map
  $f \colon F_0^\ast(X) \rightarrow F_0^\ast(Y)$ over
  $\cat{E}_\smcat{C} \rightarrow \cat{B}$ such that the following
  diagram commutes.
  \begin{equation}
    \label{eq:intfunctor2}
    \begin{gathered}
    \xymatrix{ F_1^\ast \partial_0^\ast X \ar[r]^{\cong}
      \ar[d]_{F_1^\ast \mu} &
      \partial_0^\ast F_0^\ast X \ar[r]^{\partial_0^\ast f} & \partial_0^\ast F_0^\ast Y
      \ar[r]^{\cong} & F_1^\ast \partial_0^\ast Y \ar[d]^{F_1^\ast \nu}
      \\
      F_1^\ast \partial_1^\ast X \ar[r]^\cong & \partial_1^\ast
      F_0^\ast X \ar[r]_{\partial_1^\ast f} & \partial_1^\ast F_0^\ast Y \ar[r]^\cong &
      F_1^\ast \partial_1^\ast Y
    }      
    \end{gathered}
  \end{equation}
  (In other words $F$ respects the action of $\smcat{A}$. $F$ also
  respects the action of $\smcat{C}$, but this is already implicit in
  the fact that $f$ is a map in $\cat{E}_\smcat{C}$.)

  We now need an internal functor from $\smcat{D}$ to the hom object.
  The map $f$ already gives us a map
  $D_0 \rightarrow \fhom_{A_0}(X, Y)$. We take this to be action of
  the functor on objects. For the action of the functor on morphisms,
  we use \eqref{eq:intfunctor2} together with the universal property
  of $\fhom_{A_1}(\mu, \nu)$. It is straightforward to check that this
  does give an internal functor, and witnesses the universal property
  of the hom object.
\end{proof}

\begin{theorem}
  Suppose that $p \colon \cat{E} \rightarrow \cat{B}$ is a cocomplete
  fibration (i.e. has dependent coproducts satisfying Beck-Chevalley),
  and $\smcat{C}$ is an internal category in $\cat{B}$. Then
  $\cat{E}_{\smcat{C}} \rightarrow \cats_\cat{B}$ is a bifibration
  with all finite colimits.
\end{theorem}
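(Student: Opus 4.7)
My plan is to verify the three claims in turn: cartesian lifts (fibration), opcartesian lifts (opfibration), and existence of finite colimits. Throughout, a diagram of type $\smcat{C} \times \smcat{D}$ unpacks as an object over $C_0 \times D_0$ together with an action over $C_1 \times D_1$ compatible with identities and composition in the evident way.

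For the fibration structure, given an internal functor $F \colon \smcat{D}' \rightarrow \smcat{D}$ in $\cats_\cat{B}$ and a diagram $X$ of type $\smcat{C} \times \smcat{D}$, I would take the cartesian lift of $X$ along $1_\smcat{C} \times F$ to be the diagram obtained by restriction. Its underlying object over $C_0 \times D_0'$ is the cartesian lift of $X_0$ along $1_{C_0} \times F_0$ in $p$, and its action is the unique map over $C_1 \times D_1'$ produced by cartesianness of the lift of the action of $X$ along $1_{C_1} \times F_1$. Strictness of the cleavage of $p$ together with the fact that $F$ strictly preserves the internal category structure ensures the unit and associativity axioms transport correctly. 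The universal property of the cartesian lift in $\cat{E}_\smcat{C}$ follows pointwise from the universal property of cartesian lifts in $p$.

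For the opfibration structure---the main obstacle---I would construct opcartesian lifts as left Kan extensions. Given $F \colon \smcat{D} \rightarrow \smcat{D}'$ and a diagram $X$, I define the underlying object of the opcartesian lift to be $\coprod_{1_{C_0} \times F_0} X_0$, using the dependent coproducts supplied by cocompleteness of $p$. The challenging part is producing the action on this Kan extension. Here the Beck-Chevalley condition is essential: applied to the pullback squares relating $\partial_0, \partial_1 \colon C_1 \times D_1 \rightarrow C_0 \times D_0$ to the corresponding maps for $\smcat{D}'$ via $F$, it gives canonical isomorphisms $\partial_i^\ast \coprod_{1 \times F_0} X_0 \cong \coprod_{1 \times F_1} \partial_i^\ast X_0$ for $i = 0, 1$. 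The action of $X$ then induces, via functoriality of $\coprod$, a vertical morphism giving the required action on the Kan extension. Verification of the unit and associativity laws reduces to a (cumbersome but routine) diagram chase combining Beck-Chevalley coherence with the original coherence for $X$. The universal property of the opcartesian lift then follows from the adjunction underlying dependent coproducts, lifted to diagrams.

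For finite colimits, I would argue these are computed fibrewise. Within a fixed fibre $(\cat{E}_\smcat{C})_\smcat{D}$, a finite colimit of diagrams is formed by taking the colimit of the underlying objects in $\cat{E}_{C_0 \times D_0}$ (which exists by cocompleteness of $p$) and equipping it with the action induced by passing to the colimit of the component actions; that this yields a well-defined diagram uses that reindexing along $\partial_0, \partial_1$ preserves the relevant colimits, which again follows from the fibred cocompleteness of $p$ via Beck-Chevalley. For finite colimits across different fibres one uses the bifibration structure established above to move all entries into a common fibre (after forming the colimit of the indexing diagram in $\cats_\cat{B}$, which inherits finite colimits from those of $\cat{B}$), and then takes the fibrewise colimit just constructed. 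The hardest step is Part 2, specifically the construction and coherence of the action on the left Kan extension, which is exactly what the Beck-Chevalley hypothesis is tailored to handle.
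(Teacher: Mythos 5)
There is a genuine gap in your construction of the opcartesian lifts. The opcartesian lift of a diagram $X$ along $F \colon \smcat{D} \rightarrow \smcat{D}'$ is the left Kan extension along $1_\smcat{C} \times F$, and this is \emph{not} given on underlying objects by the plain dependent coproduct $\coprod_{1_{C_0} \times F_0} X_0$. That coproduct is only the first stage of the coend formula: one must additionally coequalize the two maps $\coprod_{1_{C_1} \times (\text{suitable comma object of } F)} \rightrightarrows \coprod_{1_{C_0} \times F_0} X_0$ encoding the action of morphisms of $\smcat{D}$ (and of the codomain $\smcat{D}'$). A minimal counterexample already appears externally: take $\cat{B} = \set$, $p$ the set-indexed family fibration, $\smcat{C} = 1$, $\smcat{D}$ the arrow category $(\bullet \to \bullet)$ and $\smcat{D}' = 1$. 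A diagram over $\smcat{D}$ is a function $X_0 \to X_1$, its left Kan extension along $\smcat{D} \to 1$ is the colimit $X_1$, but your formula gives $X_0 \amalg X_1$. Correspondingly, your appeal to ``the adjunction underlying dependent coproducts, lifted to diagrams'' fails: a vertical map out of $\coprod_{1 \times F_0} X_0$ compatible with the $\smcat{D}'$-action corresponds to a map out of $X_0$ satisfying strictly more than compatibility with the $\smcat{D}$-action, precisely because no quotient has been taken. Repairing this requires reflexive coequalizers in the fibres of $p$ stable under reindexing (which is part of what ``cocomplete fibration'' must mean for the theorem to hold, and is also what your fibrewise-colimit argument in the last part silently uses), plus a nontrivial coherence check that the coequalizer carries a well-defined $\smcat{C} \times \smcat{D}'$-action.

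For comparison, the paper avoids all of this hands-on work: it observes that $\cat{E}_{\smcat{C}} \rightarrow \cats_\cat{B}$ is the base change along $- \times \smcat{C} \colon \cats_\cat{B} \rightarrow \cats_\cat{B}$ of the ``universal'' fibration $\tilde{\cat{E}} \rightarrow \cats_\cat{B}$ of internal diagrams, cites \cite[Proposition B2.3.20]{theelephant} for the fact that the latter has finite colimits and dependent coproducts (this is exactly where the coequalizer construction of internal left Kan extensions lives), and notes that both properties are preserved by base change. Note also that ``all finite colimits'' in the statement is meant fibrewise (stable under reindexing), so your final paragraph about assembling colimits across different fibres of $\cats_\cat{B}$ is not needed and would anyway require unjustified assumptions on colimits in $\cats_\cat{B}$.
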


\begin{proof}
  Write $\tilde{\cat{E}}$ for the category consisting of triples $(\smcat{A},
  X, \mu)$ where $\smcat{A}$ is an internal category in $\cat{B}$ and
  $(X, \mu)$ is a diagram over $\smcat{A}$. Then the projection
  $\tilde{\cat{E}} \rightarrow \cats_\cat{B}$ is a fibration. Furthermore,
  for a small category $\smcat{C}$, the fibration $\cat{E}_\smcat{C}
  \rightarrow \cats_\cat{B}$ is the base change of $\tilde{\cat{E}}$ along
  the functor $- \times \smcat{C}$.
  
  \cite[Proposition B2.3.20]{theelephant} tells us that
  $\tilde{\cat{E}} \rightarrow \cats_\cat{B}$ has finite colimits and
  dependent coproducts, and so is a bifibration. However, these
  properties are preserved by base change, so the same applies to
  $\cat{E}_\smcat{C} \rightarrow \cats_\cat{B}$.
\end{proof}

\subsubsection{Presheaf Assemblies}
\label{sec:presheaf-assemblies}

We recall some basic definitions in realizability. See e.g.
\cite[Section 1.2]{jacobs} for more details.

\begin{definition}
  An \emph{assembly over $\mathcal{K}_1$}, or \emph{$\omega$-set},
  consists of a pair $\langle X, E \rangle$ where $X$ is a set, and
  $E$ is a function $X \rightarrow \powset^\ast(\nat)$ (non empty
  subsets of $\nat$). We refer to $X$ as the \emph{underlying set} and
  to $E$ as the \emph{existence predicate} of the assembly.

  We will also refer to these just as assemblies, since we won't
  consider any other categories of assemblies in this paper.
\end{definition}

\begin{definition}
  Let $\langle X, E \rangle$ and $\langle X', E' \rangle$ be
  assemblies. We say a function $f \colon X \rightarrow X'$ is
  \emph{tracked}, or \emph{computable} if there exists $e \in \nat$
  satisfying the following. Write $\varphi_e$ for the $e$th computable
  function. For all $x \in X$ and all $n \in E(x)$, we have that
  $\varphi_e(n)$ is defined and $\varphi_e(n) \in E'(f(x))$.
\end{definition}

\begin{definition}
  Assemblies and computable functions form a category, which we denote
  $\asm$.
\end{definition}

\begin{proposition}
  $\asm$ has all finite limits and finite colimits and is locally
  cartesian closed.
\end{proposition}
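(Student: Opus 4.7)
The plan is to give explicit constructions of each piece of structure, leveraging the fact that the corresponding constructions already exist in $\set$ and that we just need to equip the underlying sets with suitable existence predicates in a computably uniform way. Throughout I will fix a computable pairing $\langle -, - \rangle \colon \nat \times \nat \rightarrow \nat$ with computable projections, and a computable coding of the disjoint union $\nat + \nat$.

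First I would dispatch finite limits. The terminal object is $\langle \{\ast\}, \ast \mapsto \nat \rangle$. Given assemblies $\langle X, E\rangle$ and $\langle X', E'\rangle$, the product has underlying set $X \times X'$ with existence predicate $(x,x') \mapsto \{\langle n, n'\rangle \mid n \in E(x),\, n' \in E'(x')\}$; the projections are tracked by the computable projections of the pairing, and the universal property in $\asm$ follows from the universal property in $\set$ together with the fact that a function into $X \times X'$ is tracked iff its two components are. Equalizers are given by the equalizer in $\set$ with the restricted existence predicate, using the identity as tracker for the inclusion.

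Next I would handle finite colimits. The initial object is $\langle \emptyset, \emptyset\rangle$. Binary coproducts have underlying set $X \sqcup X'$ with existence predicate given by $\mathrm{inl}(x) \mapsto \{2n \mid n \in E(x)\}$ and $\mathrm{inr}(x') \mapsto \{2n+1 \mid n \in E'(x')\}$; any pair of tracked maps $X \to Y$ and $X' \to Y$ can be combined into a single tracker by case analysis on parity. Coequalizers are constructed by taking the coequalizer in $\set$ and equipping each equivalence class $[x]$ with the union $\bigcup_{x' \sim x} E(x')$ as its existence predicate; the quotient map is tracked by the identity, and any map out of the coequalizer that is tracked on representatives is tracked on classes by the same index.

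Finally, the hard part is local cartesian closedness, since we must construct exponentials in every slice $\asm/X$, not just in $\asm$ itself. My plan is the standard one: given $f \colon \langle Y, E_Y\rangle \rightarrow \langle X, E_X\rangle$ and $g \colon \langle Z, E_Z\rangle \rightarrow \langle X, E_X\rangle$ in $\asm/X$, define the exponential $g^{f}$ over $X$ to have, as its fibre over $x$, the set of functions $h \colon f^{-1}(x) \rightarrow g^{-1}(x)$; the existence predicate at $(x, h)$ consists of those $e \in \nat$ such that for some $m \in E_X(x)$ and every $y \in f^{-1}(x)$ with $n \in E_Y(y)$, $\varphi_e(\langle m, n\rangle)$ is defined and lies in $E_Z(h(y))$. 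One then verifies that evaluation $f^\ast(g^f) \rightarrow g$ (over $X$) is tracked by a universal evaluation index, and that transposition $(h \colon f^\ast(W) \rightarrow g) \mapsto (\tilde h \colon W \rightarrow g^f)$ preserves trackability because a tracker for $h$ can be uniformly converted by the $s$-$m$-$n$ theorem into a tracker for $\tilde h$. The universal property in $\set$ combined with this observation yields the universal property in $\asm/X$, and pullback functoriality is automatic because reindexing along a tracked map over $X$ can be absorbed by composition of realizers, which is again computable.
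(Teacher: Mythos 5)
The paper does not actually prove this proposition; it is stated as a standard fact about $\asm$ (with the realizability background deferred to the cited literature), so the only thing to assess is whether your explicit construction is correct. Your treatment of finite limits and finite colimits is fine: products via a computable pairing, equalizers and coequalizers created from $\set$ with restricted resp.\ unioned existence predicates, coproducts via parity coding --- all standard and correct (and all the existence predicates you define are nonempty, as the paper's definition requires).

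The local exponential, however, has two genuine defects as written. First, you take the fibre of $g^f$ over $x$ to be \emph{all} functions $h \colon f^{-1}(x) \rightarrow g^{-1}(x)$. For such an $h$ the set of realizers you describe can be empty (e.g.\ a non-computable $h$ between fibres of $\nat$ with $E(n) = \{n\}$), so $\langle \text{underlying set}, E\rangle$ is not an assembly under the paper's definition, which requires $E$ to land in \emph{nonempty} subsets of $\nat$. You must cut the underlying set down to the pairs $(x,h)$ that admit at least one realizer; the universal property survives this restriction precisely because transposition of a tracked map always lands in the realized part. Second, the quantifier pattern ``for \emph{some} $m \in E_X(x)$, for every $y,n$, $\varphi_e(\langle m,n\rangle) \in E_Z(h(y))$'' does not support the verifications you then claim: a tracker for the evaluation map $f^\ast(g^f) \rightarrow g$ receives $e$ and some $n \in E_Y(y)$ but has no computable access to the witnessing $m$ (it can compute \emph{an} element of $E_X(x)$ by pushing $n$ through a tracker of $f$, but not necessarily the $m$ that $e$ needs), and likewise the structure map $g^f \rightarrow X$ has no way to extract a realizer of $x$ from $e$. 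The standard fix is to take realizers of $(x,h)$ to be coded pairs $\langle m, e'\rangle$ with $m \in E_X(x)$ explicit and $e'$ a fibrewise tracker of $h$ (equivalently, to quantify universally over $m \in E_X(x)$ and feed $m$ to $e$ as an argument). With those two repairs your argument goes through and is the standard proof.
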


Hence we see that the codomain fibration on $\asm$ is locally small.

We now give explicit descriptions of internal category, internal
functor and diagram in assemblies. These are all straightforward to
prove by unfolding the general definition.

\begin{proposition}
  An internal category over $\cod \colon \asm^\btwo \rightarrow \asm$
  is (up to equivalence) a small category
  $\smcat{C} = \langle C_1, C_0 \rangle$ together with existence
  predicates $E_0 \colon C_0 \rightarrow \powset^\ast(\nat)$ and
  $E_1 \colon C_1 \rightarrow \powset^\ast(\nat)$ such that the
  domain, codomain, identity and composition functions are all
  computable.
\end{proposition}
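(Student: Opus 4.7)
The plan is to unfold the general definition of internal category in the base $\asm$ and match it piece-by-piece against the proposed concrete data.

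First, I would recall that an internal category in $\asm$ consists of a pair of assemblies $C_0, C_1 \in \asm$ together with morphisms $d, c \colon C_1 \rightarrow C_0$ (domain and codomain), $e \colon C_0 \rightarrow C_1$ (identity), and $m \colon C_1 \times_{C_0} C_1 \rightarrow C_1$ (composition), all satisfying the usual associativity and unit diagrams. Since morphisms in $\asm$ are computable functions of the underlying sets, applying the underlying-set functor $\asm \rightarrow \set$ (which preserves finite limits) sends this data to an internal category in $\set$, i.e.\ a small category $\smcat{C}$. Hence from any internal category in $\asm$ we recover a small category $\smcat{C} = \langle C_1, C_0 \rangle$ equipped with existence predicates $E_0, E_1$ inherited from the assembly structure, with the domain, codomain, identity, and composition functions computable because they are tracked morphisms of assemblies.

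For the converse direction, given a small category $\smcat{C}$ together with existence predicates $E_0$ on $C_0$ and $E_1$ on $C_1$ such that $d, c, e$ are computable, I would construct the candidate internal category by taking the assemblies $\langle C_0, E_0 \rangle$ and $\langle C_1, E_1 \rangle$ and lifting $d, c, e$ to $\asm$ using the trackability hypothesis. The one point needing care is the composition map $m$: its domain in $\asm$ is the pullback $\langle C_1, E_1 \rangle \times_{\langle C_0, E_0 \rangle} \langle C_1, E_1 \rangle$, which by the standard construction of pullbacks in $\asm$ has underlying set $\{(g,f) : d(g) = c(f)\}$ and existence predicate given by pairing (or intersection) of codes from $E_1$ on each component. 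The composition in $\smcat{C}$ gives a function out of this set, and the assumption that composition is computable is precisely the condition that this function is tracked; hence $m$ lifts to $\asm$. The required diagrams in $\asm$ (associativity, unit) commute because they commute on underlying sets and morphisms in $\asm$ are determined by their underlying functions.

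The ``up to equivalence'' in the statement accounts for the fact that the pullback $C_1 \times_{C_0} C_1$ computed in $\asm$ is only canonically isomorphic (not equal) to its canonical presentation, so two different lifts of the same small category with predicates may give internal categories that differ by a coherent isomorphism at the pullback stage; the standard argument that equivalent presentations of finite-limit structures yield equivalent internal categories applies unchanged. The main obstacle I anticipate is being precise about this equivalence: one must verify both that the assignment is essentially surjective (both directions of the construction) and that it is full and faithful on internal functors, which means checking that an internal functor $F \colon \smcat{C} \rightarrow \smcat{D}$ in $\asm$ is the same as a functor of underlying categories whose actions on $C_0$ and $C_1$ are tracked. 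This last point is immediate once the object/morphism parts are identified with computable functions between the existence-predicate-equipped sets.
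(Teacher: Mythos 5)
Your proposal is correct and follows exactly the route the paper indicates: the paper offers no written proof beyond the remark that these descriptions "are all straightforward to prove by unfolding the general definition," which is precisely what you do (unfolding the internal-category data in $\asm$, using that the underlying-set functor preserves finite limits, and checking that trackability of the structure maps, including composition out of the pullback assembly, is exactly the stated computability condition). Your extra care about the pullback presentation and the "up to equivalence" clause is consistent with the paper's intent and adds nothing problematic.
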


\begin{proposition}
  An internal functor between internal categories $\smcat{C}$ and
  $\smcat{D}$ in $\asm$ consists of a functor $F \colon \smcat{C}
  \rightarrow \smcat{D}$ between underlying small categories, such
  that its action on objects and action on morphisms are both
  computable functions.
\end{proposition}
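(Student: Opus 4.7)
The plan is to simply unfold the standard definition of internal functor in a category with finite limits, applied to $\asm$. Recall that an internal functor between internal categories $\smcat{C} = \langle C_1 \rightrightarrows C_0 \rangle$ and $\smcat{D} = \langle D_1 \rightrightarrows D_0 \rangle$ in any finitely complete category consists of a pair of morphisms $F_0 \colon C_0 \to D_0$ and $F_1 \colon C_1 \to D_1$ commuting with the domain, codomain, identity and composition structure maps in the ambient category.

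First I would use the previous proposition to identify an internal category in $\asm$ with a small category whose object and morphism sets carry existence predicates, with the structure maps being computable. Next I would use the explicit description of morphisms in $\asm$: a morphism $\langle X, E \rangle \to \langle X', E' \rangle$ is simply a set function $f \colon X \to X'$ together with a computability witness. Hence the $\asm$-morphisms $F_0$ and $F_1$ are exactly set functions between the underlying object and morphism sets of the small categories underlying $\smcat{C}$ and $\smcat{D}$, together with trackers witnessing computability.

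The main step, which is entirely formal, is to observe that the commutation conditions with dom, cod, id and composition in $\asm$ are equations between morphisms in $\asm$, and these hold if and only if the corresponding equations hold between the underlying set functions (since equality of morphisms in $\asm$ reduces to equality of underlying functions, the tracker data being mere existence). These equations are exactly the diagrammatic form of the functor axioms for $(F_0, F_1)$ as a functor between the underlying small categories.

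Hence an internal functor is exactly an ordinary functor $F \colon \smcat{C} \to \smcat{D}$ between the underlying small categories such that both $F_0$ and $F_1$ are computable as functions of assemblies. The converse direction is immediate: any such functor automatically gives rise to $\asm$-morphisms $F_0$ and $F_1$ (using the trackers), and the ordinary functoriality equations imply the internal functoriality equations. I do not anticipate any real obstacle; the proof is a routine unfolding of definitions, with the only minor subtlety being the book-keeping to confirm that the four diagrammatic equations in $\asm$ correspond precisely to the two usual functor axioms (preservation of identities and composition) on underlying sets, together with preservation of source and target which is automatic from the typing.
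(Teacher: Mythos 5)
Your proof is correct and matches the paper's approach: the paper itself offers no detailed argument, stating only that these descriptions "are all straightforward to prove by unfolding the general definition," which is exactly the routine unfolding you carry out. Your observation that equality of morphisms in $\asm$ reduces to equality of underlying set functions (trackers being mere existence data) is the right justification for why the internal functor axioms coincide with the ordinary ones.
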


\begin{proposition}
  Let $\smcat{C}$ be an internal category in assemblies. A diagram
  over $\smcat{C}$ is (up to equivalence) a functor $X \colon
  \smcat{C} \rightarrow \asm$ which is uniformly computable, in the
  following sense. For each object $A$ of $\smcat{C}$, write $X_0(A)$
  for the underlying set and $X_1(A)$ for the existence predicate of
  the assembly $X(A)$. Then there exists $e \in \nat$, such that for
  all morphisms $s \colon A \rightarrow B$ in $\smcat{C}$, for all $n
  \in E_1(s)$, for all $x \in X_0(A)$, for all $m \in X_1(A)(x)$,
  $\varphi_e(n, m)$ is defined, with $\varphi_e(n, m) \in
  X_1(B)(X(s)(x))$.
\end{proposition}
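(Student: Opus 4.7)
The plan is to unfold the general definition of diagram over an internal category (see e.g.\ \cite[Chapter 7]{jacobs}) in the specific case of the codomain fibration on $\asm$, and then translate each piece of data into the stated uniform computability condition.

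Recall that a diagram over $\smcat{C}$ in a fibration $p \colon \cat{E} \rightarrow \cat{B}$ consists of an object $Y \in \cat{E}$ over $C_0$ together with a vertical action $\mu \colon \partial_0^\ast Y \rightarrow \partial_1^\ast Y$ over $C_1$, satisfying the usual unit and associativity axioms (compatibility with the identity and composition maps of $\smcat{C}$). So in the first step I would specialise this to $p = \cod \colon \asm^\btwo \rightarrow \asm$. An object over $C_0$ is a map $p_Y \colon Y \rightarrow C_0$ in $\asm$; up to equivalence (by taking fibres at each $A \in C_0$), this is the same as a $C_0$-indexed family of assemblies $(X(A))_{A \in C_0}$, and unfolding the definition of $\asm$ this is precisely the data of sets $X_0(A)$ with existence predicates $X_1(A) \colon X_0(A) \rightarrow \powset^\ast(\nat)$ (the computability of $p_Y$ itself imposes no further condition, since it just needs some tracker on the assembled total set).

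Next I would unfold the action $\mu$. Pullback in $\asm$ along $\partial_i \colon C_1 \rightarrow C_0$ sends the family $(X(A))_A$ to the family $(X(\partial_i(s)))_{s \in C_1}$, so at the level of sets the vertical map $\mu$ assigns to each $s \colon A \rightarrow B$ a function $X_0(A) \rightarrow X_0(B)$; call this $X(s)$. The two diagram axioms, expressed as commutative squares involving the identity and composition arrows of $\smcat{C}$, translate directly into $X(1_A) = 1_{X(A)}$ and $X(t \circ s) = X(t) \circ X(s)$, i.e.\ $X$ is a functor $\smcat{C} \rightarrow \asm$. Finally, the computability of $\mu$ as a morphism in $\asm / C_1$ between the pullback assemblies is exactly the statement that there is a single $e \in \nat$ which, given a realiser $n \in E_1(s)$ of the morphism coordinate and a realiser $m \in X_1(A)(x)$ of a point in the fibre, computes $\varphi_e(n,m) \in X_1(B)(X(s)(x))$; this is the promised uniform computability condition.

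The main thing to check carefully (and the only non-routine point) is the ``up to equivalence'' clause: strictly, an object of the codomain fibre $\asm / C_0$ is a map in $\asm$, not a family of assemblies, so I would note that the standard equivalence between $\asm / C_0$ and the category of computable $C_0$-indexed families of assemblies (obtained by taking fibres, and inversely by taking disjoint unions equipped with the projection) transports the data faithfully; once this equivalence is in place, the rest of the translation is a routine bookkeeping exercise, with the reverse direction (building $p_Y$ and $\mu$ from a uniformly computable functor) given by the obvious disjoint-union constructions and using the uniform tracker $e$ as tracker of $\mu$.
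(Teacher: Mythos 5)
Your proposal is correct and matches the paper's (omitted) argument: the paper states only that these propositions are "straightforward to prove by unfolding the general definition," and your unfolding — specialising the object-over-$C_0$ plus action $\mu \colon \partial_0^\ast Y \rightarrow \partial_1^\ast Y$ to the codomain fibration on $\asm$, invoking the standard equivalence between $\asm/C_0$ and uniform $C_0$-indexed families of assemblies, and reading off the tracker of $\mu$ as the uniform realiser $e$ — is exactly that. You also rightly flag the "up to equivalence" clause as the only point needing care.
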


Now applying the internal category indexed family construction to
assemblies, we get the following description of lifting problems.

An object of $\cat{E}_\smcat{C}$ consist of a functor
$\smcat{C} \times \smcat{D} \rightarrow \asm$ that is uniformly
computable. In particular, $\cat{E}_{\smcat{C},1}$ consists of
presheaf assemblies over $\smcat{C}$. A vertical map can then be
viewed as a uniformly computable functor
$\smcat{D} \rightarrow [\smcat{C}, \asm]^\btwo$ (where we define
uniformly computable similarly to functors to $\asm$). A choice of
fillers is then a choice of fillers from section
\ref{sec:categ-index-famil} (i.e. a uniform choice of fillers in the
sense of \cite{garnersmallobject}) that satisfies the addition
requirement of being uniformly computable.

In this way, we see it is easy to develop realizability variants of
cofibrantly generated classes of maps in presheaf categories. To
illustrate this, we show that Kan fibrations in simplicial assemblies,
as defined by Stekelenburg in \cite{stekelenburgsimpass} are
a cofibrantly generated class in our general sense.

We define the computable simplex category $\Delta$ to be the following
internal category in $\asm$. We take the underlying category to be the
usual simplex category, $\Delta$. We define the existence predicate on
objects by $E_0([n]) = \{n\}$. The existence predicate on morphisms is
defined as follows. We can view $\sigma \colon [n] \rightarrow [m]$ as
an order preserving function between finite sets. We define
$E_1(\sigma)$ to be the set of natural numbers that track this
function. (This is the same as constructing the simplex category in
the internal logic of assemblies.) A simplicial assembly is then (up
to equivalence) a functor $\opcat{\Delta} \rightarrow \asm$ which is
uniformly computable.

We define a family of maps as follows. We take the underlying category
$\smcat{D}$ to be the discrete category defined as follows. The
underlying set consists of pairs $\langle n, k \rangle$ where
$k \leq n$. The existence predicate is defined taking
$E(\langle n, k \rangle)$ to be $\{(n, k)\}$, where $(-, -)$ is a
computable encoding for pairs. To define a vertical map in the
fibration over $\smcat{D}$ is to define a uniformly computable functor
$\opcat{\Delta} \times \smcat{D} \rightarrow \asm^\btwo$. We first
note that the Yoneda embedding is clearly computable, and so we get a
functor $\smcat{D} \rightarrow [\Delta, \asm]$ sending $(n, k)$ to
$\Delta^n$. We can then make the $k$th horn $\Lambda^k[n]$ into an
assembly by taking the existence predicate to be the restriction of
the existence predicate on $\Delta^n$. This then makes the functor
sending $\langle n, k \rangle$ to the horn inclusion $\Lambda^k[n]
\hookrightarrow \Delta^n$ uniformly computable.

Finally, we can see that a map with the right lifting property against
this functor is a computable variant of the usual notion of Kan
fibration. By unfolding the definition of universal lifting problem
for this case we can see that it is a morphism $f$ in simplicial
assemblies with a Kan filling operator providing a filler for every
lifting problem against a horn inclusion, which is uniformly
computable. Here uniformly computable means uniform both in the choice
of horn inclusion and the lifting problem. In other words, if we are
given $n$, $k$ and natural numbers tracking the horizontal maps in a
lifting problem of $\Lambda^k[n] \hookrightarrow \Delta^n$ against
$f$, then we can compute a number tracking the choice of diagonal
filler.

\subsection{Codomain Fibrations}
\label{sec:codomain-fibrations}

Let $\catc$ be a category with pullbacks. Then the codomain functor
$\cod \colon \catc^\btwo \rightarrow \catc$ is a fibration. We assume
that $\catc$ has all finite limits. Opcartesian lifts in $\cod$ always
exist, and are given by composition. The codomain fibration is locally
small exactly when $\catc$ is locally cartesian closed: given $I$ in
$\catc$, and $U \rightarrow I$ and $V \rightarrow I$ in $\catc/I$ we
define $\fhom_I(U, V)$ to be the exponential in $\catc/I$.

A family of maps, is just a morphism in a slice category. Suppose we
are given families of maps $m$ and $f$ over $I$ and $J$ respectively
as below:
\begin{equation*}
  \begin{gathered}
    \xymatrix{ U \ar[dr] \ar[rr]^m & & V \ar[dl] \\
      & I & }
  \end{gathered}\
  \qquad
  \begin{gathered}
    \xymatrix{ X \ar[dr] \ar[rr]^f & & Y \ar[dl] \\
      & J &}
  \end{gathered}
\end{equation*}

A family of lifting problems from $m$ to $f$ over $K$ is a diagram of
the following form, where both squares on the left are pullbacks.
\begin{equation*}
  \begin{gathered}
    \xymatrix{ U \ar[d] & \sigma^\ast(U) \ar[l] \ar[r] \ar[d]
      \pullbackcorner[dl] & X \ar[d] \\
      V \ar[d] & \sigma^\ast(V) \ar[l] \ar[r] \ar[d]
      \pullbackcorner[dl] & Y \ar[d] \\
      I & K \ar[l]_\sigma \ar[r] & J
    }
  \end{gathered}
\end{equation*}

From this description (and proposition \ref{prop:univlp}) we easily
get the following.
\begin{proposition}
  \label{prop:codfibredlpasordinary}
  Let $m$ be a vertical map over an object $I$ of $\catc$. Given a map
  $f$ in $\catc$, we view it as a vertical over $1$ (using the
  canonical isomorphism $\catc \cong \catc/1$). Then $f$ has the
  (fibred) right lifting property against $m$ if and only if it has
  the (ordinary) right lifting property against $\sigma^\ast(m)$ for
  every $K \in \catc$ and $\sigma \colon K \rightarrow I$.
\end{proposition}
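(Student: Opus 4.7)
The plan is to use the alternative formulation of family of lifting problems given earlier in the paper (the one where lifting problems correspond to commutative squares in $\cat{E}$ with top and bottom maps lying over $\tau$, i.e.\ diagram~\eqref{eq:21}). The key observation is that when $f$ is viewed as a vertical map over the terminal object $1$, the data of $\tau \colon K \rightarrow J$ is forced (there is a unique map $K \rightarrow 1$), and the condition that the horizontal maps lie over $\tau$ is automatic. Thus a family of lifting problems from $m$ to $f$ over $K$ reduces to a choice of $\sigma \colon K \rightarrow I$ together with an ordinary commutative square in $\catc$
\begin{equation*}
  \xymatrix{ \sigma^\ast(U) \ar[r] \ar[d]_{\sigma^\ast(m)} & X \ar[d]^f \\
    \sigma^\ast(V) \ar[r] & Y, }
\end{equation*}
and solutions to the family correspond precisely to ordinary diagonal fillers of this square.

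First I would invoke proposition~\ref{prop:univlp} to translate ``$f$ has the fibred right lifting property against $m$'' into the statement that every family of lifting problems from $m$ to $f$ admits a solution. Then, using the reformulation just described, this is equivalent to saying that for every $K \in \catc$, every $\sigma \colon K \rightarrow I$, and every ordinary lifting problem of $\sigma^\ast(m)$ against $f$ in $\catc$, there is a diagonal filler, which is exactly the condition that $f$ has the ordinary right lifting property against $\sigma^\ast(m)$ for every such $K$ and $\sigma$.

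Both directions are now immediate: given the fibred lifting property, any ordinary lifting problem of $\sigma^\ast(m)$ against $f$ packages as a family of lifting problems over $K$ and so has a solution; conversely, given ordinary lifting against every $\sigma^\ast(m)$, any family of lifting problems over any $K$ unpacks into such an ordinary lifting problem and so has a filler. There is no real obstacle here beyond being careful that the bijection between families of lifting problems and ordinary squares is natural with respect to the filler correspondence, which is immediate once one notes that in both directions the filler is literally the same map $\sigma^\ast(V) \rightarrow X$.
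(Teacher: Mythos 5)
Your proof is correct and follows essentially the same route as the paper, which simply asserts that the proposition follows from the explicit description of a family of lifting problems over the codomain fibration together with proposition \ref{prop:univlp}. Your observation that $\tau$ is forced when $f$ lies over $1$, so that families of lifting problems over $K$ unpack exactly to ordinary squares from $\sigma^\ast(m)$ to $f$ with matching fillers, is precisely the content the paper leaves implicit.
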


The universal lifting problem is then defined by taking $K$ as
below. Recall that we can define $\fhom$ using the local exponential
in a slice category (in this case $\catc/I \times J$).
\begin{equation*}
  K := \fhom(U, X) \times_{\fhom(U, Y)} \fhom(V, Y)
\end{equation*}
The right hand maps to $X$ and $Y$ are then the evident evaluation
maps.

Over codomain fibrations we can easily show that lawfs's and awfs's
are fibred, as we show below.
\begin{theorem}
  Let $\cat{C}$ be locally cartesian closed category with
  pushouts. Let $m$ be a vertical map over $\cod \colon \catc^\btwo
  \rightarrow \catc$.
  \begin{enumerate}
  \item \label{codstep1exists} Step-one of the small object argument on
    $m$ is well defined, giving us an lawfs $(L_1, R_1)$, where the
    $R_1$ algebra structures on a family of maps $f$ correspond
    precisely to solutions of the universal lifting problem of $f$
    against $m$.
  \item \label{codstep1fibred} $(L_1, R_1)$ is a fibred lawfs.
  \item \label{codawfsfibred} If the awfs algebraically generated by
    $(L_1, R_1)$ exists, then it is also fibred.
  \end{enumerate}
\end{theorem}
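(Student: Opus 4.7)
The plan is to derive all three parts by verifying the hypotheses of the general theorems already established (Theorems \ref{thm:step1}, \ref{thm:step1fibred}, and \ref{thm:algfreeawfsfib}) for the codomain fibration of a locally cartesian closed $\cat{C}$ with pushouts. The work is almost entirely bookkeeping: identifying which structural properties of $\cod \colon \cat{C}^\btwo \to \cat{C}$ correspond to which hypotheses.

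For part \ref{codstep1exists}, I need $\cod$ to be a locally small bifibration. Local smallness is just local cartesian closedness, since $\fhom_I(U,V)$ is the local exponential $V^U$ in $\cat{C}/I$ (as already remarked in the discussion opening this section). The bifibration structure is the standard one: opcartesian lifts over $\sigma \colon K \to I$ of an object $U \to K$ are given by postcomposition, yielding $U \to K \xrightarrow{\sigma} I$. Theorem \ref{thm:step1} then produces $(L_1, R_1)$ with the stated correspondence between $R_1$-algebra structures and solutions of the universal lifting problem.

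For part \ref{codstep1fibred}, the two hypotheses of Theorem \ref{thm:step1fibred} are Beck-Chevalley and fibred pushouts. Beck-Chevalley for $\cod$ unfolds to the standard pullback pasting lemma: in a square of $\cat{C}^\btwo$ whose image in $\cat{C}$ is a pullback, where the right-hand edge is cartesian (a pullback square) and the bottom edge is opcartesian (a composition), the top edge is opcartesian iff the left edge is cartesian. This is routine pasting. Fibred pushouts means each $\cat{C}/I$ has pushouts (immediate, as they are computed in $\cat{C}$) and that reindexing $\sigma^\ast \colon \cat{C}/I \to \cat{C}/K$ preserves them. Since $\cat{C}$ is locally cartesian closed, every $\sigma^\ast$ has a right adjoint $\Pi_\sigma$ and hence preserves all colimits, in particular pushouts. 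Thus Theorem \ref{thm:step1fibred} applies and gives fibredness.

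For part \ref{codawfsfibred}, Theorem \ref{thm:algfreeawfsfib} requires only that $p$ have dependent products, which is exactly local cartesian closedness of $\cat{C}$; combined with part \ref{codstep1fibred} it gives the conclusion immediately. The main obstacle, to the extent there is one, is just making sure the abstract Beck-Chevalley condition of Section \ref{sec:bifibrations} is correctly identified with pullback pasting in $\cat{C}$; everything else is a direct invocation of the general machinery already developed.
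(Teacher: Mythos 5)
Your proposal is correct and follows essentially the same route as the paper: both proofs simply verify the hypotheses of Theorems \ref{thm:step1}, \ref{thm:step1fibred} and \ref{thm:algfreeawfsfib} for $\cod$, identifying local smallness with local cartesian closedness, opcartesian lifts with composition, Beck--Chevalley with pullback pasting, and fibred pushouts (and dependent products) with the right adjoints to reindexing supplied by local cartesian closedness. Your version is in fact slightly more careful than the paper's, which at one point says ``reindexing preserves pullbacks'' where pushouts is what is actually needed.
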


\begin{proof}
  First note that $\cod$ is a bifibration, with opcartesian maps given
  by ``composition'' and the Beck-Chevalley condition is an easy
  diagram chase. As explained above $\cod$ is also locally
  small. Local cartesian closedness is used again to get dependent
  products in $\catc$. In particular reindexing preserves pullbacks
  (as long as they exist) since it has a right adjoint.

  Hence we get an lawfs, and it is fibred by theorem
  \ref{thm:step1fibred}. If the cofibrantly generated awfs exists then
  it is also fibred by theorem \ref{thm:algfreeawfsfib}.
\end{proof}

Sometimes useful to use an alternative formulation of
the universal lifting problem. When we gave the general definition of
universal lifting problem we were motivated by thinking of a lifting
problem from $m$ to $f$ as a map $\alpha \colon U \rightarrow X$
together with a map $\beta \colon V \rightarrow Y$ satisfying the
condition that $f \circ \alpha = \beta \circ m$. Using dependent
products we can instead formalise an alternative idea. A lifting
problem of $m$ against $f$ is a map $\beta \colon V \rightarrow Y$
together with a map $m^{-1}(\{v\}) \rightarrow f^{-1}(\{\beta(v)\})$
for each $v \in V$. We then no longer need to require an additional
commutativity condition. If we were working internally in type theory,
we might think of $U$ as a family of types indexed by $V$ and $X$ as a
family of types indexed by $Y$. We would then express the the above
using the type below
\begin{equation}
  \label{eq:piulptt}
  \Sigma_{i : I} \Sigma_{j : J} \Sigma_{\beta : V(i) \rightarrow Y(j)}
  \Pi_{v : V(i)} (U(i, v)
  \rightarrow X(j, \beta(v)))
\end{equation}

We can also formulate this idea in purely category theory terms using
dependent products as follows.

First write $e \colon V \times_I \fhom(V, Y) \rightarrow Y$ for the
right hand map of the hom object (which is the evaluation map of
the local exponential composed with the projection $\pi_1^\ast(Y)
\rightarrow Y$). Write $\tilde{m}$ for the map $\langle m, 1
\rangle \colon U \times_I \fhom(V, Y) \rightarrow V \times_I \fhom(V,
Y)$. Write $q$ for the projection $V \times_I \fhom(V, Y) \rightarrow
\fhom(V, Y)$. 

We view the objects $U \times_I \fhom(V, Y)$ and $e^\ast(X)$ as
objects of $\catc/(V \times_I \fhom(V, Y))$ and then take the
exponential in $\catc/(V \times_I \fhom(V, Y))$ from $U \times_I
\fhom(V, Y)$ to $e^\ast(X)$, corresponding to the term $U(v)
\rightarrow Y(\beta(v))$ in \eqref{eq:piulptt}. By the usual
description of local exponentials in terms of dependent products, we
can express this as $\Pi_{\tilde{m}} \tilde{m}^\ast(e^\ast(X))$. Note
that this comes equipped with an evaluation map $e' \colon
\tilde{m}^\ast \Pi_{\tilde{m}} \tilde{m}^\ast(e^\ast(X)) \rightarrow
\tilde{m}^\ast e^\ast(X)$ over $V \times_I \fhom(V, Y)$, and we have
$\tilde{m}^\ast \Pi_{\tilde{m}} \tilde{m}^\ast(e^\ast(X)) \cong U
\times_I \Pi_{\tilde{m}} \tilde{m}^\ast(e^\ast(X))$.

We then apply $\Pi_q$, corresponding to the $\Pi_{v : V}$ in
\eqref{eq:piulptt} to get
$\Pi_q \Pi_{\tilde{m}} \tilde{m}^\ast(e^\ast(X))$ in
$\catc/ \fhom(V, Y)$. Write $q'$ for the projection
$U \times_I \fhom(V, Y) \rightarrow \fhom(V, Y)$. Then since
$q' = q \circ \tilde{m}$, we have
$\Pi_q \Pi_{\tilde{m}} \tilde{m}^\ast(e^\ast(X)) \cong \Pi_{q'}
\tilde{m}^\ast(e^\ast(X))$

Finally note that we can view this as an object of $\catc/I \times J$
by composing with the map $\fhom(V, Y) \rightarrow I \times J$,
corresponding to the $\Sigma_{\beta : V \rightarrow Y}$ in
\eqref{eq:piulptt}. We have now constructed the indexing object for
the universal lifting problem. We summarise this as the lemma below.

\begin{lemma}
  \label{lem:ulpdepprod}
  The universal lifting problem is isomorphic to a diagram of the
  following form, where $q'$, $\tilde{m}$ and $e$ are as above.
  \begin{equation}
    \label{eq:ulpdepproddiag}
    \begin{gathered}
      \xymatrix{ U \ar[d] & U \times_I \Pi_{q'}
        \tilde{m}^\ast(e^\ast(X)) \ar[l] \ar[r] \ar[d]
        \pullbackcorner[dl] & X \ar[d] \\
        V \ar[d] & V \times_I \Pi_{q'}
        \tilde{m}^\ast(e^\ast(X)) \ar[l] \ar[r] \ar[d] \pullbackcorner[dl]
        & Y \ar[d] \\
        I & \Pi_{q'}
        \tilde{m}^\ast(e^\ast(X)) \ar[r] \ar[l] & J
      }
    \end{gathered}
  \end{equation}

  In type theoretic notation, $\Pi_{q'} \tilde{m}^\ast(e^\ast(X))$ is
  the type we started with in \eqref{eq:piulptt}. Then
  $V \times \Pi_{q'} \tilde{m}^\ast(e^\ast(X))$ contains additionally
  an element $v$ of $V(i)$. Similarly
  $U \times \Pi_{q'} \tilde{m}^\ast(e^\ast(X))$ contains an element of
  $U(i)$. The horizontal right hand maps are then given by
  evaluation.
\end{lemma}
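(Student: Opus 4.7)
The plan is to verify the claimed isomorphism by showing that both sides represent the same functor on the slice category $\catc/(I \times J)$, thereby avoiding the need to construct an explicit comparison map. First I would recall that the universal lifting problem, as given earlier in the section, is characterised up to unique isomorphism over $I \times J$ by the property that morphisms $\langle \sigma, \tau \rangle \colon K \to I \times J$ factoring through its indexing object correspond naturally to families of lifting problems from $m$ to $f$ over $K$. It therefore suffices to exhibit the same universal property for the object $\Pi_{q'} \tilde{m}^\ast(e^\ast(X))$ viewed as an object of $\catc/(I \times J)$ via the composite $\Pi_{q'} \tilde{m}^\ast(e^\ast(X)) \to \fhom(V, Y) \to I \times J$.

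Next I would unfold this universal property using the two adjunctions at hand. A morphism from $K$ to $\fhom(V, Y)$ over $I \times J$ is, by the defining universal property of $\fhom$ in terms of local exponentials, the same data as $\sigma, \tau$ together with a morphism $\beta \colon \sigma^\ast V \to \tau^\ast Y$ over $K$: this is precisely the bottom edge of a lifting square. Given such a $\beta$, a further lift of this morphism to one into $\Pi_{q'} \tilde{m}^\ast(e^\ast(X))$ over $\fhom(V, Y)$ corresponds, by the adjunction $q'^\ast \dashv \Pi_{q'}$ in $\catc$, to a morphism $q'^\ast K \to \tilde{m}^\ast(e^\ast(X))$ over $U \times_I \fhom(V, Y)$. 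Because $q' = q \circ \tilde{m}$ and pullback along $K \to \fhom(V, Y)$ commutes with $q^\ast$ and with $\tilde{m}^\ast$ up to canonical isomorphism, $q'^\ast K$ is identified with $\sigma^\ast U$ while $\tilde{m}^\ast(e^\ast(X))$ pulled back further along $\beta$ is identified with the pullback of $\tau^\ast X$ along $\tau^\ast(f) \circ \beta$. The resulting datum is therefore a morphism $\alpha \colon \sigma^\ast U \to \tau^\ast X$ lying over $\beta \circ \sigma^\ast(m) = \tau^\ast(f) \circ \alpha$, i.e., precisely a family of lifting problems over $K$.

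The main obstacle will be the careful bookkeeping of the iterated pullbacks and, in particular, verifying that the commutativity condition for a lifting square is automatically encoded by the fact that $\tilde{m}^\ast(e^\ast(X))$ is set up over $V \times_I \fhom(V, Y)$ (with $e$ built from evaluation at $\beta$), rather than needing to be imposed as an additional constraint on the chosen $\alpha$. Naturality in $K$ of the whole correspondence follows from naturality of each adjunction used, so representability transfers, and the claimed isomorphism of indexing objects holds. Finally, taking $K$ to be $\Pi_{q'}\tilde{m}^\ast(e^\ast(X))$ itself together with the identity morphism and reading off the corresponding family of lifting problems via the above chain yields exactly the diagram in \eqref{eq:ulpdepproddiag}, so the isomorphism is indeed one of universal families of lifting problems and not merely of their indexing objects.
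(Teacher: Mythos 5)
Your proposal is correct and follows the second of the two routes the paper's own (very terse) proof offers: it shows that the object $\Pi_{q'}\tilde{m}^\ast(e^\ast(X))$ satisfies the defining universal property of the universal lifting problem, by unwinding the adjunctions $q'^\ast \dashv \Pi_{q'}$ and the universal property of $\fhom(V,Y)$ -- which is exactly the chain of identifications carried out in the discussion preceding the lemma, just phrased externally via representability rather than in the internal logic. In particular your observation that the commutativity of the lifting square is encoded by the typing of $\tilde{m}^\ast(e^\ast(X))$ over $U \times_I \fhom(V,Y)$, rather than imposed as a separate condition, is the key point and is handled correctly.
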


\begin{proof}
  The two descriptions are equivalent by the reasoning preceding the
  lemma.
  
  Working in the internal logic of the category, one can 
  either verify directly using the type theoretic description that this
  satisfies the universal property of the universal lifting problem,
  or show that the description matches the usual definition of the
  universal lifting problem.
\end{proof}

We use the explicit description below to show that for an interesting
class of generating family of left maps, the resulting lawfs is
strongly fibred.

\begin{theorem}
  \label{thm:codlawfsstrfib}
  Suppose that the map $V \rightarrow I$ is an isomorphism, then step
  1 of the small object argument is strongly fibred.

  Moreover, it can be described type theoretically as the type below.
  \begin{equation}
    \label{eq:strfibstep1}
    \Sigma_{j : J} \Sigma_{y : Y(j)} \Sigma_{i : I} (U(i) \rightarrow X(j, y))
  \end{equation}
\end{theorem}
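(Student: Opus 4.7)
The plan is to specialise the dependent-product description of lemma \ref{lem:ulpdepprod} to the iso hypothesis and then use the resulting explicit formula, together with lemma \ref{lem:strongfibrelem}, to verify strong fibredness.

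First I would derive the type theoretic description. Lemma \ref{lem:ulpdepprod} exhibits the universal lifting problem as $\Pi_{q'} \tilde{m}^*(e^*(X))$, which in type theoretic notation is
\[
  \Sigma_{i:I} \Sigma_{j:J} \Sigma_{\beta : V(i) \to Y(j)} \Pi_{v : V(i)} \bigl( U(i, v) \to X(j, \beta(v)) \bigr).
\]
When $V \to I$ is an isomorphism each fibre $V(i)$ is terminal, so $\beta : V(i) \to Y(j)$ collapses to the choice of an element $y \in Y(j)$ and the dependent product $\Pi_{v : V(i)}$ reduces to its body. Rearranging the $\Sigma$'s gives the claimed type. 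By the description of step one in section \ref{sec:apply-abstr-descr} this indexing object is exactly $\fhom(m, f)$, so step one as a whole is determined by it via the opcartesian--vertical factorisation of lemma \ref{lem:step1algrmap}.

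Next I would establish strong fibredness via lemma \ref{lem:strongfibrelem}. Fibredness is immediate from theorem \ref{thm:step1fibred}, since the codomain fibration on the locally cartesian closed $\catc$ satisfies Beck-Chevalley and has fibred pushouts. It remains to verify that each $R_{1, I}$ preserves pullbacks inside $\catc/I$. For this I would chase a pullback square witnessing $f' = f \times_Y Y'$ through the explicit construction: the iso hypothesis forces $\sigma^*(V) = K$ (the left leg of the step-one pushout is an identity), so the simplified indexing formula is built only from $\Sigma$-types, ordinary function types and pullbacks in $\catc/I$. In a locally cartesian closed category the pullback functor along any map has both a left and a right adjoint, hence preserves all limits and all colimits, so both the indexing type and the subsequent pushout producing $K_1 f$ are stable under pullback, giving $K_1 f' \cong K_1 f \times_Y Y'$ and hence the criterion of lemma \ref{lem:strongfibrelem}.

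The main obstacle, and the reason the iso hypothesis is needed, is that in general the $\Pi_{v : V(i)}$ factor of the universal lifting problem has a ``moving base'' $V(i)$ that does not commute cleanly with pullback along morphisms in $\catc/I$, and the pushout step involves a non-identity leg $\sigma^*(V) \to K$, so strong fibredness (as distinct from mere fibredness over $\catc$) can fail. The iso hypothesis removes the inner $\Pi$ and trivialises the left leg of the pushout, leaving a construction that is manifestly stable under fibrewise pullback; this is what makes the verification of lemma \ref{lem:strongfibrelem} go through.
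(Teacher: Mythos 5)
Your proposal follows essentially the same route as the paper: simplify the type-theoretic description of the universal lifting problem using that each $V(i)$ is terminal, observe that fibredness is automatic over a codomain fibration, and then verify the criterion of lemma \ref{lem:strongfibrelem} by showing that both the indexing object and the subsequent pushout are stable under pullback along maps $Y' \rightarrow Y$. Two points in your verification need repair, however.

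First, the parenthetical claim that ``the left leg of the step-one pushout is an identity'' is false. The iso hypothesis gives $\sigma^\ast(V) \cong K$, but the left leg of the pushout square is $\sigma^\ast(m) \colon \sigma^\ast(U) \rightarrow \sigma^\ast(V)$, i.e.\ the structure map of $\sigma^\ast(U) \cong U \times_I K$ over $K$, which is an identity only when $U \rightarrow I$ is also an isomorphism (that is the situation of theorem \ref{thm:codclassmapstep1strfib}, not of this theorem). The error is harmless for your argument --- you only need that the pushout, whatever it is, is preserved by pullback --- but it should not be used as a premise. Second, and more substantively, the justification ``pullback has both adjoints, hence preserves all limits and colimits'' does not cover the ``ordinary function type'' $U(i) \rightarrow X(j,y)$ appearing in the indexing formula: that is a local exponential (a dependent product along $U(i)$), not a limit in the relevant slice, and its stability under pullback along $Y' \rightarrow Y$ is a Beck--Chevalley condition for a specific pullback square, not a consequence of preservation of (co)limits. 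This is precisely what the paper's cube-and-Beck--Chevalley computation establishes; equivalently one can invoke the fact that reindexing in a locally cartesian closed category preserves local exponentials. With that substitution your argument goes through and coincides with the paper's second, non-type-theoretic proof; your type-theoretic reduction matches the paper's first proof verbatim.
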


\begin{proof}
  We first give a type theoretic description of the proof since, this
  is easier to follow. We use the description of the universal lifting
  problem in \eqref{eq:piulptt}.

  Since $V \rightarrow I$ is an isomorphism, we may assume that for
  any $i : I$, $V(i)$ is a singleton, that is, isomorphic to the unit
  type $1$. Hence we can replace the term $\beta \colon V(i)
  \rightarrow Y(j)$ with $y \colon Y(j)$, and we can replace the term
  $\Pi_{v : V(i)} (U(i, v) \rightarrow X(j, \beta(v)))$ with $U(i)
  \rightarrow X(j, y)$. This gives the following simplification.
  \begin{equation*}
    \Sigma_{i : I} \Sigma_{j : J} \Sigma_{y : Y(j)} (U(i) \rightarrow X(j, y))
  \end{equation*}
  But this is isomorphic to \eqref{eq:strfibstep1}.

  It's then clear that pulling back along any map
  $g \colon Y' \rightarrow Y$ over $J$ is the same as substituting
  $g(y)$ for $y$, which is the same as pulling back $X$ along $g$, and
  then forming the type, so it is indeed stable under pullback.

  For completeness we now also include a proof that doesn't use any
  type theory.
  
  Recall that over codomain fibrations $\fhom(V, Y)$ is
  $\fhom_{I \times J}(\pi_0^\ast(V), \pi_1^\ast(Y))$ where
  $\fhom_{I \times J}$ is the exponential in $\catc / I \times J$ and
  $\pi_0$ and $\pi_1$ are the projections from $I \times J$ to $I$ and
  $J$. Since $V \rightarrow I$ is an isomorphism we know that
  $\pi_0^\ast(V)$ is the terminal object in $\catc / I \times J$ and
  so $\fhom(V, Y)$ is isomorphic to $\pi_1^\ast(Y)$ with evaluation
  map corresponding to the identity on $\pi_1^\ast(Y)$, which is
  isomorphic to $I \times Y$. So we can take $e$ to be the projection
  $I \times Y \rightarrow Y$.

  For convenience we will now assume that in fact $V = I$ and the map
  $V \rightarrow I$ is the identity. We then have the further
  simplifications that $U \times_I \fhom(V, Y)$ is $U \times Y$, with
  $q'$ equal to $\langle m, 1_Y \rangle$ and $V \times_I \fhom(V, Y)$
  is $I \times Y$, with $\tilde{m}$ equal again to
  $\langle m, 1_Y \rangle$. We will write both $q$ and $\tilde{m}$ as
  $\tilde{m}_Y$.

  In summary we can rewrite the universal lifting problem as below.
  \begin{equation*}
    \begin{gathered}
      \xymatrix{ U \ar[d]_m & U \times_I \Pi_{\tilde{m}_Y}
        \tilde{m}_Y^\ast(I \times X) \ar[l] \ar[r] \ar[d] & X \ar[d] \\
        I \ar@{=}[d] & \Pi_{\tilde{m}_Y} \tilde{m}_Y^\ast(I \times X) \ar@{=}[d]
        \ar[r] \ar[l] & Y \ar[d] \\
        I & \Pi_{\tilde{m}_Y} \tilde{m}_Y^\ast(I \times X) \ar[l] \ar[r] & J}
    \end{gathered}
  \end{equation*}

  Now suppose we are given $f' \colon X' \rightarrow Y'$ and maps
  making the following pullback square.
  \begin{equation*}
    \begin{gathered}
      \xymatrix{ X' \ar[r] \ar[d]_{f'} \pullbackcorner & X \ar[d]^f \\
        Y' \ar[r] & Y}
    \end{gathered}
  \end{equation*}

  Consider the following cube, where the left and right faces are
  pullbacks by definition, and the upper back edge is given by the
  universal property of the pullback on the right.
  \begin{equation*}
    \xymatrix@=1.5em{m_{Y'}^\ast (I \times X') \ar[rr] \ar[dr] \ar[dd] &
      & \tilde{m}_Y^\ast (I \times X) \ar'[d][dd] \ar[dr] & \\
      & I \times X' \ar[rr] \ar[dd] & & I \times X \ar[dd] \\
      U \times Y' \ar'[r][rr] \ar[dr]_{m_{Y'}} & & U \times Y
      \ar[dr]^{\tilde{m}_Y} & \\
      & I \times Y' \ar[rr] & & I \times Y}
  \end{equation*}
  The front face is a pullback by diagram chasing, and we deduce that
  the back face is also a pullback.
  
  Since dependent products in locally cartesian closed categories
  always satisfy Beck-Chevalley, we can deduce that we also have the
  following pullback.
  \begin{equation*}
    \xymatrix{\Pi_{m_{Y'}}m_{Y'}^\ast(I \times X') \ar[r] \ar[d]
      \pullbackcorner & \Pi_{m_{Y}}m_{Y}^\ast(I \times X) \ar[d] \\
      I \times Y' \ar[r] & I \times Y
    }
  \end{equation*}
  We can now see that pulling back along $Y' \rightarrow Y$ preserves
  the universal lifting problem, and so it also preserves step-one of
  the small object argument, which is just a pushout of the upper
  right square in the universal lifting problem (and pullbacks in
  locally cartesian closed categories always preserve pushouts since
  they are left adjoints). Since step-one is always fibred in a locally
  cartesian closed category, we can now deduce by lemma
  \ref{lem:strongfibrelem} that it is strongly fibred.
\end{proof}

\begin{corollary}
  Suppose that $m$ is as above the map $V \rightarrow I$ is an
  isomorphism. Then the cofibrantly generated awfs is strongly fibred
  if it exists.
\end{corollary}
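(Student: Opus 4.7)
The plan is to combine Theorem \ref{thm:codlawfsstrfib} with Theorem \ref{thm:algfreeawfsstrfib}. Since $\cat{C}$ is assumed locally cartesian closed (this was already needed for local smallness of $\cod$, and hence for step-one to exist in the first place), the codomain fibration $\cod \colon \catc^\btwo \to \catc$ has dependent products: the right adjoints to pullback functors between slices are exactly the local exponentials, and these give dependent products $\Pi_u$ for any morphism $u$ in $\catc$. Similarly, each fibre category $(\catc^\btwo)_I \cong \catc/I$ is itself locally cartesian closed, since $\catc$ is.

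The hypothesis that $V \to I$ is an isomorphism is precisely the assumption of Theorem \ref{thm:codlawfsstrfib}, so step-one $(L_1,R_1)$ is a strongly fibred lawfs. Now \emph{cofibrantly generated} means that the awfs is algebraically free on $(L_1,R_1)$. Since the hypotheses of Theorem \ref{thm:algfreeawfsstrfib}, namely that $\cod$ has dependent products, that each fibre is locally cartesian closed, and that $(L_1,R_1)$ is strongly fibred, are all satisfied, we can invoke that theorem to conclude that the algebraically free awfs $(L,R)$ on $(L_1,R_1)$ (which is what cofibrantly generated means here by definition) is also strongly fibred.

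There is no real obstacle, since the corollary is designed to package the two preceding theorems. The only minor verification is that the ambient structural hypotheses on $\catc$ (existence of finite limits, pushouts, and local cartesian closure) assumed throughout the subsection on codomain fibrations indeed transfer into the hypotheses of Theorem \ref{thm:algfreeawfsstrfib} when specialised to $p = \cod$, and this is immediate.
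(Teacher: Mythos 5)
Your proposal is correct and is exactly the paper's argument: the corollary is proved by combining Theorem \ref{thm:codlawfsstrfib} with Theorem \ref{thm:algfreeawfsstrfib}. Your additional verification that local cartesian closure of $\catc$ supplies both the dependent products for $\cod$ and the local cartesian closure of each fibre $\catc/I$ is the right (and only) hypothesis check needed.
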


\begin{proof}
  By theorems \ref{thm:codlawfsstrfib} and
  \ref{thm:algfreeawfsstrfib}.
\end{proof}

\begin{remark}
  One might expect fibred and strongly fibred to be equivalent for
  functorial factorisations over $\cod$, since both involve
  pullbacks. However, this is not the case. A functorial factorisation
  takes as input a family of maps over an object $J$ as below.
  \begin{equation*}
    \xymatrix{ X \ar[dr] \ar[rr]^f & & Y \ar[dl] \\
      & J &}
  \end{equation*}
  It then factorises $f$ as $L f$ followed by $R f$.

  For the factorisation to be fibred says that it is stable under
  pullback along maps into $J$. We have seen that for step-one this is
  always the case. On the other hand, strongly fibred says that the
  factorisation is stable under pullback along all maps into $Y$. We
  have seen an interesting special case when step-one is strongly
  fibred, but it is not the case in general.
\end{remark}

\subsubsection{Individual Morphisms}
\label{sec:individual-morphisms}

We will illustrate how lifting problems work over codomain fibrations
by first applying the definitions when we are just given individual
maps as input (i.e. families of maps over the terminal object). This
case can also be understood via enriched lifting problems where we
view $\catc$ as enriched over itself with cartesian product (as
appears, for example in \cite[Section 13.3]{riehlcht}), but it
illustrates similar ideas that turn up in the general case of families
of maps in codomain fibrations, where it is not so clear how the
definitions relate to existing notions in homotopical algebra.

Let $i \colon U \rightarrow V$ and $f \colon X \rightarrow Y$ be
morphisms of $\catc$. Then we can view them as morphisms in
$\catc^\btwo$ in the fibre of $1$. A family of lifting problems
consists of an object $Z$ of $\catc$ together with a commutative
square of the following form in $\catc / Z$.
\begin{equation*}
  \begin{gathered}
    \xymatrix{ Z \times U \ar[rr] \ar[d]_{\langle 1, i \rangle} & & Z
      \times X \ar[d]^{\langle 1, f \rangle} \\
      Z \times V \ar[rr] \ar[dr] & & Z \times Y \ar[dl] \\
      & Z & }
  \end{gathered}
\end{equation*}

Then, either using the adjunction between composition and pullback, or
by checking directly, finding a solution to the family of lifting
problems above is equivalent to finding a diagonal filler of the
following square in $\catc$ (or equivalently $\catc / 1$).
\begin{equation*}
  \begin{gathered}
    \xymatrix{ Z \times U \ar[r] \ar[d]_{\langle 1, i \rangle} & X
       \ar[d]^{f} \\
      Z \times V \ar[r] & Y \\
    }
  \end{gathered}
\end{equation*}

Recall that for codomain fibrations, $\fhom_I(A, B)$ is just the local
exponential in $\catc/I$. In particular $\fhom_1(A, B)$ is just the
exponential $B^A$ in $\catc$.

For the example above, we know by proposition \ref{prop:univlp} that a
filler for every family of lifting problems corresponds to a solution
of the universal lifting problem and in turn to a coherent choice of
solutions to lifting problems. Here coherence says that for any
$g \colon Z' \rightarrow Z$, the triangle in the middle of the diagram
below commutes, where the diagonals are the choices of solutions.
\begin{equation*}
  \begin{gathered}
    \xymatrix{ Z' \times U \ar[r] \ar[d] & Z \times U \ar[r] \ar[d] &
      X \ar[d] \\
      Z' \times V \ar[r] \ar[urr] & Z \times V \ar[r] \ar[ur] & Y}
  \end{gathered}
\end{equation*}

The indexing object of the universal family is $X^U \times_{Y^U} Y^V$.

Then a solution to the universal lifting problem corresponds to a
solution of the following lifting problem in $\catc$, where the top
and bottom maps are given by evaluating the appropriate exponentials.
\begin{equation*}
  \begin{gathered}
    \xymatrix{ X^U \times_{Y^U} Y^V \times U \ar[r] \ar[d]_{\langle 1,
        i \rangle} & X
      \ar[d]^f \\
      X^U \times_{Y^U} Y^V \times V \ar[r] & Y}
  \end{gathered}
\end{equation*}

\subsubsection{Strongly Fibred Cofibrations and Pushout
  Product}
\label{sec:class-class-monom}

We now consider a general construction that generates a strongly
fibred lawfs $(C_1, F^t_1)$ a fibred (but usually not strongly fibred)
lawfs $(C^t_1, F_1)$ together with a morphism of lawfs
$\xi \colon (C_1, F^t_1) \rightarrow (C^t_1, F_1)$. We will show that
this generalises certain constructions considered by Van den Berg and
Frumin in \cite{vdbergfrumin} and by Pitts and Orton in
\cite{pittsortoncubtopos}.

Let $\catc$ be a locally cartesian closed category with pushouts and
disjoint coproducts.

We suppose that we are given an interval object $\intv$ with endpoints
$\delta_0, \delta_1 \colon 1 \rightarrow \intv$ together with a family
of maps of the following form.
\begin{equation}
  \label{eq:11}
  \begin{gathered}
    \xymatrix{ 1 \ar[rr]^{i_0} \ar[dr]_{i_0} & & I \ar@{=}[dl] \\
      & I & }
  \end{gathered}
\end{equation}

Write $\mathbf{\Sigma}$ for the class of maps that are pullbacks of
$i_0$.

We first remark that we get the following explicit description of
right maps over $1$, as a special case of proposition
\ref{prop:codfibredlpasordinary}.
\begin{proposition}
  Let $f$ be a morphism of $\catc$. We also view $f$ as a map in
  $\catc/1$. Then $f$ has the (fibred) right lifting property against
  $i_0$ if and only if it has the (ordinary) right lifting property
  against $m$ for every $m \in \mathbf{\Sigma}$.
\end{proposition}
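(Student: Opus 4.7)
The plan is to derive this as a direct instance of proposition \ref{prop:codfibredlpasordinary}. First I would view the map $i_0 \colon 1 \to I$ displayed in \eqref{eq:11} as a vertical map $m$ in the codomain fibration over the object $I$, whose domain is the map $i_0 \colon 1 \to I$ in $\catc/I$ and whose codomain is the identity $1_I$. With this identification in hand, proposition \ref{prop:codfibredlpasordinary} applied to this $m$ says precisely that $f$ has the fibred right lifting property against $i_0$ in the codomain fibration if and only if, for every object $K$ of $\catc$ and every morphism $\sigma \colon K \to I$, $f$ has the ordinary right lifting property against $\sigma^\ast(i_0)$.

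Next I would match this quantification with the class $\mathbf{\Sigma}$. By definition $\mathbf{\Sigma}$ consists of those morphisms of $\catc$ that arise as pullbacks of $i_0$ along some morphism into $I$. So the family $\{\sigma^\ast(i_0) \mid \sigma \colon K \to I\}$ coincides with $\mathbf{\Sigma}$ up to isomorphism. Since the ordinary right lifting property against a map depends only on its isomorphism class, the two conditions ``$f$ has the right lifting property against $\sigma^\ast(i_0)$ for all $\sigma$'' and ``$f$ has the right lifting property against every $m \in \mathbf{\Sigma}$'' are equivalent.

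Combining these two observations yields the proposition immediately. There is no real obstacle here: the entire content is to notice that the universally quantified family appearing in proposition \ref{prop:codfibredlpasordinary} is, in this special case, precisely the defining family of $\mathbf{\Sigma}$, so the proof is a one-line invocation of that earlier result together with the definition of $\mathbf{\Sigma}$.
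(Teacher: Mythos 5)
Your proposal is correct and takes essentially the same approach as the paper, which presents this proposition simply as a special case of proposition \ref{prop:codfibredlpasordinary} with no further argument. The only content beyond that citation is the identification of the reindexed family $\{\sigma^\ast(i_0) \mid \sigma \colon K \rightarrow I\}$ with $\mathbf{\Sigma}$ up to isomorphism, which you make explicit and justify correctly.
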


As we saw in theorem \ref{thm:codlawfsstrfib} the fact that the right
hand map is an identity means that we get a more explicit type
theoretic description of the universal lifting problem, and that step
1 is strongly fibred.

Since the object on the left is the terminal object, we can in fact
further reduce the type theoretic description even further in this
case. We think of maps $1 \rightarrow U$ as terms of type $\Sigma_{i :
  I} U(i)$ in the empty context. Since this map is also the display
map for $U$ over $I$, we see that we can in fact think of this as a
term $i_0$ of type $I$ such that the family of types $U(i)$ over $i :
I$ is defined by $U(i) :=\, i = i_0$.

However, this implies that the top right horizontal map in the
universal lifting problem \eqref{eq:ulpdepproddiag} is an
isomorphism. Explicitly, this map is the evaluation map
$\Sigma_{i : I} (i = i_0 \;\times\; (i = i_0 \rightarrow X(j, y)))
\rightarrow X(j, y)$. Its inverse is then the map sending
$x : X(j, y)$ to $(i_0, \mathtt{refl}, \lambda u.x)$.

But step-one of the small object argument is defined via pushout, and
pushouts preserve isomorphisms, so we can deduce the following.
\begin{theorem}
  \label{thm:codclassmapstep1strfib}
  Suppose we are given a family of maps as in \eqref{eq:11}. Then step
  1 of the small object argument (which we refer to as $(C_1, F^t_1)$)
  is isomorphic to the following type, and it is strongly fibred.
  \begin{equation*}
    \Sigma_{j : J} \Sigma_{y : Y(j)} \Sigma_{i : I} (i = i_0 \rightarrow X(j, y))
  \end{equation*}
\end{theorem}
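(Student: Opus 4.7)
The plan is to derive the statement from Theorem \ref{thm:codlawfsstrfib} by observing two further simplifications that become available when $U = 1$, so that $m$ is the classifying map $i_0 : 1 \to I$ of an element of the family $V = I \to I$.

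First, since $V \to I$ is the identity---hence an isomorphism---Theorem \ref{thm:codlawfsstrfib} applies directly. It gives that step-one is strongly fibred, which already yields the second half of the conclusion, and describes step-one type-theoretically as
\[
\Sigma_{j : J}\, \Sigma_{y : Y(j)}\, \Sigma_{i : I}\, (U(i) \to X(j, y)).
\]

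Second, I compute the family $U(i)$: since the display map $U \to I$ is $i_0 : 1 \to I$, its fibre over a generic $i : I$ is the subsingleton $(i = i_0)$. Substituting yields exactly the type in the statement. Strictly speaking, Theorem \ref{thm:codlawfsstrfib} produces a description of the indexing object of the universal lifting problem rather than of $K_1 f$ directly; but as sketched in the discussion preceding the theorem, the top horizontal map of the pushout square in \eqref{eq:ulpdepproddiag} now reduces to the evaluation
\[
\Sigma_{i : I}\bigl((i = i_0) \times ((i = i_0) \to X(j, y))\bigr) \longrightarrow X(j, y),
\]
which is an isomorphism with inverse $x \mapsto (i_0, \mathtt{refl}, \lambda u.\, x)$. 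Since pushouts preserve isomorphisms, the pushout collapses and $K_1 f$ is isomorphic to the indexing object just identified.

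The main obstacle is the verification that this evaluation map really is an isomorphism when phrased in purely category-theoretic (rather than type-theoretic) language. In essence this is the observation that the total space $\Sigma_{i : I}(i = i_0)$ of the display $i_0$ is by construction $1$, so forming the dependent product along the projection $V \times_I \Pi_{q'}\tilde{m}^\ast(e^\ast X) \to \Pi_{q'}\tilde{m}^\ast(e^\ast X)$ leaves $X$ unchanged up to isomorphism. Everything else is a mechanical specialisation of Theorem \ref{thm:codlawfsstrfib} to the case $U = 1$, $V = I$.
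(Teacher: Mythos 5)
Your proposal is correct and follows essentially the same route as the paper: the paper's justification (given in the paragraphs preceding the theorem) likewise specialises Theorem \ref{thm:codlawfsstrfib}, identifies $U(i)$ with $i = i_0$, observes that the top-right horizontal map of \eqref{eq:ulpdepproddiag} becomes the evaluation isomorphism with inverse $x \mapsto (i_0, \mathtt{refl}, \lambda u.\,x)$, and concludes because pushouts preserve isomorphisms. Your explicit remark that Theorem \ref{thm:codlawfsstrfib} really describes the indexing object of the universal lifting problem, so that the collapse of the pushout is what identifies $K_1 f$ with it, is a point the paper leaves implicit.
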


\begin{proposition}
  \label{prop:classifiedmonosequivdef}
  The following are equivalent.
  \begin{enumerate}
  \item Every element of $\mathbf{\Sigma}$ is a pullback of $i_0$ in a
    unique way.
  \item $i_0$ is the terminal object of the category with objects the
    elements of $\mathbf{\Sigma}$ and morphisms pullback squares.
  \item The following ``propositional extensionality'' principle holds
    in the internal logic: $\forall i, i' \in I\,(i = i_0
    \Leftrightarrow i' = i_0) \Rightarrow i = i'$.
  \end{enumerate}
  If $\catc$ has a subobject classifier,
  $\top \colon 1 \rightarrow \Omega$, then these are equivalent to $I$
  being a subobject of $\Omega$, with $i_0$ the pullback of $\top$
  along the subobject inclusion.
\end{proposition}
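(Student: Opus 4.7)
The strategy is to prove $1 \Leftrightarrow 2$ by unwinding definitions, $1 \Leftrightarrow 3$ via an internal logic translation, and then treat the subobject-classifier clause separately. For $1 \Leftrightarrow 2$, I would observe that a morphism in the category described in statement~2 from $m : U \to V$ to $i_0 : 1 \to I$ is a pullback square, and since $1$ is terminal its top edge $U \to 1$ is forced. Such a square is therefore fully determined by its lower edge $V \to I$, and the resulting bijection between morphisms $m \to i_0$ and pullback-presentations of $m$ identifies the terminality of $i_0$ with the uniqueness asserted in statement~1.

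For $1 \Leftrightarrow 3$, the first step is to note that $i_0$ is automatically monic (since $1$ is terminal), so every element of $\mathbf{\Sigma}$ is a subobject of its codomain. Given a generalised element $i : V \to I$, the internal proposition $i = i_0$ interprets as the equaliser of $i$ with the constant map $V \to 1 \xrightarrow{i_0} I$, which is exactly the pullback of $i_0$ along $i$ as a subobject of $V$. The internal hypothesis $\forall i,i' \in I\,(i = i_0 \Leftrightarrow i' = i_0)$, interpreted in an arbitrary ambient context $V$, therefore says that $i, i' : V \to I$ cut out equal subobjects of $V$ as pullbacks of $i_0$; and the conclusion $i = i'$ translates to equality of these morphisms. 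Statement~3 thus reads precisely as ``any two pullback-presentations of a fixed element of $\mathbf{\Sigma}$ have the same classifying map'', which is statement~1.

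For the final clause, assume $\catc$ has a subobject classifier $\top : 1 \to \Omega$. Since $i_0$ is monic, it has a unique classifying map $\chi_{i_0} : I \to \Omega$ with $i_0$ obtained by pulling back $\top$ along $\chi_{i_0}$. The plan is to show that $\chi_{i_0}$ is monic if and only if statement~1 holds, which together with the previous equivalences gives the last claim. For any $f : V \to I$, the composite $\chi_{i_0} \circ f$ is the classifying map of the pullback of $i_0$ along $f$. Hence for $f, g : V \to I$ one has $\chi_{i_0} \circ f = \chi_{i_0} \circ g$ iff $f$ and $g$ determine equal pullbacks of $i_0$ as subobjects of $V$; monicity of $\chi_{i_0}$ is thus exactly the uniqueness asserted in statement~1, and this uniqueness is in turn equivalent to $\chi_{i_0}$ exhibiting $I$ as a subobject of $\Omega$.

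The only mildly delicate point is the translation in the $1 \Leftrightarrow 3$ step between the internal formula $\forall i, i' \in I\,(\cdots)$ and its external ``for all generalised elements of $I$ in an arbitrary context $V$'' formulation; but as $\catc$ is locally cartesian closed, this is a routine application of Kripke--Joyal semantics and presents no real obstacle.
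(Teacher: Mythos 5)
The paper states this proposition without giving a proof, so there is nothing to compare against; your argument is correct and is exactly the routine unwinding the author evidently treats as an exercise. In particular your key observations --- that the top edge of any pullback square into $i_0 \colon 1 \to I$ is forced so terminality reduces to uniqueness of the bottom edge, that the subobject $[i = i_0]$ of the context is precisely the pullback of $i_0$ along the generalised element $i$, and that $\chi_{i_0} \circ f$ classifies $f^\ast(i_0)$ so that monicity of $\chi_{i_0}$ is condition 1 --- are all sound, and the Kripke--Joyal translation you invoke is available in a locally cartesian closed category with finite limits.
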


\begin{definition}
  \label{def:classifiedmono}
  If $\mathbf{\Sigma}$ satisfies one of the equivalent conditions in
  proposition \ref{prop:classifiedmonosequivdef}, we say it is
  \emph{extensional}.
\end{definition}

\begin{proposition}
  If $\mathbf{\Sigma}$ is extensional then the $L_1$ coalgebra
  structure on a map $m$ is unique (if it exists).
\end{proposition}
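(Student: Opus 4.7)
The strategy is to show that an $L_1$-coalgebra structure on $m \colon U \to V$ is equivalent data to a classifying map $\chi \colon V \to I$ exhibiting $m$ as a pullback of $i_0$ along $\chi$, at which point extensionality of $\mathbf{\Sigma}$ delivers uniqueness. First I would unfold the coalgebra. Since $L_1$ is a comonad over $\dom$ and $L_1 m \colon U \to K_1 m$ has the same domain $U$ as $m$, a coalgebra structure $\alpha \colon m \to L_1 m$ vertical over $\dom$ amounts to a single morphism $j \colon V \to K_1 m$ in $\catc$ such that $j \circ m = L_1 m$ (the commutativity of the underlying square in $\catc^\btwo$) and $R_1 m \circ j = 1_V$ (the counit law). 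The comultiplication law will turn out to be automatic once such $j$ is shown unique, so it plays no role in the argument.

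Next I would give an explicit description of $K_1 m$ using the pushout formula from lemma \ref{lem:step1algrmap} together with the simplifications for the generating family $i_0$ carried out in section \ref{sec:class-class-monom}. Writing $K_m = \fhom(i_0, m)$, the inclusion $\sigma^\ast(i_0)$ has domain naturally isomorphic to $U$, so the pushout defining $K_1 m$ collapses and $K_1 m$ can be identified with the object whose internal description is $\Sigma_{v \in V} \Sigma_{i \in I} ((i = i_0) \to m^{-1}(v))$, with $R_1 m$ the projection to $v$ and $L_1 m$ the canonical map $u \mapsto (m(u), i_0, \refl \mapsto u)$. A section $j$ of $R_1 m$ compatible with $L_1 m$ then unfolds into a pair $(\chi, \phi)$ consisting of a morphism $\chi \colon V \to I$ and, for each $v$, a function $\phi_v \colon (\chi(v) = i_0) \to m^{-1}(v)$, subject to $\chi \circ m$ being the constant map at $i_0$ and $\phi_{m(u)}(\refl) = u$. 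A short diagram chase in the internal logic shows that these conditions force $m$ to be monic with image equal to $\chi^{-1}(i_0)$ and exhibit $m$ as the pullback of $i_0$ along $\chi$.

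Finally, extensionality of $\mathbf{\Sigma}$ (proposition \ref{prop:classifiedmonosequivdef}) implies that any two classifying maps for $m$ must coincide, and once $\chi$ is fixed the function $\phi$ is forced: on fibres with $\chi(v) = i_0$ the value $\phi_v(\refl)$ is pinned down as the unique element of the singleton $m^{-1}(v)$, while on fibres with $\chi(v) \neq i_0$ the domain of $\phi_v$ is empty. Hence the data $(\chi, \phi)$, and therefore $j$, is unique. The hardest step will be the second paragraph: translating between the pushout description of $K_1 m$ and the pullback-classification data, sketched above in the internal language, needs to be carried out in the ambient locally cartesian closed category $\catc$ by a careful diagram chase exploiting the pushout universal property and the explicit form of $R_1 m$.
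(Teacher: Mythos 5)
Your proposal is correct and is exactly the argument the paper intends: the paper's own proof merely asserts that the result is ``straightforward by using the type theoretic definitions and then working in the internal logic,'' and your unfolding of an $L_1$-coalgebra into a classifying map $\chi \colon V \rightarrow I$ together with a fibrewise section $\phi$, followed by an appeal to extensionality to pin down $\chi$ and then $\phi$, is precisely that argument written out. The only thing to tidy is the final case split on whether $\chi(v) = i_0$, which is not constructively valid in the internal logic (and the paper's examples include realizability settings) but is also unnecessary: for any given $p \colon \chi(v) = i_0$ the value $\phi_v(p)$ lies in $m^{-1}(v)$, which is a subsingleton because $m$ is monic, so $\phi$ is determined outright.
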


\begin{proof}
  This is straightforward by using the type theoretic definitions and
  then working in the internal logic of the
  category.
\end{proof}

\begin{theorem}
  If we are given a natural way to compose elements of
  $\mathbf{\Sigma}$, then we can assign $(C_1, F^t_1)$ a
  multiplication map making it into an awfs.
\end{theorem}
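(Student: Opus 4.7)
The plan is to construct a monad multiplication $\mu\colon (F^t_1)^2 \Rightarrow F^t_1$ directly from the composition on $\mathbf{\Sigma}$, so that $(C_1,F^t_1)$ becomes an rawfs, and then invoke lemmas \ref{lem:lawfstocomp} and \ref{lem:comptoawfs} to recover the comonad structure on $C_1$ and hence an awfs. Recall from theorem \ref{thm:codclassmapstep1strfib} that $F^t_1(f)\colon K_f\to Y$ is a pushout attaching, uniformly over the parametrising object $\Sigma_{j:J}\Sigma_{y:Y(j)}\Sigma_{i:I}(i=i_0\to X(j,y))$, a generic diagonal filler for the lifting problem of $f$ against the element of $\mathbf{\Sigma}$ classified by $i$.

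First I would unpack $(F^t_1)^2(f)$: its domain is a second pushout attaching, for each lifting problem of an element of $\mathbf{\Sigma}$ against $F^t_1(f)$, a solving filler. Pulling back the top edge of any such lifting square along the structural map $X\to K_f$ coming from the first pushout rewrites the square as a lifting problem for $f$ itself, but against the \emph{composite} $m\circ m'$, where $m\in\mathbf{\Sigma}$ is the outer problem and $m'\in\mathbf{\Sigma}$ is the element indexing the particular pushout cell of $K_f$ being composed with. Under the natural composition hypothesis, $m\circ m'$ lies in $\mathbf{\Sigma}$, and a corresponding filler was already freely attached when forming $F^t_1(f)$; this data is natural in $f$ and assembles, via the universal property of the outer pushout, into a vertical natural transformation $\mu_f\colon (F^t_1)^2(f)\to F^t_1(f)$ over the identity of $Y$.

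Next I would verify the monad axioms for $(F^t_1,\lambda,\mu)$. Chasing the unit and associativity squares through the universal properties of the iterated pushouts reduces each axiom to the statement that composition in $\mathbf{\Sigma}$ is unital (with the identity at $i_0$ as unit) and associative; both are intrinsic to the naturality hypothesis, which one should read as supplying a monoid-like structure on $I$ with unit $i_0$, compatible with the pullback description of $\mathbf{\Sigma}$. Lemma \ref{lem:lawfstocomp} already equips $\xalg{F^t_1}$ with a natural vertical composition of pointed-endofunctor algebras; since $\mu$ was built to extend $\lambda$ using only composition in $\mathbf{\Sigma}$, this composition automatically restricts to the subcategory of monad algebras.

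With $(F^t_1,\lambda,\mu)$ a monad and $F^t_1$-algebras composing naturally and compatibly with $\mu$, lemma \ref{lem:comptoawfs} produces a comultiplication on $C_1$ and hence the desired awfs $(C_1,F^t_1)$. The main obstacle will be pinning down the precise formal meaning of ``natural way to compose elements of $\mathbf{\Sigma}$'' so that the construction of $\mu$ is genuinely natural in $f$ and the monad axioms reduce literally to the corresponding identities on $\mathbf{\Sigma}$; once this formalisation is in place, the subsequent verifications are essentially coherence diagram chases using the universal properties of pullbacks and pushouts already present in the construction of $(C_1,F^t_1)$.
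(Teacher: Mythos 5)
Your instinct that the natural composition on $\mathbf{\Sigma}$ is what supplies the missing multiplication on $F^t_1$ is correct, but both the direction of your argument and its key step diverge from what works. The paper's proof is a one-line appeal to the \emph{dual} of lemma \ref{lem:comptoawfs}: $(C_1, F^t_1)$ is already an lawfs, so $C_1$ already carries its comultiplication from step-one; the natural composition on $\mathbf{\Sigma}$ induces a natural vertical composition of $C_1$-coalgebras; and the dual of lemma \ref{lem:comptoawfs} then manufactures the multiplication on $F^t_1$ together with all the awfs axioms at once. You instead try to build $\mu \colon (F^t_1)^2 \Rightarrow F^t_1$ by hand and then run lemma \ref{lem:comptoawfs} in its stated, undualised direction to ``recover'' a comonad structure on $C_1$. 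That endgame is both redundant and unsound: the comultiplication on $C_1$ already exists, and what an awfs requires is the distributive-law compatibility between \emph{that} comultiplication and your new $\mu$. Lemma \ref{lem:comptoawfs} would hand you some comultiplication with no guarantee that it agrees with the existing one, and your claim that the algebra composition of lemma \ref{lem:lawfstocomp} ``automatically restricts'' to $\mu$-algebras is an assertion rather than a proof.

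The more serious gap is in the construction of $\mu$ itself. A lifting problem of $m \colon S \rightarrow T$ in $\mathbf{\Sigma}$ against $F^t_1(f) \colon K_f \rightarrow Y$ has top edge $S \rightarrow K_f$, and this map does not factor through $X \rightarrow K_f$, so ``pulling back the top edge along $X \rightarrow K_f$'' does not rewrite the square as a lifting problem of a single composite $m \circ m'$ against $f$. What is true (and visible in the type-theoretic description of theorem \ref{thm:codclassmapstep1strfib}) is that each point of $S$ is sent to a pair consisting of an index $i$ and a partial element of $X$, so one obtains a \emph{family} of elements of $\mathbf{\Sigma}$ varying over $T$; assembling these into a single composite cofibration over $T$ together with its partial filler is exactly where the naturality of the composition on $\mathbf{\Sigma}$ must be invoked, and it is precisely the content that the coalgebra-composition formulation packages. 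If you want a direct construction you must carry this out uniformly over the whole parametrising object rather than pointwise on one square, and then still verify the distributive law; the intended route is simply to dualise lemma \ref{lem:comptoawfs} and feed it the composition of $C_1$-coalgebras induced by the composition on $\mathbf{\Sigma}$.
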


\begin{proof}
  By (the dual of) lemma \ref{lem:comptoawfs}.
\end{proof}

\begin{example}
  \label{ex:classifiedcof}
  Suppose that $\mathbf{\Sigma}$ is
  closed under composition and extensional.

  In this case we end up with an identical situation to the one
  considered by Bourke and Garner in \cite[Section
  4.4]{bourkegarnerawfs1}, and indeed the theorems above are minor
  variants of those considered by Bourke and Garner.

  Gambino and Sattler proved in \cite[Lemma 9.7]{gambinosattlerpi}
  that if $\catc$ is a presheaf category, then there is a suitable
  such $i_0$ for any class $\mathbf{\Sigma}$ of monomorphisms closed
  under pullback and composition.
\end{example}

We now define the second lawfs $(C^t_1, F_1)$ to be the one generated
by the coproduct of the two families of maps below.
\begin{equation}
  \label{eq:26}
  \begin{gathered}
    \xymatrix{ \intv +_1 I \ar[rr]^{\delta_0 \hat \times i_0} \ar[dr]
      & & \intv
      \times I \ar[dl] \\
      & I & }
  \end{gathered}
  \qquad
  \begin{gathered}
    \xymatrix{ \intv +_1 I \ar[rr]^{\delta_1 \hat \times i_0} \ar[dr]
      & & \intv
      \times I \ar[dl] \\
      & I & }
  \end{gathered}
\end{equation}
In many natural examples the interval comes equipped with a symmetry
operation swapping the two endpoints. In this case, we clearly only
need to use one of the diagrams above to get the same class of maps
with right lifting property.

Again we get simple description of maps with the right lifting
property.
\begin{proposition}
  \label{prop:codfibppchar}
  A map $f$ in $\catc$, viewed as a map in $\catc/1$ has the (fibred)
  right lifting property against the coproduct of the maps in
  \eqref{eq:26} if and only if it has the (ordinary) right lifting
  property against $\delta_k \hat \times m$ for all $m \in
  \mathbf{\Sigma}$ and $k \in \{0, 1\}$.
\end{proposition}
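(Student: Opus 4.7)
The plan is to chain three equivalences that reduce fibred right lifting against the coproduct over $I$ to ordinary right lifting against the pushout products $\delta_k \hat\times m$ in $\catc$, making essential use of Propositions~\ref{prop:codfibredlpasordinary} and~\ref{prop:pushoutprodfib} and the fibred pushout-product description of lifting problems from Proposition~\ref{prop:fibpushoutproddescr}.

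First I would apply Proposition~\ref{prop:codfibredlpasordinary}: writing $m_I$ for the coproduct (in the fibre over $I$) of the two vertical maps in~\eqref{eq:26}, the fibred right lifting property of $f$ against $m_I$ is equivalent to the ordinary right lifting property of $f$ against $\sigma^\ast(m_I)$ for every $\sigma \colon K \to I$. Second, a lifting problem out of a coproduct $a + b$ in the arrow category of $\catc$ splits as a pair of lifting problems, one against $a$ and one against $b$, so ordinary right lifting against $\sigma^\ast(m_I)$ is equivalent to ordinary right lifting against each of $\sigma^\ast(\delta_k \hat\times i_0)$ for $k \in \{0,1\}$.

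Third, I would invoke the fibredness of pushout product (Proposition~\ref{prop:pushoutprodfib}) together with Proposition~\ref{prop:fibpushoutproddescr}. Since $\sigma^\ast$ preserves pushouts and binary products (being a left adjoint in the LCCC $\catc$), and $\delta_k$ is a vertical map over the terminal object, we obtain a canonical isomorphism $\sigma^\ast(\delta_k \hat\times i_0) \cong \delta_k \hat\times \sigma^\ast(i_0)$ of maps in $\catc$. By the very definition of $\mathbf{\Sigma}$, as $\sigma$ ranges over all morphisms $K \to I$ in $\catc$, the pullbacks $\sigma^\ast(i_0)$ range (up to isomorphism) precisely over the elements of $\mathbf{\Sigma}$; conversely every $m \in \mathbf{\Sigma}$ arises this way by construction. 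Composing the three equivalences yields the claim.

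The only step I expect to require care is the identification $\sigma^\ast(m_I) \cong \sigma^\ast(\delta_0 \hat\times i_0) + \sigma^\ast(\delta_1 \hat\times i_0)$ as vertical maps over $K$, i.e.\ that pullback commutes with the coproduct in the arrow category; but this is immediate from pullback preserving all colimits in the LCCC $\catc$, so no genuine obstacle arises.
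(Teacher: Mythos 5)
Your argument is correct and follows essentially the same route as the paper, whose proof is just the citation of Propositions \ref{prop:codfibredlpasordinary} and \ref{prop:fibpushoutproddescr}; you have merely made explicit the intermediate steps (splitting the coproduct, commuting $\sigma^\ast$ past the pushout product, and identifying pullbacks of $i_0$ with elements of $\mathbf{\Sigma}$) that the paper leaves implicit. The only cosmetic quibble is that the coproduct of the two families in \eqref{eq:26} is most naturally taken over $I + I$ rather than ``in the fibre over $I$,'' but this does not affect your argument since lifting problems against a coproduct split either way.
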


\begin{proof}
  By propositions \ref{prop:codfibredlpasordinary} and
  \ref{prop:fibpushoutproddescr}.
\end{proof}

\begin{example}
  Suppose that $i_0$ is terminal in the category of pullback squares
  and $\mathbf{\Sigma}$ is closed under composition, as in example
  \ref{ex:classifiedcof}.

  Suppose further that $\catc$ is a topos, that the interval object
  comes equipped with connections, and the endpoint inclusions
  $\delta_0$ and $\delta_1$ are disjoint.

  Suppose further that elements of $\mathbf{\Sigma}$ are closed under
  finite union, and contain the map $[\delta_0, \delta_1] \colon 1 + 1
  \rightarrow \intv$.

  This is now the situation considered by Van den Berg and Frumin in
  \cite{vdbergfrumin}.
  
  By proposition \ref{prop:codfibppchar} we see that the class of maps
  with the fibred right lifting property against $i_0$ are precisely the
  class of Kan fibrations that appear as \cite[Definition
  3.3]{vdbergfrumin}.

  We also recover the notion of \emph{filling structure} due to Orton
  and Pitts in \cite[Definition 4.2]{pittsortoncubtopos} as
  follows. Firstly, we only use one of the maps in \eqref{eq:26},
  $\delta_0$. This means we are considering the universal lifting
  problem of $i_0 \hat{\times} \delta_0$ against $f$. Then one can
  show by working internally in type theory that solutions to the
  universal lifting problem correspond precisely to filling
  structures. One might worry that pushout product refers to pushout
  which requires quotients to define in type theory, which don't
  appear in the work of Orton and Pitts. An explanation for this is
  that one can, by proposition \ref{prop:fibpushoutproddescr}, instead
  consider solutions of the universal lifting problem of $i_0$ against
  $\widehat{\{\delta_0, f\}}$, which can be defined just using
  pullback and exponentials and more closely matches the Pitts-Orton
  definition.

  We will see later in section \ref{sec:furth-gener} that it is also
  possible to formulate the Pitts-Orton definition of
  \emph{composition structure} in a similar way.
\end{example}

\begin{example}[van den Berg, Frumin]
  \label{ex:efftopos}
  As a special case of the previous example, we can take $\catc$ to be
  the effective topos, the interval $\intv$ to be $\nabla 2$ and
  $\mathbf{\Sigma}$ to be the class of all monomorphisms. This gives a
  nontrivial model structure on a subcategory of the effective
  topos. See \cite{vdbergfrumin} for details.
\end{example}

\begin{example}
  \label{ex:endpointgencof}
  Suppose we are just given an interval object
  $\delta_0, \delta_1 \colon 1 \rightarrow \intv$. Then we can take
  $I := \intv$ and $i_0 := \delta_1$.
\end{example}

\subsubsection{Trivial Fibrations in $01$-Substitution Sets}
\label{sec:triv-fibr-01}

Recall that Pitts in \cite{pittsnompcs} defined the category of
$01$-substitution sets as an equivalent category to
the category of cubical sets studied by Bezem Coquand and Huber in
\cite{bchcubicalsets}. See also \cite[Section 1.2]{swannomawfs} for a
description of the category and the definition of Kan fibration. In a
later paper \cite{bchunivalence}, Bezem, Coquand and Huber returned to
this category of cubical sets and showed that it has a univalent
universe. One of the ideas that they developed in that paper was a
notion of cofibration and trivial fibration in the category of cubical
sets.

In this section we will define the corresponding notion of trivial
fibration in $01$-substitution sets, and in fact define it as a
cofibrantly generated class over the codomain fibration. We will
assume the reader is familiar with nominal sets. See
\cite{pittsnomsets} for a general introduction.

We first define a $01$-substitution set $\powfin(\names) + 1$ as
follows. The underlying nominal set is the usual definition of
$\powfin(\names) + 1$ in nominal sets (recalling $\powfin(\names)$ is the
nominal set of finite subsets of $\names$ with the pointwise action).
We write the unique element of
$1$ as $\top$. We define the action of substitutions by
$x (a := i) := \top$ for all $x \in V$, $a \in \names$ and $i \in
2$. We also write $\top$ for the coproduct inclusion
$1 \rightarrow \powfin(\names) + 1$ (which we note is a morphism in
$\sub$, not just in nominal sets).

We consider the lawfs cofibrantly generated by the following family of
maps:
\begin{equation*}
  \xymatrix{ 1 \ar[rr]^\top \ar[dr]_\top & & \powfin(\names) + 1 \ar@{=}[dl] \\
    & \powfin(\names) + 1 & }
\end{equation*}

\begin{proposition}
  $\powfin(\names) + 1$ is extensional (in the sense of definition
  \ref{def:classifiedmono}).
\end{proposition}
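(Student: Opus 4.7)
The plan is to verify condition 1 of Proposition \ref{prop:classifiedmonosequivdef}: every mono $m \colon A \hookrightarrow B$ belonging to $\mathbf{\Sigma}$ is classified by a unique $\chi \colon B \to \powfin(\names) + 1$ with $A = \chi^{-1}(\top)$. Existence is built into the definition of $\mathbf{\Sigma}$; the content of the proposition is uniqueness of $\chi$.

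First I would fix such a classifier $\chi$ and observe that $\chi$ is forced on $A$: for $b \in A$ we must have $\chi(b) = \top$. So the only freedom is in the values $\chi(b) \in \powfin(\names)$ for $b \in B \setminus A$. Write $\chi(b) = S_b$ for such a $b$. The strategy is to show that $S_b$ is uniquely pinned down by the requirement that $\chi$ be a morphism of $01$-substitution sets, which amounts to compatibility with permutations (equivariance in the underlying nominal set) and with each $01$-substitution $(a := i)$.

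Next I would read off $S_b$ from $b$ using the $01$-substitution action: for each name $a$, we have $\chi(b \cdot (a := i)) = \chi(b) \cdot (a := i) = S_b \cdot (a := i)$, and the specified action on $\powfin(\names) + 1$ tells us exactly when this lands at $\top$ in terms of whether $a \in S_b$. Matching this with whether $b \cdot (a := i) \in A$ gives the formula
\[
\chi(b) \;=\; \{\, a \in \names : b \cdot (a := i) \in A \,\},
\]
which is manifestly independent of the particular $\chi$ we started with, giving uniqueness. Finiteness of the right-hand side follows because $b$ has finite support (so outside that support $(a := i)$ fixes $b$, and since $b \notin A$, the substituted element is still outside $A$), and equivariance of this assignment is immediate from the nominal structure.

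The main obstacle, and the place where care is required, is verifying that the formula above actually defines a morphism of $01$-substitution sets: checking that the set of witnessing names is finite in the needed sense, is independent of the choice $i \in \{0,1\}$, is equivariant under permutations of $\names$, and respects the substitution action (so that it really is compatible with the closure condition on $A$ built into membership in $\mathbf{\Sigma}$). All of these reduce to short verifications from the concrete description of the substitution action on $\powfin(\names) + 1$, after which uniqueness of $\chi$—and hence extensionality of $\mathbf{\Sigma}$—is immediate.
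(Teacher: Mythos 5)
Your proposal is correct, and its mathematical core is the same as the paper's: the substitution action on $\powfin(\names)+1$ detects membership, i.e.\ $S(a:=i)=\top$ iff $a\in S$, and this rigidity is what forces extensionality. The difference is purely in which of the equivalent conditions of Proposition \ref{prop:classifiedmonosequivdef} you verify. You check condition 1 directly: any classifier $\chi$ of $m\colon A\rightarrowtail B$ must send $b\in A$ to $\top$ and $b\notin A$ to the set $\{a : b(a:=i)\in A\}$, so $\chi$ is unique. The paper instead checks condition 3 (internal propositional extensionality): given $A,B\in\powfin(\names)+1$ with $A=\top\Leftrightarrow B=\top$ internally, applying the Kripke--Joyal clause at each substitution $(a:=0)$ yields $a\in A\Leftrightarrow a\in B$, hence $A=B$. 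The paper's version is slightly leaner because it never mentions classifying maps; yours has the advantage of making the uniqueness of the classifier completely explicit via a closed formula.

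One small remark: your final paragraph about verifying that the formula $\chi(b)=\{a: b(a:=i)\in A\}$ actually defines a morphism of $01$-substitution sets is not needed. Existence of a classifier is given (membership in $\mathbf{\Sigma}$ means being a pullback of $\top$), and for uniqueness it suffices that \emph{any} classifier is forced to satisfy this formula; you never have to check that the formula, taken on its own, yields an equivariant, substitution-compatible map. So the ``main obstacle'' you identify dissolves, and the argument is shorter than you suggest.
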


\begin{proof}
  We use the condition that ``propositional extensionality'' holds.

  Suppose we are given $A, B \in \powfin(\names) + 1$ such that $A =
  \top \Leftrightarrow B = \top$ holds in the internal logic. First
  note that this clearly rules out $A = \top$ and $B \in
  \powfin(\names)$. Similarly for $A \in \powfin(\names)$ and $B =
  \top$. When $A = \top$ and $B = \top$ we vacuously have $A = B$.

  It only remains to consider the case where both $A$ and $B$ are
  elements of $\powfin(\names)$. For all $a \in \names$ we have that
  $A(a := 0) = \top$ if and only if $B(a := 0) = \top$. But this
  implies $a \in A$ if and only if $a \in B$, and so $A = B$ as
  required.
\end{proof}

Let $\square_A$ be the image of a representable under Pitts'
equivalence between $01$-substitution sets and cubical sets, as
defined in \cite[Section 5.1]{swannomawfs}. Note that maps
$\square_A \rightarrow \powfin(\names) + 1$ correspond precisely to elements of $\powfin(\names) + 1$ for
which $A$ is a support. Such an element is either $\top$, or of the
form $A_1$ where $A_1 \subset A$. Write $A_2$ for $A \setminus
A_1$. Then we have $\square_A \cong \square_{A_1} \otimes
\square_{A_2}$. Write $\partial \square_{A_1}$ for the subobject of
$\square_{A_1}$ with elements $\sigma \colon A_1 \rightarrow 2$ such
that $\sigma(a) \in 2$ for some $a \in A_1$. Then the pullback of
$\top$ is of the following form.
\begin{equation}
  \label{eq:8}
  \begin{gathered}
  \xymatrix{ \partial \square_{A_1} \otimes \square_{A_2}
    \pullbackcorner \ar[r] \ar[d] & 1 \ar[d]^\top \\
    \square_{A_1} \otimes \square_{A_2} \ar[r] & \powfin(\names) + 1}    
  \end{gathered}
\end{equation}

Suppose we are given a map $\sigma \colon \square_{A'} \rightarrow
\square_{A}$. We say it is non degenerate if the map $\square_{A'}
\rightarrow \powfin(\names) + 1$ is non trivial. In this case we must have $\square_{A'}
\cong \square_{A_1} \otimes \square_{A_2'}$ for some $A_2'$ where
$\sigma$ is an automorphism of $\square_{A_1}$. We then have a
pullback of the following form.
\begin{equation}
  \label{eq:5}
  \begin{gathered}
  \xymatrix{ \partial \square_{A_1} \otimes \square_{A_2}
    \pullbackcorner \ar[r] \ar[d] & \partial \square_{A_1} \otimes
    \square_{A_2'} \ar[d] \\
    \square_{A_1} \otimes \square_{A_2} \ar[r] & \square_{A_1} \otimes
    \square_{A_2'}}    
  \end{gathered}
\end{equation}

Then we see that the fibred right lifting property against $\top$
gives us a choice of filler for the (ordinary) lifting
problem against each map on the left of \eqref{eq:8}, subject to the
compatibility condition given by \eqref{eq:5}. However, since the
category of $01$-substitution sets is equivalent to a presheaf
category, the converse also holds, by a similar argument to
\cite[Theorem 9.1]{gambinosattlerpi}.

By the same reasoning as in \cite[Remark 3.9]{huberthesis} or
\cite[Section 5]{swannomawfs}
we see that maps $f \colon X \rightarrow Y$ with this
property against such maps correspond
precisely with those with a \emph{boundary filling operator}, defined
as follows.

\begin{definition}
  Let $f \colon X \rightarrow Y$. A \emph{boundary} (or
  \emph{tube})
  over a finite set $A \subseteq \names$ consists of an element $y$ of
  $Y$ together with a function $u \colon A \times 2 \rightarrow X$
  satisfying the following conditions for all $a \in A$ and $i
  \in 2$.
  \begin{enumerate}
  \item $a \,\#\, u(a, i)$
  \item $f(u(a, i)) = y(a := i)$
  \item For $a' \in A$ such that $a' \neq a$ and for $i' \in 2$, $u(a,
    i)(a' := i') = u(a', i')(a := i)$
  \end{enumerate}

  A \emph{filler} of a boundary $(u, y)$ consists of an element $x$ of
  $X$ such that $f(x) = y$ and for all $a \in A$ and $i \in 2$,
  $u(a, i) = x(a := i)$.

  A \emph{boundary filling operator} on $f$ consists of a choice of
  filler ${\uparrow} (u, y)$ for every boundary satisfying the
  following.
  \begin{enumerate}
  \item For all $\pi \in \perma$, $\pi \cdot ({\uparrow} (u, y)) =
    {\uparrow} (\pi \cdot u, \pi \cdot y)$.
  \item For all $a \in \names \setminus A$ and $i \in 2$, we have
    ${\uparrow} (u, y) (a := i) = {\uparrow} (u(a := i), y (a := i))$.
  \end{enumerate}
\end{definition}



\subsubsection{Trivial Fibrations and Fibrations in CCHM Cubical Sets}
\label{sec:triv-fibr-fibr}

We will define two classes of maps in the
Cohen-Coquand-Huber-M\"{o}rtburg category of cubical sets from
\cite{coquandcubicaltt}. Following Gambino and Sattler in
\cite{gambinosattlerpi}, Van den Berg and Frumin in
\cite{vdbergfrumin} and Orton and Pitts in \cite{pittsortoncubtopos},
we do this as a special case of the construction in section
\ref{sec:class-class-monom}.

We first recall the definition of CCHM cubical sets from
\cite{coquandcubicaltt}.
\begin{definition}
  A \emph{de Morgan algebra} is a bounded distributive lattice $\langle
  L, \wedge, \vee \rangle$ with top
  element, $1$, and bottom element $0$, and an involution $\neg \colon
  L \rightarrow L$ satisfying the following for all $r, s \in L$:
  \begin{equation*}
    \neg 1 \,=\, 0 \qquad \neg 0 \,=\, 1 \qquad \neg (r \vee s) \,=\, (\neg r)
    \wedge (\neg s) \qquad \neg (r \wedge s) \,=\, (\neg r) \vee (\neg s)
  \end{equation*}

  The forgetful functor from de Morgan algebras to $\set$ is monadic,
  so we can alternatively view de Morgan algebras as algebras over a
  monad on $\set$ which we denote $\dmmod$.
\end{definition}

\begin{definition}
  Fix a countably infinite set, $\names$. (When working constructively
  also assume that $\names$ has decidable equality.)
  
  The \emph{category of cubes}, $\smcat{C}$ is the full (small)
  subcategory of the Kleisli category of $\dmmod$ on finite subsets of
  $\names$.

  The category of CCHM cubical sets is the functor category
  $\set^\smcat{C}$.
\end{definition}

\begin{definition}
  We define the interval object $\intv$ as the cubical set defined by
  $\intv(A) := \dmmod(A)$. Alternatively, this is the canonical map
  from the Kleisli category to $\xalg{\dmmod}$ composed with the
  forgetful functor $\xalg{\dmmod} \rightarrow \set$.

  Alternatively again, this is also isomorphic to the representable
  $\yoneda \{a\}$ for $a \in \names$.

  The endpoints $\delta_0, \delta_1 \colon 1 \rightarrow \intv$ are
  given by $0$ and $1$ respectively in the de Morgan algebras.
\end{definition}

We now just need to define the classifying map for the
cofibrations. This is what Cohen, Coquand, Huber and M\"{o}rtburg
refer to as the \emph{face lattice}. We will denote it $1 \rightarrow
F$, and give three alternative definitions. Two abstract, and one more
concrete (based on two definitions from \cite{coquandcubicaltt}).

First, note that one might be tempted to take $F = \intv$, and
$1 \rightarrow F$ to be be one of the endpoint inclusions, say
$\delta_1 \colon 1 \rightarrow \intv$. This does generate a pair of
lawfs's (as we saw in example \ref{ex:endpointgencof}), but it is not
extensional (in the sense of definition \ref{def:classifiedmono}). To
see this note that for all
$\sigma \colon \{a\} \rightarrow \dmmod \{a\}$, we have
$\sigma(a \wedge \neg a) \neq 1$, so $\sigma(a \wedge \neg a) = 1$ if
and only if $0 = 1$, but $a \wedge \neg a \neq 0$. Hence, one might
motivate the definition of face map, by ``making $\delta_1$
extensional efficiently as possible.''

\begin{definition}
  (This appears at the bottom of \cite[Section
  8.1]{coquandcubicaltt}). We define $1 \rightarrow F$ as follows. Let
  $\chi \colon \intv \rightarrow \Omega$ be the classifying map for
  the monomorphism $\delta_1$. Let
  $\intv \twoheadrightarrow F \rightarrowtail \Omega$ be the image
  factorisation of $\chi$. Define the classifying map $1 \rightarrow
  F$ to be the pullback of $\top \colon 1 \rightarrow \Omega$ along
  the inclusion $F \rightarrowtail \Omega$.
\end{definition}

The construction above requires the existence of a subobject
classifier. This is sometimes rejected in constructive mathematics for
predicativity reasons (see \cite{moerdijkpalmgrenast1}). Hence we
give the following alternative
definition, which works more generally in any $\Pi$-pretopos (but is
equivalent to the definition above in a topos).
\begin{definition}
  We define an equivalence relation, $\sim$ on $\intv$ using the
  following definition in the internal logic. For $i, i' \in \intv$,
  we set $a \sim b$ if $(a = 1) \Leftrightarrow (b = 1)$. We define
  $F$ to be the quotient $\intv / {\sim}$. We define $1 \rightarrow F$
  to be the composition
  $1 \stackrel{\delta_1}{\rightarrow} \intv \twoheadrightarrow \intv
  /{\sim}$.
\end{definition}

Finally, we recall from \cite[Section 4.1]{coquandcubicaltt}, that we
can also give the following more concrete, syntactic definition of the
face lattice.
\begin{definition}
  Given a finite set $A$, we define $F(A)$ to be the distributive
  lattice generated by $A + A$, subject to the following
  relation. Write the elements of $A + A$ as $(a = 0)$ and $(a = 1)$
  for $a \in A$. The relation is $(a = 0) \wedge (a = 1) = 0$.
\end{definition}

Now similarly to \cite[Example 9.3]{gambinosattlerpi}, \cite[Example
3.1(2)]{vdbergfrumin} and \cite{pittsortoncubtopos}, we can
characterise the CCHM notion of Kan filling operator as follows. Given
a morphism $f \colon X \rightarrow Y$ in cubical sets, a \emph{Kan
  filling operator} is a solution to the universal lifting problem to
$f$ from the following family of maps.
\begin{equation*}
  \begin{gathered}
    \xymatrix{ \intv +_1 F \ar[rr]^{\delta_1 \hat \times \top} \ar[dr]
      & & \intv
      \times F \ar[dl] \\
      & F & }
  \end{gathered}  
\end{equation*}

In fact, we can now characterise Kan filling operators as cofibrantly
generated in two senses, since, as Gambino and Sattler show in
\cite{gambinosattlerpi},
they are also algebraically cofibrantly generated in Garner's sense.

\section{A Further Generalisation: Lifting Problems for Squares}
\label{sec:furth-gener}

In this section we consider more general notion of lifting problem due
to Sattler \cite[Section 6]{sattlermodelstructures} and show how the
earlier results in this paper can be adapted to also work with this
definition.

\subsection{Definition}
\label{sec:definition}

We first give a fibred version of Sattler's definition. Throughout, we
assume that we are given a fibration $p \colon \cat{E} \rightarrow
\cat{B}$. We will recover Sattler's definition by applying this to a
category indexed family fibration.

\begin{definition}
  A \emph{family of squares} over $I \in \cat{B}$ is a commutative
  square in $\cat{E}_I$.
\end{definition}

\begin{proposition}
  Equivalently, a family of squares is a commutative square in
  $\cat{E}$ where all maps are vertical, or a vertical map in
  $\vrt{\cat{E}}$, or an object of $\vrt{\vrt{\cat{E}}}$.
\end{proposition}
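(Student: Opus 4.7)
The plan is to observe that all four descriptions unfold to the same piece of data, namely a commutative square
\begin{equation*}
  \begin{gathered}
    \xymatrix{ A \ar[r] \ar[d] & B \ar[d] \\
      C \ar[r] & D}
  \end{gathered}
\end{equation*}
in $\cat{E}$ in which all four objects lie over $I$ and all four morphisms lie over $1_I$. So the proof is a matter of chasing the definitions.

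First I would note that (1) $\Leftrightarrow$ (2) is immediate from the definition of the fibre category $\cat{E}_I$: a commutative square in $\cat{E}_I$ is by definition a commutative square in $\cat{E}$ whose objects all lie over $I$ and whose morphisms are all vertical, so the two data coincide. Then (2) $\Leftrightarrow$ (3): a morphism in $\vrt{\cat{E}} \subseteq \cat{E}^\btwo$ is a commutative square in $\cat{E}$ whose two vertical legs are vertical morphisms of $\cat{E}$; asking further that this morphism itself be vertical over $\cat{B}$, with respect to the functor $p \circ \cod = p \circ \dom$, means that the top and bottom horizontal maps of the square are vertical in $\cat{E}$ (since the image of such a morphism in $\cat{B}$ is the common image of its top and bottom edges). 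Hence a vertical map in $\vrt{\cat{E}}$ is exactly a commutative square with all four sides vertical.

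Finally, (3) $\Leftrightarrow$ (4) is just the definition of $\vrt{-}$: an object of $\vrt{\vrt{\cat{E}}}$ is, by definition, a vertical map in $\vrt{\cat{E}}$ (viewed over the base $\cat{B}$ via $p \circ \cod$). Chaining these equivalences gives all four.

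There is no real obstacle here; the only minor point worth stating carefully is that being vertical as a morphism of $\vrt{\cat{E}}$ (over $\cat{B}$) is genuinely the condition that the two horizontal edges of the square are vertical in $\cat{E}$, which uses the description of the structural functor $\vrt{\cat{E}} \to \cat{B}$ established in the earlier proposition. Given that, the proof is one line per equivalence.
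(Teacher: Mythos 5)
Your proof is correct: the paper states this proposition without proof, treating it as a direct unfolding of the definitions, and your definition-chase (fibre category, the functor $p \circ \cod = p \circ \dom$ on $\vrt{\cat{E}}$, and $\vrt{-}$ applied twice) is exactly the intended justification. The one point you rightly flag — that verticality of a morphism of $\vrt{\cat{E}}$ over $\cat{B}$ amounts to the two horizontal edges being vertical in $\cat{E}$ — is the only step with any content, and you handle it correctly.
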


\begin{definition}
  Suppose we are given a family of squares over $I \in \cat{B}$ and a
  family of squares over $J \in \cat{B}$, as below.
  \begin{equation*}
    \begin{gathered}
      \xymatrix{ \cdot \ar[d]_{m} \ar[rr] & & \cdot \ar[d]^{n} \\
        \cdot \ar[rr] \ar[dr] & & \cdot \ar[dl] \\
      & I & }
    \end{gathered}
    \quad
    \begin{gathered}
      \xymatrix{ \cdot \ar[d]_{f} \ar[rr] & & \cdot \ar[d]^{g} \\
        \cdot \ar[rr] \ar[dr] & & \cdot \ar[dl] \\
      & J &}
    \end{gathered}
  \end{equation*}

  A \emph{family of lifting problems} from $(m, n)$ to $(f, g)$ is an
  object $K$ of $\cat{B}$ together with maps
  $\sigma \colon K \rightarrow I$ and $\tau \colon K \rightarrow J$
  and a map from $\sigma^\ast(n)$ to $\tau^\ast(f)$ in
  $\vrt{\cat{E}}$, or equivalently, the middle square in the
  diagram below in $\cat{E}_K$. A \emph{solution}, or \emph{family of
    fillers} of the lifting problem is the dotted diagonal map in the
  diagram below making two commutative triangles.
  \begin{equation*}
    \xymatrix{ \cdot \ar[d]_{\sigma^\ast(m)} \ar[r] & \cdot
      \ar[d]_{\sigma^\ast(n)} \ar[r] &
      \cdot \ar[d]^{\tau^\ast(f)} \ar[r] & \cdot \ar[d]^{\tau^\ast(g)} \\
      \cdot \ar[r] \ar@{.>}[urrr] & \cdot \ar[r] & \cdot \ar[r] &
      \cdot}
  \end{equation*}
\end{definition}

Note that a lifting problem of the square $(m, n)$ against the
square $(f, g)$ is exactly a lifting problem of $n$ against $f$.
\begin{definition}
  A lifting problem of $(m, n)$ against $(f, g)$ is a \emph{universal
    lifting problem} if it is universal as a lifting problem of $n$
  against $f$.
\end{definition}

Since this is a special case of our earlier definition, we immediately
see that universal lifting problem is unique up to isomorphism and
that the universal lifting problem exists whenever $p$ is locally
small and $\cat{B}$ has all finite limits.

We easily get the counterpart to proposition \ref{prop:univlp} as
below.
\begin{proposition}
  \label{prop:univlpsq}
  Let $\cat{B}$ have finite limits, let $p$ be a locally small
  fibration and let $(m, n)$ and $(f, g)$ be families of squares. Then
  the following are equivalent.
  \begin{enumerate}
  \item Every family of lifting problems from $(m, n)$ to $(f, g)$ has
    a solution.
  \item The universal family of lifting problems from $(m, n)$ to
    $(f, g)$ has a solution.
  \item There is a coherent choice of solutions to all families of
    lifting problems from $(m, n)$ to $(f, g)$.
  \end{enumerate}
\end{proposition}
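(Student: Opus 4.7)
The plan is simply to reduce the statement to Proposition \ref{prop:univlp} applied to the vertical maps $n$ and $f$. By construction, the definition of family of lifting problems for squares was engineered so that a family of lifting problems from $(m,n)$ to $(f,g)$ over $K$ is literally the same data as a family of lifting problems from $n$ to $f$ over $K$, namely an object $K$ of $\cat{B}$, maps $\sigma \colon K \rightarrow I$, $\tau \colon K \rightarrow J$, and a vertical map $\sigma^\ast(n) \rightarrow \tau^\ast(f)$ in $\vrt{\cat{E}}_K$. The outer squares $m$ and $g$ play no role in forming the lifting problem itself, they merely provide the context in which the middle square sits.

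Similarly, a solution to such a family is by definition a diagonal filler of the middle square, which is exactly a solution to the corresponding family of lifting problems from $n$ to $f$. The definition of \emph{universal} family of lifting problems for squares was explicitly taken to be the same as being universal as a lifting problem from $n$ to $f$. Hence the universal family of lifting problems from $(m,n)$ to $(f,g)$ exists precisely when the universal family from $n$ to $f$ exists (guaranteed here by local smallness of $p$ together with finite limits in $\cat{B}$ via lemma \ref{lem:vertarrhom}), and its solutions coincide with solutions of the latter.

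For the coherence condition, a coherent choice of solutions to all families of lifting problems from $(m,n)$ to $(f,g)$ requires compatibility with reindexing along every $k \colon K' \rightarrow K$ in $\cat{B}$ equipped with suitable maps into $I$ and $J$. Under the identification above, this is the same compatibility condition as for coherent choices of solutions to families of lifting problems from $n$ to $f$, since reindexing acts on the middle square exactly as in the earlier setting.

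The main (and only) step is therefore to make this identification explicit and invoke Proposition \ref{prop:univlp}. No real obstacle arises: the work has already been done in setting up the definitions so that lifting problems for squares restrict to lifting problems for the ``inner'' pair of vertical maps. I would state the proof as a single sentence applying \ref{prop:univlp} to $n$ and $f$.
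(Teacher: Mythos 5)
There is a genuine error in your identification of solutions. You are right that the \emph{problems} coincide: a family of lifting problems from $(m,n)$ to $(f,g)$ over $K$ is by definition exactly a family of lifting problems from $n$ to $f$ over $K$, and the universal one is defined to be the one that is universal as a lifting problem of $n$ against $f$. But a \emph{solution} is not a diagonal filler of the middle square $\sigma^\ast(n) \rightarrow \tau^\ast(f)$. In the paper's definition the dotted arrow runs from the codomain of $\sigma^\ast(m)$ to the domain of $\tau^\ast(g)$, i.e.\ it is a filler of the outer composite rectangle $\tau^\ast(g) \circ (\text{middle square}) \circ \sigma^\ast(m)$ (this is the whole point of Sattler's notion: it is what makes Kan \emph{composition}, as opposed to filling, expressible). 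Consequently, conditions 1--3 for $(m,n)$ against $(f,g)$ are not equivalent to conditions 1--3 for $n$ against $f$ --- the outer rectangle may admit a diagonal when the middle square does not, and vice versa --- so you cannot obtain the proposition as a formal corollary of Proposition \ref{prop:univlp} applied to $n$ and $f$.

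What does survive of your reduction is the classification of problems: by Lemma \ref{lem:univlplem} applied to $n$ and $f$, every family of lifting problems from $(m,n)$ to $(f,g)$ is the reindexing of the universal one along a unique map $t \colon K \rightarrow \fhom(n,f)$. Since solutions in the square sense (diagonals of the outer rectangle) are plainly preserved by reindexing, the same three-step argument as in Proposition \ref{prop:univlp} goes through with the new notion of solution: $1 \Rightarrow 2$ and $3 \Rightarrow 1$ are immediate, and $2 \Rightarrow 3$ follows by applying $t^\ast$ to the solution of the universal problem. So the repair is short, but the proof must rerun that argument rather than cite the earlier proposition as a black box.
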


Just as in loc. cit. we note that every family of maps can be viewed
as a family of squares. We can show this succinctly by working over
$\vrt{\cat{E}}$: every object $f$ of $\vrt{\cat{E}}$ gives
us a vertical map $1_f$ of $\vrt{\cat{E}}$. Again, following
Sattler, we will focus on lifting problems where the right hand map is
of this form. This is most useful in practice since usually what we
are interested in is algebraic structure on maps cofibrantly generated
by squares.

We can carry out step-one of the small object argument as follows. This
time the construction only gives a functorial factorisation rather
than a lawfs in general.

As before, it is easiest to work in $\vrt{\cat{E}}$, over
the composition of bifibrations, $\vrt{\cat{E}}
\stackrel{\dom}{\longrightarrow} \cat{E} \stackrel{p}{\longrightarrow}
\cat{B}$.

We are given a generating square $m \rightarrow n$ in
$\vrt{\cat{E}}$ over $I \in \cat{B}$ and an object $g$ of
$\vrt{\cat{E}}$ over $J \in \cat{B}$. We first form the
universal lifting problem. Say that
$h \colon \fhom(n, g) \rightarrow I$. This gives us a diagram of the
form below.
\begin{equation*}
  \xymatrix{ m \ar[r] & n & & g \\
    & h^\ast(m) \ar[ul] \ar[r] & h^\ast(n) \ar[ul] \ar[ur] }
\end{equation*}

We then factorise the map $h^\ast(m) \rightarrow g$ as an opcartesian
map over $\dom$ followed by a vertical map over $\dom$. By lemma
\ref{lem:domopcartchar} this is the same as taking the following
pushout in $\cat{E}$ (where the right hand square is the universal
lifting problem). In particular we know that the pushout exists and
can also be expressed as a levelwise opcartesian map followed by a
vertical pushout.
\begin{equation*}
  \xymatrix{ \cdot \ar[r] \ar[dd]_m & \cdot
    \ar[rr] \ar[dd]_n & & \cdot \ar[dl] \ar[dd]^g \\
    & & \cdot \pushoutcorner \ar[dr] & \\
    \cdot \ar[r] \ar[urr] & \cdot \ar[rr] & & \cdot
  }
\end{equation*}
However, in this form it is clear that we get a factorisation of $g$,
which in fact gives a functorial factorisation such that algebras over
the corresponding pointed endofunctor correspond precisely to
solutions of the universal lifting problem.

\subsection{Squares over a Codomain Fibration}

We now specialise to codomain fibrations. In this case a family of
squares indexed by $I$ is a diagram of the following form.
\begin{equation}
  \label{eq:codsquare}
  \begin{gathered}
    \xymatrix{ U_0 \ar[rr] \ar[d] & & U_1 \ar[d] \\
      V_0 \ar[rr] \ar[dr] & & V_1 \ar[dl] \\
      & I &
    }
  \end{gathered}
\end{equation}

\begin{lemma}
  Suppose that the map $V_1 \rightarrow I$ in \eqref{eq:codsquare} is
  an isomorphism. Then,
  \begin{enumerate}
  \item The functorial factorisation on \eqref{eq:codsquare} is
    strongly fibred.
  \item If the algebraically free rawfs on the functorial
    factorisation exists, then it is also strongly fibred.
  \end{enumerate}
\end{lemma}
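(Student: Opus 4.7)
The plan is to bootstrap the argument of theorem~\ref{thm:codlawfsstrfib} and then invoke theorem~\ref{thm:algfreerawfsstrfib} for part~(2). Recall that the step-one construction for squares factors $g : X \to Y$ over $J$ by first forming the universal lifting problem of $n$ against $g$, with indexing map $h : \fhom(n, g) \to I$, and then constructing the middle object of the factorisation as the pushout $X \sqcup_{h^\ast(U_0)} h^\ast(V_0)$, using lemma~\ref{lem:domopcartchar} to express the opcartesian-over-$\dom$ step as a pushout.

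First I would observe that the universal lifting problem of $n$ against $g$ is exactly the universal lifting problem in the ordinary (non-square) setting of theorem~\ref{thm:codlawfsstrfib}, whose hypothesis that the codomain map to $I$ is an isomorphism is satisfied here by $V_1 \to I$. The proof of that theorem shows---using Beck--Chevalley for dependent products in the locally cartesian closed category $\catc$---that this universal lifting problem is preserved by pullback along any $Y' \to Y$. Concretely, $\fhom(n, g)$ admits the type theoretic description $\Sigma_{j : J} \Sigma_{y : Y(j)} \Sigma_{i : I} (U_1(i) \to X(j, y))$, which substitutes manifestly under pullback in $Y$. Writing $h' : \fhom(n, g') \to I$ for the indexing map after pulling $g$ back to $g' : X' \to Y'$, one obtains in particular that $h'^\ast(U_0) \to h^\ast(U_0)$ and $h'^\ast(V_0) \to h^\ast(V_0)$ are pullback squares over the canonical map $\fhom(n, g') \to \fhom(n, g)$.

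Next I would argue that the additional pushout defining step-one for squares is also preserved by pullback along $Y' \to Y$. Since $\catc$ is locally cartesian closed, pullback is a left adjoint and so preserves colimits, in particular pushouts. Applying pullback along $Y' \to Y$ to the pushout defining the middle object $K$ therefore yields $X' \sqcup_{h'^\ast(U_0)} h'^\ast(V_0)$, which is precisely the step-one factorisation of $g'$. Together with lemma~\ref{lem:strongfibrelem}, this shows that the functorial factorisation is strongly fibred, proving part~(1). The main point requiring care is that the canonical maps out of the compared pushouts line up correctly, but this is forced by the universal property of pushout combined with the pullback stability of the universal lifting problem established in the previous step, so I expect this to be routine rather than a genuine obstacle.

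For part~(2), note that the codomain fibration has dependent products and each slice $\catc/J$ is itself locally cartesian closed (since $\catc$ is). Hence the hypotheses of theorem~\ref{thm:algfreerawfsstrfib} are met, and applying that theorem to the strongly fibred functorial factorisation from part~(1) shows that the algebraically free rawfs, whenever it exists, is also strongly fibred.
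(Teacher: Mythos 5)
Your proposal is correct and follows essentially the same route as the paper: part (1) is the argument of Theorem~\ref{thm:codlawfsstrfib} adapted to this setting (the universal lifting problem here is by definition that of $n$ against $g$, whose codomain-to-$I$ map is the isomorphism $V_1 \to I$, so it is pullback-stable; the extra pushout over $h^\ast(m)$ is then preserved because pullback functors in a locally cartesian closed category are left adjoints), and part (2) is exactly the paper's appeal to Theorem~\ref{thm:algfreerawfsstrfib}. The paper's own proof is just a one-line reference to these two ingredients, so your write-up is a faithful (and more detailed) expansion of it.
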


\begin{proof}
  By an easy argument in the internal logic similar to that of theorem
  \ref{thm:codlawfsstrfib}, and then applying theorem
  \ref{thm:algfreerawfsstrfib} for showing the awfs is also fibred
  (when it exists).
\end{proof}

\begin{example}
  As Sattler shows in \cite[Section 6]{sattlermodelstructures}, this
  notion of lifting problem can be used to define Kan composition in
  CCHM cubical sets. Combining this with our earlier remarks, we see
  that the category of maps with Kan composition operator is
  cofibrantly generated by the following family of squares.
  \begin{equation*}
    \xymatrix{ 1 \ar[rr] \ar[d]_{\top} & & \intv +_1 F
      \ar[d]^{\top \hat{\times} \delta_0} \\
      F \ar[rr]^{\langle \delta_1, 1_F \rangle} \ar[dr] & &
      \intv \times F \ar[dl] \\
      & F &
    }
  \end{equation*}
\end{example}

\begin{example}
  Again working in CCHM cubical sets, we define a \emph{weak
    fibration} to be a map with the right lifting property against the
  following family of squares. The intuition is that we define a
  weaker notion of Kan filling operator in which instead of requiring
  a diagonal filler for all lifting problems of
  $m \hat \times \delta_0$ against a map $f \colon X \rightarrow Y$,
  we only require it for those where the map
  $\intv \times \Sigma \rightarrow Y$ factors (necessarily uniquely)
  through the projection $\intv \times \Sigma \rightarrow \Sigma$.
  \begin{equation*}
    \xymatrix{ \intv +_1 F \ar@{=}[rr] \ar[d]_{\top \hat \times
        \delta_0} & & \intv +_1 F \ar[d] \\
      \intv \times F \ar[rr]^{\pi_1} \ar[dr] & & F \ar[dl] \\
      & F &
    }
  \end{equation*}
  Since the lower right object is terminal in $\catc / \Sigma$, we see
  that the cofibrantly generated lawfs is strongly fibred, and the
  cofibrantly generated awfs is too, if it exists. This construction
  may be useful for developing an abstract version of the approach to
  the semantics of higher inductive types in \cite{coquandcubicaltt}.

  We can also combine this with the previous example to get a weak
  version of Kan composition operator:
  \begin{equation*}
    \xymatrix{ 1 \ar[rr] \ar[d]_{\top}
      & & \intv +_1 F \ar[d] \\
      F \ar[rr] \ar[dr]
      & & F \ar[dl] \\
      & F &
    }
  \end{equation*}
\end{example}

\section{Conclusion and Directions for Future Work}
\label{sec:concl-direct-future}

\subsection{Cofibrantly Generated Awfs's in $\Pi W$-Pretoposes with
  WISC}
\label{sec:cofibr-gener-awfss}

In this paper we saw a new fibred variation of the definition of
cofibrantly generated awfs. We also saw that Garner's small object
argument tells us that for certain categories, cofibrantly generated
awfs's over the category indexed families fibration always
exist. However, so far we have not seen any corresponding result for
cofibrantly generated awfs's over codomain fibrations. One possible
way to approach this would be to carry out a transfinite construction
similar to Garner's small object argument. However, this approach has
the drawback that it requires infinite colimits. This results in
natural examples (such as internal presheaves in realizability
toposes) where one can define step-one of the small object
argument, but where the construction of cofibrantly generated awfs's
does not work. The author will instead develop a new, alternative
approach in a separate paper. Roughly speaking the idea is as
follows. A category has $W$-types if certain endofunctors, referred to
as \emph{polynomial endofunctors} admit initial algebras
\cite{moerdijkpalmgrenwtypes}. The author will develop a new
generalisation of $W$-types in which one instead uses initial algebras
of certain pointed endofunctors, and that these initial algebras can
be constructed from $W$-types provided that a weak choice axiom known
as weakly initial set of covers (WISC) holds. For codomain fibrations,
the pointed endofunctor in corollary \ref{cor:initialalg} will be an
example of such a pointed endofunctor, and so we will deduce that
cofibrantly generated awfs's over the codomain fibration exist in this
case.

\subsection{Applications to Realizability}
\label{sec:appl-real}

One of the main aims of this work was to develop a definition of
cofibrantly generated that is suitable for use in realisability
toposes, categories of assemblies and variants, which at the same time
can be used in proofs that are easy generalisations of existing work
in homotopical algebra. The main issue is that these categories are
not cocomplete. This makes it difficult to apply some standard
arguments in homotopical algebra, such as the small object argument.

A natural way to develop a realizability variant of CCHM cubical sets
is to construct the category of cubical sets internally in a category
of assemblies. Within our general framework we have now seen two
different approaches to defining classes of maps within these
categories. The most promising approach is to work over the codomain
fibration as in section \ref{sec:class-class-monom}. However, the
approach of internal category indexed families of presheaves from
section \ref{sec:presheaf-assemblies} is in some ways more flexible
and may also be useful, for instance when working with realizability
variants of BCH cubical sets, where separated product is not fibred
with respect to the codomain fibration.

A somewhat surprising fact, first observed by Van Oosten in
\cite{vanoostenhtyeff} is that the effective topos itself admits
nontrivial homotopical structure. We have shown (in example
\ref{ex:efftopos}) that a more recent variant by Van den Berg and
Frumin fits within our general framework. Another realizability topos
that promises to have rich (but apparently as yet unexplored)
homotopical structure is the function realizability topos, and its
relative the Kleene-Vesley topos (as defined in \cite[Section
4.5]{vanoosten}).

\subsection{The BCH Cubical Set Model}
\label{sec:bch-cubical-set}

The Bezem-Coquand-Huber cubical set model \cite{bchcubicalsets} was
the first example of a constructively valid ``homotopical'' model of
type theory. Since then, many authors have focused on the newer
Cohen-Coquand-Huber-M\"{o}rtburg cubical set model. However, the
original BCH model remains an interesting topic. Bezem, Coquand and
Huber have shown in a more recent paper \cite{bchunivalence} that the
univalence axiom holds in BCH cubical sets, confirming that this
approach does indeed give a model of homotopy type theory.

We have
seen here in section \ref{sec:triv-fibr-01} that the acyclic
fibrations appearing in that paper can be characterised elegantly as
cofibrantly generated with respect to the codomain fibration (for the
equivalent category of $01$-substitution sets).  Kan fibrations in BCH
cubical sets are part of an awfs cofibrantly generated with respect to
the category indexed family fibration, and moreover this can be done
constructively \cite{swannomawfs}, but the question remains whether
there is a more elegant definition similar to the case for acyclic
fibrations, or Kan fibrations in CCHM cubical sets.

The main obstacle
is that the definition used by Bezem, Coquand and Huber requires a
particular monoidal product, called \emph{separated product} to
state. Since it is unclear how to extend separated product to a fibred
monoidal product over $\cod \colon \sub^\btwo \rightarrow \sub$, we
cannot readily use the fibred Leibniz construction to get the BCH
definition of Kan fibration. A partial answer to this has been
provided by Alex Simpson, who has shown (in currently unpublished
work) that if instead of $\sub^\btwo$, one works in a suitable
subcategory (of so called \emph{independent squares}), and the
restriction of $\cod$ to this subcategory, then one does obtain a
cartesian monoidal fibration whose restriction to the fibre over the
terminal object is the monoidal category of $\sub$ with separated
product.

\subsection{Relation to Other Generalised Notions of Lifting Problem}
\label{sec:combining-with-other}

In this paper we have seen a generalised notion of cofibrantly
generated awfs's. However, it is not the only such generalisation.

For
example, as Riehl explains in \cite[Section 13.3]{riehlcht}, given a
monoidal category, one can define a notion of enriched lifting
property. There is an overlap between the examples considered here and
enriched lifting properties. Namely, the fibred lifting property
between maps over the terminal object in a codomain fibration can also
be viewed as the enriched lifting property (over the cartesian
monoidal product) between functors from the trivial enriched
category. However, neither approach seems to be more general than the
other. One can give a rough intuition for the relation between fibred
and enriched lifting problems as follows. In an enriched category, for
any two objects $X$ and $Y$, $\hom(X, Y)$ is an object of a certain
category and so can be manipulated via the internal logic of that
category. However, to talk about a collection of objects, or maps with
different domains/codomains we still need to use some external notion
of set. On the other hand, when working over a fibration, even more
can be done internally, requiring very little from the ambient set
theory. It is natural to ask whether it is possible to combine the
definitions together to get something that subsumes both notions. Such
a combination would likely involve Shulman's work on monoidal
fibrations from \cite{shulmanfbmf}.

In \cite[Section 6]{bourkegarnerawfs1}, Bourke and Garner consider a
notion of lifting problem between double categories. Once again, it
seems that this is neither more general, nor a special case of the
framework we consider here. Therefore it's natural to ask whether
there is another more general notion that includes both.

\bibliographystyle{abbrv}
\bibliography{mybib}{}

\end{document}